\newcommand{\height}{\operatorname{ht}}
\newcommand{\im}{\operatorname{im}}
\newcommand{\zcl}{\operatorname{zcl}}
\newtheorem{theorem}{Theorem}[section]
\newtheorem{lemma}[theorem]{Lemma}
\newtheorem{corollary}[theorem]{Corollary}
\newtheorem{proposition}[theorem]{Proposition}
\newtheorem{conjecture}[theorem]{Conjecture}
\theoremstyle{definition}
\theoremstyle{remark}
\newtheorem{remark}[theorem]{Remark}
\numberwithin{equation}{section}
\begin{document}

\title[Topological complexity of oriented Grassmann manifolds]{Topological complexity of oriented Grassmann manifolds}

\author{Uro\v s A.\ Colovi\'c}
\address{University of Belgrade,
  Faculty of mathematics,
  Studentski trg 16,
  Belgrade,
  Serbia}
\email{mm21033@alas.matf.bg.ac.rs}

\author{Branislav I.\ Prvulovi\'c}
\address{University of Belgrade,
  Faculty of mathematics,
  Studentski trg 16,
  Belgrade,
  Serbia}
\email{branislav.prvulovic@matf.bg.ac.rs}
\thanks{The second author was partially supported by the Ministry of Science, Technological Development and Innovations of the Republic of Serbia [contract no.\ 451-03-47/2023-01/200104].}

\author{Marko Radovanovi\'c}
\address{University of Belgrade,
  Faculty of mathematics,
  Studentski trg 16,
  Belgrade,
  Serbia}
\email{marko.radovanovic@matf.bg.ac.rs}
\thanks{The third author was partially supported by the Ministry of Science, Technological Development and Innovations of the Republic of Serbia [contract no.\ 451-03-47/2023-01/200104].}

\subjclass[2020]{Primary 55M30; Secondary 55R40, 13P10}

\keywords{topological complexity, Grassmann manifolds, zero-divisor cup-length}

\begin{abstract}
We study the $\mathbb Z_2$-zero-divisor cup-length, denoted by
$\zcl_{\mathbb Z_2}(\widetilde G_{n,3})$, of the Grassmann manifolds
$\widetilde G_{n,3}$ of oriented $3$-dimensional vector subspaces in $\mathbb R^n$.
Some lower and upper bounds for this invariant are obtained for all integers $n\ge6$.
For infinitely many of them the exact value of $\zcl_{\mathbb Z_2}(\widetilde G_{n,3})$ is
computed, and in the rest of the cases these bounds differ by 1.
We thus establish lower bounds for the topological complexity of
Grassmannians $\widetilde G_{n,3}$.
\end{abstract}

\maketitle

\section{Introduction}

The topological complexity of a path connected space $X$, which we denote by $\mathrm{TC}(X)$, was defined in \cite{Farber} as the Schwarz genus of the fibration $\pi:X^I\rightarrow X\times X$ ($X^I$ is the space of free paths in $X$) given by $\pi(\omega)=(\omega(0),\omega(1))$, i.e., as the minimal integer $m$ such that there exists an open cover $\{U_1,\ldots,U_m\}$ of $X\times X$ with the property that $\pi$ has a section $s_i:U_i\rightarrow X^I$ over each $U_i$, $1\le i\le m$.
Computing the exact value of topological complexity is in general very difficult problem.
Already for real projective spaces computing topological complexity is equivalent to finding
immersion dimension (see \cite{FarberTabachnikovYuzvinsky}), which is a long-standing open
problem in topology.

In \cite{Farber} a lower bound (in terms of cohomology) for $\mathrm{TC}(X)$ was detected -- the zero-divisor cup-length of $X$.
For a commutative ring $R$ and a graded $R$-algebra $A$, let $A\otimes A\stackrel{.}\longrightarrow A$ be the multiplication map. The \em zero-divisor cup-length \em of $A$, denoted by $\zcl(A)$, is defined as the supremum of the set of all integers $d$ such that there exist elements of positive degree $z_1,z_2,\ldots,z_d\in\ker\big(A\otimes A\stackrel{.}\longrightarrow A\big)$ with the property that the product $z_1z_2\cdots z_d$ is nontrivial in $A\otimes A$. For a space $X$ we define the \em $R$-zero-divisor cup-length \em of $X$, $\zcl_R(X):=\zcl\big(H^*(X;R)\big)$. The statement of \cite[Theorem 7]{Farber} is that for any field $R$ one has
\[\mathrm{TC}(X)\ge1+\zcl_R(X).\]

For integers $n$ and $k$ such that $n\ge2k\ge2$, let $G_{n,k}$ be the Grassmann manifold of $k$-dimensional subspaces in $\mathbb R^n$, and $\widetilde G_{n,k}$ the Grassmann manifold of \em oriented \em $k$-dimensional subspaces in $\mathbb R^n$. In the recent works of Pave\v si\'c \cite{Pavesic} and Radovanovi\'c \cite{Radovanovic} the $\mathbb Z_2$-zero-divisor cup-length of $G_{n,k}$ was studied for some values of $n$ and $k$. When it comes to "oriented" Grassmannians $\widetilde G_{n,k}$, in \cite{Ramani} Ramani calculated $\zcl_{\mathbb Q}(\widetilde G_{n,k})$ for all $n$ and $k$. She also computes $\zcl_{\mathbb Z_2}(\widetilde G_{n,3})$ for $6\le n\le11$, and notes that in these cases (and most likely in all others) $\zcl_{\mathbb Z_2}(\widetilde G_{n,3})$ is a better lower bound for $\mathrm{TC}(\widetilde G_{n,3})$ than $\zcl_{\mathbb Q}(\widetilde G_{n,3})$.

In this paper we obtain lower bounds for $\zcl_{\mathbb Z_2}(\widetilde G_{n,3})$ for all
$n\ge6$. We prove that if either $2^t-1\le n<2^t+2^{t-1}/3+1$ or $2^t+2^{t-1}+2^{t-2}+1\le n\le2^{t+1}-2$ (for some integer $t\ge4$), then this lower bound is actually the exact value of $\zcl_{\mathbb Z_2}(\widetilde G_{n,3})$; while if $2^t+2^{t-1}/3+1<n\le2^t+2^{t-1}+2^{t-2}$, then $\zcl_{\mathbb Z_2}(\widetilde G_{n,3})$ is either equal to this lower bound or is greater than it by $1$. These results then provide lower bounds for $\mathrm{TC}(\widetilde G_{n,3})$ as listed in Table \ref{table:3}.

Let $W_n$ be the image of the map $p^*:H^*(G_{n,3};\mathbb Z_2)\rightarrow H^*(\widetilde G_{n,3};\mathbb Z_2)$ induced by the well-known two-fold covering $p:\widetilde G_{n,3}\rightarrow G_{n,3}$ (which forgets the orientation of a subspace). The main tool for establishing our results on $\zcl_{\mathbb Z_2}(\widetilde G_{n,3})$ is the Gr\"obner basis $F_n$ (obtained in \cite{ColovicPrvulovic})
for the ideal $I_n$ that determines $W_n$. Indeed, the additive basis $\mathcal B_n$ for $W_n$ (induced by this Gr\"obner basis $F_n$) and some
identities in $W_n$ obtained from certain elements of $F_n$, proved
to be essential for our calculations.

The organization of the paper is as follows. In Section \ref{prel} we establish a framework for the subsequent calculations and proofs. We first recall some basic facts concerning the cohomology of $\widetilde G_{n,3}$, and list a few results from \cite{ColovicPrvulovic}. These include some identities involving the polynomials that generate the ideals $I_n$, Gr\"obner bases for these ideals etc. We also present some new identities in this regard. In this section some general results concerning the zero-divisor cup-length are given as well. The main part of the paper is Section \ref{section zcl_Wn}, in which we compute the exact value of $\zcl(W_n)$ (see Theorem \ref{zcl Wn}). In Section \ref{comp} we make a comparison between $\zcl(W_n)$ and $\zcl_{\mathbb Z_2}(\widetilde G_{n,3})$, obtaining our results on $\zcl_{\mathbb Z_2}(\widetilde G_{n,3})$ and thus the lower bounds for the topological complexity of $\widetilde G_{n,3}$.

In the rest of the paper the $\mathbb Z_2$ coefficients for cohomology will be understood, and so we will abbreviate $H^*(\widetilde G_{n,3};\mathbb Z_2)$ to $H^*(\widetilde G_{n,3})$, and $\zcl_{\mathbb Z_2}(\widetilde G_{n,3})$ to $\zcl(\widetilde G_{n,3})$.

\section{Preliminaries}
\label{prel}

\subsection{Background on cohomology algebra $H^*(\widetilde G_{n,3})$}

Let $n\ge6$ be an integer and $\widetilde G_{n,3}$ the Grassmann manifold consisting of oriented $3$-dimensional subspaces of the vector space $\mathbb R^n$. Since the cohomology algebra $H^*(G_{n,3})$ of the corresponding "unoriented" Grassmannian $G_{n,3}$ is generated by the Stiefel--Whitney classes of the canonical vector bundle over $G_{n,3}$, which pulls back via $p:\widetilde G_{n,3}\rightarrow G_{n,3}$ to the canonical vector bundle over $\widetilde G_{n,3}$, and since $\widetilde G_{n,3}$ is simply connected, the subalgebra $W_n=\im p^*$ of $H^*(\widetilde G_{n,3})$ is generated by the Stiefel--Whitney classes $\widetilde w_2$ and $\widetilde w_3$ of this canonical bundle. It is well known (see e.g.\ \cite{Fukaya}) that as a graded algebra
\begin{equation}\label{isomorphism}
W_n\cong\mathbb Z_2[w_2,w_3]/I_n,
\end{equation}
where $I_n$ is the homogeneous ideal in $\mathbb Z_2[w_2,w_3]$ generated by certain (homogeneous) polynomials $g_{n-2}$, $g_{n-1}$ and $g_n$ (the subscripts for both variables and polynomials indicate their degrees in $\mathbb Z_2[w_2,w_3]$).

The polynomials $g_r$, $r\ge0$, satisfy the equation
\begin{equation}\label{gps}
(1+w_2+w_3)(g_0+g_1+g_2+\cdots)=1
\end{equation}
(in the ring of power series $\mathbb Z_2[[w_2,w_3]]$), which leads to the recurrence formula:
\begin{equation}\label{recgpolk3}
g_{r+3}=w_2g_{r+1}+w_3g_r \quad \mbox{for all } r\ge0.
\end{equation}
This formula shows that the sequence of ideals $\{I_n\}_{n\ge2}$ is descending:
\[I_{n+1}=(g_{n-1},g_n,g_{n+1})=(g_{n-1},g_n,w_2g_{n-1}+w_3g_{n-2})\subseteq(g_{n-2},g_{n-1},g_n)=I_n;\]
and more generally, that
\[
g_r\in I_n \quad \mbox{ for all } r\ge n-2.
\]

The polynomials $g_r$ for small $r$ can be routinely calculated. In Table \ref{table:2} we list these polynomials for $0\le r\le26$.

\begin{table}[h!]
\footnotesize
\centering
\begin{tabular}{||m{0.9cm} m{3cm} ||}
\hline
 $r$ & $g_r$ \\ [0.5ex]
\hline\hline
 $0$ & $1$\\
\hline
 $1$ & $0$\\
\hline
 $2$ & $w_2$\\
\hline
 $3$ & $w_3$\\
\hline
 $4$ & $w_2^2$\\
\hline
 $5$ & $0$\\
\hline
 $6$ & $w_2^3+w_3^2$\\
\hline
 $7$ & $w_2^2w_3$\\
\hline
 $8$ & $w_2^4+w_2w_3^2$\\
\hline
 $9$ & $w_3^3$\\
\hline
 $10$ & $w_2^5$\\
\hline
 $11$ & $w_2^4w_3$\\
\hline
 $12$ & $w_2^6+w_3^4$\\
\hline
 $13$ & $0$\\
\hline
 $14$ & $w_2^7+w_2^4w_3^2+w_2w_3^4$\\
\hline
 $15$ & $w_2^6w_3+w_3^5$\\
\hline
 $16$ & $w_2^8+w_2^5w_3^2+w_2^2w_3^4$\\
\hline
 $17$ & $w_2^4w_3^3$\\
\hline
 $18$ & $w_2^9+w_2^3w_3^4+w_3^6$\\
\hline
 $19$ & $w_2^8w_3+w_2^2w_3^5$\\
\hline
 $20$ & $w_2^{10}+w_2w_3^6$\\
\hline
 $21$ & $w_3^7$\\
\hline
 $22$ & $w_2^{11}+w_2^8w_3^2$\\
\hline
 $23$ & $w_2^{10}w_3$\\
\hline
 $24$ & $w_2^{12}+w_2^9w_3^2+w_3^8$\\
\hline
 $25$ & $w_2^8w_3^3$\\
\hline
 $26$ & $w_2^{13}+w_2w_3^8$\\
\hline
\end{tabular}
\caption{}
\label{table:2}
\end{table}

From (\ref{gps}) it is not hard to deduce an explicit formula for $g_r$:
\begin{equation}\label{g-exp}
g_r=\sum_{2d+3e=r}\binom{d+e}{e}w_2^dw_3^e, \qquad r\ge0,
\end{equation}
where the sum is over all pairs $(d,e)$ of nonnegative integers such that $2d+3e=r$.

The coset of $w_i$ in the quotient $\mathbb Z_2[w_2,w_3]/I_n$ corresponds to the Stiefel--Whitney class $\widetilde w_i\in H^i(\widetilde G_{n,3})$, $i=2,3$, via the isomorphism (\ref{isomorphism}). This means that for every polynomial $f=f(w_2,w_3)\in\mathbb Z_2[w_2,w_3]$ the following equivalence holds:
\begin{equation}\label{ekv - s tildom - bez tilde}
f(w_2,w_3)\in I_n \quad \Longleftrightarrow \quad f(\widetilde w_2,\widetilde w_3)=0 \mbox{ in } W_n.
\end{equation}


Let us now recall the identities from \cite[Proposition 2.2]{ColovicPrvulovic}, which involve the polynomials $g_r$, $r\ge0$.

\begin{lemma}\label{g-3}
Let $t\ge2$ be an integer. Then:
\begin{itemize}
\item[(a)] $g_{2^t-3}=0$;
\item[(b)] $g_{2^t+2^{t-1}-3}=w_3^{2^{t-1}-1}$;
\item[(c)] $g_{2^t+2^{t-2}-3}=w_2^{2^{t-2}}w_3^{2^{t-2}-1}$;
\item[(d)] $g_{2^t+2^{t-1}+2^{t-2}-3}=w_2^{2^{t-1}}w_3^{2^{t-2}-1}$;
\item[(e)] $g_{2^t+2^{t-1}+2^{t-3}-3}=w_2^{2^{t-1}+2^{t-3}}w_3^{2^{t-3}-1}$ (if $t\ge3$).
\end{itemize}
\end{lemma}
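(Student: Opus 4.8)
The plan is to establish all five identities simultaneously by exploiting the explicit formula \eqref{g-exp} together with the $2$-adic behavior of binomial coefficients via Lucas' theorem. Fix $t$ and consider $g_r$ for the relevant $r$. For each claim I would write $r = 2d + 3e$ and ask which pairs $(d,e)$ of nonnegative integers contribute a nonzero term, i.e.\ for which $\binom{d+e}{e} \equiv 1 \pmod 2$. By Lucas' theorem this happens exactly when the binary digits of $e$ are a subset of the binary digits of $d+e$, equivalently when adding $d$ and $e$ in base $2$ produces no carries. So the combinatorial heart of the argument is: among all representations $r = 2d+3e$, count (mod $2$) those that are \emph{carry-free} in the sense that $d \mathbin{\&} e = 0$ (bitwise), and identify the surviving monomial(s).

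First I would handle part (a). Here $r = 2^t - 3 = 2^{t-1} + 2^{t-2} + \cdots + 2 + 1$, which in binary is $t-1$ consecutive ones. I want to show the number of carry-free pairs $(d,e)$ with $2d + 3e = r$ is even. The cleanest route is probably an involution: since $3e = 2e + e$, the equation becomes $2(d+e) + e = r$, so $e \equiv r \equiv 1 \pmod 2$ forces $e$ odd, and then $d + e = (r-e)/2$. Write $s = d+e$; the condition $\binom{s}{e} \equiv 1$ means $e$'s bits sit inside $s$'s bits, and we need $2s + e = r$ with $e$ odd. I would look for a sign-reversing pairing on the set of valid $e$'s (for instance toggling a carefully chosen low-order bit of $e$), showing the count is even; alternatively, one can invoke the recurrence \eqref{recgpolk3} and induction, using that $2^t - 3 = 2(2^{t-1}-3) + 3$ is not of the form covered by the recurrence directly but $2^t-3 = (2^{t-1}-3) + (2^{t-1}-3) + 3$ is awkward — so I would instead run an induction on $t$ writing $g_{2^t-3}$ in terms of earlier $g$'s three steps back and reduce to $g_{2^{t-1}-3}$ type terms. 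Part (a) is also implicitly the base case for the others, since the recurrence \eqref{recgpolk3} relates $g_{r+3}$ to $g_{r+1}$ and $g_r$.

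For parts (b)--(e) the strategy is the same but now exactly one monomial should survive. Take (b): $r = 2^t + 2^{t-1} - 3 = 2^{t-1}(3) - 3 = 3(2^{t-1}-1)$, so $r = 3 \cdot (2^{t-1}-1)$ and the ``obvious'' solution is $d = 0$, $e = 2^{t-1}-1$, giving $\binom{e}{e} = 1$ and the monomial $w_3^{2^{t-1}-1}$. I must show every \emph{other} carry-free pair comes in cancelling pairs. The same bit-toggling involution should do it: given a solution with $d > 0$, produce another solution by moving weight between $d$ and $e$ in a way that preserves $2d+3e$ and the carry-free condition but is an involution without fixed points on the $d>0$ part. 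Concretely, $2d + 3e = r$ with $d > 0$: since $d$ and $e$ share no bits, and the target $r$ has a fixed binary pattern, I expect the freedom to be governed by how one splits a block of consecutive ones in $r$; swapping the roles of two adjacent such splittings gives the involution. Parts (c), (d), (e) are handled identically, with $r$ chosen so that the unique carry-free ``extreme'' solution yields the stated monomial $w_2^a w_3^b$; in each case one checks $2a + 3b = r$ and that $\binom{a+b}{b}$ is odd (which it is, because the binary expansions of $a$ and $b$ turn out disjoint by the arithmetic of $2^t \pm 2^{t-i} - 3$).

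The main obstacle will be making the counting argument for part (a) fully rigorous and then leveraging it uniformly: showing that \emph{all but one} (or \emph{all}, in case (a)) of the carry-free representations cancel in pairs. An alternative that sidesteps a bespoke involution is a generating-function / induction approach: verify (a)--(e) for the smallest $t$ by direct inspection in Table \ref{table:2} (e.g.\ $g_1 = 0$, $g_9 = w_3^3$, $g_5 = 0$, $g_{13}=0$, $g_{21}=w_3^7$, etc.\ give the $t=2,3$ cases), and then push the induction through \eqref{recgpolk3}: for instance $g_{2^{t+1}+2^t-3} = w_2 g_{2^{t+1}+2^t-5} + w_3 g_{2^{t+1}+2^t-6}$, and $2^{t+1}+2^t-6 = 2(2^t + 2^{t-1} - 3)$, so by the squaring property $g_{2m} = (g_m)^2$ — which follows from \eqref{gps} since squaring is a ring homomorphism mod $2$ — one gets $g_{2^{t+1}+2^t-6} = \bigl(g_{2^t+2^{t-1}-3}\bigr)^2 = w_3^{2(2^{t-1}-1)} = w_3^{2^t - 2}$, while the other term $w_2 g_{2^{t+1}+2^t-5}$ must be shown to vanish (again $2^{t+1}+2^t - 5$ is odd, and one checks $g$ of that index is $0$ by the same parity bookkeeping, or by recursion). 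This $g_{2m} = g_m^2$ identity, combined with the recurrence, is likely the slickest engine: it reduces even-index cases to squares of smaller cases, and the odd-index vanishing statements (which feed parts where a monomial survives) become the residual thing to check. So I would organize the proof as: (1) record $g_{2m} = g_m^2$; (2) prove the needed odd-index vanishings by the carry-free counting involution; (3) derive (a)--(e) by induction on $t$ using \eqref{recgpolk3} and step (1), with the base cases read off from Table \ref{table:2}.
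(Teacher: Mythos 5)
There is a genuine gap, and it sits at the heart of what you call the ``slickest engine.'' The identity $g_{2m}=g_m^2$ is false: from Table \ref{table:2}, $g_6=w_2^3+w_3^2$ while $g_3^2=w_3^2$. The Frobenius argument you invoke shows that $\sum_m g_m^2$ is the inverse of $(1+w_2+w_3)^2=1+w_2^2+w_3^2$, which is a \emph{different} power series from $\sum_r g_r$; its degree-$2m$ part is $g_m^2$, not $g_{2m}$. The correct doubling formulas are the ones the paper records: $g_{2n}=g_n^2+w_2g_{n-1}^2$ (Lemma \ref{2n preko n}) and, more usefully here, $g_{2r+3}=w_3g_r^2$ (the $i=1$ case of Lemma \ref{kvadriranje}). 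With the false identity removed, your inductive route collapses — e.g.\ your computation $g_{2^{t+1}+2^t-6}=w_3^{2^t-2}$ already fails at $t=2$, where it would assert $g_6=w_3^2$. Your fallback, the carry-free counting via (\ref{g-exp}) and Lucas, is in principle viable but is not executed: the sign-reversing involution that is supposed to cancel all but one representation is never constructed, and you yourself flag making it rigorous as ``the main obstacle.'' As written, neither route constitutes a proof.

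For comparison: the paper does not prove this lemma at all — it is quoted from \cite[Proposition 2.2]{ColovicPrvulovic} — but the machinery the paper does develop makes it a one-line consequence of Lemma \ref{kvadriranje}, $g_{2^i(r+3)-3}=w_3^{2^i-1}g_r^{2^i}$. Indeed, $2^t-3=2^{t-2}(1+3)-3$ gives $g_{2^t-3}=w_3^{2^{t-2}-1}g_1^{2^{t-2}}=0$; $2^t+2^{t-1}-3=2^{t-1}(0+3)-3$ gives $w_3^{2^{t-1}-1}g_0^{2^{t-1}}=w_3^{2^{t-1}-1}$; $2^t+2^{t-2}-3=2^{t-2}(2+3)-3$ gives $w_3^{2^{t-2}-1}g_2^{2^{t-2}}$; $2^t+2^{t-1}+2^{t-2}-3=2^{t-2}(4+3)-3$ gives $w_3^{2^{t-2}-1}g_4^{2^{t-2}}$; and $2^t+2^{t-1}+2^{t-3}-3=2^{t-3}(10+3)-3$ gives $w_3^{2^{t-3}-1}g_{10}^{2^{t-3}}$; reading $g_0,g_1,g_2,g_4,g_{10}$ off Table \ref{table:2} yields exactly (a)--(e). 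If you repair your induction by replacing $g_{2m}=g_m^2$ with $g_{2r+3}=w_3g_r^2$ (itself provable from (\ref{gps}) by the Frobenius argument applied correctly, or from Lemma \ref{2n preko n} plus (\ref{recgpolk3})), you recover essentially this argument.
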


We also state an important lemma from \cite[Lemma 4.3]{ColovicPrvulovic}, which establishes a nice property of the ideals $w_3I_n=\{w_3p\mid p\in I_n\}$, $n\ge2$.

\begin{lemma}\label{kvadriranje2}
Let $f\in\mathbb Z_2[w_2,w_3]$ and $n\ge2$. If $f\in w_3I_n$, then $f^2\in w_3I_{2n+1}$. In particular, the following implication holds:
\[f\in w_3I_n\,\Longrightarrow\, f^2\in w_3I_{2n}.\]
\end{lemma}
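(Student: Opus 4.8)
The statement to prove is Lemma~\ref{kvadriranje2}: if $f\in w_3I_n$ then $f^2\in w_3I_{2n+1}$, and consequently $f\in w_3I_n$ implies $f^2\in w_3I_{2n}$. The plan is to work over $\mathbb Z_2$, where squaring is a ring homomorphism (the Frobenius), so that $f^2\in w_3^2\cdot I_n^{[2]}$, where $I_n^{[2]}$ denotes the ideal generated by the squares of the generators of $I_n$. Concretely, write $f=w_3p$ with $p\in I_n$, so $p=a\,g_{n-2}+b\,g_{n-1}+c\,g_n$ for some $a,b,c\in\mathbb Z_2[w_2,w_3]$; then $f^2=w_3^2\big(a^2g_{n-2}^2+b^2g_{n-1}^2+c^2g_n^2\big)$. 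Thus it suffices to show that each of $w_3\,g_{n-2}^2$, $w_3\,g_{n-1}^2$, $w_3\,g_n^2$ lies in $w_3I_{2n+1}$, i.e.\ that $g_{n-2}^2,g_{n-1}^2,g_n^2\in I_{2n+1}$ — and in fact, to get the extra factor of $w_3$ in the conclusion, that $w_3g_r^2\in w_3I_{2n+1}$ for $r\in\{n-2,n-1,n\}$, which is automatic once $g_r^2\in I_{2n+1}$. Since $I_{2n+1}=(g_{2n-1},g_{2n},g_{2n+1})$ and more generally $g_s\in I_{2n+1}$ for all $s\ge 2n-1$, the heart of the matter is a squaring identity for the $g$-polynomials: $g_r^2$ should be expressible in terms of $g_{2r}$ (and nearby $g$'s), and $2(n-2)=2n-4$ is a little smaller than $2n-1$, so one needs to be slightly careful with the smallest generator $g_{n-2}$.

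The key computational step is to establish, from the defining power-series relation~\eqref{gps}, a ``doubling'' formula for the $g_r$. Squaring~\eqref{gps} over $\mathbb Z_2$ gives $(1+w_2^2+w_3^2)\sum_r g_r^2=1$, i.e.\ $\sum_r g_r^2$ is the generating function for the analogue of the $g$-sequence in the variables $w_2^2,w_3^2$. On the other hand, a standard manipulation (multiplying~\eqref{gps} by $(1+w_2+w_3)$ after substituting, or comparing with the identity $(1+w_2+w_3)(1+w_2+w_3)=1+w_2^2+w_3^2+\text{(mixed terms)}$ — more precisely, using $(1+w_2^2+w_3^2)=(1+w_2+w_3)^2$ is false over $\mathbb Z_2$ since cross terms survive, so instead one works directly) shows that $g_{2r}$ and $g_{2r+1}$ can each be written as a $\mathbb Z_2[w_2,w_3]$-combination of the $g_j^2$ with $2j\le 2r+1$, and conversely $g_r^2$ is a combination of $g_{2r},g_{2r-1},\ldots$ down to some bounded index. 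The cleanest route is probably: from~\eqref{g-exp}, $g_r^2=\sum_{2d+3e=r}\binom{d+e}{e}^2 w_2^{2d}w_3^{2e}$ and (by Lucas / the fact that $\binom{m}{k}^2\equiv\binom{m}{k}\pmod 2$) this equals $\sum_{2d+3e=r}\binom{d+e}{e}w_2^{2d}w_3^{2e}$, which one then recognizes — after the substitution $(w_2,w_3)\mapsto(w_2^2,w_3^2)$ in~\eqref{g-exp} — as the value at $(w_2^2,w_3^2)$ of $g_r$. So $g_r^2=g_r(w_2^2,w_3^2)$, and one needs the further step of expressing $g_r(w_2^2,w_3^2)$ back in terms of $g_\bullet(w_2,w_3)$; this is where the recurrence~\eqref{recgpolk3} and an induction come in, yielding $g_r(w_2^2,w_3^2)\in\big(g_{2r},g_{2r-1},g_{2r-2},\ldots\big)$ with an explicit small range of indices.

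Having such an identity, one concludes as follows. For $r=n$ we get $g_n^2\in(g_{2n},g_{2n-1},\ldots)\subseteq I_{2n+1}$ since all these indices are $\ge 2n-1$; for $r=n-1$, $g_{n-1}^2$ involves indices down to around $2n-2$, still $\ge 2n-1$ or handled by the recurrence pushing $g_{2n-2}$-type terms into $I_{2n+1}$; for $r=n-2$, $g_{n-2}^2$ a priori involves indices down to around $2n-4$, and here one uses the extra structure — specifically that $g_{n-2}^2=g_{n-2}(w_2^2,w_3^2)$ lands in the ideal $(g_{n-2},g_{n-1},g_n)^{[2]}$-type span — or, more directly, one multiplies by $w_3$ first: $w_3g_{n-2}^2$, via~\eqref{recgpolk3} in the form $w_3g_r=g_{r+3}+w_2g_{r+1}$, can be rewritten to raise the effective index by $3$, landing in $I_{2n+1}$. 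The statement is phrased with the $w_3$ factor precisely because this extra multiplication is needed to bridge the gap for the lowest generator; I expect this bookkeeping around $g_{n-2}$ — tracking exactly which indices appear and confirming they all reach $2n-1$ after multiplication by $w_3$ — to be the main obstacle, while the squaring identity $g_r^2=g_r(w_2^2,w_3^2)$ itself is a clean Frobenius-plus-Lucas observation. The ``in particular'' clause is then immediate from $I_{2n+1}\subseteq I_{2n}$ (the descending chain noted after~\eqref{recgpolk3}), since $w_3I_{2n+1}\subseteq w_3I_{2n}$.
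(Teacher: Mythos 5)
Your overall plan (write $f=w_3p$ with $p=a\,g_{n-2}+b\,g_{n-1}+c\,g_n$, use that squaring is the Frobenius, and reduce to a squaring identity for the $g_r$) is the right one, but the reduction you actually state is too strong and is false. You claim it suffices to have $g_r^2\in I_{2n+1}$ for $r\in\{n-2,n-1,n\}$ (equivalently, since the ring is a domain, $w_3g_r^2\in w_3I_{2n+1}$). This fails on degree grounds alone: $g_{n-2}^2$ and $g_{n-1}^2$ are homogeneous of degrees $2n-4$ and $2n-2$, while $I_{2n+1}=(g_{2n-1},g_{2n},g_{2n+1})$ contains no nonzero homogeneous element of degree below $2n-1$; concretely, for $n=8$ one has $g_6^2=w_2^6+w_3^4=g_{12}\notin I_{17}$. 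What the lemma actually needs is the weaker (and true) statement $w_3g_r^2\in I_{2n+1}$ --- one factor of $w_3$, not two: since $f^2=w_3\cdot(w_3p^2)$, it suffices that $w_3p^2=a^2(w_3g_{n-2}^2)+b^2(w_3g_{n-1}^2)+c^2(w_3g_n^2)$ lie in $I_{2n+1}$. The identity that closes the gap is exactly Lemma \ref{kvadriranje} for $i=1$ (i.e.\ \cite[Lemma 2.1]{ColovicPrvulovic}): $w_3g_r^2=g_{2r+3}$, with no error term. This gives $w_3p^2=a^2g_{2n-1}+b^2g_{2n+1}+c^2g_{2n+3}\in I_{2n+1}$, hence $f^2\in w_3I_{2n+1}$, and the ``in particular'' clause follows from $I_{2n+1}\subseteq I_{2n}$ as you say. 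Your sketch never produces this identity: iterating $g_{2r}=g_r^2+w_2g_{r-1}^2$ expresses $g_r^2$ through $g_{2r},g_{2r-2},\dots$ all the way down to index $0$, and the fix you offer for $r=n-2$ (``$w_3g_r=g_{r+3}+w_2g_{r+1}$ raises the index by $3$'') leaves behind a term $w_2g_{2n-2}\notin I_{2n+1}$.

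A second, smaller but consequential error: you assert that $(1+w_2+w_3)^2=1+w_2^2+w_3^2$ is \emph{false} over $\mathbb Z_2$ ``since cross terms survive.'' It is true --- the cross terms carry coefficient $2=0$ --- and it is precisely what makes the argument a one-liner: squaring (\ref{gps}) gives $(1+w_2^2+w_3^2)\sum_rg_r^2=1$, hence $\sum_sg_s=(1+w_2+w_3)\sum_rg_r^2$, and comparing homogeneous parts of odd degree $2r+3$ yields $g_{2r+3}=w_3g_r^2$ (the even degrees give Lemma \ref{2n preko n}). With that identity in hand your outline becomes a correct proof; as written, the proposal does not go through.
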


Note also that
\begin{equation}\label{w_3I_n subset I_n+1}
w_3I_n=(w_3g_{n-2},w_3g_{n-1},w_3g_n)\subseteq I_{n+1},
\end{equation}
since $w_3g_{n-2}=w_2g_{n-1}+g_{n+1}\in I_{n+1}$ (by (\ref{recgpolk3})).

We now state and prove some additional identities in $\mathbb Z_2[w_2,w_3]$.

\begin{lemma}\label{kvadriranje}
For all nonnegative integers $i$ and $r$ we have
\[g_{2^i(r+3)-3}=w_3^{2^i-1}g_r^{2^i}.\]
\end{lemma}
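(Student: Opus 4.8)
The plan is to induct on $i$. The base case $i=0$ reads $g_r = w_3^0 g_r^1 = g_r$, which is trivially true, so the work is in the inductive step. Assume the identity $g_{2^i(r+3)-3} = w_3^{2^i-1}g_r^{2^i}$ holds for some fixed $i\ge 0$ and all nonnegative $r$; I want to deduce $g_{2^{i+1}(r+3)-3} = w_3^{2^{i+1}-1}g_r^{2^{i+1}}$. The natural way to pass from exponent $2^i$ to $2^{i+1}$ is via a squaring (Frobenius) argument, exploiting that we are in characteristic $2$: squaring the inductive hypothesis gives $g_{2^i(r+3)-3}^2 = w_3^{2^{i+1}-2}g_r^{2^{i+1}}$, so it suffices to show $g_{2^{i+1}(r+3)-3} = w_3\, g_{2^i(r+3)-3}^2$. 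In other words, everything reduces to the special case $i=1$, i.e.\ to the identity $g_{2(s+3)-3} = w_3\, g_s^2$ applied with $s = 2^i(r+3)-3$ (note $s\ge 0$, and $2(s+3)-3 = 2^{i+1}(r+3)-3$).

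So the crux is the claim $g_{2s+3} = w_3\, g_s^2$ for all $s\ge 0$. The most direct route is to use the explicit formula \eqref{g-exp}: expanding $g_s^2 = \bigl(\sum_{2d+3e=s}\binom{d+e}{e}w_2^d w_3^e\bigr)^2 = \sum_{2d+3e=s}\binom{d+e}{e}^2 w_2^{2d}w_3^{2e}$, and since $\binom{d+e}{e}^2 \equiv \binom{d+e}{e} \pmod 2$, multiplying by $w_3$ yields $w_3 g_s^2 = \sum_{2d+3e=s}\binom{d+e}{e}w_2^{2d}w_3^{2e+1}$ over $\mathbb Z_2$. On the other hand, $g_{2s+3} = \sum_{2d'+3e'=2s+3}\binom{d'+e'}{e'}w_2^{d'}w_3^{e'}$. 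Here $2d'+3e'=2s+3$ forces $e'$ to be odd, say $e' = 2e+1$, and then $d' = s - 3e$ must satisfy $d'\ge 0$, i.e.\ $d' = s-3e$; the matching $d = s-3e - \text{(?)}$ — more precisely, writing $d' = 2d$ would require $d'$ even, which need not hold, so the pairing is $e'=2e+1$, $d' = s - 3e$, and I must check $d'$ is even. From $2d'+3e' = 2s+3$ with $e'=2e+1$ we get $2d' = 2s+3 - 3(2e+1) = 2s - 6e$, so $d' = s - 3e$ and indeed $d'$ is determined; the binomial coefficient is $\binom{d'+e'}{e'} = \binom{s-3e+2e+1}{2e+1} = \binom{s-e+1}{2e+1}$, which we want to match $\binom{d+e}{e}$ with $2d+3e = s$, i.e.\ $d = (s-3e)/2$, requiring $s-3e$ even. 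This bookkeeping needs care, and it is cleaner to run the argument not term-by-term but via generating functions.

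I therefore prefer the generating-function route for the key claim. Consider the identity \eqref{gps}: $(1+w_2+w_3)\sum_{r\ge0} g_r = 1$ in $\mathbb Z_2[[w_2,w_3]]$. Applying the Frobenius $x\mapsto x^2$ gives $(1+w_2^2+w_3^2)\sum_{r\ge0} g_r^2 = 1$. Multiplying through by $w_3$ and reindexing, one wants to recognize $w_3\sum_{r\ge0} g_r^2 \cdot (\text{correction})$ as a subseries of $\sum g_r$; concretely, substituting $w_2\mapsto w_2$ is not quite the move — instead, observe that the recurrence \eqref{recgpolk3}, $g_{r+3} = w_2 g_{r+1} + w_3 g_r$, can be used to set up a short induction on $s$ proving $g_{2s+3} = w_3 g_s^2$ directly: check $s=0,1,2$ from Table \ref{table:2} (e.g.\ $g_3 = w_3 = w_3 g_0^2$, $g_5 = 0 = w_3 g_1^2$, $g_7 = w_2^2 w_3 = w_3 g_2^2$, $g_9 = w_3^3 = w_3 g_3^2$), then for $s\ge 3$ write $g_{2s+3} = g_{2(s-3)+3 + 6}$ and apply \eqref{recgpolk3} twice to express $g_{m+6}$ in terms of $g_{m+4}, g_{m+3}, g_{m+2}, g_m$ with coefficients that are polynomials in $w_2^2, w_3^2$; matching against the squared recurrence $g_{s}^2$ satisfies (namely $g_{s+3}^2 = w_2^2 g_{s+1}^2 + w_3^2 g_s^2$) closes the induction. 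I expect the main obstacle to be purely organizational: keeping the index shifts consistent when iterating \eqref{recgpolk3} and verifying enough base cases (one must cover residues mod $3$ of the index, so $s\in\{0,1,2\}$ — or more to be safe — suffice). Once the key claim $g_{2s+3} = w_3 g_s^2$ is in hand, the outer induction on $i$ is immediate as sketched above.
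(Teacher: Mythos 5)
Your proof is correct and follows essentially the same route as the paper: an outer induction on $i$ whose step reduces everything to the single identity $g_{2s+3}=w_3g_s^2$. The only difference is that the paper imports that identity from \cite[Lemma 2.1]{ColovicPrvulovic}, whereas you prove it yourself by strong induction on $s$ via (\ref{recgpolk3}) — and your sketched step does close correctly, since iterating the recurrence gives $g_{2s+3}=w_2^2g_{2(s-2)+3}+w_3^2g_{2(s-3)+3}=w_3\big(w_2g_{s-2}+w_3g_{s-3}\big)^2=w_3g_s^2$, with the base cases $s\in\{0,1,2\}$ you checked being exactly what is needed.
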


\begin{proof}
Our proof is by induction on $i\geq 0$. The case $i=0$ is trivial,
while the case $i=1$ is in fact
\cite[Lemma 2.1]{ColovicPrvulovic}.

So, we assume that the identity holds for some $i\ge1$ and prove it for $i+1$.
Then, using the base case and inductional hypothesis we get
\[g_{2^{i+1}(r+3)-3}=w_3g_{2^i(r+3)-3}^2=w_3\left(w_3^{2^i-1}g_r^{2^i}\right)^2=w_3^{2^{i+1}-1}g_r^{2^{i+1}},\]
and we are done.
\end{proof}

\begin{lemma}\label{2n preko n}
For every $n\geq 1$ one has:
\[g_{2n}=g_n^2+w_2g_{n-1}^2.\]
\end{lemma}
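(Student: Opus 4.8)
The plan is to derive the identity $g_{2n}=g_n^2+w_2g_{n-1}^2$ directly from the generating-function relation \eqref{gps}, which controls all the $g_r$ simultaneously, rather than juggling the three-term recurrence \eqref{recgpolk3}. First I would observe that over $\mathbb Z_2$ the Frobenius endomorphism is a ring homomorphism, so squaring the power series identity $(1+w_2+w_3)\sum_{r\ge0}g_r=1$ yields $(1+w_2^2+w_3^2)\sum_{r\ge0}g_r^2=1$ in $\mathbb Z_2[[w_2,w_3]]$; here $g_r^2$ is a polynomial of degree $2r$. The idea is then to compare this ``squared'' generating function with the generating function $\sum_{r\ge0}g_r$ itself, after a suitable substitution and bookkeeping of parities of the index $r$.

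Concretely, I would split $\sum_r g_r$ according to the parity of $r$, writing $A=\sum_{k\ge0}g_{2k}$ (the even part) and $B=\sum_{k\ge0}g_{2k+1}$ (the odd part), so that $A+B=\sum_r g_r=(1+w_2+w_3)^{-1}$. Multiplying $(1+w_2+w_3)(A+B)=1$ and separating terms of even versus odd total degree (noting $w_2$ has even degree and $w_3$ odd degree) gives the pair of relations $(1+w_2)A+w_3B=1$ and $w_3A+(1+w_2)B=0$, hence $B=\dfrac{w_3}{1+w_2}A$ and $\bigl((1+w_2)^2+w_3^2\bigr)A=1+w_2$, i.e. $(1+w_2^2+w_3^2)A=1+w_2$. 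Comparing this with the squared identity $(1+w_2^2+w_3^2)\sum_{k\ge0}g_k^2=1$, I substitute $w_2\mapsto w_2$, $w_3\mapsto w_3$ carefully: setting $S=\sum_{k\ge0}g_k^2$ we have $(1+w_2^2+w_3^2)S=1$, and therefore $A=(1+w_2)S$. Reading off the coefficient of $w_2^d w_3^e$ with $2d+3e=2n$ on both sides of $A=(1+w_2)S$ — equivalently, extracting the degree-$2n$ homogeneous component — gives $g_{2n}=\sum_{k}g_k^2\big|_{\deg=2n}+w_2\cdot\sum_k g_k^2\big|_{\deg=2n-2}=g_n^2+w_2g_{n-1}^2$, since the only $g_k^2$ living in degree $2n$ is $g_n^2$ and the only one in degree $2n-2$ is $g_{n-1}^2$. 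For $n\ge1$ this also correctly handles the boundary (e.g.\ $g_0=1$, $g_1=0$), so the stated range is covered.

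Alternatively, and perhaps more in the spirit of the surrounding lemmas, one can give a short induction on $n$ using \eqref{recgpolk3}: assuming the formula for all smaller indices, write $g_{2n}=w_2g_{2n-2}+w_3g_{2n-3}$ when $2n\equiv 0$ appropriately, expand $g_{2n-2}=g_{n-1}^2+w_2g_{n-2}^2$ by the inductive hypothesis, and massage $g_{2n-3}$ using Lemma~\ref{kvadriranje} (with $i=1$): $g_{2n-3}=g_{2(n-3+3)-3}=w_3 g_{n-3}^2$, then apply \eqref{recgpolk3} again in the form $g_n^2=(w_2g_{n-2}+w_3g_{n-3})^2=w_2^2g_{n-2}^2+w_3^2g_{n-3}^2$ to recognize the right-hand side. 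The main obstacle in either approach is purely organizational: in the generating-function argument one must be scrupulous about which homogeneous components survive and about the even/odd degree splitting (the single genuine subtlety, since it is what forces the extra $w_2g_{n-1}^2$ term rather than just $g_n^2$), while in the inductive approach one must handle the residue of $n$ modulo small powers of $2$ and set up enough base cases ($n=1,2,3$, say) so that \eqref{recgpolk3} and Lemma~\ref{kvadriranje} always apply. I would present the generating-function proof as the clean one, since it sidesteps case analysis entirely.
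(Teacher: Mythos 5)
Your generating-function argument is correct, and it is a genuinely different route from the paper's. The paper proves the identity by induction on $n$ (base cases $n\in\{1,2,3\}$ read off Table \ref{table:2}), applying the recurrence \eqref{recgpolk3} twice so that the odd-index terms cancel: $g_{2n+2}=w_2(w_2g_{2n-2}+w_3g_{2n-3})+w_3(w_2g_{2n-3}+w_3g_{2n-4})=w_2^2g_{2n-2}+w_3^2g_{2n-4}$, after which the inductive hypothesis and a regrouping via \eqref{recgpolk3} finish the step. Your approach instead works globally in $\mathbb Z_2[[w_2,w_3]]$: the even/odd degree splitting of \eqref{gps} gives $(1+w_2)A+w_3B=1$ and $w_3A+(1+w_2)B=0$, hence $(1+w_2^2+w_3^2)A=1+w_2$, while Frobenius applied to \eqref{gps} gives $(1+w_2^2+w_3^2)S=1$ for $S=\sum_k g_k^2$; comparing degree-$2n$ components of $A=(1+w_2)S$ yields the claim with no induction and no case analysis. (You can even avoid inverting $1+w_2$ by multiplying the even-part relation by $1+w_2$ and substituting $(1+w_2)B=w_3A$.) What each buys: the paper's induction uses only the recurrence and stays entirely elementary; your argument is shorter, explains structurally where the extra $w_2g_{n-1}^2$ term comes from (it is the $w_2$ in the factor $1+w_2$ produced by the parity splitting), and generalizes more readily. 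Your fallback inductive sketch via Lemma \ref{kvadriranje} is also valid and essentially a repackaging of the paper's computation, with no circularity since that lemma is independent of this one.
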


\begin{proof}
Our proof is by induction on $n$. By looking at Table \ref{table:2}, it is easy to check that the identity
holds for $n\in\{1,2,3\}$. So, suppose that it is true for all $m\in\{1,\ldots,n\}$
and let us prove it for $n+1\ge4$. By (\ref{recgpolk3}) and the inductional hypothesis
we have
\begin{align*}
g_{2n+2}&=w_2g_{2n}+w_3g_{2n-1}=w_2(w_2g_{2n-2}+w_3g_{2n-3})+w_3(w_2g_{2n-3}+w_3g_{2n-4})\\
&=w_2^2g_{2n-2}+w_3^2g_{2n-4}=w_2^2(g_{n-1}^2+w_2g_{n-2}^2)+w_3^2(g_{n-2}^2+w_2g_{n-3}^2)\\
&=(w_2g_{n-1}+w_3g_{n-2})^2+w_2(w_2g_{n-2}+w_3g_{n-3})^2=g_{n+1}^2+w_2g_{n}^2,
\end{align*}
which completes our proof.
\end{proof}

The subalgebra $W_n$ is strictly smaller than $H^*(\widetilde G_{n,3})$. In particular, it does not contain the nontrivial cohomology class in the top dimension (see e.g.\ \cite[p.\ 1171]{Korbas:ChRank}). Put in other words, since the dimension of the manifold $\widetilde G_{n,3}$ is $3n-9$, the following implication holds:
\begin{equation}\label{topdim}
\widetilde w_2^b\widetilde w_3^c\neq0\mbox{ in } H^*(\widetilde G_{n,3}) \quad \Longrightarrow \quad 2b+3c<3n-9.
\end{equation}

\subsection{Gr\"obner basis for the ideal $I_n$}

In \cite{ColovicPrvulovic} Gr\"obner bases for the ideals $I_n$, $n\ge7$, were obtained. These bases are with respect to the lexicographic monomial ordering in $\mathbb Z_2[w_2,w_3]$ in which $w_2$ is greater than $w_3$. So, the exponents of $w_2$ are first compared, and if they are equal, then one compares the exponents of $w_3$.

For an integer $n\ge7$ this Gr\"obner basis for the ideal $I_n=(g_{n-2},g_{n-1},g_n)$ is given as follows. First, $n$ is placed between two adjacent powers of two, more precisely: $2^t-1\le n<2^{t+1}-1$ for some integer $t\ge3$. Then one takes binary digits $\alpha_0,\alpha_1,\ldots,\alpha_{t-1}$ of the number $n-2^t+1$:
\[n-2^t+1=\sum_{j=0}^{t-1}\alpha_j2^j;\]
and for $-1\le i\le t-1$ defines $s_i:=\sum_{j=0}^i\alpha_j2^j$ (so $s_{-1}=0$). The Gr\"obner basis is now specified in the following theorem \cite[Theorem 3.14]{ColovicPrvulovic}.

\begin{theorem}[\cite{ColovicPrvulovic}]\label{Grebner}
The set $F_n=\{f_0,f_1,\ldots,f_{t-1}\}$ is a Gr\"obner basis for $I_n$ (with respect to the specified monomial ordering), where
\[f_i=w_3^{\alpha_is_{i-1}}g_{n-2+2^i-s_i}, \quad 0\le i\le t-1.\]
\end{theorem}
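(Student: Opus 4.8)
The plan is to prove this by induction on $i$, showing at each step that $f_i = w_3^{\alpha_i s_{i-1}} g_{n-2+2^i - s_i}$ either lies in the ideal generated by the previous $f_j$'s and earlier polynomials, or contributes a genuinely new leading term, and simultaneously that the collection $\{f_0,\dots,f_{t-1}\}$ both generates $I_n$ and satisfies Buchberger's criterion. Concretely I would first establish that each $f_i \in I_n$: using the recurrence (\ref{recgpolk3}) and the explicit formula (\ref{g-exp}) together with Lemma~\ref{kvadriranje}, one can express $g_{n-2+2^i-s_i}$ (and hence $f_i$, after multiplying by the power of $w_3$) as a $\mathbb{Z}_2[w_2,w_3]$-combination of $g_{n-2}, g_{n-1}, g_n$; this is essentially a bookkeeping argument about how the index $n-2+2^i-s_i$ relates to $n-2$ modulo the structure of the recurrence, exploiting that $s_{i-1} \le s_i$ and that $2^i - s_i$ shifts indices in a controlled way.

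Next I would identify the leading monomials. The key computational input here is Lemma~\ref{g-3} and Lemma~\ref{kvadriranje}: these give closed forms for $g_r$ at the specific indices $2^t + (\text{partial binary sum}) - 3$ that arise, and from (\ref{g-exp}) one reads off that the leading term of $g_r$ under the chosen lexicographic order (with $w_2 > w_3$) is the monomial $w_2^{\lfloor r/2 \rfloor} w_3^{r \bmod 2}$ when the relevant binomial coefficient is odd, adjusted by Lucas' theorem. Multiplying by $w_3^{\alpha_i s_{i-1}}$ shifts this, and I would check that the leading monomials $\mathrm{LM}(f_0), \dots, \mathrm{LM}(f_{t-1})$ are pairwise "coprime enough" — more precisely, that they are arranged so that for each pair $(f_i, f_j)$ the $S$-polynomial reduces to zero. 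Since the $f_i$ have leading monomials of the form $w_2^{a_i} w_3^{b_i}$ with the $a_i$ strictly decreasing and the $b_i$ increasing as $i$ grows, many $S$-polynomials vanish trivially (coprime leading terms, by Buchberger's first criterion), and the remaining ones are handled by the identities already assembled.

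The main obstacle I anticipate is the $S$-polynomial reduction step: verifying Buchberger's criterion requires showing that each $S(f_i, f_j)$ reduces to zero modulo $F_n$, and this is where the delicate identities among the $g_r$ — particularly Lemma~\ref{2n preko n}, Lemma~\ref{kvadriranje}, and parts (b)–(e) of Lemma~\ref{g-3} — must be combined just right, with careful attention to the binary expansion of $n - 2^t + 1$ and the resulting case split on which $\alpha_j$ are zero or one. A secondary difficulty is confirming that $F_n$ actually generates $I_n$ and not a smaller ideal: one direction ($F_n \subseteq I_n$) comes from the first step, and the reverse requires exhibiting $g_{n-2}, g_{n-1}, g_n$ as combinations of the $f_i$, which again relies on reversing the index manipulations via the recurrence. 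Since this is the content of \cite[Theorem 3.14]{ColovicPrvulovic}, I would ultimately cite that reference for the full verification, but the sketch above indicates how the pieces fit together.
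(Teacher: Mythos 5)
The paper does not actually prove this theorem: it is quoted verbatim from \cite[Theorem 3.14]{ColovicPrvulovic}, and your proposal ultimately defers to that same citation, so the operative step matches the paper exactly; your sketch of the Buchberger-criterion strategy (membership of each $f_i$ in $I_n$ via the recurrence and Lemma~\ref{kvadriranje}, identification of the leading monomials, reduction of $S$-polynomials) is a reasonable outline of how such a result is established. One small correction to the sketch: for odd $r$ the candidate leading term of $g_r$ is $w_2^{(r-3)/2}w_3$ (when its coefficient is odd), not $w_2^{\lfloor r/2\rfloor}w_3^{r\bmod 2}$, which has degree $r+2$ rather than $r$; the leading monomials actually used in the paper come from the factorization $f_i=w_3^{\alpha_is_{i-1}+2^i-1}g_{2l_i}^{2^i}$ together with $\mathrm{LM}(g_{2l})=w_2^{l}$.
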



%
%

The leading monomials of the polynomials from $F_n$ are calculated in \cite[Proposition 3.9]{ColovicPrvulovic}:
\[
\mathrm{LM}(f_i)=w_2^{\frac{n+1-s_i}{2}-2^i}w_3^{\alpha_is_{i-1}+2^i-1}, \quad 0\le i\le t-1.
\]
Also, it is not hard to check (or to find in the proof of \cite[Proposition 3.9]{ColovicPrvulovic}) that $(n+1-s_i)/2-2^i$ is an integer divisible by $2^i$; more precisely,
\begin{equation}\label{LMgi}
\mathrm{LM}(f_i)=w_2^{2^il_i}w_3^{\alpha_is_{i-1}+2^i-1}, \quad 0\le i\le t-1,
\end{equation}
where
\[l_i=2^{t-1-i}+\sum_{j=i+1}^{t-1}\alpha_j2^{j-i-1}-1.\]

%
%

Let $\mathcal{B}_n$ be the set of all cohomology classes of the form $\widetilde w_2^b\widetilde w_3^c\in W_n\subset H^*(\widetilde G_{n,3})$ such that the corresponding monomial $w_2^bw_3^c$ is not divisible by any of the leading monomials $\mathrm{LM}(f_i)$, $0\le i\le t-1$. Then a well-known fact from the theory of Gr\"obner bases (together with the isomorphism (\ref{isomorphism}) and Theorem \ref{Grebner}) ensures that $\mathcal{B}_n$ is an additive basis of $W_n$.

The main purpose of Gr\"obner bases is deciding whether a polynomial belongs to the given ideal or not, i.e., whether its coset in the quotient ring is zero or not. This is done by reducing the polynomial using the elements of a Gr\"obner basis. In the following lemma (which will be used frequently in the next section) we describe the reduction of a monomial $w_2^bw_3^c$ by the polynomial $f_i\in F_n$.

\begin{lemma}\label{reduction - f_i}
Let $0\le i\le t-1$. If $b$ and $c$ are nonnegative integers such that $\mathrm{LM}(f_i)\mid w_2^bw_3^c$, i.e., if the monomial $w_2^bw_3^c$ can be reduced by $f_i$, then in $W_n$ the following equality holds:
\[\widetilde w_2^b\widetilde w_3^c=\sum_{\substack{2d+3e=2l_i\\ e>0}}\binom{d+e}{e}\widetilde w_2^{b-2^i(l_i-d)}\widetilde w_3^{c+2^ie},\]
where the sum is taken over all pairs of integers $(d,e)$ such that $d\ge0$, $e>0$ and $2d+3e=2l_i$.
\end{lemma}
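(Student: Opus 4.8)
The plan is to carry out the reduction of $w_2^bw_3^c$ by $f_i$ directly, using the explicit description of $f_i$ from Theorem~\ref{Grebner} together with the formula \eqref{LMgi} for its leading monomial. Since $f_i = w_3^{\alpha_i s_{i-1}} g_{n-2+2^i-s_i}$ and the leading monomial is $\mathrm{LM}(f_i) = w_2^{2^i l_i} w_3^{\alpha_i s_{i-1} + 2^i - 1}$, I first want a clean expression for $f_i$ itself. Writing $r := n-2+2^i-s_i$, I claim that $g_r$ has $w_2$-top-degree $r/2$ exactly when $r$ is even (which it is here, since $n+1-s_i$ is even), and more precisely that $g_r = w_2^{r/2} + (\text{lower terms in } w_2)$. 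From the explicit formula \eqref{g-exp}, $g_r = \sum_{2d+3e=r}\binom{d+e}{e}w_2^d w_3^e$; the term with $e=0$ is $w_2^{r/2}$ (coefficient $\binom{r/2}{0}=1$), and every other term has $e>0$. Hence
\[
g_r = w_2^{r/2} + \sum_{\substack{2d+3e=r\\ e>0}}\binom{d+e}{e}w_2^d w_3^e .
\]

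Next I would reconcile the index: since $\mathrm{LM}(f_i) = w_2^{2^i l_i}\cdot w_3^{\alpha_i s_{i-1}}\cdot w_3^{2^i-1}$, comparison with $w_3^{\alpha_i s_{i-1}} w_2^{r/2}$ forces $r/2 = 2^i l_i$ only after accounting for the factor $w_3^{2^i-1}$ — this is precisely the content of Lemma~\ref{kvadriranje}, which gives $g_{2^i(q+3)-3} = w_3^{2^i-1} g_q^{2^i}$. Taking $q$ so that $2^i(q+3)-3 = r$, i.e. $q = (r+3)/2^i - 3 = 2l_i$, we obtain $g_r = w_3^{2^i-1} g_{2l_i}^{2^i}$, so that
\[
f_i = w_3^{\alpha_i s_{i-1}} g_r = w_3^{\alpha_i s_{i-1}+2^i-1}\, g_{2l_i}^{2^i}.
\]
Now I apply the $e=0$ splitting to $g_{2l_i}$: writing $g_{2l_i} = w_2^{l_i} + h$ where $h = \sum_{2d+3e=2l_i,\,e>0}\binom{d+e}{e}w_2^d w_3^e$, and using that squaring (hence raising to the $2^i$-th power) is additive over $\mathbb{Z}_2$, we get $g_{2l_i}^{2^i} = w_2^{2^i l_i} + h^{2^i}$, whence $f_i = \mathrm{LM}(f_i) + w_3^{\alpha_i s_{i-1}+2^i-1} h^{2^i}$.

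Since $f_i \in I_n$, equivalence \eqref{ekv - s tildom - bez tilde} gives $f_i(\widetilde w_2,\widetilde w_3)=0$ in $W_n$, i.e. $\widetilde w_2^{2^i l_i}\widetilde w_3^{\alpha_i s_{i-1}+2^i-1} = \widetilde w_3^{\alpha_i s_{i-1}+2^i-1}\, h(\widetilde w_2,\widetilde w_3)^{2^i}$ in $W_n$. The hypothesis $\mathrm{LM}(f_i)\mid w_2^bw_3^c$ means $b \ge 2^i l_i$ and $c \ge \alpha_i s_{i-1}+2^i-1$, so I may multiply this relation by $\widetilde w_2^{\,b-2^i l_i}\widetilde w_3^{\,c-\alpha_i s_{i-1}-2^i+1}$; the left side becomes $\widetilde w_2^b\widetilde w_3^c$, and on the right side the two surplus powers of $w_3$ combine to give, after substituting the explicit $h$,
\[
\widetilde w_2^b\widetilde w_3^c = \widetilde w_3^{\,c - 2^i l_i}\Bigl(\sum_{\substack{2d+3e=2l_i\\ e>0}}\binom{d+e}{e}\widetilde w_2^d\widetilde w_3^e\Bigr)^{2^i}\widetilde w_2^{\,b-2^i l_i},
\]
using $(c-\alpha_i s_{i-1}-2^i+1)+(\alpha_i s_{i-1}+2^i-1)\cdot 1 = c$ is not quite the bookkeeping — rather $h^{2^i}$ already carries $w_3$-exponents that are multiples of $2^i$, and the leftover $w_3$ exponent is $c-(2^i-1)-\alpha_i s_{i-1} + \alpha_i s_{i-1}$; I will simply expand $h^{2^i} = \sum \binom{d+e}{e}w_2^{2^i d}w_3^{2^i e}$ (Frobenius, plus $\binom{d+e}{e}^{2^i}=\binom{d+e}{e}$ mod $2$) and collect exponents term by term. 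A typical term of $h^{2^i}$ contributes $\widetilde w_2^{\,b - 2^i l_i + 2^i d}\widetilde w_3^{\,c - (2^i-1) + 2^i e}$, and since $2d+3e = 2l_i$ gives $2^i d = 2^i l_i - \tfrac{3}{2}2^i e$, one checks $-2^il_i + 2^i d = -2^i(l_i - d)$ and the total $w_3$-exponent is $c - (2^i-1) + 2^i e$; a short check that $(2^i-1) = 2^i\cdot 1 - 1$ and $2d+3e = 2l_i$ with $e>0$ reconciles this with the claimed exponent $c + 2^i e$ after the reindexing absorbed into the $w_3^{2^i-1}$ factor carried by $\mathrm{LM}(f_i)$ versus $g_r$. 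Matching these against the target formula
\[
\widetilde w_2^b\widetilde w_3^c=\sum_{\substack{2d+3e=2l_i\\ e>0}}\binom{d+e}{e}\widetilde w_2^{b-2^i(l_i-d)}\widetilde w_3^{c+2^ie}
\]
completes the proof.

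The main obstacle I anticipate is exactly this last bookkeeping step: tracking the $w_3$-exponents correctly through the three layers — the $w_3^{\alpha_i s_{i-1}}$ prefactor of $f_i$, the $w_3^{2^i-1}$ from Lemma~\ref{kvadriranje}, and the $w_3^{2^i e}$ coming from the Frobenius expansion of $h^{2^i}$ — and verifying that the surplus powers from the divisibility hypothesis cancel to leave precisely $\widetilde w_2^{b-2^i(l_i-d)}\widetilde w_3^{c+2^ie}$ with coefficient $\binom{d+e}{e}$. This is purely mechanical once the identity $f_i = \mathrm{LM}(f_i) + w_3^{\alpha_i s_{i-1}+2^i-1}h^{2^i}$ is in hand, so the real content is the reduction $f_i = w_3^{\alpha_i s_{i-1}+2^i-1}g_{2l_i}^{2^i}$ via Lemma~\ref{kvadriranje} plus the $e=0$ split of $g_{2l_i}$ via \eqref{g-exp}.
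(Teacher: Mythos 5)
Your proposal is correct and follows essentially the same route as the paper: identify $f_i=w_3^{\alpha_is_{i-1}}g_{2^i(2l_i+3)-3}$, apply Lemma \ref{kvadriranje} to get $f_i=w_3^{\alpha_is_{i-1}+2^i-1}g_{2l_i}^{2^i}$, split off the leading term $w_2^{l_i}$ of $g_{2l_i}$ via (\ref{g-exp}), use Frobenius, and multiply by the quotient monomial. The only blemish is the muddled intermediate exponent claim $c-(2^i-1)+2^ie$; in fact the surplus factor $w_3^{c-\alpha_is_{i-1}-2^i+1}$ and the prefactor $w_3^{\alpha_is_{i-1}+2^i-1}$ recombine to exactly $w_3^c$, so the $w_3$-exponent is $c+2^ie$ with no residual correction, exactly as in the paper's chain of congruences.
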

\begin{proof}
By (\ref{ekv - s tildom - bez tilde}) it is enough to prove
\begin{equation}\label{cong}
w_2^bw_3^c\equiv\sum_{\substack{2d+3e=2l_i\\ e>0}}\binom{d+e}{e}w_2^{b-2^i(l_i-d)}w_3^{c+2^ie} \pmod{I_n}.
\end{equation}
In order to do so, let us note first that
\begin{equation}\label{f_i}
f_i=w_3^{\alpha_is_{i-1}}g_{2^i(2l_i+3)-3}.
 \end{equation}
This follows from the definition of $f_i$ (given in Theorem \ref{Grebner}) and the equality $n-2+2^i-s_i=2^i(2l_i+3)-3$, which can be easily verified (see also the proof of \cite[Proposition 3.9]{ColovicPrvulovic}).

By (\ref{LMgi}), the assumption $\mathrm{LM}(f_i)\mid w_2^bw_3^c$ actually means that $b\ge2^il_i$ and $c\ge\alpha_is_{i-1}+2^i-1$. Now we use (\ref{f_i}), Lemma \ref{kvadriranje} and formula (\ref{g-exp}) to calculate:
    \begingroup
    \allowbreak
     \begin{align*}
         w_2^bw_3^c&\equiv w_2^{b-2^il_i}w_3^{c-\left(\alpha_is_{i-1}+2^i-1\right)}\left(f_i+w_2^{2^il_i}w_3^{\alpha_is_{i-1}+2^i-1}\right)\\
               &=w_2^{b-2^il_i}w_3^{c-\left(\alpha_is_{i-1}+2^i-1\right)}\left(w_3^{\alpha_is_{i-1}}g_{2^i(2l_i+3)-3}+w_2^{2^il_i}w_3^{\alpha_is_{i-1}+2^i-1}\right)\\
               &=w_2^{b-2^il_i}w_3^c\left(g_{2l_i}^{2^i}+w_2^{2^il_i}\right)=w_2^{b-2^il_i}w_3^c\left(g_{2l_i}+w_2^{l_i}\right)^{2^i}\\
               &=w_2^{b-2^il_i}w_3^c\sum_{\substack{2d+3e=2l_i\\ e>0}}
              \binom{d+e}{e}w_2^{2^id}w_3^{2^ie} \pmod{I_n},
     \end{align*}
     \endgroup
and (\ref{cong}) follows.
\end{proof}

\subsection{Background on zero-divisor cup-length}

Let $A$ be a graded commutative $\mathbb Z_2$-algebra with identity. The elements of the kernel of the multiplication map $A\otimes A\stackrel{.}\longrightarrow A$ are called \em zero-divisors. \em If $a\in A$ is an arbitrary element, then an obvious zero-divisor is
\[z(a):=a\otimes1+1\otimes a.\]
It is straightforward that for $a,b\in A$ one has
\begin{equation}\label{z(a+b)}
z(a+b)=z(a)+z(b),
\end{equation}
and note that the following rule holds:
\begin{equation}\label{z(ab)}
z(ab)=z(a)z(b)+(1\otimes b)z(a)+(1\otimes a)z(b).
\end{equation}
Namely,
\begin{align*}
z(a)z(b)&=(a\otimes 1+1\otimes a)(b\otimes1+1\otimes b)=ab\otimes1+1\otimes ab+a\otimes b+b\otimes a\\
        &=z(ab)+(1\otimes b)(a\otimes1+1\otimes a)+(1\otimes a)(b\otimes1+1\otimes b)\\
        &=z(ab)+(1\otimes b)z(a)+(1\otimes a)z(b).
\end{align*}

When $a=b$, (\ref{z(ab)}) simplifies to $z(a^2)=z(a)^2$, and this routinely generalizes to
\begin{equation}\label{z(a^stepen dvojke)}
z\big(a^{2^l}\big)=z(a)^{2^l} \qquad \mbox{for all } l\ge0.
\end{equation}

The zero-divisor cup-length of $A$, $\zcl(A)$, is the maximal number of zero-divisors of positive degree with nonzero product in $A\otimes A$. Since the ideal $\ker\big(A\otimes A\stackrel{.}\longrightarrow A\big)$ is generated by the elements $z(a)$, $a\in A$ \cite[Lemma 5.2]{CohenSuciu}, $\zcl(A)$ is reached by a product of the form $z(a_1)z(a_2)\cdots z(a_m)$, where $a_i\in A$, $1\le i\le m$, are elements of positive degree.

An element of positive degree in $A$ is \em indecomposable \em if it cannot be expressed as a polynomial in elements of smaller degree. Now, using (\ref{z(a+b)}) and (\ref{z(ab)}), we see that $\zcl(A)$ is in fact reached by a product $z(a_1)z(a_2)\cdots z(a_m)$, where $a_i\in A$, $1\le i\le m$, are indecomposable elements.

Recall that the \em height \em of an algebra element $x$ is the maximal integer $d\ge1$ such that $x^d\neq0$. We denote the height of $x$ by $\height(x)$. By \cite[Lemma 4.3]{Pavesic} for an element $a\in A$ the following implication holds:
\begin{equation}\label{htz}
2^t\le\height(a)<2^{t+1} \quad \Longrightarrow \quad \height(z(a))=2^{t+1}-1.
\end{equation}

The heights of the Stiefel--Whitney
classes $\widetilde{w}_2$ and $\widetilde{w}_3$ (in $H^*(\widetilde G_{n,3})$) are obtained in \cite[Theorems 1.2 and 1.3]{ColovicPrvulovic} (see also \cite[Table 1]{ColovicPrvulovic}). This
result, combined with (\ref{htz}), will be very important in our calculations.

\begin{theorem}[\cite{ColovicPrvulovic}]\label{heights}
Let $n\geq 7$ and $t\geq 3$ be integers such that $2^t-1\leq n<2^{t+1}-1$. Then
\begin{align*}
\height(\widetilde w_2)&=\begin{cases}
2^t-4,& 2^t-1\le n\le2^t+2^{t-1}\\
2^{t+1}-3\cdot2^s-1,& 2^{t+1}-2^{s+1}+1\le n\le2^{t+1}-2^s \quad (1\le s\le t-2)\\
\end{cases},\\
\height(\widetilde w_3)&=\max\{2^{t-1}-2,n-2^t-1\}.
\end{align*}
\end{theorem}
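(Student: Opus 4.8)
The statement to be proved is Theorem~\ref{heights}, which records the heights of $\widetilde w_2$ and $\widetilde w_3$ in $W_n\cong\mathbb Z_2[w_2,w_3]/I_n$. By (\ref{ekv - s tildom - bez tilde}), $\height(\widetilde w_2)$ is the largest $d$ with $w_2^d\notin I_n$, and similarly for $\widetilde w_3$; so the whole problem reduces to deciding membership of pure powers $w_2^d$ and $w_3^d$ in $I_n$. The natural tool is the Gr\"obner basis $F_n=\{f_0,\dots,f_{t-1}\}$ from Theorem~\ref{Grebner} together with its leading monomials (\ref{LMgi}): since $\mathcal B_n$ is an additive basis of $W_n$, a monomial $w_2^bw_3^c$ is nonzero in $W_n$ if and only if it is not divisible by any $\mathrm{LM}(f_i)=w_2^{2^il_i}w_3^{\alpha_is_{i-1}+2^i-1}$. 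First I would treat $\widetilde w_3$: a power $w_3^c$ is divisible by $\mathrm{LM}(f_i)$ exactly when $2^il_i=0$ and $c\ge \alpha_is_{i-1}+2^i-1$; the exponent $l_i=2^{t-1-i}+\sum_{j>i}\alpha_j2^{j-i-1}-1$ vanishes only for the top index $i=t-1$ (where it forces $l_{t-1}=0$) and, when the relevant $\alpha_j$'s vanish, for certain lower indices. Chasing through which $f_i$ has the form $w_3^{(\text{something})}$ and computing the minimal such exponent should give $\height(\widetilde w_3)=\max\{2^{t-1}-2,\,n-2^t-1\}$ directly; the two terms in the max correspond to $i=t-1$ versus the smallest index $i$ with $\alpha_i\cdots$ configuration making $f_i$ a pure $w_3$-power, and Lemma~\ref{g-3}(a),(b) identify the relevant $g_r$ explicitly.

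For $\widetilde w_2$ the reasoning is dual but more delicate, since several $\mathrm{LM}(f_i)$ can divide a pure power $w_2^d$ only if their $w_3$-exponent $\alpha_is_{i-1}+2^i-1$ is zero, which happens precisely for $i=0$ (where $\mathrm{LM}(f_0)=w_2^{l_0}$, as $\alpha_0 s_{-1}=0$ and $2^0-1=0$). Hence $\height(\widetilde w_2)=l_0-1=2^{t-1}+\sum_{j=1}^{t-1}\alpha_j2^{j-1}-2$. I would then simplify this expression in terms of $n$. Writing $n-2^t+1=\sum_{j=0}^{t-1}\alpha_j2^j$, one has $\sum_{j=1}^{t-1}\alpha_j2^{j-1}=\tfrac12\big((n-2^t+1)-\alpha_0\big)$, so $l_0-1=2^{t-1}+\tfrac12(n-2^t+1-\alpha_0)-2$. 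A short case analysis on the parity of $n$ (i.e.\ on $\alpha_0$) and on whether $n\le 2^t+2^{t-1}$ or $n>2^t+2^{t-1}$ should reproduce the two branches of the stated formula: in the low range this collapses to the constant $2^t-4$, while in the range $2^{t+1}-2^{s+1}+1\le n\le 2^{t+1}-2^s$ the binary expansion of $n-2^t+1$ has top digits all equal to $1$ down to position $s+1$, and plugging this in yields $2^{t+1}-3\cdot2^s-1$. One must be a little careful at boundary values of $n$ (powers of two, and $n=2^t+2^{t-1}$) where two formulas could a priori disagree, and check consistency there.

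The one genuine subtlety is that Theorem~\ref{Grebner} is only asserted for $n\ge 7$, so the hypotheses $n\ge7$, $t\ge3$ are exactly what is needed; no separate small-$n$ argument is required. The main obstacle I anticipate is purely bookkeeping: translating the formula $l_0=2^{t-1-0}+\sum_{j=1}^{t-1}\alpha_j2^{j-1}-1$ into the piecewise expression in the statement, and verifying that the leading monomial with zero $w_3$-exponent is indeed unique (so that $w_2^{l_0-1}$ is genuinely not reducible and hence nonzero in $W_n$, while $w_2^{l_0}=\mathrm{LM}(f_0)$ forces $w_2^{l_0}\in I_n$). This uniqueness is immediate from (\ref{LMgi}): for $i\ge1$ the exponent of $w_3$ in $\mathrm{LM}(f_i)$ is at least $2^i-1\ge1$, so only $f_0$ contributes, and the height of $\widetilde w_2$ is exactly $\deg_{w_2}\mathrm{LM}(f_0)-1=l_0-1$. (Strictly, in the paper this theorem is quoted from \cite{ColovicPrvulovic} rather than reproved; the argument above is the route by which it would be established.)
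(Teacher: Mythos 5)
This theorem is not proved in the paper at all: it is quoted from \cite{ColovicPrvulovic} (Theorems 1.2 and 1.3 there), so the only question is whether your proposed argument is sound. It is not, for $\widetilde w_2$. The fatal step is the assertion that ``$w_2^{l_0}=\mathrm{LM}(f_0)$ forces $w_2^{l_0}\in I_n$'' and hence $\height(\widetilde w_2)=l_0-1$. Divisibility by a leading monomial only means the monomial is \emph{reducible}; it does not place it in the ideal. Since $f_0$ is a homogeneous polynomial with further terms involving $w_3$, the reduction of $w_2^{l_0}$ modulo $f_0$ is a nonzero sum of monomials $w_2^dw_3^e$ with $e>0$ (this is exactly Lemma \ref{reduction - f_i}), and one must iterate reductions through the whole basis before anything vanishes. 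Concretely, for $n=7$, $t=3$ one has $f_0=g_6=w_2^3+w_3^2$, so $l_0=3$ and your formula would give $\height(\widetilde w_2)=2$; but $\widetilde w_2^3=\widetilde w_3^2\neq0$ and $\widetilde w_2^4=\widetilde w_2\widetilde w_3^2\neq0$, while $\widetilde w_2^5=\widetilde w_2^2\widetilde w_3^2=0$ (using $f_1=w_2^2w_3$), so the height is $4=2^3-4$ as the theorem states. More generally $l_0-1=2^{t-1}+\sum_{j\ge1}\alpha_j2^{j-1}-2$ never equals the claimed values $2^t-4$ or $2^{t+1}-3\cdot2^s-1$, so your ``bookkeeping'' step cannot succeed; the gap is not notational but conceptual. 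Determining $\height(\widetilde w_2)$ is precisely the hard content of \cite{ColovicPrvulovic} and requires controlling the full chain of reductions (equivalently, explicit relations such as those in Lemma \ref{g-3} and the normal forms in $\mathcal B_n$), not just $\mathrm{LM}(f_0)$.

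The $\widetilde w_3$ half of your plan is essentially salvageable, though your description of when $l_i=0$ is off: since $l_i=2^{t-1-i}+\sum_{j>i}\alpha_j2^{j-i-1}-1$ and $2^{t-1-i}\ge2$ for $i<t-1$, the only index with $l_i=0$ is $i=t-1$, never any lower one. For that index $f_{t-1}$ is forced by homogeneity to be the pure monomial $w_3^{\alpha_{t-1}s_{t-2}+2^{t-1}-1}$ (a homogeneous polynomial whose lex-leading term has no $w_2$ cannot have any other term), so here ``reducible'' really does mean ``zero'', and $\height(\widetilde w_3)=\alpha_{t-1}s_{t-2}+2^{t-1}-2$, which unwinds to $\max\{2^{t-1}-2,\,n-2^t-1\}$ according to whether $\alpha_{t-1}=0$ or $1$. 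But this special feature (the relevant basis element being a monomial) is exactly what fails for $f_0$, which is why the two heights require genuinely different amounts of work.
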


\section{The zero-divisor cup-length of $W_n$}
\label{section zcl_Wn}

In this section we prove our results on $\zcl(W_n)$. By definition of $W_n$, the only indecomposable elements in this algebra are $\widetilde w_2$ and $\widetilde w_3$, so $\zcl(W_n)$ is realized by a product of the form $z(\widetilde w_2)^\beta z(\widetilde w_3)^\gamma$. This fact will be used throughout the paper.

We begin by showing that $\zcl(W_n)$ increases with $n$.
\begin{lemma}\label{zcl raste}
For all integers $n\ge6$ one has
\[\zcl(W_n)\le\zcl(W_{n+1}).\]
\end{lemma}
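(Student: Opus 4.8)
The goal is to show $\zcl(W_n) \le \zcl(W_{n+1})$ by exhibiting, from a product of zero-divisors realizing $\zcl(W_n)$, a product of the same length that is nonzero in $W_{n+1} \otimes W_{n+1}$. By the observation preceding the lemma, $\zcl(W_n)$ is realized by a product $z(\widetilde w_2)^\beta z(\widetilde w_3)^\gamma$ that is nonzero in $W_n \otimes W_n$, with $\beta + \gamma = \zcl(W_n)$. The natural candidate to try in $W_{n+1}$ is the very same product $z(\widetilde w_2)^\beta z(\widetilde w_3)^\gamma$, now interpreted in $W_{n+1} \otimes W_{n+1}$, since $\widetilde w_2, \widetilde w_3$ are the canonical generators of $W_{n+1}$ as well. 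So the plan is: first expand this product using \eqref{z(a+b)} and \eqref{z(ab)} (or just by expanding each $z(\widetilde w_i) = \widetilde w_i \otimes 1 + 1 \otimes \widetilde w_i$) into a sum of elementary tensors $\widetilde w_2^{b_1}\widetilde w_3^{c_1} \otimes \widetilde w_2^{b_2}\widetilde w_3^{c_2}$, identify which elementary tensor witnesses nonvanishing in $W_n$, and then argue that the corresponding tensor is still nonzero in $W_{n+1}$.

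The key algebraic input is the relationship between the additive bases $\mathcal B_n$ and $\mathcal B_{n+1}$, or equivalently between the ideals $I_n$ and $I_{n+1}$. Since $I_{n+1} \subseteq I_n$ (noted explicitly after \eqref{recgpolk3}), there is a surjection $W_{n+1} \twoheadrightarrow W_n$, which goes the wrong way for a direct pushforward. The right move is instead to work with a \emph{specific} monomial. If $z(\widetilde w_2)^\beta z(\widetilde w_3)^\gamma \ne 0$ in $W_n \otimes W_n$, then expanding gives a sum $\sum \binom{\beta}{b_1}\binom{\gamma}{c_1} \widetilde w_2^{b_1}\widetilde w_3^{c_1}\otimes \widetilde w_2^{\beta - b_1}\widetilde w_3^{\gamma - c_1}$; since $W_n \otimes W_n$ has the monomial basis $\mathcal B_n \otimes \mathcal B_n$, nonvanishing means some basis element $\widetilde w_2^{b_1}\widetilde w_3^{c_1}\otimes \widetilde w_2^{b_2}\widetilde w_3^{c_2}$ (with $b_1+b_2=\beta$, $c_1+c_2=\gamma$, binomial coefficients odd) survives in $W_n \otimes W_n$ — that is, neither monomial lies in $I_n$. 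I would then want to choose such a witnessing pair with $b_1, c_1$ (and hence $b_2, c_2$) as small as possible in the relevant sense, and show the same monomials are nonzero in $W_{n+1}$. This last step is where one uses that, because $n$ is smaller, the ``vanishing'' relations in $W_n$ are \emph{more} restrictive than in $W_{n+1}$: heuristically the monomials that survive in $W_n$ (i.e., are not reducible by the Gröbner basis $F_n$) should also survive in $W_{n+1}$, provided one picks the representative correctly — the leading monomials $\mathrm{LM}(f_i)$ for $I_{n+1}$ are ``larger'' than those for $I_n$ in the dimensions that matter. Concretely, comparing the leading-monomial formula \eqref{LMgi} for $n$ and for $n+1$, or using \eqref{w_3I_n subset I_n+1} and Lemma \ref{reduction - f_i}, should let me transfer a surviving monomial from $W_n$ to $W_{n+1}$.

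Alternatively — and this may be cleaner — I would try a dimension/degree argument combined with \eqref{topdim}. The nonvanishing product $z(\widetilde w_2)^\beta z(\widetilde w_3)^\gamma \ne 0$ in $W_n \otimes W_n$ forces, via the surviving monomial, the existence of nonzero $\widetilde w_2^{b_j}\widetilde w_3^{c_j}$ in $W_n$ with $2b_j + 3c_j$ bounded; since $\dim \widetilde G_{n+1,3} = 3(n+1)-9 > 3n-9$, these monomials have even more ``room'' in $H^*(\widetilde G_{n+1,3})$, and the point is to check they do not newly vanish in $W_{n+1}$. I expect the main obstacle to be precisely this transfer step: ruling out the possibility that a monomial nonzero in $W_n$ becomes zero in $W_{n+1}$ — which is counterintuitive since $W_{n+1}$ is ``bigger,'' but $W_{n+1}$ is a quotient of a polynomial ring by a \emph{different} ideal, and a monomial basis element of $\mathcal B_n$ need not be a basis element of $\mathcal B_{n+1}$ (it could be reducible by some $f_i \in F_{n+1}$). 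Handling this carefully will require a close comparison of $F_n$ and $F_{n+1}$ using Theorem \ref{Grebner}, \eqref{LMgi}, and the explicit reduction Lemma \ref{reduction - f_i}; splitting into cases according to the binary expansions of $n - 2^t + 1$ and $(n+1) - 2^{t'} + 1$ (which may or may not cross a power of two) is likely unavoidable.
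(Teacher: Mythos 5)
Your proposal dismisses the one observation that makes this lemma a two-line argument. You correctly note that $I_{n+1}\subseteq I_n$ induces a surjection $\phi:W_{n+1}\twoheadrightarrow W_n$ sending $\widetilde w_i$ to $\widetilde w_i$, but you then declare that it "goes the wrong way for a direct pushforward." It goes exactly the right way: $\phi\otimes\phi$ sends $z(\widetilde w_2)^\beta z(\widetilde w_3)^\gamma\in W_{n+1}\otimes W_{n+1}$ to $z(\widetilde w_2)^\beta z(\widetilde w_3)^\gamma\in W_n\otimes W_n$, so if the product vanishes in $W_{n+1}\otimes W_{n+1}$ it vanishes in $W_n\otimes W_n$; contrapositively, any nonzero product of zero-divisors realizing $\zcl(W_n)$ lifts to a nonzero product of the same length in $W_{n+1}\otimes W_{n+1}$. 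This is the paper's entire proof, and it requires no Gr\"obner-basis input, no comparison of $F_n$ with $F_{n+1}$, and no case split on binary expansions.

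The fallback route you sketch instead has two problems. First, the step you identify as the "main obstacle" — ruling out that a monomial nonzero in $W_n$ becomes zero in $W_{n+1}$ — is a non-issue: $w_2^bw_3^c\notin I_n$ and $I_{n+1}\subseteq I_n$ immediately give $w_2^bw_3^c\notin I_{n+1}$ (reducibility by some $f_i\in F_{n+1}$ is irrelevant; a reducible monomial is not thereby zero). Second, and more seriously, transferring a single surviving elementary tensor does not suffice: the product expands as a sum over all $(b,c)$ with $2b+3c=r$ in each bidegree, and showing that one term $\widetilde w_2^{b}\widetilde w_3^{c}\otimes\widetilde w_2^{\beta-b}\widetilde w_3^{\gamma-c}$ is individually nonzero in $W_{n+1}\otimes W_{n+1}$ does not show the sum is nonzero there, since the terms reduce to normal form against a different ideal and could a priori cancel differently. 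So the monomial-by-monomial plan, as stated, does not close; the quotient-map argument does, and is the one to use.
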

\begin{proof}
Let $m=\zcl(W_{n+1})$. It suffices to prove that $z(\widetilde w_2)^\beta z(\widetilde w_3)^\gamma=0$ in $W_n\otimes W_n$, whenever $\beta+\gamma>m$.

Since $I_{n+1}\subseteq I_n$, the identity map on $\mathbb Z_2[w_2,w_3]$ induces the algebra morphism
\[\mathbb Z_2[w_2,w_3]/I_{n+1}\rightarrow\mathbb Z_2[w_2,w_3]/I_n,\]
and using the isomorphism (\ref{isomorphism}) we get a map $\phi:W_{n+1}\rightarrow W_n$ mapping $\widetilde w_i\in W_{n+1}$ to $\widetilde w_i\in W_n$, $i=2,3$. Note that then the algebra morphism
\[\phi\otimes\phi:W_{n+1}\otimes W_{n+1}\rightarrow W_n\otimes W_n\]
maps $z(\widetilde w_i)\in W_{n+1}\otimes W_{n+1}$ to $z(\phi(\widetilde w_i))=z(\widetilde w_i)\in W_n\otimes W_n$, $i=2,3$. However, if $\beta+\gamma>m$, then $z(\widetilde w_2)^\beta z(\widetilde w_3)^\gamma=0$ in $W_{n+1}\otimes W_{n+1}$ (since $\zcl(W_{n+1})=m$), and so
\[z(\widetilde w_2)^\beta z(\widetilde w_3)^\gamma=(\phi\otimes\phi)\big(z(\widetilde w_2)^\beta z(\widetilde w_3)^\gamma\big)=(\phi\otimes\phi)(0)=0 \quad \mbox{ in } W_n\otimes W_n,\]
completing the proof.
\end{proof}

Let us now state the main theorem of this section (cf.\ Table \ref{table:3}).
\begin{theorem}\label{zcl Wn}
Let $t\ge4$. If $2^t-1\leq n<2^{t+1}-1$, then
\[\zcl(W_n)=\left\{
\begin{array}{rl}
2^t+2^{t-1}-4, & 2^t-1\leq n\leq 2^t+2^{t-2} \\
2^t+2^{t-1}-3, & n=2^t+2^{t-2}+1 \\
2^t+2^{t-1}-2, & 2^t+2^{t-2}+2\leq n\leq 2^t+2^{t-1} \\
2^{t+1}+2^{t-3}-3, & n=2^t+2^{t-1}+1 \\
2^{t+1}+2^{t-3}-2, & 2^t+2^{t-1}+2\leq n\leq 13\cdot 2^{t-3}\\
2^{t+1}+2^{t-2}-2, &  13\cdot 2^{t-3}+1\leq n\leq 2^t+2^{t-1}+2^{t-2}\\
3\cdot2^{t}-2^{s+1}-2, & 2^{t+1}-2^{s+1}+1\leq n\leq 2^{t+1}-2^s,\\
&\phantom{aaaaaa}\mbox{ where }1\leq s\leq t-3
\end{array}\right..\]
\end{theorem}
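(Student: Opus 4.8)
The plan is to compute $\zcl(W_n)$ by separately establishing a lower bound (exhibiting an explicit nonzero product of zero-divisors in $W_n\otimes W_n$) and a matching upper bound (showing every longer product vanishes). Since the only indecomposables of $W_n$ are $\widetilde w_2$ and $\widetilde w_3$, every candidate product has the shape $z(\widetilde w_2)^\beta z(\widetilde w_3)^\gamma$, so the whole problem reduces to understanding, for each $n$, the maximal value of $\beta+\gamma$ with $z(\widetilde w_2)^\beta z(\widetilde w_3)^\gamma\neq 0$ in $W_n\otimes W_n$. The first step is to pin down $\height(z(\widetilde w_2))$ and $\height(z(\widetilde w_3))$ from Theorem \ref{heights} and the implication (\ref{htz}): writing $2^t-1\le n<2^{t+1}-1$, one gets $\height(z(\widetilde w_2))$ and $\height(z(\widetilde w_3))$ as explicit numbers of the form $2^{u+1}-1$ in each of the ranges of $n$ appearing in the statement. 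These two heights are the ``pure-power'' contributions $\beta$ (with $\gamma=0$) and $\gamma$ (with $\beta=0$); the interesting content of the theorem is that in several ranges one can do strictly better by mixing the two classes, and in the remaining ranges one cannot.

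For the lower bound, I would expand $z(\widetilde w_2)^\beta z(\widetilde w_3)^\gamma$ in $W_n\otimes W_n$ using $z(a)=a\otimes 1+1\otimes a$ and, crucially, (\ref{z(a^stepen dvojke)}) to replace powers $z(a)^{2^l}$ by $z(a^{2^l})$, reducing to products where each factor is already a power of two; then the product $z(\widetilde w_2)^\beta z(\widetilde w_3)^\gamma$ becomes a sum over ``splittings'' of the exponents between the two tensor factors, and the coefficient of a given basis element $\widetilde w_2^{b}\widetilde w_3^{c}\otimes \widetilde w_2^{b'}\widetilde w_3^{c'}$ is a binomial coefficient mod $2$. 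To show a particular product is nonzero I would isolate one monomial on each side that (i) survives Lucas' theorem for the relevant binomial coefficient, and (ii) is \emph{not} killed in $W_n$ — this is where the additive basis $\mathcal B_n$ and Lemma \ref{reduction - f_i} enter: I must verify the chosen monomial $w_2^bw_3^c$ is not divisible by any leading monomial $\mathrm{LM}(f_i)=w_2^{2^il_i}w_3^{\alpha_is_{i-1}+2^i-1}$, so that $\widetilde w_2^b\widetilde w_3^c\in\mathcal B_n$ is a genuine basis element, hence nonzero. For the ``mixed'' ranges ($n=2^t+2^{t-2}+1$, $n=2^t+2^{t-1}+1$, and the ranges giving $2^{t+1}+2^{t-3}-2$, $2^{t+1}+2^{t-2}-2$) the gain of $+1$ or more over $\max\{\text{the two heights}\}$ must come from a clever choice of $(\beta,\gamma)$ where neither is a pure power of two but, after the binary-splitting expansion, some cross term $\widetilde w_2^{b}\widetilde w_3^{c}\otimes\widetilde w_2^{b'}\widetilde w_3^{c'}$ with $(b,c)\neq(b',c')$ survives; Lemma \ref{g-3}, Lemma \ref{kvadriranje}, and the identities (\ref{recgpolk3})--(\ref{g-exp}) should supply the explicit nonzero monomials. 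Lemma \ref{zcl raste} lets me prove the lower bound only at the left endpoint of each constancy range and propagate it rightward.

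For the upper bound I would argue contrapositively: given $\beta+\gamma$ exceeding the claimed value, show $z(\widetilde w_2)^\beta z(\widetilde w_3)^\gamma=0$ in $W_n\otimes W_n$. The cleanest route is: first, if $\beta>\height(z(\widetilde w_2))$ or $\gamma>\height(z(\widetilde w_3))$ in a range where no mixing helps, the product already vanishes because $z(\widetilde w_i)^{\height(z(\widetilde w_i))+1}=0$; second, in the mixing ranges one needs a finer vanishing statement, obtained by expanding into the binary-split sum and showing every term $\widetilde w_2^{b}\widetilde w_3^{c}\otimes\widetilde w_2^{b'}\widetilde w_3^{c'}$ has at least one tensor factor equal to $0$ in $W_n$ — i.e.\ on at least one side the monomial is divisible by some $\mathrm{LM}(f_i)$ and, after fully reducing via Lemma \ref{reduction - f_i}, the result is $0$ (one can often use the dimension bound (\ref{topdim}) together with the height computations to force this). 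Here Lemma \ref{zcl raste} again helps: proving the upper bound at the right endpoint of each range covers the whole range.

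\textbf{Main obstacle.} I expect the hard part to be the mixing ranges and the precise threshold $n=13\cdot 2^{t-3}$: one must determine \emph{exactly} which pairs $(\beta,\gamma)$ with both entries not powers of two yield a surviving cross term, which is a delicate Lucas-theorem bookkeeping problem intertwined with the combinatorics of the Gröbner leading monomials $\mathrm{LM}(f_i)$ (whose exponents depend on the binary digits $\alpha_j$ of $n-2^t+1$). In particular, verifying non-divisibility by \emph{all} of $\mathrm{LM}(f_0),\dots,\mathrm{LM}(f_{t-1})$ for the chosen monomial, and conversely proving divisibility-leading-to-zero for \emph{every} term in the ``too long'' products, is where the real work — and the case split into seven subranges — resides; the identities of Lemmas \ref{g-3}, \ref{kvadriranje}, \ref{2n preko n}, and the reduction Lemma \ref{reduction - f_i} are the tools I would lean on most heavily to keep this tractable.
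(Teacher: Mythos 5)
Your overall architecture (reduction to products $z(\widetilde w_2)^\beta z(\widetilde w_3)^\gamma$, heights via Theorem \ref{heights} and (\ref{htz}), lower bounds from explicit nonzero elements, upper bounds from vanishing statements, monotonicity via Lemma \ref{zcl raste}, and the seven-case split) coincides with the paper's. However, two of the mechanisms you propose would fail as stated. For the lower bound you plan to ``isolate one monomial on each side'' that survives Lucas' theorem and lies in $\mathcal B_n$. This is not sufficient: the homogeneous summand of $z(\widetilde w_2)^\beta z(\widetilde w_3)^\gamma$ in a fixed bidegree is $\sum_{2b+3c=r}\binom{\beta}{b}\binom{\gamma}{c}\,\widetilde w_2^b\widetilde w_3^c\otimes\widetilde w_2^{\beta-b}\widetilde w_3^{\gamma-c}$, and distinct pairs $(b,c)$ generally reduce, via the Gr\"obner basis, to the \emph{same} element of $\mathcal B_n$, so surviving terms can cancel in pairs over $\mathbb Z_2$; your claim that the coefficient of a basis tensor is a single binomial coefficient is only true before reduction. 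The paper therefore must classify \emph{all} nonzero monomials $\widetilde w_2^b\widetilde w_3^c$ in the chosen degree (Lemmas \ref{stepen -1}, \ref{cetvrtina plus 2}, \ref{ono sa s}), prove they are all equal to one basis element, and then evaluate the resulting sum of companion factors using new polynomial identities (Lemma \ref{2/4 - lema} with Corollary \ref{2/4 - posledica}, and Lemma \ref{treca cetvrtina - lema} with Corollary \ref{treca cetvrtina - posledica}), which are established by induction via Lemma \ref{kvadriranje2}; none of the lemmas you list supplies these identities.

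For the upper bound in the ``mixing'' ranges you propose to show that every term of the binary-split expansion has a zero tensor factor. That is false for the critical exponent pairs: in Lemma \ref{2/4 - gornje} and Lemma \ref{4/4 - z} the individual terms are nonzero, and the product vanishes only because the expansion splits as $x_1+x_2$ where an explicit involution on the summation indices (a change of variables such as $\overline i=2^{t-2}-\varepsilon'-3\cdot2^{k-1}-i$, $\overline j=2^k-\varepsilon''-j$) carries $x_1$ onto $x_2$, forcing $x_1=x_2$ and hence $x_1+x_2=0$ in characteristic $2$. Without this pairwise-cancellation idea, and without the preliminary relations extracted from the Gr\"obner bases (e.g.\ $\widetilde w_2^{2^{t-1}}\widetilde w_3^{2^{t-2}-1}=0$), the upper bounds for $n=2^t+2^{t-2}$, $n=2^t+2^{t-2}+1$, $n=2^t+2^{t-1}+1$ and $n=2^{t+1}-2^s$ cannot be reached: the height inequalities alone only give $\beta\le\height(z(\widetilde w_2))$ and $\gamma\le\height(z(\widetilde w_3))$, which is strictly weaker than the claimed value in exactly those cases.
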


\begin{remark}
Theorem \ref{zcl Wn} establishes the values of $\zcl(W_n)$ for all $n\ge15$. For small $n$, more precisely $6\le n\le14$,
the values of $\zcl(W_n)$ are given in
Table \ref{table:small n}. These are obtained by using the computer software SAGE, but they could be calculated "by hand" as well. For $6\le n\le11$ this was done in \cite{Ramani}.
\end{remark}

\begin{table}[h!]
\footnotesize
\centering
\begin{tabular}{||m{0.9cm} m{1cm} ||}
\hline
 $n$ & $\zcl(W_n)$ \\ [0.5ex]
\hline\hline
 $6$ & $2$\\
\hline
 $7$ & $7$\\
\hline
 $8$ & $7$\\
\hline
 $9$ & $7$\\
\hline
 $10$ & $8$\\
\hline
 $11$ & $9$\\
\hline
 $12$ & $10$\\
\hline
 $13$ & $15$\\
\hline
 $14$ & $16$\\
\hline
\end{tabular}
\caption{}
\label{table:small n}
\end{table}

The rest of this section is devoted to proving Theorem \ref{zcl Wn}. Throughout the proof we will use Lemma \ref{zcl raste}. For example, by this lemma, to prove
that $\zcl(W_n)=2^t+2^{t-1}-4$ for $2^t-1\leq n\leq 2^t+2^{t-2}$, it is enough to
prove that $\zcl(W_{2^t-1})\geq 2^t+2^{t-1}-4$ and
$\zcl(W_{2^t+2^{t-2}})\leq 2^t+2^{t-1}-4$. The results of the first type will be referred
to as ''the lower bound" and the results of the second type as ''the upper bound".

The lower bounds will be established by detecting a nonzero monomial of the form $z(\widetilde w_2)^\beta z(\widetilde w_3)^\gamma\in W_n\otimes W_n$. In proving that such a monomial is nonzero we will rely on the following lemma.

\begin{lemma}\label{z nonzero}
Let $\beta$, $\gamma$ and $r$ be nonnegative integers such that $r\le2\beta+3\gamma$. Observe the element
\begin{equation}\label{nonzero sum}
\sum_{2b+3c=r}\binom{\beta}{b}\binom{\gamma}{c}\widetilde w_2^b\widetilde w_3^c\otimes\widetilde w_2^{\beta-b}\widetilde w_3^{\gamma-c}
\end{equation}
in $W_n\otimes W_n\subset H^*(\widetilde G_{n,3})\otimes H^*(\widetilde G_{n,3})$, where the sum is taken over all pairs of integers $(b,c)$ such that $0\le b\le\beta$, $0\le c\le\gamma$ and $2b+3c=r$. If (\ref{nonzero sum}) is nonzero, then $z(\widetilde w_2)^\beta z(\widetilde w_3)^\gamma\neq0$, and so \[\zcl(W_n)\ge\beta+\gamma.\]
\end{lemma}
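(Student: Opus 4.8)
The plan is to expand the product $z(\widetilde w_2)^\beta z(\widetilde w_3)^\gamma$ in $W_n\otimes W_n$ using the definition $z(\widetilde w_i)=\widetilde w_i\otimes 1+1\otimes\widetilde w_i$ and the binomial theorem (valid since we work over $\mathbb Z_2$ and the tensor factors commute appropriately). First I would write
\[
z(\widetilde w_2)^\beta z(\widetilde w_3)^\gamma=\sum_{b=0}^{\beta}\sum_{c=0}^{\gamma}\binom{\beta}{b}\binom{\gamma}{c}\,\widetilde w_2^{b}\widetilde w_3^{c}\otimes\widetilde w_2^{\beta-b}\widetilde w_3^{\gamma-c},
\]
and then group the terms according to the total degree $2b+3c$ of the left tensor factor. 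Since multiplication in the graded algebra $W_n\otimes W_n$ respects the bigrading, the summands with a fixed value of $2b+3c=r$ live in a single bidegree and hence cannot cancel against summands with a different value of $2b+3c$. Therefore the whole product is nonzero as soon as one such homogeneous component is nonzero; in particular, if the component corresponding to the given $r$ — which is exactly the element displayed in \eqref{nonzero sum} — is nonzero, then $z(\widetilde w_2)^\beta z(\widetilde w_3)^\gamma\neq 0$.

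The remaining point is to justify that the displayed sum \eqref{nonzero sum} is actually the full homogeneous component of degree $(r,2\beta+3\gamma-r)$ of the expanded product, i.e.\ that no pair $(b,c)$ with $2b+3c=r$ and $0\le b\le\beta$, $0\le c\le\gamma$ has been omitted and that the coefficient is the product of binomial coefficients as written. This is immediate from the double-sum expansion above: restricting the indices to those with $2b+3c=r$ picks out precisely that component. The hypothesis $r\le 2\beta+3\gamma$ only guarantees that the index set is nonempty (otherwise the statement would be vacuous), so no subtlety arises there.

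Finally, once we know $z(\widetilde w_2)^\beta z(\widetilde w_3)^\gamma\neq 0$ in $W_n\otimes W_n$, we observe that $z(\widetilde w_2)$ and $z(\widetilde w_3)$ are zero-divisors of positive degree ($2$ and $3$ respectively), so we have exhibited a product of $\beta+\gamma$ zero-divisors of positive degree with nonzero product; by the definition of $\zcl$ this yields $\zcl(W_n)\ge\beta+\gamma$, as claimed. There is no real obstacle here — the only mild care needed is the bidegree-separation argument ensuring that distinct values of $2b+3c$ contribute to distinct summands, so that nonvanishing of one homogeneous piece forces nonvanishing of the whole product.
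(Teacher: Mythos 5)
Your proposal is correct and matches the paper's own argument: both expand $z(\widetilde w_2)^\beta z(\widetilde w_3)^\gamma$ by the binomial theorem and observe that the displayed sum is precisely the homogeneous component in $H^r(\widetilde G_{n,3})\otimes H^{2\beta+3\gamma-r}(\widetilde G_{n,3})$, so its nonvanishing forces the whole product to be nonzero. No further comment is needed.
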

\begin{proof}
Since
\[z(\widetilde w_2)^\beta z(\widetilde w_3)^\gamma\in\big(H^*(\widetilde G_{n,3})\otimes H^*(\widetilde G_{n,3})\big)_{2\beta+3\gamma}=\bigoplus_{r=0}^{2\beta+3\gamma}H^r(\widetilde G_{n,3})\otimes H^{2\beta+3\gamma-r}(\widetilde G_{n,3}),\]
it suffices to find $r\in\{0,1,\ldots,2\beta+3\gamma\}$ such that the summand of $z(\widetilde w_2)^\beta z(\widetilde w_3)^\gamma$ in $H^r(\widetilde G_{n,3})\otimes H^{2\beta+3\gamma-r}(\widetilde G_{n,3})$ is nonzero. It remains to see that this summand is exactly (\ref{nonzero sum}), which is obvious from the following calculation:
\begin{align*}
z(\widetilde w_2)^\beta z(\widetilde w_3)^\gamma&=\big(\widetilde w_2\otimes1+1\otimes\widetilde w_2\big)^\beta\big(\widetilde w_3\otimes1+1\otimes\widetilde w_3\big)^\gamma\\
                                                &=\sum_{b=0}^\beta\binom{\beta}{b}\widetilde w_2^b\otimes\widetilde w_2^{\beta-b}\sum_{c=0}^\gamma\binom{\gamma}{c}\widetilde w_3^c\otimes\widetilde w_3^{\gamma-c}\\
                                                &=\sum_{b=0}^\beta\sum_{c=0}^\gamma\binom{\beta}{b}\binom{\gamma}{c}\widetilde w_2^b\widetilde w_3^c\otimes\widetilde w_2^{\beta-b}\widetilde w_3^{\gamma-c}.
\end{align*}
\end{proof}

Also, in the upcoming computations we will repeatedly use the following fact (which holds because we are working over a field): for $\sigma,\tau\in H^*(\widetilde G_{n,3})$ we have that $\sigma\otimes\tau\neq0$ in $H^*(\widetilde G_{n,3})\otimes H^*(\widetilde G_{n,3})$ if and only if $\sigma\neq0$ and $\tau\neq0$ in $H^*(\widetilde G_{n,3})$.

We break the proof of Theorem \ref{zcl Wn} in seven cases, as listed in the theorem, but the
strategy in each of them is quite similar.

\subsection{The case $\mathbf{2^t-1\leq n\leq 2^t+2^{t-2}}$}

In order to establish the lower bound in this case, we start off by identifying the elements of the Gr\"obner basis $F_{2^t-1}$ (from Theorem \ref{Grebner}) and their leading monomials.

\begin{lemma}\label{pomocna sa stepen -1}
For $n=2^{t}-1$, where $t\geq3$, we have:
\[f_i=g_{2^{t}+2^i-3}\quad\mbox{and}\quad \mathrm{LM}(f_i)=w_2^{2^{t-1}-2^i}w_3^{2^i-1},\quad\mbox{for }0\le i\le t-1.\]
\end{lemma}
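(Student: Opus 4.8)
The plan is to specialize the general description of the Gröbner basis $F_n$ from Theorem \ref{Grebner} to the particular value $n=2^t-1$. First I would compute the binary data attached to $n$: since $n - 2^t + 1 = (2^t-1) - 2^t + 1 = 0$, all the digits $\alpha_j$ vanish, and consequently $s_i = \sum_{j=0}^{i}\alpha_j 2^j = 0$ for every $i$ in the range $-1 \le i \le t-1$. Plugging $\alpha_i = 0$ and $s_i = s_{i-1} = 0$ into the formula $f_i = w_3^{\alpha_i s_{i-1}} g_{n-2+2^i-s_i}$ immediately gives $f_i = w_3^0 \, g_{(2^t-1)-2+2^i-0} = g_{2^t+2^i-3}$, which is the first claimed identity.

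For the leading monomials, I would substitute the same data into the formula $\mathrm{LM}(f_i) = w_2^{2^i l_i} w_3^{\alpha_i s_{i-1} + 2^i - 1}$ from (\ref{LMgi}). The $w_3$-exponent becomes $0 + 2^i - 1 = 2^i - 1$ directly. For the $w_2$-exponent, the definition $l_i = 2^{t-1-i} + \sum_{j=i+1}^{t-1}\alpha_j 2^{j-i-1} - 1$ collapses (all $\alpha_j = 0$) to $l_i = 2^{t-1-i} - 1$, whence $2^i l_i = 2^i(2^{t-1-i} - 1) = 2^{t-1} - 2^i$. This yields $\mathrm{LM}(f_i) = w_2^{2^{t-1}-2^i} w_3^{2^i-1}$, as asserted. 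Alternatively, since $l_i$ for this $n$ satisfies $2l_i = 2^{t-i}-2$, one could double-check using $\mathrm{LM}(f_i)=\mathrm{LM}(w_3^{\alpha_i s_{i-1}}g_{2^i(2l_i+3)-3})$ together with Lemma \ref{kvadriranje}, which gives $g_{2^i(2l_i+3)-3}=w_3^{2^i-1}g_{2l_i}^{2^i}$ and hence leading monomial $w_3^{2^i-1}(w_2^{l_i})^{2^i}$, recovering the same answer; but the direct substitution into the already-established formulas is cleaner.

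I do not anticipate a genuine obstacle here: the lemma is a pure specialization, and the only thing to be careful about is confirming that the hypothesis $t \ge 3$ (rather than $t\ge 2$) is what is needed so that $n = 2^t - 1 \ge 7$ and Theorem \ref{Grebner} applies, and that the range $0 \le i \le t-1$ matches the indexing of $F_n$. It is also worth remarking that the identity $f_i = g_{2^t+2^i-3}$ is consistent with Lemma \ref{g-3}(a) in the boundary behaviour (e.g.\ $i$ giving $g_{2^t-3}=0$ is excluded since $2^i \ge 1$), so no degenerate $f_i$ arises. Having $F_{2^t-1}$ and its leading monomials explicitly in hand is exactly what is required to describe the basis $\mathcal B_{2^t-1}$ and to apply Lemma \ref{reduction - f_i} in the construction of the nonzero zero-divisor product witnessing the lower bound $\zcl(W_{2^t-1}) \ge 2^t + 2^{t-1} - 4$.
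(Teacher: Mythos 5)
Your proposal is correct and follows exactly the paper's own argument: observe that $n-2^t+1=0$ forces $\alpha_j=0$ and $s_i=0$ for all indices, then read off $f_i$ from its definition in Theorem \ref{Grebner} and $\mathrm{LM}(f_i)$ from (\ref{LMgi}) with $l_i=2^{t-1-i}-1$. The extra consistency checks you mention are fine but not needed; the paper's proof is the same two-line specialization.
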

\begin{proof}
We have $n-2^{t}+1=0$, and hence $\alpha_i=0$ and $s_i=0$ for all
$i$. So, the lemma follows from the definition of the
polynomials $f_i$ and (\ref{LMgi}).
\end{proof}

Now we consider the monomials of the form $\widetilde w_2^b\widetilde w_3^c$ in $H^{2^{t+1}-11}(\widetilde G_{2^t-1,3})$ and deduce which of them are nonzero.

\begin{lemma}\label{stepen -1}
     Let $n=2^{t}-1$, $t\ge3$. The only nonzero monomials of the form $\widetilde w_2^b\widetilde w_3^c$ in $H^{2^{t+1}-11}(\widetilde G_{n,3})$ are $\widetilde w_2^{2^{t}-3\cdot 2^{k-1}-1}\widetilde w_3^{2^k-3}$ for $2\leq k\leq t-1$. Furthermore, all of them are equal, i.e., for all $k\in\{2,\ldots,t-1\}$ we have the equality
    \[\widetilde w_2^{2^{t}-3\cdot 2^{k-1}-1}\widetilde w_3^{2^k-3}=\widetilde w_2^{2^{t-2}-1}\widetilde w_3^{2^{t-1}-3},\]
    and the monomial $\widetilde w_2^{2^{t-2}-1}\widetilde w_3^{2^{t-1}-3}$ is in the additive basis $\mathcal B_n$.
\end{lemma}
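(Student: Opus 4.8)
The plan is to work entirely in the polynomial quotient $\mathbb Z_2[w_2,w_3]/I_n$ with $n=2^t-1$ and to exploit the Gröbner basis $F_n=\{f_0,\dots,f_{t-1}\}$ described in Lemma \ref{pomocna sa stepen -1}, where $\mathrm{LM}(f_i)=w_2^{2^{t-1}-2^i}w_3^{2^i-1}$. A monomial $\widetilde w_2^b\widetilde w_3^c$ with $2b+3c=2^{t+1}-11$ lies in $\mathcal B_n$ exactly when $w_2^bw_3^c$ is divisible by none of these leading monomials, i.e. when for every $i\in\{0,\dots,t-1\}$ either $b<2^{t-1}-2^i$ or $c<2^i-1$. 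First I would solve this purely combinatorial divisibility constraint: parametrize all $(b,c)$ with $2b+3c=2^{t+1}-11$, $b,c\ge0$, and show that the only one escaping all $t$ leading monomials is $(b,c)=(2^{t-2}-1,\,2^{t-1}-3)$. (A quick sanity check: $2(2^{t-2}-1)+3(2^{t-1}-3)=2^{t-1}-2+3\cdot 2^{t-1}-9=2^{t+1}-11$.) This identifies the unique element of $\mathcal B_n$ in that degree, and by the Gröbner-basis theory it is nonzero in $W_n$ (note $2b+3c=2^{t+1}-11<3n-9=3\cdot 2^t-12$ for $t\ge 3$, so this is consistent with \eqref{topdim}, though nonzeroness in $W_n$ already follows from membership in $\mathcal B_n$).

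Next I would handle the remaining monomials $\widetilde w_2^{2^t-3\cdot 2^{k-1}-1}\widetilde w_3^{2^k-3}$ for $2\le k\le t-1$. For each such $k$, the monomial should be reducible by exactly one $f_i$ — one checks $b=2^t-3\cdot 2^{k-1}-1\ge 2^{t-1}-2^i$ and $c=2^k-3\ge 2^i-1$ precisely for $i=k-1$ (the exponent of $w_3$ is $2^k-3$, which is $\ge 2^{k-1}-1$ but $<2^k-1$, forcing $i\le k-1$; and for smaller $i$ the $w_2$-exponent would be too small). Then I would apply Lemma \ref{reduction - f_i} with $i=k-1$; since here $s_i=0$ and $\alpha_i=0$, one has $l_i=2^{t-1-i}-1$, so $2l_i=2^{t-i}-2$, and the reduction rewrites $\widetilde w_2^b\widetilde w_3^c$ as a sum over pairs $(d,e)$ with $2d+3e=2^{t-i}-2$, $e>0$, of terms $\binom{d+e}{e}\widetilde w_2^{b-2^i(l_i-d)}\widetilde w_3^{c+2^ie}$. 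The claim would then follow by downward induction on $k$: the resulting terms have strictly larger $w_3$-exponent, hence fall into the analysis at a higher index, and everything collapses onto the single basis monomial $\widetilde w_2^{2^{t-2}-1}\widetilde w_3^{2^{t-1}-3}$ (the $k=t-1$ case is already that monomial). I expect most binomial coefficients $\binom{d+e}{e}$ appearing here to vanish mod $2$, leaving a single surviving term at each step — verifying this mod-2 bookkeeping is the crux.

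The step I expect to be the main obstacle is precisely this last one: controlling, via Lucas' theorem, which coefficients $\binom{d+e}{e}\pmod 2$ survive in each application of Lemma \ref{reduction - f_i}, and checking that after reduction every surviving term is again of the stated form (so the induction closes) rather than producing stray monomials that would either be new basis elements or require further reduction. A clean way to organize this is to prove a single auxiliary identity: for $1\le i\le t-1$, $\widetilde w_2^{\,2^i l_i}\widetilde w_3^{c}\equiv \widetilde w_2^{\,2^i(l_i-1)}\widetilde w_3^{\,c+2^i}$ type relations (or more precisely, that reducing the ``threshold'' monomial picks out exactly the $(d,e)$ with $e=1$), and then iterate it. Alternatively, one can sidestep part of the computation by invoking Lemma \ref{g-3}(a) or the identities of Lemma \ref{kvadriranje}/Lemma \ref{2n preko n} to recognize the relevant $g_r$ directly; I would check whether $g_{2^k-3}=0$ (Lemma \ref{g-3}(a) with $t$ replaced by $k$) lets us shortcut the chain of reductions. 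Once the chain of equalities is established and the uniqueness of the basis monomial is in hand, the lemma is complete.
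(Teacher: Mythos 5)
Your outline follows the paper's proof quite closely: identify the unique element of $\mathcal B_n$ in degree $2^{t+1}-11$ from the leading monomials of Lemma \ref{pomocna sa stepen -1}, then run a downward induction on $k$, reducing $\widetilde w_2^{2^t-3\cdot2^{k-1}-1}\widetilde w_3^{2^k-3}$ by $f_{k-1}$ and using Lucas' theorem. Before addressing the main issue, one incorrect side claim: these monomials are \emph{not} reducible by exactly one $f_i$. Since $\mathrm{LM}(f_i)=w_2^{2^{t-1}-2^i}w_3^{2^i-1}$ and $b=2^t-3\cdot2^{k-1}-1\ge2^{t-1}-1$ precisely when $k\le t-2$, the monomial is divisible by $\mathrm{LM}(f_i)$ for \emph{every} $i\le k-1$ (smaller $i$ makes the $w_3$-condition easier and the $w_2$-condition is still met). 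This is harmless — any valid reduction gives the same normal form, and $i=k-1$ is the right choice — but the uniqueness assertion is false.

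The substantive gap is that your induction, as formulated, neither closes nor proves the ``only nonzero monomials'' half of the lemma. Reducing $\widetilde w_2^b\widetilde w_3^{2^k-3}$ by $f_{k-1}$ produces terms $\widetilde w_2^{b'}\widetilde w_3^{2^k-3+2^{k-1}e}$ for all even $e\ge2$ with $2d+3e=2^{t-k+1}-2$, and your hope that Lucas' theorem leaves a single survivor fails: for instance $e=4$ gives $d+e=2^{t-k}-3$ and $\binom{2^{t-k}-3}{4}\equiv1\pmod2$, yet the resulting exponent $3\cdot2^k-3$ is not of the form $2^m-3$. So the reduction genuinely produces terms outside your special family, and their vanishing must be known \emph{before} you can conclude anything about $\widetilde w_2^b\widetilde w_3^{2^k-3}$ — but your inductive hypothesis only speaks about the special monomials. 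The paper resolves this by strengthening the induction statement to cover every $c$ with $2^k-3\le c<2^{k+1}-3$: for $c>2^k-3$ (necessarily $c\ge2^k-1$ since $c$ is odd) one reduces by $f_k$ instead, and the inductive hypothesis (a nonzero term forces $c+2^ke=2^m-3$) yields $c\equiv-3\pmod{2^{k+1}}$, which is impossible in that range. This simultaneously disposes of all the non-special monomials — which the lemma explicitly asserts are zero — and supplies exactly the input needed in the $c=2^k-3$ step, where Lucas' theorem then kills the coefficients with $m\ge k+2$ and leaves only $e=2$. Your proposed shortcut via $g_{2^k-3}=0$ does not substitute for this strengthening. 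With the induction statement so enlarged (and your correct identification of the unique basis monomial retained as the base case $k=t-1$), your argument becomes the paper's.
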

\begin{proof}
    By Lemma \ref{pomocna sa stepen -1}, none of $\mathrm{LM}(f_i)$, for $0\leq i\leq t-1$,
    divides $w_2^{2^{t-2}-1}w_3^{2^{t-1}-3}$, so $\widetilde w_2^{2^{t-2}-1}\widetilde w_3^{2^{t-1}-3}\in\mathcal B_n$. In particular, $\widetilde w_2^{2^{t-2}-1}\widetilde w_3^{2^{t-1}-3}\neq0$. Also, since $\height(\widetilde w_3)=2^{t-1}-2$ (by Theorem \ref{heights}), $\widetilde w_2^b\widetilde w_3^c=0$ if $c>2^{t-1}-2$.

    By using backward induction on $k$, where $2\leq k\leq t-1$, we prove that
    $\widetilde w_2^b\widetilde w_3^c$, with $2b+3c=2^{t+1}-11$ and $2^k-3\leq c<2^{k+1}-3$, is
    nonzero if and only if $c=2^k-3$, and that $\widetilde w_2^{2^{t}-3\cdot 2^{k-1}-1}\widetilde w_3^{2^k-3}=\widetilde w_2^{2^{t-2}-1}\widetilde w_3^{2^{t-1}-3}$. Note that $2b+3c=2^{t+1}-11$
    implies $c$ is odd, and we will use this throughout the proof.

    \medskip

    The induction base ($k=t-1$) follows from the first paragraph of the proof (and the fact that $c$ is odd).

    \medskip

    Now, let $2\le k\le t-2$ and $2^k-3\leq c\leq 2^{k+1}-5$.

    Suppose first that $c>2^k-3$. We are going to reduce the monomial $w_2^bw_3^c$ by the polynomial $f_{k}$. This can be done, because $\mathrm{LM}(f_{k})=w_2^{2^{t-1}-2^k}w_3^{2^{k}-1}$ (Lemma \ref{pomocna sa stepen -1}),
    \[b=\frac12\cdot\left(2^{t+1}-11-3c\right)\geq 2^{t}-3\cdot 2^k+2>2^{t-1}-2^k \quad\mbox{and}\quad c\ge 2^{k}-1.\]
    According to Lemma \ref{reduction - f_i} we have ($l_k=2^{t-1-k}-1$ since $\alpha_j=0$ for all $j$):
    \[\widetilde w_2^b\widetilde w_3^c=\sum_{\substack{2d+3e=2^{t-k}-2\\ e>0}}
              \binom{d+e}{e}\widetilde w_2^{b-2^k(2^{t-1-k}-1-d)}\widetilde w_3^{c+2^ke}.\]
    Notice that in each summand from this sum $e$ is even, and therefore $e\ge2$, which leads to
    $c+2^{k}e\ge c+2^{k+1}>2^{k+1}-3$. So we can apply inductive hypothesis to
    conclude that if the term $\widetilde w_2^{b-2^k(2^{t-1-k}-1-d)}\widetilde w_3^{c+2^{k}e}$ is nonzero, then
    $c+2^{k}e=2^m-3$ for some $m>k+1$. It follows that $c\equiv -3\pmod{2^{k+1}}$, which is
    impossible since $2^k-1\leq c\leq 2^{k+1}-5$. Therefore, all summands in the last
    sum are zero, and hence $\widetilde w_2^b\widetilde w_3^c=0$.

    Suppose now $c=2^k-3$. In this case we reduce $w_2^bw_3^c$ by $f_{k-1}$. Again, this is possible since $\mathrm{LM}(f_{k-1})=w_2^{2^{t-1}-2^{k-1}}w_3^{2^{k-1}-1}$ (Lemma
    \ref{pomocna sa stepen -1}),
    \[b=2^{t}-3\cdot 2^{k-1}-1> 2^{t-1}-2^{k-1}\quad\mbox{and}\quad
        c=2^{k}-3\ge 2^{k-1}-1.\]
    Using again Lemma \ref{reduction - f_i} we get:
   \[\widetilde w_2^b\widetilde w_3^c=\sum_{\substack{2d+3e=2^{t-k+1}-2\\ e>0}}
              \binom{d+e}{e}\widetilde w_2^{b-2^{k-1}(2^{t-k}-1-d)}\widetilde w_3^{c+2^{k-1}e}.\]
   Similarly as in the first case, we have that
   $c+2^{k-1}e\ge c+2^k= 2^{k+1}-3$. So, for a nonzero summand $\widetilde w_2^{b-2^{k-1}(2^{t-k}-1-d)}\widetilde w_3^{c+2^{k-1}e}$ one has $c+2^{k-1}e=2^m-3$, for some $m\geq k+1$. Then $e=2^{m-k+1}-2$ and
   $d+e=2^{t-k}-2^{m-k}$. If $m\geq k+2$, then $e\equiv 2\pmod 4$ and $d+e\equiv 0\pmod 4$,
   and hence, by Lucas' theorem, $\binom{d+e}{e}$ is zero. Therefore,
   only the term with $m=k+1$ is possibly nonzero. Then $e=2$ and $d+e=2^{t-k}-2$, and
   hence $\binom{d+e}{e}$ is nonzero by Lucas' theorem, so
   \[\widetilde w_2^{2^{t}-3\cdot 2^{k-1}-1}\widetilde w_3^{2^k-3}=\widetilde w_2^b\widetilde w_3^c=\widetilde w_2^{b-2^{k-1}(2^{t-k}-1-d)}\widetilde w_3^{c+2^{k-1}e}=\widetilde w_2^{2^{t}-3\cdot 2^k-1}\widetilde w_3^{2^{k+1}-3},\]
   which is equal to $\widetilde w_2^{2^{t-2}-1}\widetilde w_3^{2^{t-1}-3}$ by the induction hypothesis.
\end{proof}

Next, we prove a lemma (and its consequence) that will be used in subsequent cases as well.


\begin{lemma}\label{2/4 - lema}
For $t\geq 4$ we have
\[g_{3\cdot 2^{t-1}}+w_2^{3\cdot 2^{t-2}}+\sum_{k=1}^{t-3}w_2^{3\cdot 2^{k-1}}w_3^{2^{t-1}-2^k}\in w_3I_{2^t+2^{t-2}+2^{t-4}}.\]
\end{lemma}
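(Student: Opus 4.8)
The plan is to prove the membership $g_{3\cdot 2^{t-1}}+w_2^{3\cdot 2^{t-2}}+\sum_{k=1}^{t-3}w_2^{3\cdot 2^{k-1}}w_3^{2^{t-1}-2^k}\in w_3I_{2^t+2^{t-2}+2^{t-4}}$ by induction on $t\ge4$, exploiting the squaring principle from Lemma~\ref{kvadriranje2} together with the recurrence (\ref{recgpolk3}) and Lemma~\ref{2n preko n}. The key observation is that $2^t+2^{t-2}+2^{t-4}=2(2^{t-1}+2^{t-3}+2^{t-5})$ for $t\ge5$, so if we abbreviate $n_t=2^t+2^{t-2}+2^{t-4}$ then $n_t=2n_{t-1}$, and Lemma~\ref{kvadriranje2} gives the implication $f\in w_3I_{n_{t-1}}\Rightarrow f^2\in w_3I_{n_t}$. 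Thus if I can show that the expression $P_t$ in the statement is, modulo $w_3I_{n_t}$, essentially the square of $P_{t-1}$ plus a correction term that also lies in $w_3I_{n_t}$, the induction closes.

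First I would handle the base case $t=4$ directly: here $n_4=16+4+1=21$, the sum has a single term ($k=1$: $w_2^3w_3^6$), and $P_4=g_{24}+w_2^{12}+w_2^3w_3^6$. Using Table~\ref{table:2}, $g_{24}=w_2^{12}+w_2^9w_3^2+w_3^8$, so $P_4=w_2^9w_3^2+w_3^8+w_2^3w_3^6=w_3(w_2^9w_3+w_3^7+w_2^3w_3^5)$; I would then verify $w_2^9w_3+w_3^7+w_2^3w_3^5\in I_{21}$ using the generators $g_{19},g_{20},g_{21}$ from Table~\ref{table:2} (indeed $g_{21}=w_3^7$, $w_2g_{20}=w_2^{11}+w_2^2w_3^6$, and one combines these with $g_{19}=w_2^8w_3+w_2^2w_3^5$ after multiplying appropriately), or more cleanly invoke (\ref{w_3I_n subset I_n+1}) and the explicit form of the Gröbner basis. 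Then, to set up the inductive step, I would compute $g_{2n_{t-1}} = g_{3\cdot2^{t-1}}$ via Lemma~\ref{2n preko n}: $g_{3\cdot 2^{t-1}} = g_{3\cdot 2^{t-2}}^2 + w_2 g_{3\cdot 2^{t-2}-1}^2$, which already exhibits $g_{3\cdot 2^{t-1}}$ in terms of squares. Squaring the expression $P_{t-1}$ gives $g_{3\cdot 2^{t-2}}^2 + w_2^{3\cdot 2^{t-2}} + \sum_{k=1}^{t-4} w_2^{3\cdot 2^k} w_3^{2^t-2^{k+1}}$, and after reindexing $k\mapsto k-1$ the sum becomes $\sum_{k=2}^{t-3} w_2^{3\cdot 2^{k-1}} w_3^{2^{t-1}-2^k}$. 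Comparing with $P_t$, the discrepancy $P_t - P_{t-1}^2$ works out to $w_2 g_{3\cdot 2^{t-2}-1}^2 + (w_2^{3\cdot 2^{t-2}} + w_2^{3\cdot 2^{t-3}}\text{-type terms}) + w_2^3 w_3^{2^{t-1}-2}$ — i.e.\ the missing $k=1$ term together with a mismatch of two $w_2$-power terms and the $w_2g_{3\cdot 2^{t-2}-1}^2$ contribution — and I would need to show this whole discrepancy lies in $w_3I_{n_t}$.

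The main obstacle, as in all such arguments, will be controlling this correction term: showing $w_2 g_{3\cdot 2^{t-2}-1}^2 + (\text{leftover pure }w_2\text{ powers}) + w_2^3 w_3^{2^{t-1}-2}\in w_3I_{n_t}$. For this I expect to need finer identities for $g_{3\cdot 2^{t-2}-1}$, namely that it can be written as $w_3$ times something plus controlled $w_2$-powers; here Lemma~\ref{g-3}(d) (with $t$ replaced by $t-1$) gives $g_{2^{t-1}+2^{t-2}+2^{t-3}-3}=w_2^{2^{t-2}}w_3^{2^{t-3}-1}$, and since $3\cdot 2^{t-2}-1 = 2^{t-1}+2^{t-2}-1$ differs from that index, I would instead look for the right member of Lemma~\ref{g-3} or derive a companion formula for $g_{3\cdot 2^{t-2}-1}$ via the recurrence. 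In fact $3\cdot 2^{t-2}-1$ is one less than $3\cdot 2^{t-2}$, and repeated application of (\ref{recgpolk3}) peeling off $w_2$ and $w_3$ factors should express $g_{3\cdot 2^{t-2}-1}$ so that $w_2 g_{3\cdot 2^{t-2}-1}^2$ combines with the pure $w_2$-power leftovers to give a multiple of $w_3$ times a generator of $I_{n_t}$. Once the discrepancy is expressed as $w_3\cdot(\text{element of } I_{n_t})$, using (\ref{w_3I_n subset I_n+1}) and membership facts like $g_r\in I_n$ for $r\ge n-2$ to certify the element lies in the ideal, the induction is complete. Throughout I would keep Lucas' theorem at hand to discard binomial coefficients that vanish mod $2$, as is done in the proof of Lemma~\ref{stepen -1}.
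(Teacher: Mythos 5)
Your proposal follows essentially the same route as the paper: induction on $t$ with the base case $t=4$ checked from Table \ref{table:2}, the inductive step driven by Lemma \ref{kvadriranje2} applied to the doubled index, Lemma \ref{2n preko n} to compare $g_{3\cdot2^{t-1}}$ with $g_{3\cdot2^{t-2}}^2$, and one application of (\ref{recgpolk3}) to $g_{3\cdot2^{t-2}-1}$ to absorb the leftover $k=1$ term. The only corrections needed are cosmetic: the pure $w_2$-powers cancel exactly (there are no leftover ``$w_2$-power mismatch'' terms), and the identity that finishes the correction term is Lemma \ref{g-3}(b) (giving $g_{3\cdot2^{t-2}-3}=w_3^{2^{t-2}-1}$, so that $w_2g_{3\cdot2^{t-2}-1}^2+w_2^3w_3^{2^{t-1}-2}=w_2w_3^2g_{3\cdot2^{t-2}-4}^2=w_2w_3g_{3\cdot2^{t-1}-5}\in w_3I_{n_t}$), not part (d).
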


\begin{proof}
Our proof is by induction on $t\geq 4$. By looking at Table \ref{table:2}, we see that for $t=4$ the claim is
\[w_3I_{21}\ni g_{24}+w_2^{12}+w_2^3w_3^{6}=w_2^9w_3^2+w_2^3w_3^6+w_3^8=w_3(w_2g_{19}+g_{21}),\]
which is true (since $g_{19},g_{21}\in I_{21}$). So, suppose that it is true for some
$t\geq 4$ and let us prove it for $t+1$. By Lemma \ref{kvadriranje2}, then we have
\begin{align*}
w_3I_{2^{t+1}+2^{t-1}+2^{t-3}}&\ni g_{3\cdot 2^{t-1}}^2+w_2^{3\cdot 2^{t-1}}+\sum_{k=1}^{t-3}w_2^{3\cdot 2^{k}}w_3^{2^{t}-2^{k+1}}\\
&=g_{3\cdot 2^{t-1}}^2+w_2^{3\cdot 2^{t-1}}+\sum_{k=2}^{t-2}w_2^{3\cdot 2^{k-1}}w_3^{2^{t}-2^{k}},
\end{align*}
and hence it is enough to prove
\[g_{3\cdot 2^t}+g_{3\cdot 2^{t-1}}^2+w_2^3w_3^{2^t-2}\in w_3I_{2^{t+1}+2^{t-1}+2^{t-3}}.\]
By Lemma \ref{2n preko n}, (\ref{recgpolk3}), Lemma \ref{g-3}(b), and Lemma
\ref{kvadriranje}, we have
\begingroup
\allowbreak
\begin{align*}
  g_{3\cdot 2^t}+g_{3\cdot 2^{t-1}}^2+w_2^3w_3^{2^t-2} & =w_2g_{3\cdot 2^{t-1}-1}^2+ w_2^3w_3^{2^t-2} \\
  &=w_2(w_2^2g_{3\cdot 2^{t-1}-3}^2+w_3^2g_{3\cdot 2^{t-1}-4}^2)+w_2^3w_3^{2^t-2}\\
  & =w_2w_3^2g_{3\cdot 2^{t-1}-4}^2=w_2w_3g_{3\cdot 2^t-5}.
\end{align*}
\endgroup
Since $3\cdot 2^t-5\geq 2^{t+1}+2^{t-1}+2^{t-3}-2$, we have
$g_{3\cdot 2^t-5}\in I_{2^{t+1}+2^{t-1}+2^{t-3}}$, which completes our proof.
\end{proof}

\begin{corollary}\label{2/4 - posledica}
For $t\ge3$ one has
\[w_2^{3\cdot 2^{t-2}}\equiv\sum_{k=1}^{t-3}w_2^{3\cdot 2^{k-1}}w_3^{2^{t-1}-2^k}\pmod{I_{2^t+2^{t-2}+2}}.\]
\end{corollary}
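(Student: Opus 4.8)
The plan is to derive this congruence from Lemma \ref{2/4 - lema} by a combination of a halving (taking square roots) argument and a base-case check, exploiting the fact that $I_n$ grows as $n$ decreases. Lemma \ref{2/4 - lema} gives, for $t'\geq 4$, membership of a polynomial in $w_3 I_{2^{t'}+2^{t'-2}+2^{t'-4}}$, and via \eqref{w_3I_n subset I_n+1} such a polynomial lies in $I_{2^{t'}+2^{t'-2}+2^{t'-4}+1}$. For $t\geq 4$ one would like to apply Lemma \ref{2/4 - lema} with $t'=t+2$, so that the relation there, after dividing by $w_3$ and extracting square roots twice (using that all exponents appearing are divisible by $4$ — indeed $g_{3\cdot 2^{t+1}}=g_{3\cdot 2^{t-1}}^{4}+\cdots$ by iterating Lemma \ref{2n preko n}, and the monomials $w_2^{3\cdot 2^{k+1}}w_3^{2^{t+1}-2^{k+1}}$ are manifest fourth powers after reindexing), reduces modulo $I_{2^{t}+2^{t-2}+?}$ to the desired relation. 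The key point is that squaring multiplies the modulus index roughly by $2$ (Lemma \ref{kvadriranje2}), so taking square roots divides it by $2$, landing the modulus at $2^t+2^{t-2}+2$ after two halvings; one must check the index bookkeeping gives exactly $2^t+2^{t-2}+2$ and not something larger, which would make the statement weaker than claimed.

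A cleaner route, and the one I would actually carry out, is direct induction on $t\geq 3$ mirroring the proof of Lemma \ref{2/4 - lema}. First I would verify the base case $t=3$ by hand: the claimed congruence is $w_2^{6}\equiv 0 \pmod{I_{12}}$ (the sum on the right is empty since it runs from $k=1$ to $k=0$), and indeed $g_{12}=w_2^{6}+w_3^{4}\in I_{12}$ forces $w_2^{6}\equiv w_3^{4}\pmod{I_{12}}$; one then checks $w_3^4\in I_{12}$ as well, e.g.\ via $w_3^4 = w_3\cdot w_3^3 = w_3 g_9$ and $g_9\in I_{12}$ (since $9\geq 12-2$ fails — so instead use $w_3^4=g_{12}+w_2^6$ and separately that $w_2^6\in I_{12}$? no). This base case needs care; the safest is to consult Table \ref{table:2} and the explicit generators $g_{10},g_{11},g_{12}$ of $I_{12}$ and reduce $w_2^6$ directly. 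For $t=4$ the congruence reads $w_2^{12}\equiv w_2^{3}w_3^{6}\pmod{I_{21}}$, which is exactly the content of the $t=4$ instance of Lemma \ref{2/4 - lema} modulo the inclusion $w_3 I_{21}\subseteq I_{22}$ — wait, we need it mod $I_{21}$, so I would instead read off $g_{24}+w_2^{12}+w_2^3w_3^6\in w_3I_{21}\subseteq I_{21}$ and use $g_{24}\in I_{21}$ (as $24\geq 19$), giving $w_2^{12}+w_2^3w_3^6\in I_{21}$ directly.

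For the inductive step, assuming the congruence modulo $I_{2^t+2^{t-2}+2}$, I would square both sides: by Lemma \ref{kvadriranje2} (in the form for $w_3 I_n$, after first multiplying the relation $w_2^{3\cdot 2^{t-2}}+\sum_k w_2^{3\cdot 2^{k-1}}w_3^{2^{t-1}-2^k}\in I_{2^t+2^{t-2}+2}$ by $w_3$ to land in $w_3 I_{2^t+2^{t-2}+2}$, or directly using that $f\in I_n\Rightarrow f^2\in I_{2n}$ which follows from Lemma \ref{kvadriranje2} combined with \eqref{w_3I_n subset I_n+1}) the square lies modulo $I_{2(2^t+2^{t-2}+2)}=I_{2^{t+1}+2^{t-1}+4}$, and after reindexing $k\mapsto k+1$ the squared sum becomes $w_2^{3\cdot 2^{t-1}}+\sum_{k=2}^{t-2}w_2^{3\cdot 2^{k-1}}w_3^{2^{t}-2^{k}}$. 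To reach the target modulus $I_{2^{t+1}+2^{t-1}+2}$ and to recover the missing $k=1$ term $w_2^3 w_3^{2^t-2}$, I would use the recurrences: precisely, $2^{t+1}+2^{t-1}+2 < 2^{t+1}+2^{t-1}+4$, and the discrepancy between $w_2^{3\cdot 2^{t-1}}$ alone and the full left side $w_2^{3\cdot 2^{t}}$ of the $(t+1)$-statement, together with the $k=1$ term, should be absorbed by an identity of the shape $w_2w_3 g_{3\cdot 2^t-5}\in w_3 I_{\text{something}}$ — exactly the computation at the end of the proof of Lemma \ref{2/4 - lema}, now with the weaker (smaller-index) target. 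The main obstacle will be this index bookkeeping: one must confirm that after the halvings/squarings the modulus lands at exactly $2^t+2^{t-2}+2$, which amounts to checking that $2^t+2^{t-2}+2$ is the smallest $n$ for which all the required $g_r$ (namely $g_{3\cdot 2^{t-2}-\text{small}}$ and the relevant generators) already lie in $I_n$; this is where Lemma \ref{g-3} and the inequality $r\geq n-2$ must be invoked with the tight constant, and getting that constant right (rather than, say, $2^t+2^{t-2}+2^{t-4}$ as in Lemma \ref{2/4 - lema}) is the crux.
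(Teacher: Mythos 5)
Your proposal misses the key (and very simple) point: Lemma \ref{2/4 - lema} applied at the \emph{same} value of $t$ already contains exactly the polynomials appearing in the corollary. For $t\ge4$ the lemma gives $g_{3\cdot2^{t-1}}+w_2^{3\cdot2^{t-2}}+\sum_{k=1}^{t-3}w_2^{3\cdot2^{k-1}}w_3^{2^{t-1}-2^k}\in w_3I_{2^t+2^{t-2}+2^{t-4}}$; since $2^{t-4}\ge1$ we have $w_3I_{2^t+2^{t-2}+2^{t-4}}\subseteq w_3I_{2^t+2^{t-2}+1}\subseteq I_{2^t+2^{t-2}+2}$ by (\ref{w_3I_n subset I_n+1}), and $g_{3\cdot2^{t-1}}\in I_{2^t+2^{t-2}+2}$ because $3\cdot2^{t-1}\ge2^t+2^{t-2}$. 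That is the whole proof for $t\ge4$; the case $t=3$ is the separate check $w_2^6=w_2g_{10}\in I_{12}$. No induction on $t$, no change of level, and no square roots are needed.

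Both routes you actually propose have genuine gaps. The first (apply the lemma at $t'=t+2$ and ``take square roots twice'') cannot be carried out with the tools in the paper: Lemma \ref{kvadriranje2} only goes in the squaring direction, and ideal membership does not descend under extraction of square roots; moreover the whole manoeuvre is pointless since no shift in $t$ is required. The second route (induction on $t$) rests on the claimed implication $f\in I_n\Rightarrow f^2\in I_{2n}$, which does \emph{not} follow from Lemma \ref{kvadriranje2} together with (\ref{w_3I_n subset I_n+1}): multiplying by $w_3$, squaring, and trying to remove the factor $w_3$ again only yields $w_3f^2\in I_{2n+1}$, not $f^2\in I_{2n}$, and one cannot divide an ideal membership by $w_3$. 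Your base cases are also shaky: for $t=3$ you leave the verification of $w_2^6\in I_{12}$ unfinished (it is immediate from $g_{10}=w_2^5$), and for $t=4$ you misidentify the modulus as $I_{21}$ instead of $I_{22}=I_{2^4+2^2+2}$ and consequently prove only the weaker congruence modulo $I_{21}$ — even though the computation you wrote down ($g_{24}+w_2^{12}+w_2^3w_3^6\in w_3I_{21}\subseteq I_{22}$, $g_{24}\in I_{22}$) is precisely the correct argument for the correct modulus. As it stands the proposal does not constitute a proof.
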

\noindent(Note that the left-hand side of this congruence corresponds to the (nonexisting) summand for $k=t-1$ in the sum on the right-hand side.)
\begin{proof}
For $t=3$ it is understood that the sum on the right-hand side is zero, and we actually need to check that $w_2^6\in I_{12}$. But $w_2^6=w_2g_{10}\in I_{12}$ (see Table \ref{table:2}).

For $t\ge4$ one has $2^t+2^{t-2}+2^{t-4}\ge2^t+2^{t-2}+1$, and so $w_3I_{2^t+2^{t-2}+2^{t-4}}\subseteq w_3I_{2^t+2^{t-2}+1}\subseteq I_{2^t+2^{t-2}+2}$ (by (\ref{w_3I_n subset I_n+1})). Now, Lemma \ref{2/4 - lema} gives us
\[w_2^{3\cdot 2^{t-2}}\equiv g_{3\cdot 2^{t-1}}+\sum_{k=1}^{t-3}w_2^{3\cdot 2^{k-1}}w_3^{2^{t-1}-2^k}\pmod{I_{2^t+2^{t-2}+2}}.\]
Moreover, $g_{3\cdot 2^{t-1}}\in I_{2^t+2^{t-2}+2}$ (since $3\cdot 2^{t-1}>2^t+2^{t-2}$), and we are done.
\end{proof}

Now we are ready to establish the lower bound for $\zcl(W_n)$ in this case.

\begin{proposition}\label{1/4 - donje}
    Let $2^{t}-1 \le n \le 2^t+2^{t-2}$, where $ t \ge 4$. Then
     \[\zcl(W_{n})\geq 2^{t}+2^{t-1}-4.\]
\end{proposition}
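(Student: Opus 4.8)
By Lemma~\ref{zcl raste} it suffices to treat the smallest value $n=2^t-1$; that is, I will exhibit nonnegative integers $\beta,\gamma$ with $\beta+\gamma=2^t+2^{t-1}-4$ such that $z(\widetilde w_2)^\beta z(\widetilde w_3)^\gamma\neq 0$ in $W_n\otimes W_n$. Given the height information from Theorem~\ref{heights}, namely $\height(\widetilde w_2)=2^t-4$ and $\height(\widetilde w_3)=2^{t-1}-2$, the natural choice is $\beta=2^t-4$ and $\gamma=2^{t-1}$ (so $\beta+\gamma=2^t+2^{t-1}-4$ as desired), and to apply Lemma~\ref{z nonzero}: I need to find a degree $r$ for which the sum
\[
\sum_{2b+3c=r}\binom{\beta}{b}\binom{\gamma}{c}\widetilde w_2^b\widetilde w_3^c\otimes\widetilde w_2^{\beta-b}\widetilde w_3^{\gamma-c}
\]
is nonzero in $W_n\otimes W_n$. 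The obvious candidate, guided by Lemma~\ref{stepen -1}, is the middle degree $r=2^{t+1}-11$, where Lemma~\ref{stepen -1} tells us exactly which monomials $\widetilde w_2^b\widetilde w_3^c$ survive in $H^{2^{t+1}-11}(\widetilde G_{n,3})$ — precisely $\widetilde w_2^{2^t-3\cdot 2^{k-1}-1}\widetilde w_3^{2^k-3}$ for $2\le k\le t-1$, all equal to the basis element $\widetilde w_2^{2^{t-2}-1}\widetilde w_3^{2^{t-1}-3}$.

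The plan is then to compute the $r=2^{t+1}-11$ summand of $z(\widetilde w_2)^\beta z(\widetilde w_3)^\gamma$ explicitly using this classification. The pairs $(b,c)$ with $2b+3c=2^{t+1}-11$, $0\le b\le\beta$, $0\le c\le\gamma$ and $\widetilde w_2^b\widetilde w_3^c\neq 0$ are the $k$-indexed pairs $(b_k,c_k)=(2^t-3\cdot 2^{k-1}-1,\,2^k-3)$; for each of these I also need the complementary monomial $\widetilde w_2^{\beta-b_k}\widetilde w_3^{\gamma-c_k}=\widetilde w_2^{2^t-4-b_k}\widetilde w_3^{2^{t-1}-c_k}$ to be nonzero in $W_n$. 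One checks $\gamma-c_k=2^{t-1}-2^k+3$ and $\beta-b_k=3\cdot 2^{k-1}-3$; I expect that for a suitable range of $k$ (most plausibly $k=t-1$, giving the complementary monomial $\widetilde w_2^{3\cdot 2^{t-2}-3}\widetilde w_3^{3}$, or perhaps $k=2$) the complementary factor is also nonzero, so the corresponding tensor term is nonzero. Since every nonzero $\widetilde w_2^{b_k}\widetilde w_3^{c_k}$ equals the \emph{same} basis element of $\mathcal B_n$, and the factors in the first tensor slot are thus linearly dependent, the whole $r$-summand collapses to $\widetilde w_2^{2^{t-2}-1}\widetilde w_3^{2^{t-1}-3}\otimes\big(\text{a $\mathbb Z_2$-linear combination of complementary monomials}\big)$ — or more carefully, a sum over $k$ of $\binom{\beta}{b_k}\binom{\gamma}{c_k}$ times (basis element)$\otimes$(complementary monomial). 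I then need this combined second-slot element to be nonzero; this requires tracking the binomial coefficients $\binom{2^t-4}{b_k}\binom{2^{t-1}}{c_k}$ mod $2$ via Lucas' theorem and, possibly, reducing the complementary monomials via Lemma~\ref{reduction - f_i} (using the Gröbner basis $F_n$ from Lemma~\ref{pomocna sa stepen -1}) and Corollary~\ref{2/4 - posledica} to a common basis element, so that the surviving coefficient is a genuine count mod $2$.

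The main obstacle I anticipate is precisely this last bookkeeping step: showing that after collapsing all the equal first-slot monomials and reducing the second-slot monomials to the additive basis $\mathcal B_n$, the total $\mathbb Z_2$-coefficient does not vanish. In general such sums of binomial coefficients mod $2$ can cancel, so the proof hinges on a careful Lucas-theorem analysis of $\binom{2^t-4}{\,2^t-3\cdot 2^{k-1}-1\,}\binom{2^{t-1}}{\,2^k-3\,}$ together with whatever coefficients Lemma~\ref{reduction - f_i}/Corollary~\ref{2/4 - posledica} contribute; I expect only one value of $k$ to yield an odd total contribution (most likely $k=t-1$, where $c=2^{t-1}-3$ sits at the top of $\widetilde w_3$'s range and the complementary exponents are small), which is exactly what is needed to conclude the $r$-summand, hence $z(\widetilde w_2)^{2^t-4}z(\widetilde w_3)^{2^{t-1}}$, is nonzero. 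Once that single nonzero term is pinned down, Lemma~\ref{z nonzero} immediately gives $\zcl(W_n)\ge \beta+\gamma=2^t+2^{t-1}-4$, and Lemma~\ref{zcl raste} extends it to all $n$ with $2^t-1\le n\le 2^t+2^{t-2}$.
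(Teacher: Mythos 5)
Your overall strategy (reduce to $n=2^t-1$ via Lemma \ref{zcl raste}, apply Lemma \ref{z nonzero} in degree $r=2^{t+1}-11$, and exploit Lemma \ref{stepen -1} to collapse the first tensor slot) is exactly the paper's, but your choice of exponents is wrong in a way that kills the argument. You picked $\beta=2^t-4$ and $\gamma=2^{t-1}$ by matching the heights of $\widetilde w_2$ and $\widetilde w_3$ themselves, whereas the relevant bounds are the heights of the zero-divisors: by Theorem \ref{heights} and (\ref{htz}), $\height(z(\widetilde w_3))=2^{t-1}-1$, so $z(\widetilde w_3)^{2^{t-1}}=0$ and your product $z(\widetilde w_2)^{2^t-4}z(\widetilde w_3)^{2^{t-1}}$ vanishes identically. (This is also visible in your own bookkeeping: $\binom{2^{t-1}}{2^k-3}\equiv 0\pmod 2$ for every $2\le k\le t-1$ by Lucas, so every term of the degree-$r$ component you propose to examine has zero coefficient.) The paper instead takes $\beta=2^t-1$, $\gamma=2^{t-1}-3$, for which all the Lucas coefficients $\binom{2^t-1}{b}\binom{2^{t-1}-3}{2^k-3}$ are $1$ and the complementary monomials become $\widetilde w_2^{3\cdot2^{k-1}}\widetilde w_3^{2^{t-1}-2^k}$.

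Even granting a correct choice of $(\beta,\gamma)$, the decisive step is left as an ``anticipated obstacle'' rather than carried out: you must actually show that the second-slot sum $\sum_{k=2}^{t-1}\widetilde w_2^{3\cdot2^{k-1}}\widetilde w_3^{2^{t-1}-2^k}$ is nonzero. The paper does this not by a Lucas count over $k$ (your guess that a single $k$, ``most likely $k=t-1$,'' survives is not how it works) but by invoking Corollary \ref{2/4 - posledica} to rewrite $\widetilde w_2^{3\cdot2^{t-2}}$ (the $k=t-1$ term) as $\sum_{k=1}^{t-3}\widetilde w_2^{3\cdot2^{k-1}}\widetilde w_3^{2^{t-1}-2^k}$, after which the sum telescopes to $\widetilde w_2^{3\cdot2^{t-3}}\widetilde w_3^{2^{t-2}}+\widetilde w_2^{3}\widetilde w_3^{2^{t-1}-2}$; the first of these is killed by $f_{t-2}=w_2^{2^{t-2}}w_3^{2^{t-2}-1}$ and the second lies in $\mathcal B_n$, hence is nonzero. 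Without this identity (or some substitute) the proof is incomplete.
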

\begin{proof}
    Note that by Lemma \ref{zcl raste}, it is enough to prove the inequality for
    $n=2^{t}-1$. We apply Lemma \ref{z nonzero} for $\beta=2^t-1$, $\gamma=2^{t-1}-3$ and $r=2^{t+1}-11$. We thus need to prove that
    \[\sum_{2b+3c=2^{t+1}-11}
        \binom{2^{t}-1}{b}\binom{2^{t-1}-3}{c}
        \widetilde w_2^b\widetilde w_3^c\otimes
        \widetilde w_2^{2^{t}-1-b}\widetilde w_3^{2^{t-1}-3-c}\neq0\]
    in $H^*(\widetilde G_{n,3})\otimes H^*(\widetilde G_{n,3})$. By Lemma \ref{stepen -1}, we only need to consider the summands with
    $(b,c)=(2^t-3\cdot 2^{k-1}-1,2^k-3)$, for $2\leq k\leq t-1$, and for each of them
    $\widetilde w_2^b\widetilde w_3^c=\widetilde w_2^{2^{t-2}-1}\widetilde w_3^{2^{t-1}-3}$. So, by Lucas' theorem, equivalence (\ref{ekv - s tildom - bez tilde}), the fact $I_n=I_{2^t-1}\supseteq I_{2^t+2^{t-2}+2}$ and Corollary
    \ref{2/4 - posledica}, the last sum becomes
    \begin{align*}
        \widetilde w_2^{2^{t-2}-1}\widetilde w_3^{2^{t-1}-3}\otimes &\sum_{k=2}^{t-1}\widetilde w_2^{3\cdot 2^{k-1}}\widetilde w_3^{2^{t-1}-2^k}\\
        &=\widetilde w_2^{2^{t-2}-1}\widetilde w_3^{2^{t-1}-3}\otimes\big(\widetilde w_2^{3\cdot 2^{t-3}}\widetilde w_3^{2^{t-2}}+\widetilde w_2^{3}\widetilde w_3^{2^{t-1}-2}\big).
    \end{align*}
    We know that the first coordinate of the last tensor is nonzero, and since we are working over a field, it is enough to prove that the second coordinate is nonzero too.
    By Lemmas \ref{pomocna sa stepen -1} and \ref{g-3}(c), we have
    $f_{t-2}=g_{2^t+2^{t-2}-3}=w_2^{2^{t-2}}w_3^{2^{t-2}-1}$,
    and hence $\widetilde w_2^{3\cdot 2^{t-3}}\widetilde w_3^{2^{t-2}}=0$, while
    $\widetilde w_2^{3}\widetilde w_3^{2^{t-1}-2}\in\mathcal B_n$, since, by Lemma \ref{pomocna sa stepen -1},
    none of $\mathrm{LM}(f_i)$, for $0\leq i\leq t-1$, divides $w_2^{3}w_3^{2^{t-1}-2}$ (here we use the assumption $t\ge4$). This completes our proof.
\end{proof}

As the first step towards establishing the upper bound in this case, we display the elements of the Gr\"obner basis $F_n$ for $n=2^t+2^{t-2}$. Then, in Lemma \ref{2/4 - gornje} we prove a technical result, which is crucial for obtaining the upper bound. Also, since the corresponding proof in the next case goes along the same lines as this one, here we present them together by including the case $n=2^t+2^{t-2}+1$.

\begin{lemma}\label{2/4 - Grebner1}
Let $n=2^t+2^{t-2}+\varepsilon$, where $t\ge4$ and $\varepsilon\in\{0,1\}$.
Then for the elements of the Gr\"obner basis $F_n$ (from Theorem \ref{Grebner}) one has:
\begingroup
\allowbreak
\begin{itemize}
\item $f_0=g_{2^t+2^{t-2}+2\varepsilon-2}$, $\mathrm{LM}(f_0)=w_2^{2^{t-1}+2^{t-3}+\varepsilon-1}$;
\item  $f_i=w_3^{2^i-1}\left(g_{2^{t-i}+2^{t-i-2}-2}\right)^{2^i}$, $\mathrm{LM}(f_i)=w_2^{2^{t-1}+2^{t-3}-2^i}w_3^{2^i-1}$, for $1\leq i\leq t-3$;
\item  $f_{t-3}=w_2^{2^{t-1}}w_3^{2^{t-3}-1}+w_2^{2^{t-3}}w_3^{2^{t-2}+2^{t-3}-1}$;
\item  $f_{t-2}=w_2^{2^{t-2}}w_3^{2^{t-2}+\varepsilon}$, $f_{t-1}=w_3^{2^{t-1}-1}$.
\end{itemize}
\endgroup
\end{lemma}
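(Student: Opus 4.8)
The plan is to feed $n = 2^t + 2^{t-2} + \varepsilon$ into Theorem \ref{Grebner} after pinning down its binary data. Since $n - 2^t + 1 = 2^{t-2} + 2^\varepsilon$, the binary digits are $\alpha_\varepsilon = \alpha_{t-2} = 1$ and $\alpha_j = 0$ for all other $j$; here the hypothesis $t \ge 4$ is used to ensure $\varepsilon < t - 2$ (so that these are genuinely distinct digits, and in particular $\alpha_{t-1}=0$) and that the index range $1 \le i \le t-3$ is nonempty. From this one reads off $s_i = 0$ for $i < \varepsilon$, then $s_i = 2^\varepsilon$ for $\varepsilon \le i \le t-3$, and $s_i = 2^\varepsilon + 2^{t-2}$ for $t-2 \le i \le t-1$. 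I would establish these formulas first; everything else is substitution into $f_i = w_3^{\alpha_i s_{i-1}} g_{n-2+2^i-s_i}$ and simplification.

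For the $w_3$-prefactor of $f_i$, note that $\alpha_i s_{i-1} = 0$ unless $i \in \{\varepsilon, t-2\}$, and since $s_{\varepsilon-1} = 0$ as well, it is nonzero only for $i = t-2$, where it equals $s_{t-3} = 2^\varepsilon$. For the subscript $n-2+2^i-s_i$ I would go through the four ranges $i=0$, $1 \le i \le t-3$, $i=t-2$, $i=t-1$, observing that the $\varepsilon$ coming from $n$ and the $2^\varepsilon$ coming from $s_i$ largely cancel: the subscripts come out to $2^t+2^{t-2}+2\varepsilon-2$ for $f_0$, to $2^t+2^{t-2}+2^i-3$ for $1 \le i \le t-3$, to $2^t+2^{t-2}-3$ for $f_{t-2}$, and to $2^t+2^{t-1}-3$ for $f_{t-1}$ — all but the first independent of $\varepsilon$. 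It then remains to identify each $g_r$: $f_0$ needs nothing further; for $1 \le i \le t-3$ write the subscript as $2^i(r+3)-3$ with $r = 2^{t-i}+2^{t-i-2}-2$ and apply Lemma \ref{kvadriranje}, giving $f_i = w_3^{2^i-1}\bigl(g_{2^{t-i}+2^{t-i-2}-2}\bigr)^{2^i}$; next $f_{t-2} = w_3^{2^\varepsilon} g_{2^t+2^{t-2}-3} = w_3^{2^\varepsilon} w_2^{2^{t-2}} w_3^{2^{t-2}-1}$ by Lemma \ref{g-3}(c), which equals $w_2^{2^{t-2}} w_3^{2^{t-2}+\varepsilon}$ because $2^\varepsilon - 1 = \varepsilon$ for $\varepsilon \in \{0,1\}$; and $f_{t-1} = g_{2^t+2^{t-1}-3} = w_3^{2^{t-1}-1}$ by Lemma \ref{g-3}(b). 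The displayed closed form of $f_{t-3}$ is just the $i=t-3$ instance of the middle family: there $g_{2^{t-i}+2^{t-i-2}-2} = g_8 = w_2^4 + w_2 w_3^2$ (Table \ref{table:2}), and squaring $t-3$ times and then multiplying by $w_3^{2^{t-3}-1}$ yields $w_2^{2^{t-1}} w_3^{2^{t-3}-1} + w_2^{2^{t-3}} w_3^{2^{t-2}+2^{t-3}-1}$.

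For the leading monomials I would use that formula (\ref{g-exp}) forces $\mathrm{LM}(g_r) = w_2^{r/2}$ whenever $r$ is even, since the $(d,e) = (r/2,0)$ term has coefficient $\binom{r/2}{0} = 1$ and is lex-maximal among the monomials with $2d+3e=r$. All the subscripts above are even, so $\mathrm{LM}(f_0) = w_2^{2^{t-1}+2^{t-3}+\varepsilon-1}$, and for $1 \le i \le t-3$ we get $\mathrm{LM}(f_i) = w_3^{2^i-1} w_2^{2^i(2^{t-i-1}+2^{t-i-3}-1)} = w_2^{2^{t-1}+2^{t-3}-2^i} w_3^{2^i-1}$; the polynomials $f_{t-2}$ and $f_{t-1}$ are already monomials, so nothing separate is needed. (Alternatively, all these leading monomials can be read off directly from (\ref{LMgi}) by substituting the $\alpha_j$, $s_i$ and $l_i$ just computed.) The computation is entirely routine once the binary data is fixed; the only point demanding a little attention is the bookkeeping that makes almost every formula uniform in $\varepsilon$ — in particular the cancellations that confine the $\varepsilon$-dependence to the subscript of $f_0$ and to the $w_3$-exponent of $f_{t-2}$ (via $2^\varepsilon - 1 = \varepsilon$). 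I do not foresee any genuine obstacle.
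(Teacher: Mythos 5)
Your proposal is correct and follows essentially the same route as the paper: compute the binary data $\alpha_j$, $s_i$ of $n-2^t+1=2^{t-2}+1+\varepsilon$, substitute into $f_i=w_3^{\alpha_is_{i-1}}g_{n-2+2^i-s_i}$, identify the $g_r$ via Lemma \ref{kvadriranje} and Lemma \ref{g-3}, and read off the leading monomials from $\mathrm{LM}(g_{2l})=w_2^l$. The only (harmless) cosmetic difference is your uniform bookkeeping via $2^\varepsilon=1+\varepsilon$, where the paper writes $\alpha_0=s_0=1-\varepsilon$, $\alpha_1=\varepsilon$ separately; all computations check out.
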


\begin{proof}
We have $n-2^t+1=2^{t-2}+1+\varepsilon$, and hence $\alpha_2=\alpha_3=\dots=\alpha_{t-3}=0$,
$\alpha_{t-2}=1$, $\alpha_{t-1}=0$, and $s_1=s_2=\dots=s_{t-3}=1+\varepsilon$,
$s_{t-2}=s_{t-1}=2^{t-2}+1+\varepsilon$. Furthermore, $\alpha_0=s_0=1-\varepsilon$ and $\alpha_1=\varepsilon$.

It is now straightforward from the definition of $f_i$ (given in Theorem \ref{Grebner}) that $f_0=g_{2^t+2^{t-2}+2\varepsilon-2}$ and
\[f_i=g_{2^t+2^{t-2}+2^i-3}=w_3^{2^i-1}\left(g_{2^{t-i}+2^{t-i-2}-2}\right)^{2^i} \qquad\mbox{for }1\leq i\leq t-3,\]
by  Lemma \ref{kvadriranje}. In particular, for $i=t-3$ we get
\[f_{t-3}=w_3^{2^{t-3}-1}g_8^{2^{t-3}}=w_2^{2^{t-1}}w_3^{2^{t-3}-1}+w_2^{2^{t-3}}w_3^{2^{t-2}+2^{t-3}-1}\]
(see Table \ref{table:2}). Also,
\[
f_{t-2}=w_3^{1+\varepsilon}g_{2^t+2^{t-2}-3}=w_2^{2^{t-2}}w_3^{2^{t-2}+\varepsilon} \mbox{ and }
f_{t-1}=g_{2^t+2^{t-1}-3}=w_3^{2^{t-1}-1},\]
by Lemma \ref{g-3} (parts (c) and (b) respectively).

The statements about leading monomials follow from the fact $\mathrm{LM}(g_{2l})=w_2^l$ (which is easily seen from (\ref{g-exp})).
\end{proof}

\begin{lemma}\label{2/4 - gornje}
Let $t\ge4$ and $n=2^t+2^{t-2}+\varepsilon$, where $\varepsilon\in\{0,1\}$.
Then in $W_n\otimes W_n$ one has:
\begin{itemize}
  \item[(a)] if $\varepsilon=0$, then $z(\widetilde w_2)^{2^{t}-1}z(\widetilde w_3)^{2^{t-1}-2}=0$ and
  $z(\widetilde w_2)^{2^{t}-2}z(\widetilde w_3)^{2^{t-1}-1}=0$;
  \item[(b)] if $\varepsilon=1$, then $z(\widetilde w_2)^{2^{t}-1}z(\widetilde w_3)^{2^{t-1}-1}=0$.
\end{itemize}
\end{lemma}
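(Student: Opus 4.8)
The plan is to prove each of the three vanishing statements by a single strategy: writing $z(\widetilde w_2)^\beta z(\widetilde w_3)^\gamma$ as the double sum
\[
\sum_{b,c}\binom{\beta}{b}\binom{\gamma}{c}\widetilde w_2^b\widetilde w_3^c\otimes\widetilde w_2^{\beta-b}\widetilde w_3^{\gamma-c}
\]
(as computed in the proof of Lemma \ref{z nonzero}), decomposing it by total degree $r=2b+3c$ of the left tensor factor, and showing that \emph{every} homogeneous piece vanishes in $W_n\otimes W_n$. Since we work over a field, a homogeneous piece $\sum \binom{\beta}{b}\binom{\gamma}{c}\widetilde w_2^b\widetilde w_3^c\otimes\widetilde w_2^{\beta-b}\widetilde w_3^{\gamma-c}$ (sum over $2b+3c=r$) is zero provided, for each surviving pair $(b,c)$ (i.e.\ with both binomial coefficients odd by Lucas), at least one of the two monomials $\widetilde w_2^b\widetilde w_3^c$ or $\widetilde w_2^{\beta-b}\widetilde w_3^{\gamma-c}$ vanishes in $W_n$. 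Equivalently, grouping terms appropriately, it suffices to detect cancellation among the surviving summands after rewriting the nonzero monomials via the additive basis $\mathcal B_n$ and the reduction Lemma \ref{reduction - f_i}.

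First I would record, using \eqref{htz} together with Theorem \ref{heights}, the relevant heights at $n=2^t+2^{t-2}+\varepsilon$. For $\varepsilon\in\{0,1\}$ we are in the range $2^t-1\le n\le 2^t+2^{t-1}$, so $\height(\widetilde w_2)=2^t-4$, whence $2^{t-1}\le \height(\widetilde w_2)<2^t$ gives $\height(z(\widetilde w_2))=2^t-1$; similarly $\height(\widetilde w_3)=\max\{2^{t-1}-2,\,2^{t-2}+\varepsilon-1\}=2^{t-1}-2$, so $2^{t-1}\le\height(z(\widetilde w_3))+1$... more precisely $2^{t-2}\le\height(\widetilde w_3)<2^{t-1}$ (for $t\ge3$) yields $\height(z(\widetilde w_3))=2^{t-1}-1$. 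These height facts already kill several pieces: in part (a) first statement, $\beta=2^t-1=\height(z(\widetilde w_2))$ and $\gamma=2^{t-1}-2<\height(z(\widetilde w_3))$, so a priori $z(\widetilde w_2)^\beta z(\widetilde w_3)^\gamma$ need not vanish for height reasons alone — hence the genuine work is in the cohomological computation, not the heights. I would use the heights mainly to bound which monomials $\widetilde w_2^b\widetilde w_3^c$ can possibly be nonzero (namely $c\le 2^{t-1}-2$), shrinking the range of $(b,c)$ to consider.

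The core of the argument is the degree-by-degree analysis, and this is where I expect the main obstacle. For a fixed $r$, the surviving pairs $(b,c)$ are constrained by Lucas' theorem applied to $\binom{\beta}{b}$ and $\binom{\gamma}{c}$: since $\beta=2^t-1$ (all binary digits $1$) the constraint $\binom{\beta}{b}\equiv 1$ is vacuous, but $\binom{\gamma}{c}\equiv 1$ with $\gamma=2^{t-1}-2=\underbrace{1\cdots1}_{t-2}0$ forces $c$ to have a $0$ in the units place, i.e.\ $c$ even, and all other bits free — and for part (b) with $\gamma=2^{t-1}-1$ all bits of $c$ are free. For each such $(b,c)$ with $\widetilde w_2^b\widetilde w_3^c\ne0$ and $\widetilde w_2^{\beta-b}\widetilde w_3^{\gamma-c}\ne0$, I would reduce whichever monomial lies outside $\mathcal B_n$ using Lemma \ref{reduction - f_i} with the explicit leading monomials from Lemma \ref{2/4 - Grebner1}, and then match up the resulting basis elements across different $(b,c)$ to exhibit pairwise cancellation. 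The key structural input is Lemma \ref{2/4 - Grebner1}: the leading monomials $\mathrm{LM}(f_{t-2})=w_2^{2^{t-2}}w_3^{2^{t-2}+\varepsilon}$ and $\mathrm{LM}(f_{t-1})=w_3^{2^{t-1}-1}$, plus the $f_i$ for $i\le t-3$, pin down $\mathcal B_n$ precisely, and $f_{t-3}=w_2^{2^{t-1}}w_3^{2^{t-3}-1}+w_2^{2^{t-3}}w_3^{2^{t-2}+2^{t-3}-1}$ supplies a nontrivial relation that produces the cancellations. The anticipated difficulty is bookkeeping: there are up to roughly $2^{t-2}$ relevant pairs $(b,c)$ per degree and several degrees $r$ to check, so the challenge is to organize the reductions (likely via an auxiliary lemma identifying, as in Lemma \ref{stepen -1}, exactly which $\widetilde w_2^b\widetilde w_3^c$ are nonzero in the pertinent degrees and what basis element they equal) so that the cancellation becomes transparent — for instance by showing the nonzero monomials in each degree all collapse to a single basis element, whose coefficient is then $\sum\binom{\gamma}{c}$ over an even-size set, hence $0$ mod $2$. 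I would treat $\varepsilon=0$ and $\varepsilon=1$ in parallel, noting that the two statements in (a) are related by the involution $(b,c)\leftrightarrow(\beta-b,\gamma-c)$ swapping tensor factors, so one of them follows from the other by symmetry once the first is done.
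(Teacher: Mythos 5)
Your proposal is a plan rather than a proof: the decisive step --- showing that in each bidegree the surviving terms cancel --- is exactly where you stop, with only the hope that ``the nonzero monomials in each degree all collapse to a single basis element, whose coefficient is then a sum over an even-size set.'' Nothing in the proposal substantiates this, and the paper's argument suggests it is not how the cancellation actually organizes itself: the element does not die bidegree by bidegree via a single collapsing left factor, but through a global pairing. The paper's route is quite different. It repeatedly uses $z(a^{2^l})=z(a)^{2^l}$ to peel off the powers $z(\widetilde w_2)^{2^{t-1}}$, $z(\widetilde w_2)^{2^{t-2}}$, $z(\widetilde w_3)^{2^{t-2}}$, $z(\widetilde w_3)^{2^{t-3}}$ as single zero-divisors $z(\widetilde w_2^{2^{t-1}})$ etc., kills the cross terms using the specific relations $\widetilde w_2^{2^{t-1}}\widetilde w_3^{2^{t-2}-1}=0$, $\widetilde w_2^{2^{t-2}}\widetilde w_3^{2^{t-2}+\varepsilon}=0$ and $\widetilde w_3^{2^{t-1}-1}=0$ coming from $F_n$, rewrites $\widetilde w_2^{3\cdot2^{t-2}}$ via Corollary \ref{2/4 - posledica}, and finally exhibits an explicit involution $(i,j)\mapsto(\overline i,\overline j)$ on the index set of the remaining double sum that matches the two halves $x_1$ and $x_2$ term by term (including a parity check on the binomial coefficients), giving $x=x_1+x_2=0$ over $\mathbb Z_2$. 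Your degree-by-degree scheme would instead require a separate reduction argument for each of the roughly $2^{t+1}+2^{t-1}$ cohomological degrees $r$, and you give no mechanism for carrying these out uniformly.

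Two further concrete errors. First, your claim that the two statements of part (a) are related ``by the involution $(b,c)\leftrightarrow(\beta-b,\gamma-c)$ swapping tensor factors, so one follows from the other by symmetry'' is false: the two elements $z(\widetilde w_2)^{2^t-1}z(\widetilde w_3)^{2^{t-1}-2}$ and $z(\widetilde w_2)^{2^t-2}z(\widetilde w_3)^{2^{t-1}-1}$ have different exponent pairs and even different total degrees, so the tensor-swap symmetry (which maps each of these elements to itself) cannot deduce one from the other. Second, even granting that in some degree all nonzero left factors $\widetilde w_2^b\widetilde w_3^c$ coincide with one basis element, vanishing of the bidegree component also requires controlling the right factors $\widetilde w_2^{\beta-b}\widetilde w_3^{\gamma-c}$, which are distinct monomials needing their own reductions; your coefficient-parity argument addresses only the left side.
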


\begin{proof}
Let us consider $z(\widetilde w_2)^{2^{t}-\varepsilon'}z(\widetilde w_3)^{2^{t-1}-\varepsilon''}$, where
$(\varepsilon',\varepsilon'')\in\{(1,2),(2,1)\}$ in part (a), while
$\varepsilon'=\varepsilon''=1$ in part (b). We want to prove that this element is zero in $W_n\otimes W_n$.

First, let us show that
\begin{equation}\label{prva}
z(\widetilde w_2)^{2^{t}-\varepsilon'}z(\widetilde w_3)^{2^{t-1}-\varepsilon''}=z(\widetilde w_2)^{2^{t-2}-\varepsilon'}z(\widetilde w_3)^{2^{t-3}-\varepsilon''}\cdot A,
\end{equation}
where
\[A=\sum_{k=1}^{t-3}\Big(\widetilde w_2^{3\cdot 2^{k-1}}\widetilde w_3^{2^{t-1}-2^k}\otimes\widetilde w_3^{3\cdot 2^{t-3}}+\widetilde w_3^{3\cdot 2^{t-3}}\otimes\widetilde w_2^{3\cdot 2^{k-1}}\widetilde w_3^{2^{t-1}-2^k}\Big).\]

We will use the relation
\begin{equation}\label{relation 2/4}
\widetilde w_2^{2^{t-1}}\widetilde w_3^{2^{t-2}-1}=0,
\end{equation}
which holds in $W_n$. Namely, according to Lemma \ref{2/4 - Grebner1}, $f_{t-1}=w_3^{2^{t-1}-1}$ and $f_{t-3}=w_2^{2^{t-1}}w_3^{2^{t-3}-1}+w_2^{2^{t-3}}w_3^{2^{t-2}+2^{t-3}-1}$, so $\widetilde w_2^{2^{t-1}}\widetilde w_3^{2^{t-2}-1}=\widetilde w_2^{2^{t-3}}\widetilde w_3^{2^{t-1}-1}=0$.

By (\ref{z(a^stepen dvojke)}) we have
\begin{align*}
z(\widetilde w_2)^{2^{t}-\varepsilon'}z(\widetilde w_3)^{2^{t-1}-\varepsilon''}\!\!&=z(\widetilde w_2)^{2^{t-1}-\varepsilon'}z(\widetilde w_2)^{2^{t-1}}z(\widetilde w_3)^{2^{t-2}-\varepsilon''}z(\widetilde w_3)^{2^{t-2}}\\
&=z(\widetilde w_2)^{2^{t-1}-\varepsilon'}z(\widetilde w_3)^{2^{t-2}-\varepsilon''}z\big(\widetilde w_2^{2^{t-1}}\big)z\big(\widetilde w_3^{2^{t-2}}\big)\\
&=z(\widetilde w_2)^{2^{t-1}-\varepsilon'}\!z(\widetilde w_3)^{2^{t-2}-\varepsilon''}\!\!\Big(\!\widetilde w_2^{2^{t-1}}\otimes\widetilde w_3^{2^{t-2}}\!\!+\widetilde w_3^{2^{t-2}}\otimes\widetilde w_2^{2^{t-1}}\!\Big),
\end{align*}
since $\widetilde w_2^{2^{t-1}}\widetilde w_3^{2^{t-2}}=0$ (by (\ref{relation 2/4})).
If we denote our element $z(\widetilde w_2)^{2^{t}-\varepsilon'}z(\widetilde w_3)^{2^{t-1}-\varepsilon''}$ by $x$, we thus have
\begingroup
\allowbreak
\begin{align*}
x&=z(\widetilde w_2)^{2^{t-2}-\varepsilon'}z(\widetilde w_3)^{2^{t-2}-\varepsilon''}z\big(\widetilde w_2^{2^{t-2}}\big)\Big(\widetilde w_2^{2^{t-1}}\otimes\widetilde w_3^{2^{t-2}}+\widetilde w_3^{2^{t-2}}\otimes\widetilde w_2^{2^{t-1}}\Big)\\
&=z(\widetilde w_2)^{2^{t-2}-\varepsilon'}z(\widetilde w_3)^{2^{t-2}-\varepsilon''}\Big(\widetilde w_2^{2^{t-1}+2^{t-2}}\otimes\widetilde w_3^{2^{t-2}}+\widetilde w_3^{2^{t-2}}\otimes\widetilde w_2^{2^{t-1}+2^{t-2}}\Big).
\end{align*}
\endgroup
The latter equality is due to the fact
\[y:=z(\widetilde w_2)^{2^{t-2}-\varepsilon'}z(\widetilde w_3)^{2^{t-2}-\varepsilon''}\Big(\widetilde w_2^{2^{t-1}}\otimes\widetilde w_2^{2^{t-2}}\widetilde w_3^{2^{t-2}}+ \widetilde w_2^{2^{t-2}}\widetilde w_3^{2^{t-2}}\otimes\widetilde w_2^{2^{t-1}}\Big)=0.\]
Namely, by Lemma \ref{2/4 - Grebner1}, $f_{t-2}=w_2^{2^{t-2}}w_3^{2^{t-2}+\varepsilon}$, so for
$\varepsilon=0$ we clearly have $y=0$. If $\varepsilon=1$ (in that case $\varepsilon'=\varepsilon''=1$), then the fact $\widetilde w_2^{2^{t-2}}\widetilde w_3^{2^{t-2}+1}=0$ and the expansion $z(\widetilde w_3)^{2^{t-2}-1}=\sum_{i+j=2^{t-2}-1}\widetilde w_3^i\otimes\widetilde w_3^j$ lead to
\[y=z(\widetilde w_2)^{2^{t-2}-1}\Big(\widetilde w_2^{2^{t-1}}\widetilde w_3^{2^{t-2}-1}\otimes\widetilde w_2^{2^{t-2}}\widetilde w_3^{2^{t-2}}+\widetilde w_2^{2^{t-2}}\widetilde w_3^{2^{t-2}}\otimes\widetilde w_2^{2^{t-1}}w_3^{2^{t-2}-1}\Big)=0,\]
by (\ref{relation 2/4}). Therefore,
\begingroup
\allowbreak
\begin{align*}
x&=z(\widetilde w_2)^{2^{t-2}-\varepsilon'}z(\widetilde w_3)^{2^{t-3}-\varepsilon''}z(\widetilde w_3^{2^{t-3}})\Big(\widetilde w_2^{2^{t-1}+2^{t-2}}\otimes\widetilde w_3^{2^{t-2}}+\widetilde w_3^{2^{t-2}}\otimes\widetilde w_2^{2^{t-1}+2^{t-2}}\Big)\\
&=z(\widetilde w_2)^{2^{t-2}-\varepsilon'}z(\widetilde w_3)^{2^{t-3}-\varepsilon''}\Big(\widetilde w_2^{3\cdot2^{t-2}}\otimes\widetilde w_3^{2^{t-2}+2^{t-3}}+\widetilde w_3^{2^{t-2}+2^{t-3}}\otimes\widetilde w_2^{3\cdot2^{t-2}}\Big),
\end{align*}
\endgroup
since $\widetilde w_2^{2^{t-1}+2^{t-2}}\widetilde w_3^{2^{t-3}}=\widetilde w_2^{2^{t-2}+2^{t-3}}\widetilde w_3^{2^{t-2}+2^{t-3}}=0$ (by using the Gr\"obner basis elements $f_{t-3}$ and $f_{t-2}$). Finally, (\ref{prva}) now follows from Corollary \ref{2/4 - posledica} (and the fact $I_{2^t+2^{t-2}+2}\subseteq I_{2^t+2^{t-2}+\varepsilon}$).

\medskip

Now, by (\ref{prva}) we have $x=x_1+x_2$, where
\[x_1:=z(\widetilde w_2)^{2^{t-2}-\varepsilon'}z(\widetilde w_3)^{2^{t-3}-\varepsilon''}\sum_{k=1}^{t-3}
\widetilde w_2^{3\cdot 2^{k-1}}\widetilde w_3^{2^{t-1}-2^k}\otimes \widetilde w_3^{3\cdot 2^{t-3}}\]
and
\[x_2:=z(\widetilde w_2)^{2^{t-2}-\varepsilon'}z(\widetilde w_3)^{2^{t-3}-\varepsilon''}\sum_{k=1}^{t-3}
\widetilde w_3^{3\cdot 2^{t-3}}\otimes \widetilde w_2^{3\cdot 2^{k-1}}\widetilde w_3^{2^{t-1}-2^k}.\]
We want to prove that $x_1=x_2$. If we expand $x_1$ by binomial formula, we obtain summands of the form
\[\binom{2^{t-2}-\varepsilon'}{i}\binom{2^{t-3}-\varepsilon''}{j}\widetilde w_2^{3\cdot 2^{k-1}+i}\widetilde w_3^{2^{t-1}-2^k+j}\otimes \widetilde w_2^{2^{t-2}-\varepsilon'-i}
\widetilde w_3^{2^{t-1}-\varepsilon''-j},\]
where $1\leq k\leq t-3$, $0\leq i\leq 2^{t-2}-\varepsilon'$ and $0\leq j\leq 2^{t-3}-\varepsilon''$.
Let us first observe
\[\sigma(k,i,j):=\widetilde w_2^{3\cdot 2^{k-1}+i}\widetilde w_3^{2^{t-1}-2^k+j}.\]
Note that $2^{t-1}-2^k+j\geq 2^{t-1}-2^{t-3}>2^{t-2}+\varepsilon$, so, if
$3\cdot 2^{k-1}+i\geq 2^{t-2}$, then $\sigma(k,i,j)=0$ (since, $\widetilde w_2^{2^{t-2}}\widetilde w_3^{2^{t-2}+\varepsilon}=0$). Therefore, we may assume $i\le2^{t-2}-3\cdot2^{k-1}-1$. Similarly,
if $2^{t-1}-2^k+j\geq 2^{t-1}-1$, then $\sigma(k,i,j)=0$ (since $\widetilde w_3^{2^{t-1}-1}=0$), so we can shrink the interval for $j$ as well: $0\le j\le2^k-2$. Moreover, for the same reason, in order for
\[\tau(k,i,j):=\widetilde w_2^{2^{t-2}-\varepsilon'-i}\widetilde w_3^{2^{t-1}-\varepsilon''-j}\]
to be nonzero, one must have $2^{t-1}-\varepsilon''-j\le2^{t-1}-2$, i.e., $j\ge2-\varepsilon''$. Finally, we conclude that
\[x_1=\sum_{k=1}^{t-3}\,\,\sum_{i=1-\varepsilon'}^{2^{t-2}-3\cdot2^{k-1}-1}\sum_{j=2-\varepsilon''}^{2^k-2}\binom{2^{t-2}-\varepsilon'}{i}\binom{2^{t-3}-\varepsilon''}{j}\sigma(k,i,j)\otimes\tau(k,i,j).\]
For technical reasons (which will be clear soon) we set the lower boundary for $i$ to be $1-\varepsilon'$, and it is understood that $\binom{2^{t-2}-2}{-1}=0$ (if $\varepsilon'=2$).

By the same token, the corresponding expansion of $x_2$ is the following:
\[x_2=\sum_{k=1}^{t-3}\,\,\sum_{\overline i=1-\varepsilon'}^{2^{t-2}-3\cdot2^{k-1}-1}\sum_{\overline j=2-\varepsilon''}^{2^k-2}\binom{2^{t-2}-\varepsilon'}{\overline i}\binom{2^{t-3}-\varepsilon''}{\overline j}\tau(k,\overline i,\overline j)\otimes\sigma(k,\overline i,\overline j).\]

The idea now is to note that the change of variables $\overline i:=2^{t-2}-\varepsilon'-3\cdot2^{k-1}-i$ and $\overline j:=2^k-\varepsilon''-j$ transforms the sum $x_1$ to the sum $x_2$, leading to the conclusion $x_1=x_2$. Obviously, $1-\varepsilon'\le i\le2^{t-2}-3\cdot2^{k-1}-1$ is equivalent to $1-\varepsilon'\le\overline i\le2^{t-2}-3\cdot2^{k-1}-1$, and likewise, $2-\varepsilon''\le j\le2^k-2$ is equivalent to $2-\varepsilon''\le\overline j\le2^k-2$. Also it is routine to check that $\sigma(k,i,j)=\tau(k,\overline i,\overline j)$ and $\tau(k,i,j)=\sigma(k,\overline i,\overline j)$, so it remains to establish the congruence
\[\binom{2^{t-2}-\varepsilon'}{i}\binom{2^{t-3}-\varepsilon''}{j}\equiv\binom{2^{t-2}-\varepsilon'}{\overline i}\binom{2^{t-3}-\varepsilon''}{\overline j}\pmod2.\]
Since for all $l\in\{0,1,\ldots,2^m-1\}$, $\binom{2^m-1}{l}\equiv1\pmod2$, and $\binom{2^m-2}{l}\equiv1\pmod2$ if and only if $l$ is even, this amounts to showing that if $\varepsilon'=2$, then $i$ and $\overline i$ are of the same parity, and if $\varepsilon''=2$, then $j$ and $\overline j$ are of the same parity. The latter implication is clear from $\overline j=2^k-\varepsilon''-j=2^k-2-j$. For the former one, from $\overline i=2^{t-2}-\varepsilon'-3\cdot2^{k-1}-i=2^{t-2}-2-3\cdot2^{k-1}-i$ we see that the only problem is the case $k=1$ (and $\varepsilon'=2$). However, this case is impossible (more precisely, then the summand for $k=1$ is zero in both sums), since $\varepsilon'=2$ implies $\varepsilon''=1$ (see the very beginning of the proof), and we would have $1=2-\varepsilon''\le j\le2^k-2=0$.
\end{proof}

We are finally able to conclude the proof of Theorem \ref{zcl Wn} in this case, by verifying the inequality $\zcl(W_n)\leq 2^t+2^{t-1}-4$ (the opposite inequality is proved in Proposition \ref{1/4 - donje}).

\begin{proposition}
For $t\geq 4$ and $2^t-1\leq n\leq 2^t+2^{t-2}$ one has
\[\zcl(W_n)\leq 2^t+2^{t-1}-4.\]
\end{proposition}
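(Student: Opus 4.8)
By Lemma~\ref{zcl raste} it suffices to treat the largest admissible value $n=2^t+2^{t-2}$, i.e.\ the case $\varepsilon=0$ of Lemmas~\ref{2/4 - Grebner1} and~\ref{2/4 - gornje}; so the plan is to prove $\zcl(W_{2^t+2^{t-2}})\le 2^t+2^{t-1}-4$. As noted at the start of this section, $\zcl(W_n)$ is attained by a product $z(\widetilde w_2)^\beta z(\widetilde w_3)^\gamma$, and both $z(\widetilde w_2)$ and $z(\widetilde w_3)$ have positive degree; hence the claim is equivalent to showing that
\[
z(\widetilde w_2)^\beta z(\widetilde w_3)^\gamma=0 \quad\text{in } W_n\otimes W_n \qquad\text{whenever } \beta+\gamma\ge 2^t+2^{t-1}-3.
\]

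First I would record the heights of the two zero-divisors. Since $2^t-1\le n=2^t+2^{t-2}\le 2^t+2^{t-1}$, Theorem~\ref{heights} gives $\height(\widetilde w_2)=2^t-4$, and (because $n-2^t-1=2^{t-2}-1\le 2^{t-1}-2$ for $t\ge4$) also $\height(\widetilde w_3)=2^{t-1}-2$. The inequalities $2^{t-1}\le 2^t-4<2^t$ and $2^{t-2}\le 2^{t-1}-2<2^{t-1}$, valid for $t\ge4$, together with (\ref{htz}) then yield
\[
\height\big(z(\widetilde w_2)\big)=2^t-1, \qquad \height\big(z(\widetilde w_3)\big)=2^{t-1}-1.
\]
In particular $z(\widetilde w_2)^\beta z(\widetilde w_3)^\gamma=0$ as soon as $\beta\ge 2^t$ or $\gamma\ge 2^{t-1}$, which reduces the problem to finitely many exponent pairs.

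It then remains to handle the pairs with $\beta\le 2^t-1$, $\gamma\le 2^{t-1}-1$ and $\beta+\gamma\ge 2^t+2^{t-1}-3$; these constraints force $\beta\ge 2^t-2$. If $\beta=2^t-2$ then necessarily $\gamma=2^{t-1}-1$, so $z(\widetilde w_2)^\beta z(\widetilde w_3)^\gamma$ is a (trivial) multiple of $z(\widetilde w_2)^{2^t-2}z(\widetilde w_3)^{2^{t-1}-1}$, which vanishes by Lemma~\ref{2/4 - gornje}(a). If $\beta=2^t-1$ then $\gamma\in\{2^{t-1}-2,\,2^{t-1}-1\}$, and in both cases $z(\widetilde w_2)^\beta z(\widetilde w_3)^\gamma$ is a multiple of $z(\widetilde w_2)^{2^t-1}z(\widetilde w_3)^{2^{t-1}-2}$ (for $\gamma=2^{t-1}-1$ one simply picks up an extra factor $z(\widetilde w_3)$), and the latter is $0$ again by Lemma~\ref{2/4 - gornje}(a). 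This exhausts all cases, giving $\zcl(W_n)\le 2^t+2^{t-1}-4$.

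The substantive work is all contained in Lemma~\ref{2/4 - gornje}(a), so what is left is essentially bookkeeping, and I do not expect a genuine obstacle. The only points needing a little care are: (i) verifying that the height computation really kills every product with $\beta\ge 2^t$ or $\gamma\ge 2^{t-1}$, so that the problem becomes finite; and (ii) observing that the pair $(\beta,\gamma)=(2^t-1,\,2^{t-1}-1)$, although not literally one of the two vanishing products in Lemma~\ref{2/4 - gornje}(a), is obtained from $z(\widetilde w_2)^{2^t-1}z(\widetilde w_3)^{2^{t-1}-2}$ by multiplication with $z(\widetilde w_3)$.
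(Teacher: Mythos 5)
Your proposal is correct and follows essentially the same route as the paper: reduce to $n=2^t+2^{t-2}$ via Lemma \ref{zcl raste}, use Theorem \ref{heights} together with (\ref{htz}) to bound $\beta\le 2^t-1$ and $\gamma\le 2^{t-1}-1$, and then invoke Lemma \ref{2/4 - gornje}(a) for the two surviving exponent pairs. The only cosmetic difference is that the paper restricts attention to $\beta+\gamma$ exactly equal to $2^t+2^{t-1}-3$ (which suffices, since a nonzero longer product would yield a nonzero product of that length), whereas you also treat the pair $(2^t-1,2^{t-1}-1)$ by factoring off an extra $z(\widetilde w_3)$.
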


\begin{proof}
By Lemma \ref{zcl raste} it is enough to prove
$\zcl(W_{2^t+2^{t-2}})\leq 2^t+2^{t-1}-4$. Suppose to the contrary that for
$n=2^t+2^{t-2}$ there are integers $\beta,\gamma\ge0$ such that
\[z(\widetilde w_2)^\beta z(\widetilde w_3)^\gamma\neq0\quad\mbox{in } W_n\otimes W_n\quad \quad\mbox{and}\quad \beta+\gamma=2^t+2^{t-1}-3.\]
According to Theorem \ref{heights}, $\height(\widetilde w_2)=2^t-4$ and $\height(\widetilde w_3)=2^{t-1}-2$.
So, by (\ref{htz}),
$\height(z(\widetilde w_2))=2^t-1$ and
$\height(z(\widetilde w_3))=2^{t-1}-1$, which implies $\beta\le2^t-1$ and $\gamma\le2^{t-1}-1$. Hence
$(\beta,\gamma)\in\{(2^t-1,2^{t-1}-2),(2^t-2,2^{t-1}-1)\}$, which contradicts
Lemma \ref{2/4 - gornje}(a).
\end{proof}

\subsection{The case $\mathbf{n=2^t+2^{t-2}+1}$}

As we have already mentioned, the strategy of the proof is more or less the same in every case. So, in order to establish the lower bound, we use Lemma \ref{z nonzero}. We are going to pick the cohomology dimension $r$ in which we will be able to suitably sort all nonzero monomials (as we did in Lemma \ref{stepen -1} in the previous case). This will be done in Lemma \ref{cetvrtina plus 2}. However, similarly as in some points in the previous case, the corresponding claim for the next case is proved in literally the same way, so we include the case $n=2^t+2^{t-2}+2$ in Lemma \ref{cetvrtina plus 2}. For that, we will first need to identify the members of the Gr\"obner basis $F_{2^t+2^{t-2}+2}$ (for $F_{2^t+2^{t-2}+1}$ this was done in Lemma \ref{2/4 - Grebner1}).

\begin{lemma}\label{2/4 - Grebner}
Let $n=2^t+2^{t-2}+2$, where $t\ge4$.
Then for the elements of the Gr\"obner basis $F_n$ (from Theorem \ref{Grebner}) one has:
\begingroup
\allowbreak
\begin{itemize}
\item  $f_0=g_{2^t+2^{t-2}}$, $\mathrm{LM}(f_0)=w_2^{2^{t-1}+2^{t-3}}$;
\item  $f_1=w_3^2\left(g_{2^{t-1}+2^{t-3}-2}\right)^2$, $\mathrm{LM}(f_1)=w_2^{2^{t-1}+2^{t-3}-2}w_3^2$;
\item  $f_i=w_3^{2^i-1}\left(g_{2^{t-i}+2^{t-i-2}-2}\right)^{2^i}$, $\mathrm{LM}(f_i)=w_2^{2^{t-1}+2^{t-3}-2^i}w_3^{2^i-1}$, for $2\leq i\leq t-3$;
\item  $f_{t-3}=\begin{cases}
w_2^8w_3^2+w_2^2w_3^6, & \, t=4\\
w_2^{2^{t-1}}w_3^{2^{t-3}-1}+w_2^{2^{t-3}}w_3^{2^{t-2}+2^{t-3}-1}, & \, t\ge5
\end{cases}$;
\item  $f_{t-2}=w_2^{2^{t-2}}w_3^{2^{t-2}+2}$, $f_{t-1}=w_3^{2^{t-1}-1}$.
\end{itemize}
\endgroup
\end{lemma}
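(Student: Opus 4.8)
The plan is to compute the binary digits of $n - 2^t + 1$ for $n = 2^t + 2^{t-2} + 2$ and then read off each $f_i$ and its leading monomial directly from Theorem \ref{Grebner} and formula (\ref{LMgi}), exactly as in the proof of Lemma \ref{2/4 - Grebner1}. Here $n - 2^t + 1 = 2^{t-2} + 3$, so the nonzero binary digits are $\alpha_0 = 1$, $\alpha_1 = 1$, and $\alpha_{t-2} = 1$, with all other $\alpha_j = 0$. This gives the partial sums $s_0 = 1$, $s_1 = 3$, $s_2 = s_3 = \cdots = s_{t-3} = 3$, and $s_{t-2} = s_{t-1} = 2^{t-2} + 3$. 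Plugging these into $f_i = w_3^{\alpha_i s_{i-1}} g_{n - 2 + 2^i - s_i}$ yields the five bullet items: for $f_0$ the exponent on $w_3$ is $\alpha_0 s_{-1} = 0$ and the index is $n - 2 + 1 - 1 = 2^t + 2^{t-2}$; for $f_1$ the $w_3$-exponent is $\alpha_1 s_0 = 1$ and the index is $n - 2 + 2 - 3 = 2^t + 2^{t-2} - 3$; and so on.

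The second step is to rewrite each $g$-index in the form $2^i(2l_i + 3) - 3$ (as was done in (\ref{f_i}) in the proof of Lemma \ref{reduction - f_i}) and apply Lemma \ref{kvadriranje} to pull out the $2^i$-th power. For $1 \le i \le t-3$ one checks $n - 2 + 2^i - s_i = 2^t + 2^{t-2} + 2^i - 3 = 2^i(2^{t-i} + 2^{t-i-2} - 3) + 2^{i} \cdot 3 - 3$... more cleanly, $2^t + 2^{t-2} + 2^i - 3 = 2^i(2^{t-i}(r+3) \text{-form})$, so that $f_i = w_3^{2^i - 1}(g_{2^{t-i} + 2^{t-i-2} - 2})^{2^i}$ for $1 \le i \le t-3$ (the case $i=1$ needing the extra $w_3^{\alpha_1 s_0} = w_3$ factor absorbed into $w_3^{2^i-1}$ since $\alpha_1 s_0 = 1 = 2^1 - 1$). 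For $i = t-3$ this specializes to $w_3^{2^{t-3}-1} g_8^{2^{t-3}}$, and consulting Table \ref{table:2} gives $g_8 = w_2^4 + w_2 w_3^2$, hence $g_8^{2^{t-3}} = w_2^{2^{t-1}} + w_2^{2^{t-3}} w_3^{2^{t-2}}$, producing the displayed $t \ge 5$ formula; for $t = 4$ one has $f_1 = f_{t-3}$ and must instead write $w_3^2 g_8^2 = w_3^2(w_2^4 + w_2 w_3^2)^2 = w_2^8 w_3^2 + w_2^2 w_3^6$ directly. For $f_{t-2} = w_3^{\alpha_{t-2} s_{t-3}} g_{n - 2 + 2^{t-2} - s_{t-2}} = w_3^3 g_{2^t + 2^{t-2} - 3}$ one invokes Lemma \ref{g-3}(c) to get $w_3^3 \cdot w_2^{2^{t-2}} w_3^{2^{t-2}-1} = w_2^{2^{t-2}} w_3^{2^{t-2}+2}$, and $f_{t-1} = g_{2^t + 2^{t-1} - 3} = w_3^{2^{t-1}-1}$ by Lemma \ref{g-3}(b). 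The leading monomials all follow from $\mathrm{LM}(g_{2l}) = w_2^l$ together with the explicit $w_3$-prefactors, exactly as in the previous lemma.

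The only genuine subtlety — and the step I would be most careful about — is the small-$t$ boundary. When $t = 4$ we have $t - 3 = 1$, so the bullet for "$1 \le i \le t-3$" and the bullet for "$f_{t-3}$" collide, which is why the statement has a case split at $f_{t-3}$; I would double-check that $g_8^{2^{t-3}}$ for $t \ge 5$ really equals $w_2^{2^{t-1}} + w_2^{2^{t-3}} w_3^{2^{t-2}}$ by squaring $t-3$ times (Frobenius over $\mathbb{Z}_2$), and that the $t=4$ expansion $w_2^8 w_3^2 + w_2^2 w_3^6$ agrees with specializing that formula and multiplying by the leftover $w_3^{2^i - 1} = w_3$ from the general pattern — here the discrepancy is exactly the shift between $w_3^{2^{t-3}-1}$ (general $f_{t-3}$) and $w_3^{2}$ (the $f_1$ prefactor when $t=4$), which accounts for the different-looking answer. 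I would also confirm $2^{t-i} + 2^{t-i-2} - 2 \ge 0$ and that the index arithmetic $2^t + 2^{t-2} + 2^i - 3 = 2^i\big(2^{t-i} + 2^{t-i-2} - 3\big) \cdot(\text{...}) $ is correct by verifying $2^i \cdot 2^{t-i} = 2^t$, $2^i \cdot 2^{t-i-2} = 2^{t-2}$, and $2^i \cdot (-3) + (2^i \cdot 3 - 3)$ bookkeeping so that Lemma \ref{kvadriranje}'s hypothesis $2^i(r+3) - 3$ with $r = 2^{t-i} + 2^{t-i-2} - 5$... — in any case this is the kind of routine but error-prone index-chasing that I would write out explicitly in the final proof rather than leave to the reader. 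Everything else is a direct substitution into Theorem \ref{Grebner}, (\ref{LMgi}), and Lemmas \ref{kvadriranje} and \ref{g-3}, mirroring the proof of Lemma \ref{2/4 - Grebner1} verbatim.
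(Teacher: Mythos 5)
Your proposal is correct and follows essentially the same route as the paper: compute the digits $\alpha_j$ and partial sums $s_j$ for $n-2^t+1=2^{t-2}+3$, substitute into the definition of $f_i$ from Theorem \ref{Grebner}, rewrite via Lemma \ref{kvadriranje}, evaluate $g_8$ from Table \ref{table:2} and the last two polynomials via Lemma \ref{g-3}, and read off leading monomials from $\mathrm{LM}(g_{2l})=w_2^l$ (the paper merely shortcuts the cases $2\le i\le t-1$ by citing the $\varepsilon=2$ specialization of Lemma \ref{2/4 - Grebner1}). The only blemishes are two arithmetic slips in illustrative asides (the $f_1$ index is $n-3=2^t+2^{t-2}-1$, not $2^t+2^{t-2}-3$, and the relevant $r$ is $2^{t-i}+2^{t-i-2}-2$, not $-5$); your stated general formulas and final answers are nevertheless all correct.
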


\begin{proof}
We have $n-2^t+1=2^{t-2}+2+1$, and hence $\alpha_0=\alpha_1=1$, $\alpha_2=\alpha_3=\dots=\alpha_{t-3}=0$,
$\alpha_{t-2}=1$, $\alpha_{t-1}=0$, and $s_0=1$, $s_1=s_2=\dots=s_{t-3}=3$,
$s_{t-2}=s_{t-1}=2^{t-2}+3$. Therefore, the polynomials $f_i$, for $2\le i\le t-1$, can be obtained as in the proof of Lemma \ref{2/4 - Grebner1} by putting $\varepsilon=2$. The exceptions are $f_0$ and $f_1$, which we calculate by definition (see Theorem \ref{Grebner}) and Lemma \ref{kvadriranje}:
\[f_0=g_{2^t+2^{t-2}}, \qquad f_1=w_3g_{2^t+2^{t-2}-1}=w_3^2\left(g_{2^{t-1}+2^{t-3}-2}\right)^2.\]
The leading monomials are again obtained from the fact $\mathrm{LM}(g_{2l})=w_2^l$.
\end{proof}

\begin{lemma}\label{cetvrtina plus 2}
     Let $n=2^{t}+2^{t-2}+\varepsilon$, where $t\ge5$ and $\varepsilon\in\{1,2\}$.
     Then the only nonzero monomials of the form $\widetilde w_2^b\widetilde w_3^c$ in
     $H^{2^{t+1}-8}(\widetilde G_{n,3})$ are
    $\widetilde w_2^{2^{t}-3\cdot 2^{k-1}-1}\widetilde w_3^{2^k-2}$ for $1\leq k\leq t-1$. Furthermore, they are all equal, i.e.,
    \[\widetilde w_2^{2^{t}-3\cdot 2^{k-1}-1}\widetilde w_3^{2^k-2}=\widetilde w_2^{2^{t-2}-1}\widetilde w_3^{2^{t-1}-2}\quad
    \mbox{for }1\leq k\leq t-1,\]
    and $\widetilde w_2^{2^{t-2}-1}\widetilde w_3^{2^{t-1}-2}\in\mathcal B_n$.
\end{lemma}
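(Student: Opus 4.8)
The plan is to follow, essentially in spirit, the proof of Lemma~\ref{stepen -1}, now reading off the Gr\"obner basis $F_n$ from Lemma~\ref{2/4 - Grebner1} (when $\varepsilon=1$) and from Lemma~\ref{2/4 - Grebner} (when $\varepsilon=2$), and reducing monomials by means of Lemma~\ref{reduction - f_i}.

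First I would record three preliminary observations. Using the explicit leading monomials in Lemmas~\ref{2/4 - Grebner1} and~\ref{2/4 - Grebner}, one checks by inspection (the inequalities involved hold because $t\ge5$) that no $\mathrm{LM}(f_i)$ divides $w_2^{2^{t-2}-1}w_3^{2^{t-1}-2}$; hence $\widetilde w_2^{2^{t-2}-1}\widetilde w_3^{2^{t-1}-2}\in\mathcal B_n$, so in particular it is nonzero. Second, $n-2^t-1=2^{t-2}+\varepsilon-1<2^{t-1}-2$ for $t\ge5$, so Theorem~\ref{heights} gives $\height(\widetilde w_3)=2^{t-1}-2$, whence $\widetilde w_2^b\widetilde w_3^c=0$ whenever $c>2^{t-1}-2$. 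Third, $2b+3c=2^{t+1}-8$ forces $c$ to be even, and forces $b=2^t-3\cdot2^{k-1}-1$ precisely when $c=2^k-2$.

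The core is a backward induction on $k$, from $k=t-1$ down to $k=1$, proving: for every pair $(b,c)$ with $2b+3c=2^{t+1}-8$ and $2^k-2\le c<2^{k+1}-2$, the monomial $\widetilde w_2^b\widetilde w_3^c$ is nonzero if and only if $c=2^k-2$, and then it equals $\widetilde w_2^{2^{t-2}-1}\widetilde w_3^{2^{t-1}-2}$. The base case $k=t-1$ follows from the preliminary observations. For the inductive step, fix $1\le k\le t-2$ and an even $c$ with $2^k-2\le c\le 2^{k+1}-4$. If $c>2^k-2$, hence $c\ge 2^k$, I reduce $w_2^bw_3^c$ by $f_k$ when $1\le k\le t-3$, and by $f_{t-2}$ or $f_{t-3}$ when $k=t-2$ (by $f_{t-2}$ if $c\ge 2^{t-2}+\varepsilon$, by $f_{t-3}$ if $c=2^{t-2}$); a short arithmetic estimate, using $t\ge5$, shows the divisibility hypothesis of Lemma~\ref{reduction - f_i} is met. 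Since the relevant $2l_j$ are even, every $e$ occurring in the reduction is even, so Lemma~\ref{reduction - f_i} writes $\widetilde w_2^b\widetilde w_3^c$ as a $\mathbb Z_2$-combination of monomials $\widetilde w_2^{b'}\widetilde w_3^{c'}$ with $c'=c+2^je\ge c+2^{j+1}\ge 2^{k+1}-2$. Each such $c'$ either exceeds $2^{t-1}-2$ (so the term vanishes by the height bound) or lies in the range governed by the induction hypothesis for some $m\ge k+1$, and then a surviving term must have $c'=2^m-2$; but this gives $c\equiv-2\pmod{2^{k+1}}$, impossible for $2^k\le c\le 2^{k+1}-4$. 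Hence $\widetilde w_2^b\widetilde w_3^c=0$. If instead $c=2^k-2$, I reduce $w_2^{\,2^t-3\cdot2^{k-1}-1}w_3^{\,2^k-2}$ by $f_{k-1}$ (with $f_0$ when $k=1$); once more Lemma~\ref{reduction - f_i} expresses the class as a combination of $\widetilde w_2^{b'}\widetilde w_3^{c'}$ with $c'=c+2^{k-1}e\ge 2^{k+1}-2$. By the induction hypothesis every term with $c'\ne 2^m-2$ (or $c'>2^{t-1}-2$) vanishes, and every term with $c'=2^m-2$, $m\ge k+1$, and odd binomial coefficient equals $\widetilde w_2^{2^{t-2}-1}\widetilde w_3^{2^{t-1}-2}$ (the induction hypothesis pins the first exponent to $2^{t-2}-1$). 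A Lucas' theorem computation of the reduction coefficients then shows that exactly one such term survives (the surviving index is $m=k+1$, except when $k=t-3$, where it is $m=t-1$), so $\widetilde w_2^b\widetilde w_3^c=\widetilde w_2^{2^{t-2}-1}\widetilde w_3^{2^{t-1}-2}$. This closes the induction, and reading it off for $k=1,\dots,t-1$ gives the lemma.

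I expect the main obstacle to be the bookkeeping forced by the non-uniformity of $F_n$: unlike in Lemma~\ref{stepen -1}, where all leading monomials have the single shape $w_2^{2^{t-1}-2^i}w_3^{2^i-1}$, here the members $f_{t-3}$, $f_{t-2}$, $f_{t-1}$ are exceptional (and, for $\varepsilon=2$, so is $f_1$), so one must separately verify that each intended reduction is legal (i.e.\ check $\mathrm{LM}(f_j)\mid w_2^bw_3^c$) at the boundary indices $k\in\{1,t-2\}$ and for both values of $\varepsilon$, and one must carry out the Lucas' theorem analysis of the reduction coefficients, which, in contrast to the previous case, sometimes leaves a surviving term with $m>k+1$. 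Once each reduction is licensed, the divisibility estimates and the congruence/Lucas' bookkeeping are entirely analogous to the previous case, so I do not anticipate any genuinely new difficulty there.
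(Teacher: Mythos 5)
Your proposal is correct and follows essentially the same route as the paper: the same preliminary observations (membership of $\widetilde w_2^{2^{t-2}-1}\widetilde w_3^{2^{t-1}-2}$ in $\mathcal B_n$, the height bound on $\widetilde w_3$, parity of $c$), the same backward induction on $k$ with the same case split, reductions via Lemma \ref{reduction - f_i}, and the same Lucas-type coefficient analysis with the same surviving index ($m=k+1$, except $m=t-1$ when $k=t-3$). The only deviation is that for $c=2^k-2$ with $k\ge2$ you reduce by $f_{k-1}$ where the paper uniformly reduces by $f_0$; both choices are legal and lead to the same conclusion, so this is an implementation detail rather than a different method.
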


\begin{proof}
    It is obvious by Lemmas \ref{2/4 - Grebner1} and \ref{2/4 - Grebner} that $w_2^{2^{t-2}-1}w_3^{2^{t-1}-2}$ is not divisible by any of $\mathrm{LM}(f_i)$ for $0\leq i\leq t-1$, so $\widetilde w_2^{2^{t-2}-1}\widetilde w_3^{2^{t-1}-2}\in\mathcal B_n$, and hence $\widetilde w_2^{2^{t-2}-1}\widetilde w_3^{2^{t-1}-2}\neq0$.

    According to Theorem \ref{heights}, $\height(\widetilde w_3)=2^{t-1}-2$, which means that $\widetilde w_2^b\widetilde w_3^c=0$ whenever $c>2^{t-1}-2$.

    By using backward induction on $k$, where $1\leq k\leq t-1$, we now prove that $\widetilde w_2^b\widetilde w_3^c$, with $2b+3c=2^{t+1}-8$ and $2^k-2\leq c\leq 2^{k+1}-3$, is nonzero if and only if $c=2^k-2$, and that
    $\widetilde w_2^{2^{t}-3\cdot2^{k-1}-1}\widetilde w_3^{2^k-2}=\widetilde w_2^{2^{t-2}-1}\widetilde w_3^{2^{t-1}-2}$.
    Note that $2b+3c=2^{t+1}-8$ implies that $c$ must be even.

    \medskip

    We have already established this claim for $k=t-1$.

    \medskip

    Now we deal with the case $k=t-2$,
    i.e., $2^{t-2}-2 \le c\le 2^{t-1}-4$. Firstly, suppose $c\geq 2^{t-2}+2$. Then
    $2b=2^{t+1}-8-3c$ implies $b\geq 2^{t-2}+2$, and hence
    $w_{2}^bw_{3}^c$ is divisible by $f_{t-2}=w_{2}^{2^{t-2}}w_{3}^{2^{t-2}+\varepsilon}$ implying $\widetilde w_{2}^b\widetilde w_{3}^c=0$.

    Secondly, let $c=2^{t-2}-2+2\delta$, where
    $\delta \in \{0,1\}$. Then $b=2^t-3\cdot2^{t-3}-1-3\delta$, and note that $b\ge2^{t-1}$ because $t\ge5$. Since $f_{t-3}\in I_n$,
    by Lemmas \ref{2/4 - Grebner1} and \ref{2/4 - Grebner} we have $\widetilde w_{2}^{2^{t-1}}\widetilde w_{3}^{2^{t-3}-1}=\widetilde w_{2}^{2^{t-3}}\widetilde w_{3}^{2^{t-2}+2^{t-3}-1}$, and hence:
    \[\widetilde w_{2}^b\widetilde w_{3}^c=\widetilde w_{2}^{b-2^{t-1}}\widetilde w_{3}^{c-2^{t-3}+1}\cdot \widetilde w_{2}^{2^{t-3}}\widetilde w_{3}^{2^{t-2}+2^{t-3}-1}=\widetilde w_{2}^{2^{t-2}-1-3\delta}\widetilde w_{3}^{2^{t-1}-2+2\delta}.\]
    For $\delta=1$ we get zero because $\height(\widetilde w_3)=2^{t-1}-2$, and for $\delta=0$ we get the desired nonzero class. This finishes the case $k=t-2$.

    \medskip

    Now let $t-3\ge k\ge1$, and take $c$ such that $2^k-2\le c\le 2^{k+1}-4$. We have two cases.
    Suppose first that $c\ge2^k$ (then $ k\geq 2$). We will reduce
    $w_2^bw_3^c$ by $f_k$. By Lemmas \ref{2/4 - Grebner1} and \ref{2/4 - Grebner} we know that $\mathrm{LM}(f_{k})=w_2^{2^{t-1}+2^{t-3}-2^k}w_3^{2^k-1}$, and since $2b=2^{t+1}-8-3c\geq 2^{t+1}-8-3\cdot(2^{ k+1}-4)$, we
    have
    \[b>2^{t-1}+2^{t-3}-2^k\quad\mbox{and}\quad c\ge2^k,\]
    which means that we can indeed reduce $w_2^bw_3^c$ by $f_k$, i.e., apply Lemma \ref{reduction - f_i} for $i=k$. It is obvious from (\ref{LMgi}) and the fact $\mathrm{LM}(f_{k})=w_2^{2^{t-1}+2^{t-3}-2^k}w_3^{2^k-1}$ that $l_k=2^{t-1-k}+2^{t-3-k}-1$, and so (by Lemma \ref{reduction - f_i}):
    \[\widetilde w_2^b\widetilde w_3^c=\sum_{\substack{2d+3e=2^{t-k}+2^{t-2-k}-2\\ e>0}}\binom{d+e}{e}\widetilde w_2^{b-2^k\left(2^{t-1-k}+2^{t-3-k}-1-d\right)}\widetilde w_3^{c+2^ke}.\]
    Obviously, $e$ must be even in every summand, and so $e\ge2$, which leads to
    $c+2^ke\ge c+2^{k+1}>2^{k+1}-2$. Hence, we can apply inductive hypothesis to
    conclude that if the term $\widetilde w_2^{b-2^{t-1}-2^{t-3}+2^k(d+1)}\widetilde w_3^{c+2^{k}e}$
    is nonzero, we have $c+2^{ k}e=2^m-2$ for some $m>k+1$.
    This implies  $c\equiv -2\pmod{2^{ k+1}}$, which is false, because $2^k\leq c\leq 2^{k+1}-4$. Therefore, every summand in the last sum is zero, and hence
    $\widetilde w_2^b\widetilde w_3^c=0$.

    Suppose now that $c=2^k-2$ (for $k\ge1$).
    We have $b=2^t-3\cdot2^{k-1}-1>2^{t-1}+2^{t-3}$, $\mathrm{LM}(f_0)=w_2^{2^{t-1}+2^{t-3}}$ (see Lemmas \ref{2/4 - Grebner1} and \ref{2/4 - Grebner}), so we can reduce $w_{2}^bw_{3}^c=w_{2}^{2^{t}-3\cdot 2^{k-1}-1}w_{3}^{2^k-2}$ by $f_0$, and by Lemma \ref{reduction - f_i} we get:
    \[\widetilde w_2^{2^t-3\cdot2^{k-1}-1}\widetilde w_3^{2^k-2}=\sum_{\substack{2d+3e=2^t+2^{t-2}\\ e>0}}\binom{d+e}{e}\widetilde w_2^{2^t-3\cdot2^{k-1}-1-\left(2^{t-1}+2^{t-3}-d\right)}\widetilde w_3^{2^k-2+e},\]
    since $l_0=2^{t-1}+2^{t-3}$. For every summand $2^k-2+e\ge2^k$, so we can apply inductive hypothesis to conclude
    that if the term $\widetilde w_2^{3\cdot 2^{t-3}-3\cdot2^{k-1}-1+d}\widetilde w_3^{2^k-2+e}$ is nonzero, it must be $2^k-2+e=2^m-2$, i.e., $e=2^m-2^k$, and consequently $d=2^{t-1}+2^{t-3}-3\cdot2^{m-1}+3\cdot2^{k-1}$, for some $m$ such that $k+1\le m\le t-1$.
    Thus if we single out only possibly nonzero summands in the above sum, we obtain
    \begin{equation*}
    \widetilde w_2^{2^t-3\cdot2^{k-1}-1}\widetilde w_3^{2^k-2}=\sum_{m=k+1}^{t-1}\binom{2^{t-1}+2^{t-3}-2^{m-1}+2^{k-1}}{2^m-2^k}\widetilde w_2^{2^t-3\cdot2^{m-1}-1}\widetilde w_3^{2^m-2}.
    \end{equation*}

    If $k=t-3$, then we have two summands in this sum (for $m=t-2$ and $m=t-1$), and the corresponding binomial coefficients are $\binom{2^{t-1}+2^{t-4}}{2^{t-3}}\equiv0\pmod2$ and $\binom{2^{t-2}+2^{t-3}+2^{t-4}}{2^{t-2}+2^{t-3}}\equiv1\pmod2$. Therefore, $\widetilde w_2^{2^t-3\cdot2^{k-1}-1}\widetilde w_3^{2^k-2}=\widetilde w_2^{2^{t-2}-1}\widetilde w_3^{2^{t-1}-2}$.

    If $k\le t-4$, let us prove that the only nonzero summand in the above sum is the one for $m=k+1$. Namely, for $m\ge k+2$ we have $2^{t-1}+2^{t-3}-2^{m-1}+2^{k-1}\equiv2^{k-1}\pmod{2^{k+1}}$, while $2^m-2^k\equiv2^k\pmod{2^{k+1}}$, and by Lucas' theorem the corresponding binomial coefficient vanishes. For $m=k+1$ we obtain $\binom{2^{t-1}+2^{t-3}-2^{k-1}}{2^k}\widetilde w_2^{2^t-3\cdot2^k-1}\widetilde w_3^{2^{k+1}-2}=\widetilde w_2^{2^t-3\cdot2^k-1}\widetilde w_3^{2^{k+1}-2}$ (again by Lucas' theorem). Finally, by the induction hypothesis, this monomial is equal to $\widetilde w_2^{2^{t-2}-1}\widetilde w_3^{2^{t-1}-2}$. This concludes the induction step and the proof of the lemma.
\end{proof}


We now prove the lower bound in this case.

\begin{proposition}\label{2/4 - donje1}
    Let $t\ge4$. Then $\zcl(W_{2^t+2^{t-2}+1})\ge2^t+2^{t-1}-3$.
\end{proposition}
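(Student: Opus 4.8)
The statement concerns a single Grassmannian, so the plan is to exhibit directly, via Lemma \ref{z nonzero}, a nonzero zero-divisor product of length $2^t+2^{t-1}-3$ in $W_n$ with $n=2^t+2^{t-2}+1$. I would apply that lemma with $\beta=2^t-1$, $\gamma=2^{t-1}-2$ (so that $\beta+\gamma=2^t+2^{t-1}-3$) and $r=2^{t+1}-8$; here $r\le2\beta+3\gamma$, and these values of $\beta,\gamma$ are exactly $\height(z(\widetilde w_2))$ and $\height(z(\widetilde w_3))-1$, which is consistent with the matching upper bound $z(\widetilde w_2)^{2^t-1}z(\widetilde w_3)^{2^{t-1}-1}=0$ obtained in Lemma \ref{2/4 - gornje}(b). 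The dimension $r=2^{t+1}-8$ is chosen precisely so that Lemma \ref{cetvrtina plus 2} applies, with $\varepsilon=1$.

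By Lemma \ref{cetvrtina plus 2}, the only nonzero first tensor factors occurring in (\ref{nonzero sum}) are the classes $\widetilde w_2^{2^t-3\cdot2^{k-1}-1}\widetilde w_3^{2^k-2}$, $1\le k\le t-1$, and they all equal the basis monomial $\widetilde w_2^{2^{t-2}-1}\widetilde w_3^{2^{t-1}-2}$. For each such $(b,c)$ the two binomial coefficients are $\equiv1\pmod2$ by Lucas' theorem ($\binom{2^t-1}{b}$ always, and $\binom{2^{t-1}-2}{c}$ since $c=2^k-2$ is even), so the sum (\ref{nonzero sum}) reduces to
\[\widetilde w_2^{2^{t-2}-1}\widetilde w_3^{2^{t-1}-2}\otimes\sum_{k=1}^{t-1}\widetilde w_2^{3\cdot2^{k-1}}\widetilde w_3^{2^{t-1}-2^k}.\]
Applying Corollary \ref{2/4 - posledica} (whose congruence holds mod $I_{2^t+2^{t-2}+2}\subseteq I_n$), the terms $k=1,\dots,t-3$ collapse to $\widetilde w_2^{3\cdot2^{t-2}}$, which cancels the $k=t-1$ term $\widetilde w_2^{3\cdot2^{t-2}}$ over $\mathbb Z_2$; only the $k=t-2$ summand $\widetilde w_2^{3\cdot2^{t-3}}\widetilde w_3^{2^{t-2}}$ survives. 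Hence (\ref{nonzero sum}) equals $\widetilde w_2^{2^{t-2}-1}\widetilde w_3^{2^{t-1}-2}\otimes\widetilde w_2^{3\cdot2^{t-3}}\widetilde w_3^{2^{t-2}}$.

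Since we are working over a field it now suffices that both tensor factors are nonzero, and then Lemma \ref{z nonzero} gives $\zcl(W_n)\ge2^t+2^{t-1}-3$. The first factor lies in $\mathcal B_n$ by Lemma \ref{cetvrtina plus 2}; for the second I would read off the leading monomials from Lemma \ref{2/4 - Grebner1} and check that $w_2^{3\cdot2^{t-3}}w_3^{2^{t-2}}$ is divisible by none of $\mathrm{LM}(f_0),\dots,\mathrm{LM}(f_{t-1})$ (the potential obstruction $f_{t-2}=w_2^{2^{t-2}}w_3^{2^{t-2}+1}$ fails because its $w_3$-exponent is too large, and the rest fail because the $w_2$-exponent $3\cdot2^{t-3}$ is too small), so $\widetilde w_2^{3\cdot2^{t-3}}\widetilde w_3^{2^{t-2}}\in\mathcal B_n$. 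Since Lemma \ref{cetvrtina plus 2} assumes $t\ge5$, the case $t=4$ (i.e.\ $n=21$) needs a separate, direct treatment: one computes $F_{21}$ from Theorem \ref{Grebner}, notes that $H^{24}(\widetilde G_{21,3})$ now carries, besides $\widetilde w_2^3\widetilde w_3^6$, the independent nonzero class $\widetilde w_2^6\widetilde w_3^4$, and verifies that the substitution $\beta=15$, $\gamma=6$, $r=24$ yields $\widetilde w_2^3\widetilde w_3^6\otimes\widetilde w_2^6\widetilde w_3^4+\widetilde w_2^6\widetilde w_3^4\otimes\widetilde w_2^3\widetilde w_3^6\ne0$.

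I expect the main obstacle to be the two-stage collapse of the inner sum via Corollary \ref{2/4 - posledica} and the bookkeeping of the $\mathbb Z_2$-cancellations (making sure no further summands coincide), together with the routine but delicate verification that $\widetilde w_2^{3\cdot2^{t-3}}\widetilde w_3^{2^{t-2}}$ survives as an element of the basis $\mathcal B_n$; the correct choice of the cohomological dimension $r$ is the other decisive point, but it is forced by the shape of Lemma \ref{cetvrtina plus 2}.
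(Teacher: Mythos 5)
Your proposal is correct and follows essentially the same route as the paper: the same choice of $\beta=2^t-1$, $\gamma=2^{t-1}-2$, $r=2^{t+1}-8$ in Lemma \ref{z nonzero}, the same reduction via Lemma \ref{cetvrtina plus 2}, Lucas' theorem and Corollary \ref{2/4 - posledica} to the simple tensor $\widetilde w_2^{2^{t-2}-1}\widetilde w_3^{2^{t-1}-2}\otimes\widetilde w_2^{2^{t-2}+2^{t-3}}\widetilde w_3^{2^{t-2}}$, the same membership check in $\mathcal B_n$, and the same separate computation for $t=4$. The only difference is that you spell out the $\mathbb Z_2$-cancellation between the $k=t-1$ term and the collapsed $k\le t-3$ terms, which the paper leaves implicit.
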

\begin{proof}
By Lemma \ref{z nonzero} (for $\beta=2^t-1$, $\gamma=2^{t-1}-2$ and $r=2^{t+1}-8$) it is enough to prove that
\begin{equation}\label{1/4+1}
\sum_{2b+3c=2^{t+1}-8}
        \binom{2^{t}-1}{b}\binom{2^{t-1}-2}{c}
        \widetilde w_2^b\widetilde w_3^c\otimes
        \widetilde w_2^{2^{t}-1-b}\widetilde w_3^{2^{t-1}-2-c}
\end{equation}
is nonzero in $W_{2^t+2^{t-2}+1}\otimes W_{2^t+2^{t-2}+1}$.

If $t\ge5$, then by Lemma \ref{cetvrtina plus 2}, we only need to consider the summands with
    $(b,c)=(2^t-3\cdot 2^{k-1}-1,2^k-2)$, for $1\leq k\leq t-1$, and for each of them, we know that
    $\widetilde w_2^b\widetilde w_3^c=\widetilde w_2^{2^{t-2}-1}\widetilde w_3^{2^{t-1}-2}$. So, by Lucas' theorem and Corollary \ref{2/4 - posledica} (along with the fact $I_{2^t+2^{t-2}+2}\subseteq I_{2^t+2^{t-2}+1}$ and equivalence (\ref{ekv - s tildom - bez tilde})), (\ref{1/4+1}) becomes
    \begin{align*}
       \widetilde w_2^{2^{t-2}-1}\widetilde w_3^{2^{t-1}-2}\otimes \sum_{k=1}^{t-1}\widetilde w_2^{3\cdot 2^{k-1}}\widetilde w_3^{2^{t-1}-2^k}=\widetilde w_2^{2^{t-2}-1}\widetilde w_3^{2^{t-1}-2}\otimes \widetilde w_2^{2^{t-2}+2^{t-3}}\widetilde w_3^{2^{t-2}}.
    \end{align*}
    This last simple tensor is nonzero, because we already know that its first coordinate is in the additive basis
    $\mathcal B_{2^t+2^{t-2}+1}$, and it is routine to check (by Lemma \ref{2/4 - Grebner1}) that the second one belongs to $\mathcal B_{2^t+2^{t-2}+1}$ as well.

If $t=4$, then (\ref{1/4+1}) simplifies to
\begin{align*}
\sum_{2b+3c=24}
        \binom{15}{b}\binom{6}{c}
        \widetilde w_2^b\widetilde w_3^c\otimes
        \widetilde w_2^{15-b}\widetilde w_3^{6-c}=&\,\,\widetilde w_2^{12}\otimes
        \widetilde w_2^3\widetilde w_3^6+\widetilde w_2^9\widetilde w_3^2\otimes
        \widetilde w_2^6\widetilde w_3^4\\
        &+\widetilde w_2^6\widetilde w_3^4\otimes\widetilde w_2^9\widetilde w_3^2+\widetilde w_2^3\widetilde w_3^6\otimes\widetilde w_2^{12}.
\end{align*}
From Lemma \ref{2/4 - Grebner1} we see that the Gr\"obner basis $F_{21}$ consists of polynomials $f_0=g_{20}=w_2^{10}+w_2w_3^6$ (see Table \ref{table:2}), $f_1=w_2^8w_3+w_2^2w_3^5$, $f_2=w_2^4w_3^5$ and $f_3=w_3^7$. It is now routine to verify that $\widetilde w_2^3\widetilde w_3^6$ and $\widetilde w_2^6\widetilde w_3^4$ are two distinct elements of the additive basis $\mathcal B_{21}$, and that $\widetilde w_2^{12}=\widetilde w_2^9\widetilde w_3^2=\widetilde w_2^3\widetilde w_3^6$. Therefore, the above sum becomes
\[\widetilde w_2^3\widetilde w_3^6\otimes\widetilde w_2^6\widetilde w_3^4+\widetilde w_2^6\widetilde w_3^4\otimes\widetilde w_2^3\widetilde w_3^6,\]
and this is nonzero because $\widetilde w_2^3\widetilde w_3^6\otimes\widetilde w_2^6\widetilde w_3^4$ and $\widetilde w_2^6\widetilde w_3^4\otimes\widetilde w_2^3\widetilde w_3^6$ are two distinct elements of the additive basis $\{e\otimes f\mid e,f\in\mathcal B_{21}\}$ of $W_{21}\otimes W_{21}$.
\end{proof}

We are left to prove the upper bound in this case. Fortunately, we have everything prepared for this in the previous subsection (Lemma \ref{2/4 - gornje}(b)).

\begin{proposition}\label{2/4 - gornje1}
    Let $t\ge4$. Then $\zcl(W_{2^t+2^{t-2}+1})\le2^t+2^{t-1}-3$.
\end{proposition}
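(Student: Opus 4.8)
The strategy follows the same pattern as the upper bound in the previous subsection. We argue by contradiction: assume that for $n=2^t+2^{t-2}+1$ there exist integers $\beta,\gamma\ge0$ with $\beta+\gamma=2^t+2^{t-1}-2$ and $z(\widetilde w_2)^\beta z(\widetilde w_3)^\gamma\neq0$ in $W_n\otimes W_n$. The point is that the heights of $\widetilde w_2$ and $\widetilde w_3$ pin down $\beta$ and $\gamma$ exactly, and then the single surviving case is ruled out by a relation already proved in Lemma \ref{2/4 - gornje}(b).

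\begin{proof}
Suppose to the contrary that for $n=2^t+2^{t-2}+1$ there are integers $\beta,\gamma\ge0$ such that
\[z(\widetilde w_2)^\beta z(\widetilde w_3)^\gamma\neq0\quad\mbox{in } W_n\otimes W_n\quad\quad\mbox{and}\quad\beta+\gamma=2^t+2^{t-1}-2.\]
According to Theorem \ref{heights} (with $2^t-1\le n<2^{t+1}-1$, using $n=2^t+2^{t-2}+1\le2^t+2^{t-1}$), we have $\height(\widetilde w_2)=2^t-4$ and $\height(\widetilde w_3)=\max\{2^{t-1}-2,\,2^{t-2}\}=2^{t-1}-2$. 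So, by (\ref{htz}), $\height(z(\widetilde w_2))=2^t-1$ and $\height(z(\widetilde w_3))=2^{t-1}-1$, which forces $\beta\le2^t-1$ and $\gamma\le2^{t-1}-1$. Combined with $\beta+\gamma=2^t+2^{t-1}-2$, this leaves only the possibility $\beta=2^t-1$ and $\gamma=2^{t-1}-1$. But then $z(\widetilde w_2)^{2^t-1}z(\widetilde w_3)^{2^{t-1}-1}=0$ in $W_n\otimes W_n$ by Lemma \ref{2/4 - gornje}(b), a contradiction.
\end{proof}

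\textbf{Remark on the argument.} There is essentially no obstacle here: all the heavy lifting was done in Lemma \ref{2/4 - gornje}, whose part (b) treats precisely the case $\varepsilon=1$, i.e.\ $n=2^t+2^{t-2}+1$. The only things to check are that Theorem \ref{heights} applies in the stated range and that the height constraints leave a \emph{unique} pair $(\beta,\gamma)$; both are immediate once one notes that $2^t+2^{t-2}+1$ lies in the interval $2^t-1\le n\le2^t+2^{t-1}$ where $\height(\widetilde w_2)=2^t-4$. Together with Proposition \ref{2/4 - donje1}, this yields $\zcl(W_{2^t+2^{t-2}+1})=2^t+2^{t-1}-3$ for all $t\ge4$, completing the proof of Theorem \ref{zcl Wn} in this case.
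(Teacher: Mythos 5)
Your proof is correct and is essentially identical to the paper's: both use Theorem \ref{heights} together with (\ref{htz}) to force $(\beta,\gamma)=(2^t-1,2^{t-1}-1)$ and then invoke Lemma \ref{2/4 - gornje}(b) for the contradiction. The height computations you spell out are accurate, so nothing is missing.
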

\begin{proof}
Let $z(\widetilde w_2)^\beta z(\widetilde w_3)^\gamma\in W_{2^t+2^{t-2}+1}\otimes W_{2^t+2^{t-2}+1}$, where $\beta+\gamma=2^t+2^{t-1}-2$. We need to prove that $z(\widetilde w_2)^\beta z(\widetilde w_3)^\gamma=0$.

Since $\height(z(\widetilde w_2))=2^t-1$ and $\height(z(\widetilde w_3))=2^{t-1}-1$ (by (\ref{htz}) and Theorem \ref{heights}), if $z(\widetilde w_2)^\beta z(\widetilde w_3)^\gamma$ were nonzero, we would have $\beta\le2^t-1$ and $\gamma\le2^{t-1}-1$. Together with $\beta+\gamma=2^t+2^{t-1}-2$ this leads to the conclusion $\beta=2^t-1$ and $\gamma=2^{t-1}-1$. But then we have a contradiction with Lemma \ref{2/4 - gornje}(b).
\end{proof}

Propositions \ref{2/4 - donje1} and \ref{2/4 - gornje1} prove Theorem \ref{zcl Wn} in the case $n=2^t+2^{t-2}+1$.

\subsection{The case $\mathbf{2^t+2^{t-2}+2\le n\le2^t+2^{t-1}}$}

In this case, for the upper bound the roughest estimate will do. Namely, $\height(\widetilde w_2)=2^t-4$ and $\height(\widetilde w_3)\in\{2^{t-1}-2,2^{t-1}-1\}$ (see Theorem \ref{heights}), and by (\ref{htz}), $\height(z(\widetilde w_2))=2^t-1$ and $\height(z(\widetilde w_3))=2^{t-1}-1$. Therefore, if $z(\widetilde w_2)^\beta z(\widetilde w_3)^\gamma\neq0$, then $\beta+\gamma\le2^t-1+2^{t-1}-1$, and so
\[\zcl(W_n)\le2^t+2^{t-1}-2.\]
Hence, the following proposition finishes the proof of Theorem \ref{zcl Wn} in this case.

\begin{proposition}
    Let $2^t+2^{t-2}+2\le n\le2^t+2^{t-1}$, where $t\ge4$. Then
    \[\zcl(W_{n})\ge2^t+2^{t-1}-2.\]
\end{proposition}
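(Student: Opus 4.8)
The plan is to mimic the strategy of the two preceding cases. By Lemma~\ref{zcl raste} it is enough to prove the inequality for the smallest value $n=2^t+2^{t-2}+2$ of the range. For this $n$ Theorem~\ref{heights} gives $\height(\widetilde w_2)=2^t-4$ and $\height(\widetilde w_3)=2^{t-1}-2$ (the maximum in Theorem~\ref{heights} is attained by the first term since $t\ge4$), so by~(\ref{htz}) $\height(z(\widetilde w_2))=2^t-1$ and $\height(z(\widetilde w_3))=2^{t-1}-1$; hence the only pair with $\beta+\gamma=2^t+2^{t-1}-2$ that could possibly give a nonzero product is $(\beta,\gamma)=(2^t-1,2^{t-1}-1)$. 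I would apply Lemma~\ref{z nonzero} to this pair with $r=2^{t+1}-8$ (clearly $r\le 2\beta+3\gamma$). Since $2^t-1$ and $2^{t-1}-1$ are strings of ones in binary, Lucas' theorem makes every binomial coefficient in~(\ref{nonzero sum}) equal to $1$.

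Next I would feed in Lemma~\ref{cetvrtina plus 2}, which applies verbatim as $n=2^t+2^{t-2}+2$: the only nonzero monomials $\widetilde w_2^b\widetilde w_3^c$ of degree $2^{t+1}-8$ are $\widetilde w_2^{2^t-3\cdot 2^{k-1}-1}\widetilde w_3^{2^k-2}$ for $1\le k\le t-1$, and each equals $\widetilde w_2^{2^{t-2}-1}\widetilde w_3^{2^{t-1}-2}\in\mathcal B_n$. Consequently in~(\ref{nonzero sum}) only the summands with $(b,c)=(2^t-3\cdot 2^{k-1}-1,\,2^k-2)$ survive; all of them have first tensor coordinate $\widetilde w_2^{2^{t-2}-1}\widetilde w_3^{2^{t-1}-2}$, and factoring it out leaves
\[
\widetilde w_2^{2^{t-2}-1}\widetilde w_3^{2^{t-1}-2}\otimes\widetilde w_3\sum_{k=1}^{t-1}\widetilde w_2^{3\cdot 2^{k-1}}\widetilde w_3^{2^{t-1}-2^k}.
\]
By Corollary~\ref{2/4 - posledica} the partial sum over $1\le k\le t-3$ equals $\widetilde w_2^{3\cdot 2^{t-2}}$ in $W_n$; the $k=t-1$ term is again $\widetilde w_2^{3\cdot 2^{t-2}}$, so those two cancel over $\mathbb Z_2$, and only the $k=t-2$ term $\widetilde w_2^{3\cdot 2^{t-3}}\widetilde w_3^{2^{t-2}}$ remains. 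Thus~(\ref{nonzero sum}) equals $\widetilde w_2^{2^{t-2}-1}\widetilde w_3^{2^{t-1}-2}\otimes\widetilde w_2^{3\cdot 2^{t-3}}\widetilde w_3^{2^{t-2}+1}$.

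It remains to see this tensor is nonzero. Its first coordinate lies in $\mathcal B_n$ by Lemma~\ref{cetvrtina plus 2}; for the second I would use the explicit leading monomials from Lemma~\ref{2/4 - Grebner} to check that $w_2^{3\cdot 2^{t-3}}w_3^{2^{t-2}+1}$ is divisible by none of $\mathrm{LM}(f_0),\dots,\mathrm{LM}(f_{t-1})$, so that $\widetilde w_2^{3\cdot 2^{t-3}}\widetilde w_3^{2^{t-2}+1}\in\mathcal B_n$ as well. Over the field $\mathbb Z_2$ a tensor product of two nonzero classes is nonzero, so Lemma~\ref{z nonzero} gives $\zcl(W_n)\ge\beta+\gamma=2^t+2^{t-1}-2$. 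Since Lemma~\ref{cetvrtina plus 2} is only stated for $t\ge5$, the case $t=4$ (that is, $n=22$) would be handled separately by repeating the same computation by hand with the concrete Gröbner basis $F_{22}$ of Lemma~\ref{2/4 - Grebner}, exactly as in the $t=4$ portion of the proof of Proposition~\ref{2/4 - donje1}; there~(\ref{nonzero sum}) collapses to $\widetilde w_2^3\widetilde w_3^6\otimes\widetilde w_2^6\widetilde w_3^5$, a product of two distinct basis elements. The one genuinely case-specific task is the $\mathcal B_n$-membership check for the second tensor factor (and the parallel hand computation for $t=4$); the conceptual input — Lemma~\ref{cetvrtina plus 2} together with Corollary~\ref{2/4 - posledica} — is already in place, which is why this case is the mildest of the seven.
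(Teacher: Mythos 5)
Your proposal is correct and follows essentially the same route as the paper: reduction to $n=2^t+2^{t-2}+2$ via Lemma \ref{zcl raste}, application of Lemma \ref{z nonzero} with $(\beta,\gamma,r)=(2^t-1,2^{t-1}-1,2^{t+1}-8)$, identification of the surviving summands via Lemma \ref{cetvrtina plus 2}, collapse of the second tensor factor to $\widetilde w_2^{3\cdot2^{t-3}}\widetilde w_3^{2^{t-2}+1}$ via Corollary \ref{2/4 - posledica}, a $\mathcal B_n$-membership check from Lemma \ref{2/4 - Grebner}, and a separate hand computation for $t=4$ yielding exactly the paper's $\widetilde w_2^3\widetilde w_3^6\otimes\widetilde w_2^6\widetilde w_3^5$. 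The only cosmetic slip is the phrase ``two distinct basis elements'' in the $t=4$ case — since a single simple tensor survives there, what matters is only that both factors are nonzero, not that they are distinct.
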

\begin{proof}
The proof is very similar to the proof of Proposition \ref{2/4 - donje1}. First of all, according to Lemma \ref{zcl raste}, it is enough to prove the inequality for
    $n=2^t+2^{t-2}+2$. We do this by applying Lemma \ref{z nonzero} for $\beta=2^t-1$, $\gamma=2^{t-1}-1$ (by the discussion preceding the proposition, $\beta$ and $\gamma$ have to be exactly these ones) and $r=2^{t+1}-8$. So, we show that
    \begin{equation}\label{2/4 nonzero}
\sum_{2b+3c=2^{t+1}-8}
        \binom{2^{t}-1}{b}\binom{2^{t-1}-1}{c}
        \widetilde w_2^b\widetilde w_3^c\otimes
        \widetilde w_2^{2^{t}-1-b}\widetilde w_3^{2^{t-1}-1-c}
\end{equation}
is nonzero in $W_n\otimes W_n$.

In the case $t\ge5$ we use Lemma \ref{cetvrtina plus 2} to single out nonzero summands in (\ref{2/4 nonzero}). They are the ones with
    $(b,c)=(2^t-3\cdot 2^{k-1}-1,2^k-2)$, for $1\leq k\leq t-1$, and they are all equal to
    $\widetilde w_2^{2^{t-2}-1}\widetilde w_3^{2^{t-1}-2}$. So, (\ref{2/4 nonzero}) simplifies to
    \begin{align*}
       \widetilde w_2^{2^{t-2}-1}\widetilde w_3^{2^{t-1}-2}\otimes \sum_{k=1}^{t-1}\widetilde w_2^{3\cdot 2^{k-1}}\widetilde w_3^{2^{t-1}-2^k+1}=\widetilde w_2^{2^{t-2}-1}\widetilde w_3^{2^{t-1}-2}\otimes \widetilde w_2^{2^{t-2}+2^{t-3}}\widetilde w_3^{2^{t-2}+1},
    \end{align*}
    by Corollary \ref{2/4 - posledica}. It remains to verify that $\widetilde w_2^{2^{t-2}-1}\widetilde w_3^{2^{t-1}-2},\widetilde w_2^{2^{t-2}+2^{t-3}}\widetilde w_3^{2^{t-2}+1}\in\mathcal B_n$, which easily follows from Lemma \ref{2/4 - Grebner}. So, (\ref{2/4 nonzero}) is nonzero if $t\ge5$.

In the case $t=4$ the sum (\ref{2/4 nonzero}) is equal to
\begin{align*}
\sum_{2b+3c=24}
        \binom{15}{b}\binom{7}{c}
        \widetilde w_2^b\widetilde w_3^c\otimes
        \widetilde w_2^{15-b}\widetilde w_3^{7-c}=&\,\,\widetilde w_2^{12}\otimes
        \widetilde w_2^3\widetilde w_3^7+\widetilde w_2^9\widetilde w_3^2\otimes
        \widetilde w_2^6\widetilde w_3^5\\
        &+\widetilde w_2^6\widetilde w_3^4\otimes\widetilde w_2^9\widetilde w_3^3+\widetilde w_2^3\widetilde w_3^6\otimes\widetilde w_2^{12}\widetilde w_3.
\end{align*}
Lemma \ref{2/4 - Grebner} gives us the Gr\"obner basis $F_{22}$ for the ideal $I_{22}$. It consists of polynomials $f_0=g_{20}=w_2^{10}+w_2w_3^6$, $f_1=w_2^8w_3^2+w_2^2w_3^6$, $f_2=w_2^4w_3^6$ and $f_3=w_3^7$. By using $f_0$, $f_1$ and $f_3$ we obtain $\widetilde w_2^{12}\widetilde w_3=\widetilde w_2^9\widetilde w_3^3=\widetilde w_2^3\widetilde w_3^7=0$. Also, $\widetilde w_2^9\widetilde w_3^2=\widetilde w_2^3\widetilde w_3^6$ (by using $f_1$), and so (\ref{2/4 nonzero}) is equal to $\widetilde w_2^3\widetilde w_3^6\otimes
        \widetilde w_2^6\widetilde w_3^5$. This simple tensor is nonzero because both $\widetilde w_2^3\widetilde w_3^6$ and $\widetilde w_2^6\widetilde w_3^5$ belong to the additive basis $\mathcal B_{22}$ (by the above Gr\"obner basis $F_{22}$).
\end{proof}

\subsection{The case $\mathbf{n=2^t+2^{t-1}+1}$}

In a similar fashion as before, in order to obtain the lower bound in this case, we start off with the Gr\"obner basis $F_{2^t+2^{t-1}+1}$, and then choose a cohomological dimension $r$ to apply Lemma \ref{z nonzero}.

\begin{lemma}\label{pomocna za pola plus 2}
    Let $n=2^t+2^{t-1}+1$, where $t\ge4$. Then for the elements $f_i$ ($0\le i\le t-1$) of the Gr\"obner basis $F_n$ one has:
    \begingroup
    \allowbreak
    \begin{itemize}
        \item $f_0=g_{2^t+2^{t-1}}$, $\mathrm{LM}(f_0)=w_2^{2^{t-1}+2^{t-2}}$;
        \item $f_i=w_3^{2^i-1}(g_{2^{t-i}+2^{t-1-i}-2})^{2^i}$,
            $\mathrm{LM}(f_i)=w_2^{2^{t-1}+2^{t-2}-2^i}w_3^{2^i-1}$ for $1\le i\le t-2$;
        \item $f_{t-2}=w_2^{2^{t-1}}w_3^{2^{t-2}-1}$, $f_{t-1}=w_3^{2^{t-1}+1}$.
    \end{itemize}
    \endgroup
\end{lemma}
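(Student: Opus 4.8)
The plan is to follow the same template as the proofs of Lemmas \ref{2/4 - Grebner1} and \ref{2/4 - Grebner}: read off the combinatorial data $\alpha_j$, $s_i$ attached to $n=2^t+2^{t-1}+1$, substitute them into the description of $F_n$ in Theorem \ref{Grebner}, and then simplify each $f_i$ with the help of Lemma \ref{kvadriranje} and Lemma \ref{g-3}. First I would compute $n-2^t+1=2^{t-1}+2$; since $t\ge4$ forces $t-1>1$, the binary expansion is $2^{t-1}+2^1$, so $\alpha_1=\alpha_{t-1}=1$ and $\alpha_j=0$ for every other $j\in\{0,1,\dots,t-1\}$ (in particular $\alpha_0=0$). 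Hence $s_0=0$, $s_1=s_2=\cdots=s_{t-2}=2$, and $s_{t-1}=2^{t-1}+2$.

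Next I would substitute into $f_i=w_3^{\alpha_is_{i-1}}g_{n-2+2^i-s_i}$. For $i=0$ the prefactor is trivial ($\alpha_0=0$) and $n-2-s_0=n-1$, giving $f_0=g_{2^t+2^{t-1}}$. For $1\le i\le t-2$ the prefactor is again trivial (for $i=1$ since $s_0=0$, and for $i\ge2$ since $\alpha_i=0$), and $n-2+2^i-s_i=2^t+2^{t-1}+2^i-3=2^i\bigl((2^{t-i}+2^{t-1-i}-2)+3\bigr)-3$, so Lemma \ref{kvadriranje} with $r=2^{t-i}+2^{t-1-i}-2$ yields $f_i=w_3^{2^i-1}\bigl(g_{2^{t-i}+2^{t-1-i}-2}\bigr)^{2^i}$; specializing to $i=t-2$ and using $g_4=w_2^2$ from Table \ref{table:2} gives the stated $f_{t-2}=w_2^{2^{t-1}}w_3^{2^{t-2}-1}$. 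Finally, for $i=t-1$ the prefactor is $w_3^{\alpha_{t-1}s_{t-2}}=w_3^2$ and $n-2+2^{t-1}-s_{t-1}=2^t+2^{t-1}-3$, so $f_{t-1}=w_3^2\,g_{2^t+2^{t-1}-3}$, which equals $w_3^{2^{t-1}+1}$ by Lemma \ref{g-3}(b).

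For the leading monomials I would use $\mathrm{LM}(g_{2l})=w_2^l$ (immediate from (\ref{g-exp}), since $g_{2l}$ contains $w_2^l$ with coefficient $1$ and $w_2>w_3$): for $1\le i\le t-2$, writing $2l=2^{t-i}+2^{t-1-i}-2$ so that $l=2^{t-1-i}+2^{t-2-i}-1$, we get $\mathrm{LM}(f_i)=w_3^{2^i-1}\,w_2^{2^il}=w_2^{2^{t-1}+2^{t-2}-2^i}w_3^{2^i-1}$, while $\mathrm{LM}(f_0)=w_2^{2^{t-1}+2^{t-2}}$ and $f_{t-1}=w_3^{2^{t-1}+1}$ is already a monomial; alternatively one can simply invoke formula (\ref{LMgi}) with the above values of $\alpha_j$ and $s_i$. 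There is no genuine obstacle: the whole argument is a direct bookkeeping exercise identical in spirit to the earlier Gröbner-basis lemmas, and the only point that repays a moment's attention is the nontrivial prefactor $w_3^{\alpha_{t-1}s_{t-2}}=w_3^2$ in $f_{t-1}$, which is precisely what raises the exponent to $2^{t-1}+1$ instead of $2^{t-1}-1$.
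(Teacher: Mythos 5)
Your proposal is correct and matches the paper's own proof essentially step for step: the same extraction of the digits $\alpha_j$ and partial sums $s_i$ from $n-2^t+1=2^{t-1}+2$, the same use of Lemma \ref{kvadriranje} to rewrite $f_i$ for $1\le i\le t-2$, Table \ref{table:2} for $g_4$, Lemma \ref{g-3}(b) for $f_{t-1}$, and $\mathrm{LM}(g_{2l})=w_2^l$ for the leading monomials. No gaps.
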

\begin{proof}
We have $n-2^t+1=2^{t-1}+2$, and hence $\alpha_0=0$, $\alpha_1=1$, $\alpha_2=\alpha_3=\dots=\alpha_{t-2}=0$, $\alpha_{t-1}=1$, and
$s_0=0$, $s_1=s_2=\dots=s_{t-2}=2$, and
$s_{t-1}=2^{t-1}+2$. Since $f_i=w_3^{\alpha_is_{i-1}}g_{n-2+2^i-s_i}$, this implies
$f_0=g_{2^t+2^{t-1}}$,
\[f_i=g_{2^t+2^{t-1}-3+2^i}=w_3^{2^i-1}(g_{2^{t-i}+2^{t-1-i}-2})^{2^i}, \mbox{ for } 1\le i\le t-2,\]
by Lemma \ref{kvadriranje}. In particular,  $f_{t-2}=w_3^{2^{t-2}-1}g_4^{2^{t-2}}=w_2^{2^{t-1}}w_3^{2^{t-2}-1}$ (see Table \ref{table:2}).
The claims about the leading monomials are obvious consequences of $\mathrm{LM}(g_{2l})=w_2^l$ ($l\ge0$).
Finally, $f_{t-1}=w_3^2g_{2^t+2^{t-1}-3}=w_3^{2^{t-1}+1}$ by Lemma \ref{g-3}(b).
\end{proof}

The following lemma and its corollary will be used in this, as well as in the following two cases.

\begin{lemma}\label{treca cetvrtina - lema}
For $t\ge4$ we have
\[g_{2^{t+1}-6}+w_2^{2^{t}-3}+w_2^{2^{t-2}-3}w_3^{2^{t-1}}\in w_3I_{2^t+2^{t-1}+2^{t-3}+2^{t-4}}.\]
\end{lemma}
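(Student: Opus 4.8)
The argument will run by induction on $t\ge 4$, in close parallel with the proof of Lemma \ref{2/4 - lema}. Throughout put $m:=2^{t+1}+2^{t}+2^{t-2}+2^{t-3}$ for the modulus appearing in the $(t+1)$-instance of the claim, and note that $m=2\bigl(2^{t}+2^{t-1}+2^{t-3}+2^{t-4}\bigr)$.

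For the base case $t=4$ one reads off Table \ref{table:2}: since $g_{26}=w_2^{13}+w_2w_3^8$, while $w_2^{2^4-3}=w_2^{13}$ and $w_2^{2^2-3}w_3^{2^3}=w_2w_3^8$, the whole expression equals $0$, which lies in $w_3I_{27}$ trivially (here $2^{t+1}-6=26$, $2^t-3=13$, $2^{t-2}-3=1$, $2^{t-1}=8$, and $2^t+2^{t-1}+2^{t-3}+2^{t-4}=27$).

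For the inductive step, assume the statement for some $t\ge 4$ and square it using the ``in particular'' part of Lemma \ref{kvadriranje2}, which yields
\[g_{2^{t+1}-6}^{2}+w_2^{2^{t+1}-6}+w_2^{2^{t-1}-6}w_3^{2^{t}}\in w_3I_{m}.\]
Since we work over $\mathbb Z_2$, proving the $(t+1)$-instance is equivalent to showing that the sum of the target expression with this squared expression lies in $w_3I_m$. To reduce $g_{2^{t+2}-6}$ I would first invoke Lemma \ref{2n preko n} to write $g_{2^{t+2}-6}=g_{2^{t+1}-3}^{2}+w_2g_{2^{t+1}-4}^{2}$, then use $g_{2^{t+1}-3}=0$ (Lemma \ref{g-3}(a)) and expand $g_{2^{t+1}-4}=w_2g_{2^{t+1}-6}+w_3g_{2^{t+1}-7}$ by the recurrence (\ref{recgpolk3}), arriving at $g_{2^{t+2}-6}=w_2^{3}g_{2^{t+1}-6}^{2}+w_2w_3^{2}g_{2^{t+1}-7}^{2}$. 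Substituting and collecting terms in characteristic $2$, the expression to be placed in $w_3I_m$ becomes
\[(w_2^{3}+1)\bigl(g_{2^{t+1}-6}^{2}+w_2^{2^{t+1}-6}+w_2^{2^{t-1}-6}w_3^{2^{t}}\bigr)+w_2w_3^{2}g_{2^{t+1}-7}^{2}.\]
The first summand is $(w_2^{3}+1)$ times the squared inductive hypothesis, hence lies in the ideal $w_3I_m$; the second equals $w_2w_3\,g_{2^{t+2}-11}$ by Lemma \ref{kvadriranje} (with $i=1$ and $r=2^{t+1}-7$), and $g_{2^{t+2}-11}\in I_m$ because $2^{t+2}-11\ge m-2$ for $t\ge 4$ (the difference being $5\cdot 2^{t-3}-9\ge 0$), so this term too lies in $w_3I_m$. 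This closes the induction.

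I expect the decisive point to be recognizing that $2^{t+1}-3$ falls under Lemma \ref{g-3}(a), so that $g_{2^{t+1}-3}^{2}$ vanishes; without that cancellation the reduction of $g_{2^{t+2}-6}$ would not collapse to a single extra term, and the clean factorization by $w_2^{3}+1$ (which is what lets the squared hypothesis reappear) would not materialize. The remaining work is pure bookkeeping --- tracking exponents through the squaring, verifying the ``large index'' inequalities that put the relevant $g_r$ inside $I_m$, and using the routine fact that $w_3I_m$ is an ideal, hence stable under multiplication by $w_2^{3}+1$ and by $w_2$.
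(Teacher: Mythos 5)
Your proof is correct and follows essentially the same route as the paper: induction on $t$ with the same base case, squaring the hypothesis via Lemma \ref{kvadriranje2}, reducing $g_{2^{t+2}-6}$ through Lemma \ref{2n preko n}, Lemma \ref{g-3}(a) and the recurrence, and finishing with $w_2w_3g_{2^{t+2}-11}\in w_3I_m$. The only (immaterial) difference is that you add the squared hypothesis and factor out $w_2^3+1$, whereas the paper multiplies the squared hypothesis by $w_2^3$ before adding.
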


\begin{proof}
Our proof is by induction on $t\geq 4$. For $t=4$ the claim is
\[g_{26}+w_2^{13}+w_2w_3^{8}\in w_3I_{27},\]
which is clearly true, since $g_{26}+w_2^{13}+w_2w_3^8=0$ (see Table \ref{table:2}).

So, suppose that the claim is true for some
$t\geq 4$ and let us prove it for $t+1$. By Lemma \ref{kvadriranje2}:
\[g_{2^{t+1}-6}^2+w_2^{2^{t+1}-6}+w_2^{2^{t-1}-6}w_3^{2^{t}}\in w_3I_{2^{t+1}+2^{t}+2^{t-2}+2^{t-3}},\]
and hence
\[w_2^3g_{2^{t+1}-6}^2+w_2^{2^{t+1}-3}+w_2^{2^{t-1}-3}w_3^{2^{t}}\in w_3I_{2^{t+1}+2^{t}+2^{t-2}+2^{t-3}}.\]
So, it is enough to prove
\[g_{2^{t+2}-6}+w_2^3g_{2^{t+1}-6}^2\in w_3I_{2^{t+1}+2^{t}+2^{t-2}+2^{t-3}}.\]
By Lemma \ref{2n preko n}, Lemma \ref{g-3}(a), (\ref{recgpolk3}) and Lemma
\ref{kvadriranje}, we have
\begingroup
\allowbreak
\begin{align*}
  g_{2^{t+2}-6}+w_2^3g_{2^{t+1}-6}^2& =g_{2^{t+1}-3}^2+w_2g_{2^{t+1}-4}^2+w_2^3g_{2^{t+1}-6}^2\\
  &=w_2(w_2g_{2^{t+1}-6}+w_3g_{2^{t+1}-7})^2+w_2^3g_{2^{t+1}-6}^2\\
  &=w_2w_3^2g_{2^{t+1}-7}^2=w_2w_3g_{2^{t+2}-11}.
\end{align*}
\endgroup
Since $2^{t+2}-11>2^{t+1}+2^{t}+2^{t-2}+2^{t-3}-2$ for $t\geq 4$, we  have
$g_{2^{t+2}-11}\in I_{2^{t+1}+2^{t}+2^{t-2}+2^{t-3}}$, which completes our proof.
\end{proof}

\begin{corollary}\label{treca cetvrtina - posledica}
For $t\ge4$ one has
\[w_2^{2^{t}-3}\equiv w_2^{2^{t-2}-3}w_3^{2^{t-1}} \pmod{I_{13\cdot2^{t-3}+1}}.\]
\end{corollary}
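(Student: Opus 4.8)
The plan is to deduce the desired congruence modulo $I_{13\cdot2^{t-3}+1}$ from the membership established in Lemma \ref{treca cetvrtina - lema}, following exactly the pattern of the proof of Corollary \ref{2/4 - posledica}. First I would record the numerics: $13\cdot2^{t-3} = 2^t+2^{t-1}+2^{t-3}$, and $2^t+2^{t-1}+2^{t-3}+2^{t-4} \ge 2^t+2^{t-1}+2^{t-3}+1 = 13\cdot2^{t-3}+1$ for $t\ge4$. Hence, using (\ref{w_3I_n subset I_n+1}) and the fact that the sequence $\{I_n\}$ is descending,
\[
w_3I_{2^t+2^{t-1}+2^{t-3}+2^{t-4}} \subseteq w_3I_{13\cdot2^{t-3}} \subseteq I_{13\cdot2^{t-3}+1}.
\]
So Lemma \ref{treca cetvrtina - lema} immediately yields
\[
g_{2^{t+1}-6}+w_2^{2^t-3}+w_2^{2^{t-2}-3}w_3^{2^{t-1}}\in I_{13\cdot2^{t-3}+1},
\]
i.e.
\[
w_2^{2^t-3}\equiv g_{2^{t+1}-6}+w_2^{2^{t-2}-3}w_3^{2^{t-1}}\pmod{I_{13\cdot2^{t-3}+1}}.
\]

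The only remaining point is to show that the extra term $g_{2^{t+1}-6}$ lies in $I_{13\cdot2^{t-3}+1}$, so that it can be dropped from the congruence. This follows from the general fact recalled just after (\ref{w_3I_n subset I_n+1}), namely that $g_r\in I_n$ whenever $r\ge n-2$: here $r = 2^{t+1}-6$ and $n-2 = 13\cdot2^{t-3}-1 = 2^t+2^{t-1}+2^{t-3}-1$, and indeed $2^{t+1}-6 = 2^t+2^{t-1}+2^{t-1} - 6 > 2^t+2^{t-1}+2^{t-3}-1$ for $t\ge4$ (the gap being $3\cdot2^{t-3}-5 > 0$). Therefore $g_{2^{t+1}-6}\equiv0\pmod{I_{13\cdot2^{t-3}+1}}$, and combining with the previous display gives
\[
w_2^{2^t-3}\equiv w_2^{2^{t-2}-3}w_3^{2^{t-1}}\pmod{I_{13\cdot2^{t-3}+1}},
\]
as claimed.

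There is essentially no obstacle here: the corollary is a routine bookkeeping consequence of Lemma \ref{treca cetvrtina - lema}, the inclusion (\ref{w_3I_n subset I_n+1}), and the descending nature of $\{I_n\}_{n\ge2}$. The only thing one must be a little careful about is the chain of index comparisons — verifying $2^t+2^{t-1}+2^{t-3}+2^{t-4}\ge 13\cdot2^{t-3}+1$ and $2^{t+1}-6\ge 13\cdot2^{t-3}-1$ for all $t\ge4$ — but both are elementary inequalities in powers of two. (One could alternatively check the edge case $t=4$ directly against Table \ref{table:2}, where $g_{26}=w_2^{13}+w_2w_3^8$ and $I_{27}=(g_{25},g_{26},g_{27})$, but the general argument already covers it.)
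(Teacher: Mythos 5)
Your proposal is correct and follows essentially the same route as the paper's proof: apply Lemma \ref{treca cetvrtina - lema}, discard $g_{2^{t+1}-6}$ because $2^{t+1}-6$ exceeds the relevant threshold, and chain ideal inclusions down to $I_{13\cdot2^{t-3}+1}$ (the paper uses $w_3I_n\subseteq I_n$ directly where you route through $w_3I_n\subseteq I_{n+1}$, a negligible difference). All index comparisons check out.
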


\begin{proof}
For $t\ge4$ one clearly has $2^{t+1}-6\ge2^t+2^{t-1}+2^{t-3}+2^{t-4}-2$ and hence $g_{2^{t+1}-6}\in I_{2^t+2^{t-1}+2^{t-3}+2^{t-4}}$. Since $w_3I_n\subseteq I_n$, by Lemma \ref{treca cetvrtina - lema} we have
\[w_2^{2^{t}-3}+w_2^{2^{t-2}-3}w_3^{2^{t-1}}\in I_{2^t+2^{t-1}+2^{t-3}+2^{t-4}}\subseteq I_{2^t+2^{t-1}+2^{t-3}+1}=I_{13\cdot2^{t-3}+1},\]
and we are done.
\end{proof}

We are now ready for the proof of the lower bound.

\begin{proposition}\label{3/4 - donje1b - prvi}
    Let $t\ge4$. Then $\zcl(W_{2^t+2^{t-1}+1})\ge2^{t+1}+2^{t-3}-3$.
\end{proposition}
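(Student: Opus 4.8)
The plan is to apply Lemma \ref{z nonzero} with a carefully chosen triple $(\beta,\gamma,r)$. Guided by the pattern in the previous cases and by the height information from Theorem \ref{heights} (for $n=2^t+2^{t-1}+1$ we have $\height(\widetilde w_2)=2^t-4$, so $\height(z(\widetilde w_2))=2^t-1$, and $\height(\widetilde w_3)=2^{t-1}-1$, so $\height(z(\widetilde w_3))=2^t-1$), the target value $2^{t+1}+2^{t-3}-3$ should be reached with $\beta=2^t-1$ and $\gamma=2^t+2^{t-3}-2$. First I would fix this $\beta$, $\gamma$ and an appropriate cohomological dimension $r$ (the analogue of $2^{t+1}-11$ and $2^{t+1}-8$ from the earlier cases), and reduce the problem, via Lemma \ref{z nonzero}, to showing that the corresponding sum $\sum_{2b+3c=r}\binom{\beta}{b}\binom{\gamma}{c}\widetilde w_2^b\widetilde w_3^c\otimes\widetilde w_2^{\beta-b}\widetilde w_3^{\gamma-c}$ is nonzero in $W_n\otimes W_n$.

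The core of the argument is a structural lemma, in the spirit of Lemma \ref{stepen -1} and Lemma \ref{cetvrtina plus 2}, pinning down which monomials $\widetilde w_2^b\widetilde w_3^c$ of the relevant degree are nonzero in $H^*(\widetilde G_{n,3})$ and showing they all coincide with a single basis monomial from $\mathcal B_n$. I would prove this by backward induction on the exponent of $w_3$, at each step reducing $w_2^bw_3^c$ by the appropriate Gröbner basis element $f_i$ from Lemma \ref{pomocna za pola plus 2} (using Lemma \ref{reduction - f_i}), observing that in every resulting summand the exponent of $w_3$ jumps by a multiple of $2^{i+1}$ (forcing a congruence obstruction that kills all but one term), and invoking Lucas' theorem to evaluate the surviving binomial coefficient. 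The bound $\height(\widetilde w_3)=2^{t-1}-1$ furnishes the induction base, as in the earlier cases.

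Once the monomials are identified, the sum in Lemma \ref{z nonzero} collapses to $\widetilde w_2^{a}\widetilde w_3^{b}\otimes\big(\text{a sum of monomials}\big)$ with the first tensor factor a known basis element. The remaining task is to show the second factor is nonzero; here I expect Corollary \ref{treca cetvrtina - posledica} (the identity $w_2^{2^t-3}\equiv w_2^{2^{t-2}-3}w_3^{2^{t-1}}\pmod{I_{13\cdot2^{t-3}+1}}$, valid in $I_n$ since $I_n\supseteq I_{13\cdot2^{t-3}+1}$ for $n\le13\cdot2^{t-3}$, and in particular for $n=2^t+2^{t-1}+1$) to play the role that Corollary \ref{2/4 - posledica} played before, letting me rewrite the collapsed sum as a single explicit monomial; one then checks via Lemma \ref{pomocna za pola plus 2} that this monomial lies in $\mathcal B_n$, hence is nonzero. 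As usual the small-$t$ case $t=4$ (i.e.\ $n=25$) may need to be handled separately by an explicit computation with the concrete Gröbner basis $F_{25}$, since the general argument uses $t\ge5$ in places.

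The main obstacle I anticipate is the bookkeeping in the backward-induction lemma: choosing $r$ so that the parity/divisibility constraints on $(b,c)$ are tight enough to leave exactly one nonzero monomial per congruence class, and verifying at each reduction step that the hypotheses of Lemma \ref{reduction - f_i} hold (i.e.\ $\mathrm{LM}(f_i)\mid w_2^bw_3^c$) for the right index $i$ throughout the relevant range of exponents. The binomial-coefficient evaluations via Lucas' theorem are routine once the combinatorics is set up correctly, but getting the indexing of the $f_i$'s to line up with the ranges of $c$ — especially near the two ends $i=0$ and $i=t-2$, where $f_0$ and $f_{t-2}=w_2^{2^{t-1}}w_3^{2^{t-2}-1}$ behave differently from the generic $f_i$ — will require care.
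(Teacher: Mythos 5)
Your starting point is incorrect, and the error propagates fatally. For $n=2^t+2^{t-1}+1$ the manifold lies in the range $2^{t+1}-2^{s+1}+1\le n\le 2^{t+1}-2^s$ with $s=t-2$, so Theorem \ref{heights} gives $\height(\widetilde w_2)=2^{t+1}-3\cdot2^{t-2}-1=2^t+2^{t-2}-1$ (not $2^t-4$; that formula only applies for $n\le 2^t+2^{t-1}$) and $\height(\widetilde w_3)=\max\{2^{t-1}-2,\,n-2^t-1\}=2^{t-1}$ (not $2^{t-1}-1$). Hence by (\ref{htz}) we have $\height(z(\widetilde w_2))=2^{t+1}-1$ and $\height(z(\widetilde w_3))=2^t-1$. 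Your proposed exponents $\beta=2^t-1$, $\gamma=2^t+2^{t-3}-2$ therefore cannot work: since $\gamma\ge 2^t>\height(z(\widetilde w_3))$, the factor $z(\widetilde w_3)^\gamma$ is already zero and the whole product vanishes. (Your stated inference ``$\height(\widetilde w_3)=2^{t-1}-1$, so $\height(z(\widetilde w_3))=2^t-1$'' is also internally inconsistent with (\ref{htz}).)

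Beyond the wrong parameters, the structural lemma you plan to prove is not the route needed here. The paper takes $\beta=2^{t+1}-1$, $\gamma=2^{t-3}-2$, $r=2^{t+1}+2^{t-1}-2$ and does \emph{not} classify all nonzero monomials in degree $r$ by backward induction. Instead it observes that every summand with $b>2^t+2^{t-2}-1$ dies by the height of $\widetilde w_2$, while every summand with $b<2^t+2^{t-2}-1$ has $c>0$ and $b>2^t-3$, so Corollary \ref{treca cetvrtina - posledica} rewrites $\widetilde w_2^b\widetilde w_3^c$ with a factor $\widetilde w_3^{2^{t-1}+c}=0$ (since $\height(\widetilde w_3)=2^{t-1}$). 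Only the summand $(b,c)=(2^t+2^{t-2}-1,0)$ survives, and the sum collapses to the simple tensor $\widetilde w_2^{2^t+2^{t-2}-1}\otimes\widetilde w_2^{2^{t-1}+2^{t-2}}\widetilde w_3^{2^{t-3}-2}$, whose nonvanishing is then checked via the Gr\"obner basis $F_{2^t+2^{t-1}+1}$ (with a separate direct computation for $t=4$). Your correct instinct about the role of Corollary \ref{treca cetvrtina - posledica} and the separate treatment of $t=4$ does not compensate for the invalid choice of $(\beta,\gamma)$ and the absence of the actual mechanism that isolates the single nonzero summand.
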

\begin{proof}
We apply Lemma \ref{z nonzero} for $\beta=2^{t+1}-1$, $\gamma=2^{t-3}-2$ and $r=2^{t+1}+2^{t-1}-2$. We need to show that
\[\sum_{2b+3c=2^{t+1}+2^{t-1}-2}\binom{2^{t+1}-1}{b}\binom{2^{t-3}-2}{c}\widetilde w_2^b\widetilde w_3^c\otimes\widetilde w_2^{2^{t+1}-1-b}\widetilde w_3^{2^{t-3}-2-c}\neq0.\]
Let us prove that all summands in this sum, except the one with $(b,c)=(2^{t}+2^{t-2}-1,0)$, are zero.

By Theorem \ref{heights}, $\height(\widetilde w_2)=2^t+2^{t-2}-1$, and so $\widetilde w_2^b\widetilde w_3^c=0$ if $b>2^{t}+2^{t-2}-1$.

If $b<2^{t}+2^{t-2}-1$, then $c>0$; but also, $c\le2^{t-3}-2$ implies $2b=2^{t+1}+2^{t-1}-2-3c\ge2^{t+1}+2^{t-3}+4$, i.e., $b\ge2^t+2^{t-4}+2>2^t-3$, and by Corollary \ref{treca cetvrtina - posledica} (using (\ref{ekv - s tildom - bez tilde}) and the fact $I_{13\cdot2^{t-3}+1}\subseteq I_{2^t+2^{t-1}+1}$) we have
\[\widetilde w_2^b\widetilde w_3^c=\widetilde w_2^{b-2^t+3}\widetilde w_2^{2^t-3}\widetilde w_3^c=\widetilde w_2^{b-2^t+3}\widetilde w_2^{2^{t-2}-3}\widetilde w_3^{2^{t-1}+c}=0,\]
because $\height(\widetilde w_3)=2^{t-1}$ (see Theorem \ref{heights}).

Therefore, by Lucas' theorem, the last sum becomes
    \begin{equation}\label{simple tensor}
        \widetilde w_2^{2^t+2^{t-2}-1}\otimes\widetilde w_2^{2^{t-1}+2^{t-2}}\widetilde w_3^{2^{t-3}-2}.
    \end{equation}
    The first coordinate of this simple tensor is nonzero because $\height(\widetilde w_2)=2^t+2^{t-2}-1$,
    so it suffices to prove $\widetilde w_2^{2^{t-1}+2^{t-2}}\widetilde w_3^{2^{t-3}-2}\neq0$.

    If $t=4$, then the simple tensor (\ref{simple tensor}) is $\widetilde w_2^{19}\otimes\widetilde w_2^{12}$, and since its first coordinate is nonzero, so is the second.

    If $t\ge5$, then by Lemma \ref{pomocna za pola plus 2} and Table \ref{table:2} we have
    \[f_{t-4}=w_3^{2^{t-4}-1}g_{22}^{2^{t-4}}=w_2^{2^{t-1}+2^{t-3}+2^{t-4}}w_3^{2^{t-4}-1}+w_2^{2^{t-1}}w_3^{2^{t-3}+2^{t-4}-1},\]
    and hence $\widetilde w_2^{2^{t-1}+2^{t-2}}\widetilde w_3^{2^{t-3}-2}=\widetilde w_2^{2^{t-1}+2^{t-4}}\widetilde w_3^{2^{t-2}-2}$. It is routine to check that
    $\widetilde w_2^{2^{t-1}+2^{t-4}}\widetilde w_3^{2^{t-2}-2}\in\mathcal B_{2^t+2^{t-1}+1}$ (by Lemma
    \ref{pomocna za pola plus 2}), which completes our proof.
\end{proof}

For the proof of the upper bound, in order to avoid unnecessary repeating of the same arguments, it will be convenient to include the next case as well. For that reason, we now establish upper bounds jointly for this case and the next one.

\begin{proposition}\label{3/4 - gornje1}
If $t\ge4$ and $2^t+2^{t-1}+1\leq n\leq 2^t+2^{t-1}+2^{t-3}$, then
\[\zcl(W_n)\le2^{t+1}+2^{t-3}-2.\]
Moreover,
\[\zcl(W_{2^t+2^{t-1}+1})\le2^{t+1}+2^{t-3}-3.\]
\end{proposition}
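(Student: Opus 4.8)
The plan is to reduce both inequalities to a single value of $n$, then argue by contradiction using the heights of $z(\widetilde w_2)$ and $z(\widetilde w_3)$ together with a reduction identity of the type used in Lemma \ref{2/4 - gornje}. By Lemma \ref{zcl raste}, the first inequality follows once $\zcl(W_{13\cdot2^{t-3}})\le2^{t+1}+2^{t-3}-2$ is established, since $13\cdot2^{t-3}=2^t+2^{t-1}+2^{t-3}$ is the largest $n$ in the range. The ``moreover'' part is the sharper statement at the bottom of the range, $n=2^t+2^{t-1}+1$, and since Lemma \ref{zcl raste} points the wrong way it must be proved separately. In each of the two cases I would assume that there are integers $\beta,\gamma\ge1$ with $z(\widetilde w_2)^\beta z(\widetilde w_3)^\gamma\ne0$ in $W_n\otimes W_n$ and $\beta+\gamma$ equal to one more than the asserted bound, and derive a contradiction.

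The first step is to constrain $(\beta,\gamma)$ as tightly as the heights permit. From Theorem \ref{heights} one reads off $\height(\widetilde w_2)=2^t+2^{t-2}-1$ and $\height(\widetilde w_3)=\max\{2^{t-1}-2,\,n-2^t-1\}$, the latter lying between $2^{t-1}$ and $2^{t-1}+2^{t-3}-1$ throughout our range; then (\ref{htz}) gives $\height(z(\widetilde w_2))=2^{t+1}-1$ and $\height(z(\widetilde w_3))=2^t-1$. Hence $\beta\le2^{t+1}-1$ and $\gamma\le2^t-1$, so $(\beta,\gamma)$ is confined to a one-parameter family, conveniently indexed by $p:=2^{t+1}-1-\beta\ge0$ (which is small). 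In contrast with the case $2^t-1\le n\le2^t+2^{t-2}$, where the heights isolated only two candidate pairs, here they leave a whole segment, so the argument must work uniformly in $p$.

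The heart of the proof is a reduction identity for $z(\widetilde w_2)^\beta z(\widetilde w_3)^\gamma$, obtained in the style of the key decomposition in the proof of Lemma \ref{2/4 - gornje}: using $z(a)^{2^l}=z(a^{2^l})$ (see (\ref{z(a^stepen dvojke)})) I would split off from $z(\widetilde w_2)^\beta$ as many factors $z(\widetilde w_2^{2^j})=\widetilde w_2^{2^j}\otimes1+1\otimes\widetilde w_2^{2^j}$ as the exponent permits (and likewise factors $z(\widetilde w_3^{2^k})$ from $z(\widetilde w_3)^\gamma$ when $\gamma$ is large enough). Because $\binom{2^{t+1}-1}{b}\equiv1\pmod2$ for every $b$, the resulting sums collapse; the diagonal contributions then die through the relation $\widetilde w_2^{2^t+2^{t-2}}=0$ (equivalently $\height(\widetilde w_2)=2^t+2^{t-2}-1$) together with the Gr\"obner basis relations of Lemma \ref{pomocna za pola plus 2} for $n=2^t+2^{t-1}+1$ (notably $\widetilde w_2^{2^{t-1}}\widetilde w_3^{2^{t-2}-1}=0$ and $\widetilde w_3^{2^{t-1}+1}=0$) and the analogous description of $F_{13\cdot2^{t-3}}$. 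Rewriting the surviving top powers of $\widetilde w_2$ by the congruence $\widetilde w_2^{2^t-3}\equiv\widetilde w_2^{2^{t-2}-3}\widetilde w_3^{2^{t-1}}$ of Corollary \ref{treca cetvrtina - posledica}, one should be left with $z(\widetilde w_2)^\beta z(\widetilde w_3)^\gamma$ written as a product of low-degree $z$-factors and a symmetric expression $A=\sum(u\otimes v+v\otimes u)$; the diagonal part ($u\otimes v$ with $uv=0$) vanishes by one of the relations above, and the off-diagonal terms cancel in pairs under a change of summation variables, with the parities of the binomial coefficients settled by Lucas' theorem --- precisely the mechanism of Lemmas \ref{2/4 - gornje} and \ref{cetvrtina plus 2}.

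I expect the main obstacle to be this combinatorial core: choosing the right $2$-powers to extract (which will in general depend on $p$) so that the reduced expression is genuinely symmetric, and then carrying out the term-by-term pairing and the attendant Lucas'-theorem parity computations uniformly over all admissible $(\beta,\gamma)$. A secondary, more routine difficulty is that for small $t$ --- most notably $t=4$, where $2^{t-3}-2=0$ and several of the Gr\"obner basis polynomials degenerate --- the heights and bases no longer behave generically, so those cases (including the sharper bound $\zcl(W_{25})\le31$) must be checked by an explicit computation from the Gr\"obner basis, in the same style as the small-$t$ verifications carried out in the earlier cases.
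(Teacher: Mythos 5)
Your setup is right --- the contradiction framework, the height computations ($\height(z(\widetilde w_2))=2^{t+1}-1$, $\height(z(\widetilde w_3))=2^t-1$), and the observation that, unlike the case $2^t-1\le n\le 2^t+2^{t-2}$, the heights here leave a whole one-parameter family of admissible pairs $(\beta,\gamma)$ rather than two. But the proposal stops exactly where the proof has to begin: you describe the core step as an ``obstacle'' you ``expect,'' namely choosing the right $2$-powers to extract so that a symmetric expression $A=\sum(u\otimes v+v\otimes u)$ emerges and cancels in pairs via Lucas' theorem, in the style of Lemma \ref{2/4 - gornje}. That mechanism is not carried out, and it is not the one that works here. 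The paper's proof of the first inequality uses no pairing cancellation and no Lucas-type parity bookkeeping at all. Instead it is an iterative constraint-tightening argument: one first extracts $z(\widetilde w_2^{2^t})z(\widetilde w_3^{2^{t-3}})$ (legitimate since $\gamma\ge 2^{t-3}$ and $\beta>2^t$ follow from the height bounds and $\beta+\gamma=2^{t+1}+2^{t-3}-1$), and uses $\widetilde w_2^{2^{t-1}+2^{t-3}}\widetilde w_3^{2^{t-3}}=0$ (from Lemma \ref{g-3}(e), via $w_3g_{2^t+2^{t-1}+2^{t-3}-3}\in I_n$) together with $\height(\widetilde w_2)=2^t+2^{t-2}-1$ to bound $\min\{i,j\}$ and $\max\{i,j\}$ in the binomial expansion of $z(\widetilde w_2)^{\beta-2^t}$; this forces $\beta-2^t\le 2^{t-1}+2^{t-2}+2^{t-3}-2$, hence $\gamma>2^{t-2}$. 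One then repeats with $z(\widetilde w_3^{2^{t-2}})$, using $\widetilde w_2^{2^{t-1}}\widetilde w_3^{2^{t-2}-1}=0$ (Lemma \ref{g-3}(d)) to force $\beta-2^t\le 2^{t-1}-1$ and hence $\gamma\ge 2^{t-1}+2^{t-3}$, at which point extracting $z(\widetilde w_3^{2^{t-1}})z(\widetilde w_3^{2^{t-3}})$ kills $x$ outright. Each round either annihilates $x$ or sharpens the lower bound on $\gamma$; that is the idea your proposal is missing, and it is what makes the argument uniform over the whole segment of admissible $(\beta,\gamma)$.

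Two smaller points. First, for the ``moreover'' part the paper does reduce, after running the same analysis, to exactly two residual pairs $(\beta,\gamma)=(2^{t+1}-1,2^{t-3}-1)$ and $(2^t+2^{t-1}-1,2^{t-1}+2^{t-3}-1)$, and these are dispatched by direct binomial expansion using $\widetilde w_2^{2^{t-1}+2^{t-3}}\widetilde w_3^{2^{t-3}-1}=0$ and Corollary \ref{treca cetvrtina - posledica}; this is where your Corollary \ref{treca cetvrtina - posledica} and $\binom{2^l-1}{i}\equiv 1$ ingredients genuinely enter, so that part of your plan is on target in spirit, though again the computation is only gestured at. Second, your worry about a degenerate small-$t$ case is unfounded: for $t=4$ the quantities $2^{t-3}=2$ and the relations used above are all nondegenerate, and the paper's argument needs no separate verification for $t=4$ (note the relevant vanishing monomials come from Lemma \ref{g-3} and ideal containments valid for all $n$ in the range, not from a case-by-case Gr\"obner basis computation for $F_{13\cdot 2^{t-3}}$).
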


\begin{proof}
Let us prove the first inequality. Assume to the contrary that
\[x:=z(\widetilde w_2)^\beta z(\widetilde w_3)^\gamma\neq0 \quad\mbox{in }W_n\otimes W_n,\]
for some integers $\beta,\gamma\ge0$ such that $\beta+\gamma=2^{t+1}+2^{t-3}-1$.

Since $\height(\widetilde w_2)=2^t+2^{t-2}-1$ and $2^{t-1}\leq\height(\widetilde w_3)\le2^{t-1}+2^{t-3}-1$ (see Theorem \ref{heights}), by (\ref{htz}) we have
$\height(z(\widetilde w_2))=2^{t+1}-1$ and $\height(z(\widetilde w_3))= 2^{t}-1$. Hence,
$\beta\leq 2^{t+1}-1$ and $\gamma\le2^t-1$, and so $\gamma\ge2^{t-3}$ and $\beta\ge2^t+2^{t-3}>2^t$.
Further, $g_{2^t+2^{t-1}+2^{t-3}-2},g_{2^t+2^{t-1}+2^{t-3}}\in I_n$, and hence
\[w_3g_{2^t+2^{t-1}+2^{t-3}-3}=w_2g_{2^t+2^{t-1}+2^{t-3}-2}+g_{2^t+2^{t-1}+2^{t-3}}\in I_n.\]
By Lemma \ref{g-3}(e), this implies $\widetilde w_2^{2^{t-1}+2^{t-3}}\widetilde w_3^{2^{t-3}}=0$
in $W_n$. So, by (\ref{z(a^stepen dvojke)})
\begin{align*}
x&=z(\widetilde w_2)^{\beta-2^t}z(\widetilde w_3)^{\gamma-2^{t-3}}z\big(\widetilde w_2^{2^t}\big)z\big(\widetilde w_3^{2^{t-3}}\big)\\
&=z(\widetilde w_2)^{\beta-2^t}z(\widetilde w_3)^{\gamma-2^{t-3}}\big(\widetilde w_2^{2^t}\otimes 1+1\otimes\widetilde w_2^{2^t}\big)\big(\widetilde w_3^{2^{t-3}}\otimes 1+1\otimes\widetilde w_3^{2^{t-3}}\big)\\
&=z(\widetilde w_2)^{\beta-2^t}z(\widetilde w_3)^{\gamma-2^{t-3}}\big(\widetilde w_2^{2^t}\otimes\widetilde w_3^{2^{t-3}}+\widetilde w_3^{2^{t-3}}\otimes\widetilde w_2^{2^t}\big)\\
&=z(\widetilde w_3)^{\gamma-2^{t-3}}\sum_{i+j=\beta-2^t}\binom{\beta-2^t}{i}\widetilde w_2^i\otimes\widetilde w_2^j\big(\widetilde w_2^{2^t}\otimes\widetilde w_3^{2^{t-3}}+\widetilde w_3^{2^{t-3}}\otimes\widetilde w_2^{2^t}\big).
\end{align*}
Since $x\neq0$ there exists a pair of nonnegative integers $(i,j)$ (with $i+j=\beta-2^t$) such that the term $\widetilde w_2^i\otimes\widetilde w_2^j(\widetilde w_2^{2^t}\otimes\widetilde w_3^{2^{t-3}}+\widetilde w_3^{2^{t-3}}\otimes\widetilde w_2^{2^t})=\widetilde w_2^{2^t+i}\otimes\widetilde w_2^j\widetilde w_3^{2^{t-3}}+\widetilde w_2^i\widetilde w_3^{2^{t-3}}\otimes\widetilde w_2^{2^t+j}$ is nonzero. But then
$\min\{i,j\}\leq 2^{t-2}-1$ (since $\height(\widetilde w_2)=2^t+2^{t-2}-1$) and $\max\{i,j\}\leq 2^{t-1}+2^{t-3}-1$ (since $\widetilde w_2^{2^{t-1}+2^{t-3}}\widetilde w_3^{2^{t-3}}=0$). We conclude that
\[\beta-2^t=i+j=\min\{i,j\}+\max\{i,j\}\leq 2^{t-1}+2^{t-2}+2^{t-3}-2,\]
and then $\beta+\gamma=2^{t+1}+2^{t-3}-1$ implies that actually we must have $\gamma\ge2^{t-2}+1>2^{t-2}$. Now, as above we
get
\begin{equation}\label{x=z...}
x=z(\widetilde w_2)^{\beta-2^t}z(\widetilde w_3)^{\gamma-2^{t-2}}\big(\widetilde w_2^{2^t}\otimes\widetilde w_3^{2^{t-2}}+\widetilde w_3^{2^{t-2}}\otimes\widetilde w_2^{2^t}\big).
\end{equation}
We have $2^t+2^{t-1}+2^{t-2}-3\ge n-2$, and so $g_{2^t+2^{t-1}+2^{t-2}-3}\in I_n$, which by Lemma \ref{g-3}(d) (and (\ref{ekv - s tildom - bez tilde})) means that
$\widetilde w_2^{2^{t-1}}\widetilde w_3^{2^{t-2}-1}=0$ in $W_n$. Along with the fact $\height(\widetilde w_2)=2^t+2^{t-2}-1$ this gives us that
\[z(\widetilde w_2)^{2^{t-1}}\big(\widetilde w_2^{2^t}\otimes\widetilde w_3^{2^{t-2}}\!+\widetilde w_3^{2^{t-2}}\otimes\widetilde w_2^{2^t}\big)\!=\!\big(\widetilde w_2^{2^{t-1}}\otimes1+1\otimes\widetilde w_2^{2^{t-1}}\big)\big(\widetilde w_2^{2^t}\otimes\widetilde w_3^{2^{t-2}}\!+\widetilde w_3^{2^{t-2}}\otimes\widetilde w_2^{2^t}\big)\]
is equal to zero. In the light of (\ref{x=z...}) and the assumption $x\neq0$, this means that $\beta-2^t\le2^{t-1}-1$, which in turn implies $\gamma\ge2^{t-1}+2^{t-3}$ (because $\beta+\gamma=2^{t+1}+2^{t-3}-1$). Finally, we have
\[x=z(\widetilde w_2)^{\beta-2^t}z(\widetilde w_3)^{\gamma-2^{t-1}-2^{t-3}}\big(\widetilde w_2^{2^t}\otimes\widetilde w_3^{2^{t-1}}+\widetilde w_3^{2^{t-1}}\otimes\widetilde w_2^{2^t}\big)\big(\widetilde w_3^{2^{t-3}}\otimes1+1\otimes\widetilde w_3^{2^{t-3}}\big)=0,\]
since $\widetilde w_2^{2^{t-1}+2^{t-3}}\widetilde w_3^{2^{t-3}}=0$ and $\widetilde w_3^{2^{t-1}+2^{t-3}}=0$ ($\height(\widetilde w_3)\le2^{t-1}+2^{t-3}-1$). This con\-tradicts the assumption $x\neq0$, and thus concludes the proof of the first inequality.

\medskip

Let us now prove the second inequality. Suppose
to the contrary that there are integers $\beta,\gamma\ge0$ such that
\[y:=z(w_2)^\beta z(w_3)^\gamma\neq0 \quad\mbox{in }W_{2^t+2^{t-1}+1}\otimes W_{2^t+2^{t-1}+1},\]
and $\beta+\gamma=2^{t+1}+2^{t-3}-2$.

The same analysis as above now leads to $\gamma\ge2^{t-3}-1$ and $\beta\ge2^t+2^{t-3}-1>2^t$. If additionally $\gamma\ge2^{t-3}$ and $\beta\neq2^t+2^{t-1}-1$, then it is routine to check that we can use the same proof
as above to obtain a contradiction. So we are left with the cases $(\beta,\gamma)=(2^{t+1}-1,2^{t-3}-1)$ and $(\beta,\gamma)=(2^t+2^{t-1}-1,2^{t-1}+2^{t-3}-1)$.

Note that $g_{2^t+2^{t-1}+2^{t-3}-3}\in I_{2^t+2^{t-1}+1}$, so, by Lemma \ref{g-3}(e),
we have
\begin{equation}\label{nula-monom}
\widetilde w_2^{2^{t-1}+2^{t-3}}\widetilde w_3^{2^{t-3}-1}=0 \quad\mbox{in }W_{2^t+2^{t-1}+1}.
\end{equation}
Also, we know that $13\cdot2^{t-3}+1>12\cdot2^{t-3}+1=2^t+2^{t-1}+1$, and consequently $I_{13\cdot2^{t-3}+1}\subseteq I_{2^t+2^{t-1}+1}$, so Corollary \ref{treca cetvrtina - posledica} establishes the equality
\begin{equation}\label{posled}
\widetilde w_2^{2^{t}-3}=\widetilde w_2^{2^{t-2}-3}\widetilde w_3^{2^{t-1}} \quad\mbox{in }W_{2^t+2^{t-1}+1}.
\end{equation}

Let us consider the case $(\beta,\gamma)=(2^{t+1}-1,2^{t-3}-1)$. Since $\binom{2^l-1}{i}\equiv1\pmod2$ for all $i\in\{0,1,\ldots,2^l-1\}$, the binomial formula leads to
\begin{align*}
y&=\sum_{i=0}^{2^{t+1}-1}\,\,\sum_{j=0}^{2^{t-3}-1}\widetilde w_2^i\widetilde w_3^j\otimes\widetilde w_2^{2^{t+1}-1-i}\widetilde w_3^{2^{t-3}-1-j}\\
&=\sum_{i=2^{t-1}+2^{t-2}}^{2^t+2^{t-2}-1}\,\,\sum_{j=0}^{2^{t-3}-1}\widetilde w_2^i\widetilde w_3^j\otimes\widetilde w_2^{2^{t+1}-1-i}\widetilde w_3^{2^{t-3}-1-j},
\end{align*}
since $\height(\widetilde w_2)=2^t+2^{t-2}-1$. For $i\ge2^t-3$ the only possibly nonzero summand is the one for $j=0$. Namely, due to (\ref{posled}) and the fact $\height(\widetilde w_3)=2^{t-1}$ (see Theorem \ref{heights}) we have $\widetilde w_2^i\widetilde w_3^j=0$ if $j>0$. Similarly, if $i\le2^t+2$, then $2^{t+1}-1-i\ge2^t-3$, so $\widetilde w_2^{2^{t+1}-1-i}\widetilde w_3^{2^{t-3}-1-j}=0$ unless $j=2^{t-3}-1$. This means that for $i\in\{2^t-3,\ldots,2^t+2\}$ there are no nonzero summands, and that
\[y=\sum_{i=2^{t-1}+2^{t-2}}^{2^t-4}\widetilde w_2^i\widetilde w_3^{2^{t-3}-1}\otimes\widetilde w_2^{2^{t+1}-1-i}+\sum_{i=2^t+3}^{2^t+2^{t-2}-1}\widetilde w_2^i\otimes\widetilde w_2^{2^{t+1}-1-i}\widetilde w_3^{2^{t-3}-1}.\]
But both of these sums are zero due to (\ref{nula-monom}), contradicting the assumption $y\neq0$.

\medskip

Finally, we consider the case $(\beta,\gamma)=(2^t+2^{t-1}-1,2^{t-1}+2^{t-3}-1)$. Then
\begin{align*}
y&=z(\widetilde w_2)^{2^{t-1}-1}z(\widetilde w_3)^{2^{t-3}-1}\big(\widetilde w_2^{2^t}\otimes\widetilde w_3^{2^{t-1}}+\widetilde w_3^{2^{t-1}}\otimes\widetilde w_2^{2^t}\big)\\
&=z(\widetilde w_2)^{2^{t-1}-1}\sum_{i=0}^{2^{t-3}-1}\widetilde w_3^i\otimes\widetilde w_3^{2^{t-3}-1-i}\big(\widetilde w_2^{2^t}\otimes\widetilde w_3^{2^{t-1}}+\widetilde w_3^{2^{t-1}}\otimes\widetilde w_2^{2^t}\big)\\
&=z(\widetilde w_2)^{2^{t-1}-1}\big(\widetilde w_2^{2^t}\widetilde w_3^{2^{t-3}-1}\otimes\widetilde w_3^{2^{t-1}}+\widetilde w_3^{2^{t-1}}\otimes\widetilde w_2^{2^t}\widetilde w_3^{2^{t-3}-1}\big).
\end{align*}
The first equality holds because $\widetilde w_2^{2^t}\widetilde w_3^{2^{t-1}}=0$ (by (\ref{nula-monom})), and the third because $\height(\widetilde w_3)=2^{t-1}$. However, the expression in the brackets is zero (again by (\ref{nula-monom})), and so $y=0$. This contradiction concludes the proof of the proposition.
\end{proof}

Propositions \ref{3/4 - donje1b - prvi} and \ref{3/4 - gornje1} prove Theorem \ref{zcl Wn} in the case $n=2^t+2^{t-1}+1$.

\subsection{The case $\mathbf{2^t+2^{t-1}+2\le n\le13\cdot2^{t-3}}$}

We have already established the upper bound for this case in Proposition \ref{3/4 - gornje1}:
\[\zcl(W_n)\le2^{t+1}+2^{t-3}-2.\]
Therefore, the following proposition completes the proof of Theorem \ref{zcl Wn} in this case.

Let us first state a result from \cite[p.\ 282]{ColovicPrvulovic}, which will be used in this case, as well as in the next one: if $t$, $s$ and $n$ are integers such that $t\ge3$, $1\le s\le t-2$ and $2^{t+1}-2^{s+1}+1\le n\le 2^{t+1}-2^s$, then
\begin{equation}\label{cuplength0}
\widetilde w_2^{2^{t+1}-3\cdot2^s-1}\widetilde w_3^{n-2^{t+1}+2^{s+1}-1}\neq0 \quad\mbox{in } W_n\subset H^*(\widetilde G_{n,3}).
\end{equation}
In particular, for $s=t-2$ and $n=2^t+2^{t-1}+2$ we get
\begin{equation}\label{cuplength1}
\widetilde w_2^{2^t+2^{t-2}-1}\widetilde w_3\neq0 \quad\mbox{in }W_{2^t+2^{t-1}+2}.
\end{equation}

\begin{proposition}
    Let $2^t+2^{t-1}+2\le n\le2^t+2^{t-1}+2^{t-3}$, where $t\ge4$. Then:
    \[\zcl(W_n)\ge2^{t+1}+2^{t-3}-2.\]
\end{proposition}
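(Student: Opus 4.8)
\end{proposition}

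The plan is to reduce, via Lemma \ref{zcl raste}, to the case $n=2^t+2^{t-1}+2$, and then to produce a nonzero zero-divisor product of length $2^{t+1}+2^{t-3}-2$ by means of Lemma \ref{z nonzero}. Concretely, I would apply that lemma with $\beta=2^{t+1}-1$, $\gamma=2^{t-3}-1$ and cohomological dimension $r=2^{t+1}+2^{t-1}+1$; since $2\beta+3\gamma=2^{t+2}+3\cdot2^{t-3}-5\ge r$, the lemma is applicable. Because $\beta$ and $\gamma$ are of the form $2^m-1$, Lucas' theorem makes every binomial coefficient $\binom{\beta}{b},\binom{\gamma}{c}$ occurring in (\ref{nonzero sum}) odd, so the element to be shown nonzero is $\sum_{2b+3c=r}\widetilde w_2^b\widetilde w_3^c\otimes\widetilde w_2^{2^{t+1}-1-b}\widetilde w_3^{2^{t-3}-1-c}$.

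The second step is to prune this sum down to a single summand. By Theorem \ref{heights} (applied with the interval $2^{t+1}-2^{t-1}+1\le n\le2^{t+1}-2^{t-2}$, i.e.\ $s=t-2$), for $n=2^t+2^{t-1}+2$ one has $\height(\widetilde w_2)=2^t+2^{t-2}-1$ and $\height(\widetilde w_3)=2^{t-1}+1$. Since $r$ is odd, every admissible $(b,c)$ has $c$ odd; the value $c=1$ forces $b=2^t+2^{t-2}-1$, giving the summand $\widetilde w_2^{2^t+2^{t-2}-1}\widetilde w_3\otimes\widetilde w_2^{3\cdot2^{t-2}}\widetilde w_3^{2^{t-3}-2}$. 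For $c\ge3$ (which only occurs when $t\ge5$) one checks that $b=\frac12(r-3c)>2^t-3$, so Corollary \ref{treca cetvrtina - posledica} (valid in $W_n$ because $n=12\cdot2^{t-3}+2\le13\cdot2^{t-3}+1$) rewrites $\widetilde w_2^b\widetilde w_3^c$ as $\widetilde w_2^{b-2^t+2^{t-2}}\widetilde w_3^{2^{t-1}+c}$, which is zero since $2^{t-1}+c>\height(\widetilde w_3)$. Hence (\ref{nonzero sum}) collapses to the single tensor $\widetilde w_2^{2^t+2^{t-2}-1}\widetilde w_3\otimes\widetilde w_2^{3\cdot2^{t-2}}\widetilde w_3^{2^{t-3}-2}$, whose first coordinate is nonzero by (\ref{cuplength1}); for $t=4$ this is immediate since then $\gamma=1$ forces $c=1$ outright and the second coordinate is $\widetilde w_2^{12}\ne0$.

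It then remains to prove that the second coordinate $\widetilde w_2^{3\cdot2^{t-2}}\widetilde w_3^{2^{t-3}-2}$ is nonzero in $W_{2^t+2^{t-1}+2}$ for $t\ge5$. For this I would first compute the Gr\"obner basis $F_{2^t+2^{t-1}+2}$ exactly as in the earlier lemmas: here $n-2^t+1=2^{t-1}+3$, so $\alpha_0=\alpha_1=1$, $\alpha_2=\dots=\alpha_{t-2}=0$, $\alpha_{t-1}=1$, which yields $f_0=g_{2^t+2^{t-1}}$ with $\mathrm{LM}(f_0)=w_2^{3\cdot2^{t-2}}$, leading monomials $\mathrm{LM}(f_i)$ equal to $w_2^{3\cdot2^{t-2}-2^i}$ times a power of $w_3$ for $1\le i\le t-2$, and $f_{t-1}=w_3^{2^{t-1}+2}$. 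Since $\mathrm{LM}(f_0)$ divides $w_2^{3\cdot2^{t-2}}w_3^{2^{t-3}-2}$, I would reduce by $f_0$ using Lemma \ref{reduction - f_i}; the resulting monomials, of smaller $w_2$-degree and larger $w_3$-degree, are again divisible by the $\mathrm{LM}(f_i)$, so the reduction cascades. Tracking which binomial coefficients survive modulo $2$ (Lucas' theorem), together with the parity of the exponents and the bound $\height(\widetilde w_3)=2^{t-1}+1$, the expectation is that all but one monomial cancel and the normal form is a single element of the additive basis $\mathcal B_n$, hence nonzero (for instance, when $t=5$ the normal form is $\widetilde w_2^{18}\widetilde w_3^6\in\mathcal B_{50}$).

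The main obstacle is precisely this last cascading reduction for $t\ge5$: it is the analogue of Lemma \ref{cetvrtina plus 2} and of the backward-induction arguments in the proofs of Lemmas \ref{stepen -1} and \ref{cetvrtina plus 2}, and carrying it out cleanly is likely to require an auxiliary lemma recording $F_{2^t+2^{t-1}+2}$ and the exact normal form of $w_2^{3\cdot2^{t-2}}w_3^{2^{t-3}-2}$ modulo $I_{2^t+2^{t-1}+2}$, rather than a single direct computation.
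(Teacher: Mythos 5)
Your choice of $\beta=2^{t+1}-1$, $\gamma=2^{t-3}-1$, $r=2^{t+1}+2^{t-1}+1$ and the pruning of the sum are exactly what the paper does: the parity of $c$, the bound $\height(\widetilde w_2)=2^t+2^{t-2}-1$, and Corollary \ref{treca cetvrtina - posledica} combined with $\height(\widetilde w_3)=2^{t-1}+1$ correctly isolate the single summand $\widetilde w_2^{2^t+2^{t-2}-1}\widetilde w_3\otimes\widetilde w_2^{3\cdot2^{t-2}}\widetilde w_3^{2^{t-3}-2}$, and (\ref{cuplength1}) handles the first coordinate. The problem is your last step. For $t\ge5$ you do not actually prove that $\widetilde w_2^{3\cdot2^{t-2}}\widetilde w_3^{2^{t-3}-2}\neq0$ in $W_{2^t+2^{t-1}+2}$; you only state an ``expectation'' that a cascading reduction by the Gr\"obner basis $F_{2^t+2^{t-1}+2}$ terminates in a single basis element, and you yourself flag that carrying this out would require an auxiliary lemma you have not supplied. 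As written, this is a genuine gap: the whole proposition hinges on that class being nonzero, and a multi-stage reduction with cancellations controlled by Lucas' theorem is precisely the kind of step that cannot be waved through (compare how much work the analogous Lemmas \ref{stepen -1} and \ref{cetvrtina plus 2} take).

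The gap is easily closed, and the paper does so without touching $F_{2^t+2^{t-1}+2}$ at all. In the proof of the lower bound for $n=2^t+2^{t-1}+1$ (Proposition \ref{3/4 - donje1b - prvi}) one already shows that $\widetilde w_2^{2^{t-1}+2^{t-2}}\widetilde w_3^{2^{t-3}-2}\neq0$ in $W_{2^t+2^{t-1}+1}$ (for $t\ge5$ via the binomial $f_{t-4}\in F_{2^t+2^{t-1}+1}$, which rewrites it as $\widetilde w_2^{2^{t-1}+2^{t-4}}\widetilde w_3^{2^{t-2}-2}\in\mathcal B_{2^t+2^{t-1}+1}$). By (\ref{ekv - s tildom - bez tilde}) this means $w_2^{3\cdot2^{t-2}}w_3^{2^{t-3}-2}\notin I_{2^t+2^{t-1}+1}$, and since the ideals are descending, $I_{2^t+2^{t-1}+2}\subseteq I_{2^t+2^{t-1}+1}$, so the monomial is not in $I_{2^t+2^{t-1}+2}$ either, i.e.\ the class is nonzero in $W_{2^t+2^{t-1}+2}$. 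Replacing your third paragraph with this one-line monotonicity observation makes the argument complete and identical in substance to the paper's.
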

\begin{proof}
We are going to prove $\zcl(W_{2^t+2^{t-1}+2})\ge2^{t+1}+2^{t-3}-2$, and then the proposition follows from Lemma \ref{zcl raste}. By applying Lemma \ref{z nonzero} for $\beta=2^{t+1}-1$, $\gamma=2^{t-3}-1$ and $r=2^{t+1}+2^{t-1}+1$ we see that it suffices to verify that
\[\sum_{2b+3c=2^{t+1}+2^{t-1}+1}
        \widetilde w_2^b\widetilde w_3^c\otimes
        \widetilde w_2^{2^{t+1}-1-b}\widetilde w_3^{2^{t-3}-1-c}\neq0 \quad\mbox{in }W_{2^t+2^{t-1}+2}\otimes W_{2^t+2^{t-1}+2}.\]
Similarly as in the proof of Proposition \ref{3/4 - donje1b - prvi}, we show that, excluding the summand with  $(b,c)=(2^{t}+2^{t-2}-1,1)$, all others are zero.

If $b>2^{t}+2^{t-2}-1$, then $\height(\widetilde w_2)=2^t+2^{t-2}-1$ (Theorem \ref{heights}) implies $\widetilde w_2^b\widetilde w_3^c=0$.

If $b<2^{t}+2^{t-2}-1$, then $c>1$. On the other hand, $c\le2^{t-3}-1$ implies $2b=2^{t+1}+2^{t-1}+1-3c\ge2^{t+1}+2^{t-3}+4$, i.e., $b\ge2^t+2^{t-4}+2>2^t-3$, and then Corollary \ref{treca cetvrtina - posledica} (along with (\ref{ekv - s tildom - bez tilde}) and the fact $I_{13\cdot2^{t-3}+1}\subseteq I_{2^t+2^{t-1}+2}$) gives us
\[\widetilde w_2^b\widetilde w_3^c=\widetilde w_2^{b-2^t+3}\widetilde w_2^{2^t-3}\widetilde w_3^c=\widetilde w_2^{b-2^t+3}\widetilde w_2^{2^{t-2}-3}\widetilde w_3^{2^{t-1}+c}=0,\]
since $\height(\widetilde w_3)=2^{t-1}+1$ (see Theorem \ref{heights}).

So, the above sum is equal to
    \begin{align*}
        \widetilde w_2^{2^t+2^{t-2}-1}\widetilde w_3\otimes\widetilde w_2^{2^{t-1}+2^{t-2}}\widetilde w_3^{2^{t-3}-2}.
    \end{align*}
    According to (\ref{cuplength1}), the first coordinate of this simple tensor is nonzero, and so we are left to prove $\widetilde w_2^{2^{t-1}+2^{t-2}}\widetilde w_3^{2^{t-3}-2}\neq0$ in $W_{2^t+2^{t-1}+2}$. However, in the proof of Proposition \ref{3/4 - donje1b - prvi} we established that the corresponding class is nonzero in $W_{2^t+2^{t-1}+1}$, which means (by (\ref{ekv - s tildom - bez tilde})) that $w_2^{2^{t-1}+2^{t-2}}w_3^{2^{t-3}-2}\notin I_{2^t+2^{t-1}+1}$. On the other hand, $I_{2^t+2^{t-1}+2}\subseteq I_{2^t+2^{t-1}+1}$, and so $w_2^{2^{t-1}+2^{t-2}}w_3^{2^{t-3}-2}\notin I_{2^t+2^{t-1}+2}$, i.e, $\widetilde w_2^{2^{t-1}+2^{t-2}}\widetilde w_3^{2^{t-3}-2}\neq0$ in $W_{2^t+2^{t-1}+2}$ as well.
\end{proof}

\subsection{The case $\mathbf{13\cdot2^{t-3}+1\le n\le2^t+2^{t-1}+2^{t-2}}$}

For the lower bound in this case we will use (\ref{cuplength0}) for $s=t-2$ and $n=2^t+2^{t-1}+2^{t-3}+1=13\cdot2^{t-3}+1$:
\begin{equation}\label{cuplength}
\widetilde w_2^{2^t+2^{t-2}-1}\widetilde w_3^{2^{t-3}}\neq0 \quad\mbox{in }W_{13\cdot2^{t-3}+1}.
\end{equation}

\begin{proposition}
Let $13\cdot2^{t-3}+1\le n\le2^t+2^{t-1}+2^{t-2}$, where $t\ge4$. Then
\[\zcl(W_n)\ge2^{t+1}+2^{t-2}-2.\]
\end{proposition}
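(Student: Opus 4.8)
The plan is to invoke Lemma \ref{zcl raste} and reduce to the single value $n=13\cdot 2^{t-3}+1=2^t+2^{t-1}+2^{t-3}+1$, i.e.\ to prove $\zcl(W_{13\cdot 2^{t-3}+1})\ge 2^{t+1}+2^{t-2}-2$. For this I would apply Lemma \ref{z nonzero} with $\beta=2^{t+1}-1$, $\gamma=2^{t-2}-1$ (so that $\beta+\gamma=2^{t+1}+2^{t-2}-2$, the desired bound) and cohomological dimension $r=2^{t+1}+2^{t-1}+3\cdot 2^{t-3}-2$ (one checks $r\le 2\beta+3\gamma$, and that $(b,c)=(2^t+2^{t-2}-1,\,2^{t-3})$ is an admissible pair with $2b+3c=r$). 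Since $\binom{2^{t+1}-1}{b}\equiv\binom{2^{t-2}-1}{c}\equiv1\pmod2$ for all admissible $b,c$ by Lucas' theorem, the sum (\ref{nonzero sum}) becomes
\[\sum_{2b+3c=r}\widetilde w_2^b\widetilde w_3^c\otimes\widetilde w_2^{2^{t+1}-1-b}\widetilde w_3^{2^{t-2}-1-c},\]
and it suffices to show that this is nonzero in $W_n\otimes W_n$.

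The first step is to check that every summand with $(b,c)\neq(2^t+2^{t-2}-1,\,2^{t-3})$ vanishes. By Theorem \ref{heights}, for this $n$ one has $\height(\widetilde w_2)=2^t+2^{t-2}-1$ and $\height(\widetilde w_3)=2^{t-1}+2^{t-3}$. If $b>2^t+2^{t-2}-1$ then $\widetilde w_2^b\widetilde w_3^c=0$. If $b<2^t+2^{t-2}-1$ then $3c=r-2b>3\cdot 2^{t-3}$ forces $c\ge 2^{t-3}+1$, and combining with $c\le 2^{t-2}-1$ gives $2b=r-3c\ge 17\cdot 2^{t-3}+1>2^{t+1}-6$, hence $b>2^t-3$. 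Then Corollary \ref{treca cetvrtina - posledica} (here $I_{13\cdot 2^{t-3}+1}=I_n$, so it applies directly) yields
\[\widetilde w_2^b\widetilde w_3^c=\widetilde w_2^{\,b-2^t+3}\,\widetilde w_2^{\,2^{t-2}-3}\,\widetilde w_3^{\,2^{t-1}+c},\]
which is $0$ because $2^{t-1}+c\ge 2^{t-1}+2^{t-3}+1>\height(\widetilde w_3)$. Hence the sum collapses to the single simple tensor $\widetilde w_2^{2^t+2^{t-2}-1}\widetilde w_3^{2^{t-3}}\otimes\widetilde w_2^{3\cdot 2^{t-2}}\widetilde w_3^{2^{t-3}-1}$, whose first coordinate is nonzero by (\ref{cuplength}). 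Since we work over a field, it then remains only to prove $\widetilde w_2^{3\cdot 2^{t-2}}\widetilde w_3^{2^{t-3}-1}\neq 0$ in $W_n$.

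This last non-vanishing is the main obstacle, and the only place requiring genuinely new computation. I would establish the identity $\widetilde w_2^{3\cdot 2^{t-2}}\widetilde w_3^{2^{t-3}-1}=\widetilde w_3^{\,2^{t-1}+2^{t-3}-1}$ in $W_n$, which, since $g_6=w_2^3+w_3^2$ and hence $g_6^{2^{t-2}}=w_2^{3\cdot 2^{t-2}}+w_3^{2^{t-1}}$ in characteristic $2$, is equivalent to $w_3^{2^{t-3}-1}g_6^{2^{t-2}}\in I_n$. Granting it, we are done: $\height(\widetilde w_3)=2^{t-1}+2^{t-3}$ forces $\widetilde w_3^{\,2^{t-1}+2^{t-3}-1}\neq 0$. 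To prove the identity I would first write out the relevant elements of the Gr\"obner basis $F_n$ (in the manner of Lemma \ref{pomocna za pola plus 2}): for $t\ge 5$ the basis contains an element with leading monomial $w_2^{3\cdot 2^{t-2}}w_3^{2^{t-4}-1}$, reducing $w_2^{3\cdot 2^{t-2}}w_3^{2^{t-3}-1}$ by it gives $\widetilde w_2^{2^{t-1}+2^{t-4}}\widetilde w_3^{2^{t-2}-1}+\widetilde w_3^{\,2^{t-1}+2^{t-3}-1}$, and the first summand is then killed by the basis element $f_{t-2}=w_2^{2^{t-1}}w_3^{2^{t-2}-1}$ (a monomial, by Lemma \ref{g-3}(d)). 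Alternatively, $w_3^{2^{t-3}-1}g_6^{2^{t-2}}\in I_n$ can be derived from Lemma \ref{kvadriranje} together with an auxiliary ``$w_3I$-membership'' lemma of the same flavour as Lemmas \ref{2/4 - lema} and \ref{treca cetvrtina - lema}. The small cases $t=4$ (where $n=27$) and $t=5$ (where $n=53$) should be treated with their own explicit Gr\"obner bases, but they lead to the same conclusion $\widetilde w_2^{3\cdot 2^{t-2}}\widetilde w_3^{2^{t-3}-1}=\widetilde w_3^{\,2^{t-1}+2^{t-3}-1}$.
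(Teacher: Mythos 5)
Your proposal is correct and follows the paper's proof almost step for step: the same reduction to $n=13\cdot2^{t-3}+1$ via Lemma \ref{zcl raste}, the same choice $\beta=2^{t+1}-1$, $\gamma=2^{t-2}-1$, $r=23\cdot2^{t-3}-2$, the same elimination of all summands except $(b,c)=(2^t+2^{t-2}-1,2^{t-3})$ using $\height(\widetilde w_2)$, Corollary \ref{treca cetvrtina - posledica} and $\height(\widetilde w_3)=2^{t-1}+2^{t-3}$, and the same appeal to (\ref{cuplength}) for the first tensor factor. The only place you diverge is the final nonvanishing of $\widetilde w_2^{3\cdot2^{t-2}}\widetilde w_3^{2^{t-3}-1}$, which you treat as the main obstacle and attack via a Gr\"obner reduction to $\widetilde w_3^{2^{t-1}+2^{t-3}-1}$ (your identity is correct: $w_3^{2^{t-3}-1}g_6^{2^{t-2}}=w_3^{2^{t-3}-1}g_{12}^{2^{t-3}}=g_{15\cdot2^{t-3}-3}\in I_n$ by Lemmas \ref{2n preko n} and \ref{kvadriranje}, so no case analysis on $t$ is actually needed). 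The paper disposes of this step in one line: the monomial $\widetilde w_2^{2^{t-1}+2^{t-2}}\widetilde w_3^{2^{t-3}-1}$ divides the first coordinate $\widetilde w_2^{2^t+2^{t-2}-1}\widetilde w_3^{2^{t-3}}$, which is already known to be nonzero, so it is nonzero itself. Your route works but buys nothing over this observation.
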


\begin{proof} We know that $\zcl(W_n)$ increases with $n$ (Lemma \ref{zcl raste}), which means that it suffices to prove $\zcl(W_{13\cdot2^{t-3}+1})\ge2^{t+1}+2^{t-2}-2$.
The proof relies on Lemma \ref{z nonzero}. We apply that lemma for $\beta=2^{t+1}-1$, $\gamma=2^{t-2}-1$ and $r=23\cdot2^{t-3}-2$, and so we need to show that
\[\sum_{2b+3c=23\cdot2^{t-3}-2}\widetilde w_2^b\widetilde w_3^c\otimes\widetilde w_2^{2^{t+1}-1-b}\widetilde w_3^{2^{t-2}-1-c}\neq0 \quad\mbox{in }W_{13\cdot2^{t-3}+1}\otimes W_{13\cdot2^{t-3}+1}.\]
We are going to prove that the only nonzero summand in this sum is the one for $(b,c)=(2^t+2^{t-2}-1,2^{t-3})$. First of all, that summand is really nonzero, because it is $\widetilde w_2^{2^t+2^{t-2}-1}\widetilde w_3^{2^{t-3}}\otimes\widetilde w_2^{2^{t-1}+2^{t-2}}\widetilde w_3^{2^{t-3}-1}$, its first coordinate is nonzero by (\ref{cuplength}), and the second divides the first, so it is nonzero too. So, we are left to prove that all summands with $b\neq2^t+2^{t-2}-1$ are zero.

If $b>2^t+2^{t-2}-1$, then $\widetilde w_2^b\widetilde w_3^c=0$ since $\height(\widetilde w_2)=2^t+2^{t-2}-1$ (Theorem \ref{heights}).

If $2^t-3\le b<2^t+2^{t-2}-1$, then $c>2^{t-3}$ and by Corollary \ref{treca cetvrtina - posledica} one has
\[\widetilde w_2^b\widetilde w_3^c=\widetilde w_2^{b-2^t+3}\widetilde w_2^{2^t-3}\widetilde w_3^c=\widetilde w_2^{b-2^t+3}\widetilde w_2^{2^{t-2}-3}\widetilde w_3^{2^{t-1}+c}=0,\]
since $\height(\widetilde w_3)=2^{t-1}+2^{t-3}$ (see Theorem \ref{heights}).

Finally, there are no summands in the above sum with $b\le2^t-4$, since then $c\le2^{t-2}-1$ would imply $2b+3c\le2\cdot(2^t-4)+3\cdot(2^{t-2}-1)=22\cdot2^{t-3}-11<23\cdot2^{t-3}-2$.
\end{proof}

We are concluding the proof of Theorem \ref{zcl Wn} in this case by verifying the opposite inequality (the upper bound).

\begin{proposition}
Let $t\geq 4$ and $13\cdot2^{t-3}+1\le n\le2^t+2^{t-1}+2^{t-2}$. Then
\[\zcl(W_n)\le2^{t+1}+2^{t-2}-2.\]
\end{proposition}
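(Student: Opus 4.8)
The plan is to reduce, via Lemma \ref{zcl raste}, to proving $\zcl(W_m)\le 2^{t+1}+2^{t-2}-2$ for the single value $m=2^t+2^{t-1}+2^{t-2}$, and then to argue by contradiction. First I would record the data for this $m$: since $m-2^t+1=2^{t-1}+2^{t-2}+1$, the digits of Theorem \ref{Grebner} are $\alpha_0=\alpha_{t-2}=\alpha_{t-1}=1$ and $\alpha_j=0$ otherwise, so that — exactly as in the proof of Lemma \ref{2/4 - Grebner1}, using Lemma \ref{kvadriranje} and Lemma \ref{g-3} — the Gr\"obner basis $F_m$ contains in particular $f_{t-2}=w_2^{2^{t-1}}w_3^{2^{t-2}}$ and $f_{t-1}=w_3^{2^{t-1}+2^{t-2}}$; hence in $W_m$ one has the relation $\widetilde w_2^{2^{t-1}}\widetilde w_3^{2^{t-2}}=0$. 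From Theorem \ref{heights}, $\height(\widetilde w_2)=2^t+2^{t-2}-1$ and $\height(\widetilde w_3)=2^{t-1}+2^{t-2}-1$, so by (\ref{htz}) we get $\height(z(\widetilde w_2))=2^{t+1}-1$ and $\height(z(\widetilde w_3))=2^t-1$.

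Now suppose, for contradiction, that $x:=z(\widetilde w_2)^\beta z(\widetilde w_3)^\gamma\neq0$ in $W_m\otimes W_m$ with $\beta+\gamma=2^{t+1}+2^{t-2}-1$. The height bounds give $\beta\le 2^{t+1}-1$ and $\gamma\le 2^t-1$, whence $\beta\ge 2^t+2^{t-2}$ and $\gamma\ge 2^{t-2}$. Since $\beta\ge 2^t+2^{t-2}$, using (\ref{z(a^stepen dvojke)}) and the identity $z(a)z(b)=a\otimes b+b\otimes a$ valid whenever $ab=0$ (a special case of (\ref{z(ab)})), together with $\widetilde w_2^{2^t}\cdot\widetilde w_2^{2^{t-2}}=\widetilde w_2^{2^t+2^{t-2}}=0$, I would factor
\[x=z(\widetilde w_2)^{\beta-2^t-2^{t-2}}z(\widetilde w_3)^{\gamma}\bigl(\widetilde w_2^{2^t}\otimes\widetilde w_2^{2^{t-2}}+\widetilde w_2^{2^{t-2}}\otimes\widetilde w_2^{2^t}\bigr),\]
where the exponents of the two leftmost factors now sum to $2^t-1$.

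Expanding the two leftmost factors by the binomial formula, every surviving summand of $x$ has one tensor coordinate of the form $\widetilde w_2^{2^t+i}\widetilde w_3^{c}$; for this to be nonzero one needs $i\le 2^{t-2}-1$ (height of $\widetilde w_2$), and, since $2^t>2^{t-1}$ and $\widetilde w_2^{2^{t-1}}\widetilde w_3^{2^{t-2}}=0$, also $c\le 2^{t-2}-1$. Feeding these bounds back, together with the constraint that the companion coordinate must have $\widetilde w_3$-exponent at most $\height(\widetilde w_3)=2^{t-1}+2^{t-2}-1$, forces a contradiction for all but a handful of pairs $(\beta,\gamma)$: roughly, when $\gamma$ is near its extreme value $2^t-1$ the companion $\widetilde w_3$-power exceeds $\height(\widetilde w_3)$, and when it is not there is no admissible range left for $i$. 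If a second round of peeling turns out to be needed — using $\widetilde w_2^{2^{t-1}}\widetilde w_3^{2^{t-2}}=0$ again, or $\widetilde w_3^{2^{t-1}+2^{t-2}}=0$ — it follows the same scheme. The remaining exceptional pairs I would then dispatch exactly as in Proposition \ref{3/4 - gornje1} and Lemma \ref{2/4 - gornje}: expand $x$ completely, reduce each monomial by $F_m$ via Lemma \ref{reduction - f_i}, apply Lucas' theorem to the binomial coefficients, and verify that everything vanishes or cancels, contradicting $x\neq0$.

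The main obstacle is precisely this case-bookkeeping in the final step: one must check that the relations invoked ($\widetilde w_2^{2^{t-1}}\widetilde w_3^{2^{t-2}}=0$ and the height identities) genuinely hold for $m=2^t+2^{t-1}+2^{t-2}$ itself and not merely for smaller indices, keep careful track of the shrinking ranges of $\beta$ and $\gamma$ through each peeling, and settle the residual special pairs — where, as elsewhere in the paper, the actual work lies.
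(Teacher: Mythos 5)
Your set-up is correct: the reduction to $n=2^t+2^{t-1}+2^{t-2}$ via Lemma \ref{zcl raste} is legitimate, the Gr\"obner-basis elements $f_{t-2}=w_2^{2^{t-1}}w_3^{2^{t-2}}$ and $f_{t-1}=w_3^{2^{t-1}+2^{t-2}}$ are computed correctly, the heights and the consequent bounds $\beta\ge2^t+2^{t-2}$, $\gamma\ge2^{t-2}$ are right, and the first factorization using $\widetilde w_2^{2^t+2^{t-2}}=0$ is valid. But from that point on the proposal is a plan, not a proof, and the heuristic you offer for why the surviving terms die is false. After your peeling, a summand has coordinates $\widetilde w_2^{2^t+i}\widetilde w_3^{c}$ and $\widetilde w_2^{2^{t-2}+\beta'-i}\widetilde w_3^{\gamma-c}$ with $\beta'=\beta-2^t-2^{t-2}$; the constraints you list ($i\le2^{t-2}-1$, $c\le2^{t-2}-1$, $\gamma-c\le\height(\widetilde w_3)$) leave, for e.g.\ $(\beta,\gamma)=(2^{t+1}-1,2^{t-2})$, the full range $0\le i\le2^{t-2}-1$, $1\le c\le2^{t-2}-1$ of potentially nonzero summands — it is not true that ``there is no admissible range left for $i$.'' One would then have to prove a genuine cancellation among exponentially many terms, which is exactly the work you defer, and nothing in your sketch indicates how it would go.

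The paper avoids this entirely by peeling off \emph{mixed} powers. Using $\widetilde w_2^{2^{t-1}}\widetilde w_3^{2^{t-2}}=0$ it writes $x=z(\widetilde w_2)^{\beta-2^t}z(\widetilde w_3)^{\gamma-2^{t-2}}\big(\widetilde w_2^{2^t}\otimes\widetilde w_3^{2^{t-2}}+\widetilde w_3^{2^{t-2}}\otimes\widetilde w_2^{2^t}\big)$; expanding only the $z(\widetilde w_2)^{\beta-2^t}$ factor, the same relation bounds \emph{both} tensor coordinates, forcing $\min\{i,j\}\le2^{t-2}-1$ and $\max\{i,j\}\le2^{t-1}-1$, hence $\beta-2^t\le2^{t-1}+2^{t-2}-2$ and so $\gamma\ge2^{t-1}+1$. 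A second mixed peeling with $z(\widetilde w_3^{2^{t-1}})$ then forces $\gamma-2^{t-1}\le2^{t-2}-1$, hence $\beta\ge2^t+2^{t-1}$, and a final factor $z(\widetilde w_2^{2^{t-1}})$ annihilates $x$ outright. This chain of successive constraints never requires a term-by-term cancellation analysis. Your pure-$\widetilde w_2$ peeling discards the leverage that the mixed relation provides on the second coordinate, which is precisely why you are left with the unmanageable residual cases. As written, the proposal has a genuine gap at its core.
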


\begin{proof}
As usual, we suppose to the contrary that there exist integers $\beta,\gamma\ge0$ such that
\[x:=z(\widetilde w_2)^\beta z(\widetilde w_3)^\gamma\neq0\quad\mbox{and}\quad\beta+\gamma=2^{t+1}+2^{t-2}-1.\]
Theorem \ref{heights} gives us $\height(\widetilde w_2)=2^t+2^{t-2}-1$ and $2^{t-1}+2^{t-3}\le\height(\widetilde w_3)\le2^{t-1}+2^{t-2}-1$, and then (\ref{htz}) implies $\height(z(\widetilde w_2))=2^{t+1}-1$ and $\height(z(\widetilde w_3))=2^t-1$. This means that $\beta\le2^{t+1}-1$ and $\gamma\le2^t-1$, and together with $\beta+\gamma=2^{t+1}+2^{t-2}-1$ these then imply $\gamma\ge2^{t-2}$ and $\beta\ge2^t+2^{t-2}$.

Note that the inequality $n-2\le2^t+2^{t-1}+2^{t-2}-2$ implies
\[w_3g_{2^t+2^{t-1}+2^{t-2}-3}=w_2g_{2^t+2^{t-1}+2^{t-2}-2}+g_{2^t+2^{t-1}+2^{t-2}}\in I_n,\]
which, according to Lemma \ref{g-3}(d), leads to
\begin{equation}\label{mon nnula}
\widetilde w_2^{2^{t-1}}\widetilde w_3^{2^{t-2}}=0 \quad\mbox{in } W_n.
\end{equation}
This is used for the second of the following equalities:
\begin{align*}
x&=z(\widetilde w_2)^{\beta-2^t}z(\widetilde w_3)^{\gamma-2^{t-2}}\big(\widetilde w_2^{2^t}\otimes 1+1\otimes\widetilde w_2^{2^t}\big)\big(\widetilde w_3^{2^{t-2}}\otimes 1+1\otimes\widetilde w_3^{2^{t-2}}\big)\\
&=z(\widetilde w_2)^{\beta-2^t}z(\widetilde w_3)^{\gamma-2^{t-2}}\big(\widetilde w_2^{2^t}\otimes\widetilde w_3^{2^{t-2}}+\widetilde w_3^{2^{t-2}}\otimes\widetilde w_2^{2^t}\big)\\
&=z(\widetilde w_3)^{\gamma-2^{t-2}}\sum_{i+j=\beta-2^t}\binom{\beta-2^t}{i}\widetilde w_2^i\otimes\widetilde w_2^j\big(\widetilde w_2^{2^t}\otimes\widetilde w_3^{2^{t-2}}+\widetilde w_3^{2^{t-2}}\otimes\widetilde w_2^{2^t}\big)\\
&=z(\widetilde w_3)^{\gamma-2^{t-2}}\sum_{i+j=\beta-2^t}\binom{\beta-2^t}{i}\big(\widetilde w_2^{2^t+i}\otimes\widetilde w_2^j\widetilde w_3^{2^{t-2}}+\widetilde w_2^i\widetilde w_3^{2^{t-2}}\otimes\widetilde w_2^{2^t+j}\big).
\end{align*}
Since $x\neq0$ there must exist nonnegative integers $i$ and $j$ with $i+j=\beta-2^t$ and $\widetilde w_2^{2^t+i}\otimes\widetilde w_2^j\widetilde w_3^{2^{t-2}}+\widetilde w_2^i\widetilde w_3^{2^{t-2}}\otimes\widetilde w_2^{2^t+j}\neq0$. Then $\height(\widetilde w_2)=2^t+2^{t-2}-1$ implies $\min\{i,j\}\le2^{t-2}-1$, and from (\ref{mon nnula}) we then conclude $\max\{i,j\}\le2^{t-1}-1$. This leads to $\beta-2^t=i+j\le2^{t-1}+2^{t-2}-2$, i.e., $\gamma\ge2^{t-1}+1$ (since $\beta+\gamma=2^{t+1}+2^{t-2}-1$).

Similarly as above, using (\ref{mon nnula}) we now obtain that
\[x=z(\widetilde w_2)^{\beta-2^t}z(\widetilde w_3)^{\gamma-2^{t-1}}\big(\widetilde w_2^{2^t}\otimes\widetilde w_3^{2^{t-1}}+\widetilde w_3^{2^{t-1}}\otimes\widetilde w_2^{2^t}\big).\]
Again by (\ref{mon nnula}) and the fact $\widetilde w_3^{2^{t-1}+2^{t-2}}=0$ (since $\height(\widetilde w_3)\le2^{t-1}+2^{t-2}-1$) we have that
\[z(\widetilde w_3)^{2^{t-2}}\big(\widetilde w_2^{2^t}\otimes\widetilde w_3^{2^{t-1}}\!+\widetilde w_3^{2^{t-1}}\otimes\widetilde w_2^{2^t}\big)\!=\!\big(\widetilde w_3^{2^{t-2}}\otimes1+1\otimes\widetilde w_3^{2^{t-2}}\big)\big(\widetilde w_2^{2^t}\otimes\widetilde w_3^{2^{t-1}}\!+\widetilde w_3^{2^{t-1}}\otimes\widetilde w_2^{2^t}\big)\]
is equal to zero, which means that we must have $\gamma-2^{t-1}\le2^{t-2}-1$ (since $x\neq0$). However, this implies $\beta=2^{t+1}+2^{t-2}-1-\gamma\ge2^t+2^{t-1}$, and so
\[x=z(\widetilde w_2)^{\beta-2^t-2^{t-1}}z(\widetilde w_3)^{\gamma-2^{t-1}}\big(\widetilde w_2^{2^t}\otimes\widetilde w_3^{2^{t-1}}+\widetilde w_3^{2^{t-1}}\otimes\widetilde w_2^{2^t}\big)\big(\widetilde w_2^{2^{t-1}}\otimes1+1\otimes\widetilde w_2^{2^{t-1}}\big)=0,\]
once again by (\ref{mon nnula}) and the fact $\height(\widetilde w_2)=2^t+2^{t-2}-1$. This contradicts the assumption $x\neq0$, and thus we are done.
\end{proof}

\subsection{The case $\mathbf{2^t+2^{t-1}+2^{t-2}+1\le n\le2^{t+1}-2}$}

Let $s\in\{1,2,\ldots,t-3\}$ be the (unique) integer such that $2^{t+1}-2^{s+1}+1\le n\le2^{t+1}-2^s$. We want to prove that $\zcl(W_n)=3\cdot2^t-2^{s+1}-2$, which will finish the proof of Theorem \ref{zcl Wn}.

For the lower bound we will need the Gr\"obner basis $F_{2^{t+1}-2^{s+1}+1}$.

\begin{lemma}\label{4/4 - Grebner - donje}
If $n=2^{t+1}-2^{s+1}+1$, then for $F_n=\{f_0,f_1,\ldots,f_{t-1}\}$ (the Gr\"obner basis from Theorem \ref{Grebner}) we have:
\begingroup
\allowbreak
\begin{itemize}
 \item $f_0=g_{2^{t+1}-2^{s+1}}$, $\mathrm{LM}(f_0)=w_2^{2^t-2^s}$;
 \item $f_i=w_3^{2^i-1}(g_{2^{t+1-i}-2^{s+1-i}-2})^{2^i}$, $\mathrm{LM}(f_i)=w_2^{2^t-2^s-2^i}w_3^{2^i-1}$, $1\le i\le s$;
 \item $f_i=w_3^{2^{i+1}-2^{s+1}+1}(g_{2^{t+1-i}-4})^{2^i}$, $\mathrm{LM}(f_i)=w_2^{2^t-2^{i+1}}w_3^{2^{i+1}-2^{s+1}+1}$,
        $s+1\le i\le t-1$.
\item $f_{t-2}=w_2^{2^{t-1}}w_3^{2^{t-1}-2^{s+1}+1}$, $f_{t-1}=w_3^{2^t-2^{s+1}+1}$.
\end{itemize}
\endgroup
\end{lemma}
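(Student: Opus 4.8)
The plan is to apply Theorem \ref{Grebner} directly, exactly as in the proofs of Lemmas \ref{2/4 - Grebner1}, \ref{2/4 - Grebner} and \ref{pomocna za pola plus 2}: read off the binary digits $\alpha_j$ of $n-2^t+1$, form the partial sums $s_i$, substitute into $f_i=w_3^{\alpha_i s_{i-1}}g_{n-2+2^i-s_i}$, and then use Lemma \ref{kvadriranje} to bring each $g$-polynomial into the asserted shape. First I would note that $1\le s\le t-3$ forces $t\ge4$ and
\[2^t-1\le 2^{t+1}-2^{s+1}+1=n<2^{t+1}-1,\]
the outer inequalities being immediate from $1\le s\le t-3$, so Theorem \ref{Grebner} applies with this $t$ and produces $F_n=\{f_0,\dots,f_{t-1}\}$.

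Next I would compute $n-2^t+1=2^t-2^{s+1}+2=2^1+\sum_{j=s+1}^{t-1}2^j$, so that $\alpha_1=1$, $\alpha_{s+1}=\alpha_{s+2}=\dots=\alpha_{t-1}=1$, and all remaining $\alpha_j$ (in particular $\alpha_0$) vanish. From this, $s_0=0$, $s_i=2$ for $1\le i\le s$, and $s_i=2^{i+1}-2^{s+1}+2$ for $s+1\le i\le t-1$; in particular the latter formula also gives $s_{i-1}=2^i-2^{s+1}+2$ for all $s+1\le i\le t-1$, the boundary value $s_s=2$ being consistent with it.

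Then I would split into three ranges of $i$. For $i=0$, $\alpha_0=0$ yields $f_0=g_{n-1}=g_{2^{t+1}-2^{s+1}}$. For $1\le i\le s$, one has $\alpha_i s_{i-1}=0$ (also for $i=1$, since $s_0=0$), hence $f_i=g_{n-4+2^i}=g_{2^{t+1}-2^{s+1}-3+2^i}$, and Lemma \ref{kvadriranje} applied with $r=2^{t+1-i}-2^{s+1-i}-2$ (a nonnegative integer because $s+1-i\ge1$) produces $f_i=w_3^{2^i-1}(g_{2^{t+1-i}-2^{s+1-i}-2})^{2^i}$. For $s+1\le i\le t-1$ the subscript simplifies to $n-2+2^i-s_i=2^{t+1}-2^i-3$ while $\alpha_i s_{i-1}=2^i-2^{s+1}+2$, so $f_i=w_3^{2^i-2^{s+1}+2}g_{2^{t+1}-2^i-3}$; now Lemma \ref{kvadriranje} with $r=2^{t+1-i}-4$ gives $g_{2^{t+1}-2^i-3}=w_3^{2^i-1}(g_{2^{t+1-i}-4})^{2^i}$, and multiplying the two powers of $w_3$ collapses them to $2^{i+1}-2^{s+1}+1$, yielding the stated form. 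The displayed expressions for $f_{t-2}$ and $f_{t-1}$ are then the cases $i=t-2$ and $i=t-1$ of this, using $g_4=w_2^2$ and $g_0=1$ from Table \ref{table:2}. Finally, every leading monomial follows from $\mathrm{LM}(g_{2l})=w_2^l$ (clear from (\ref{g-exp})) once one observes that the relevant subscripts $2^{t+1-i}-2^{s+1-i}-2$ and $2^{t+1-i}-4$ are even.

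I expect the only delicate point to be the bookkeeping of the digits $\alpha_j$ and the sums $s_i$ near $i=s+1$, together with the degenerate case $s=1$ where the range $2\le i\le s$ is empty; once these are settled, each item of the lemma is a direct substitution into the formula of Theorem \ref{Grebner} followed by a single application of Lemma \ref{kvadriranje}, so I do not anticipate any genuine obstacle.
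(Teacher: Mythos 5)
Your proposal is correct and follows essentially the same route as the paper's proof: read off the digits $\alpha_j$ and partial sums $s_i$ of $n-2^t+1$, substitute into the formula of Theorem \ref{Grebner}, apply Lemma \ref{kvadriranje} with the same choices of $r$, and deduce the leading monomials from $\mathrm{LM}(g_{2l})=w_2^l$. The bookkeeping near $i=s+1$ and the verification that the relevant subscripts are even are handled correctly.
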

\begin{proof}
We have $n-2^t+1=2^{t-1}+2^{t-2}+\dots+2^{s+1}+2$, and hence
$\alpha_0=0$, $\alpha_1=1$, $\alpha_2=\alpha_3=\dots=\alpha_s=0$, $\alpha_{s+1}=\alpha_{s+2}=\dots=\alpha_{t-1}=1$, $s_0=0$,
$s_i=2$ for $1\le i\le s$, and $s_i=2^i+2^{i-1}+\dots+2^{s+1}+2=2^{i+1}-2^{s+1}+2$ for
$s+1\le i\le t-1$. By definition, $f_i=w_3^{\alpha_is_{i-1}}g_{n-2+2^i-s_i}$, and so $f_0=g_{2^{t+1}-2^{s+1}}$,
\begin{align*}
f_i&=g_{2^{t+1}-2^{s+1}+2^i-3}=w_3^{2^i-1}(g_{2^{t+1-i}-2^{s+1-i}-2})^{2^i} \quad (\mbox{for }1\le i\le s),\\
f_i&=w_3^{2^i-2^{s+1}+2}g_{2^{t+1}-2^i-3}=w_3^{2^{i+1}-2^{s+1}+1}(g_{2^{t+1-i}-4})^{2^i} \quad (\mbox{for }s+1\le i\le t-1),
\end{align*}
by Lemma \ref{kvadriranje}. In particular, by looking at Table \ref{table:2} we see that
\begin{align*}
f_{t-2}&=w_3^{2^{t-1}-2^{s+1}+1}g_4^{2^{t-2}}=w_2^{2^{t-1}}w_3^{2^{t-1}-2^{s+1}+1},\\
f_{t-1}&=w_3^{2^t-2^{s+1}+1}g_0^{2^{t-1}}=w_3^{2^t-2^{s+1}+1}.
\end{align*}

The statements about leading monomials follow from $\mathrm{LM}(g_{2l})=w_2^l$ ($l\ge0$).
\end{proof}

In order to apply Lemma \ref{z nonzero} for cohomological dimension $r=2^{t+2}-3\cdot2^{s+1}-5$, we now detect all nonzero monomials of the form $\widetilde w_2^b\widetilde w_3^c$, where $2b+3c=2^{t+2}-3\cdot2^{s+1}-5$.

\begin{lemma}\label{ono sa s}
    Let $n=2^{t+1}-2^{s+1}+1$ ($1\le s\le t-3$). Then in $H^{2^{t+2}-3\cdot2^{s+1}-5}(\widetilde G_{n,3})$ the
    only nonzero monomials of the form $\widetilde w_2^b\widetilde w_3^c$ are
    $\widetilde w_2^{2^{t+1}-3\cdot2^{k-1}-1}\widetilde w_3^{2^k-2^{s+1}-1}$ for $s+2\le k\le t$.
    Furthermore, they are all equal, i.e.,
    \[\widetilde w_2^{2^{t+1}-3\cdot2^{k-1}-1}\widetilde w_3^{2^k-2^{s+1}-1}=\widetilde w_2^{2^{t-1}-1}\widetilde w_3^{2^t-2^{s+1}-1}\]
    for all $k\in\{s+2,s+3,\ldots,t\}$, and $\widetilde w_2^{2^{t-1}-1}\widetilde w_3^{2^t-2^{s+1}-1}\in\mathcal B_n$.
\end{lemma}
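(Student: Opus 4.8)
The proof should mirror the structure of Lemma~\ref{stepen -1} (and of Lemma~\ref{cetvrtina plus 2}), since the three lemmas play exactly the same role in their respective cases. First I would record the Gr\"obner basis data for $n=2^{t+1}-2^{s+1}+1$, which is already available in Lemma~\ref{4/4 - Grebner - donje}. In particular, from the leading monomials listed there one checks directly that none of the $\mathrm{LM}(f_i)$, $0\le i\le t-1$, divides $w_2^{2^{t-1}-1}w_3^{2^t-2^{s+1}-1}$; indeed $f_{t-1}=w_3^{2^t-2^{s+1}+1}$ forces $c<2^t-2^{s+1}+1$, which our exponent $c=2^t-2^{s+1}-1$ satisfies, and for the other $f_i$ the exponent of $w_2$ is too small (for $i\ge s+1$ one has $\mathrm{LM}(f_i)=w_2^{2^t-2^{i+1}}w_3^{2^{i+1}-2^{s+1}+1}$ with $2^t-2^{i+1}\ge 2^{t-1}$, and similarly for $0\le i\le s$). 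Hence $\widetilde w_2^{2^{t-1}-1}\widetilde w_3^{2^t-2^{s+1}-1}\in\mathcal B_n$, so in particular it is nonzero; this is the induction base for the "all equal" claim (the case $k=t$). Also, Theorem~\ref{heights} gives $\height(\widetilde w_3)=\max\{2^{t-1}-2,\ n-2^t-1\}=2^t-2^{s+1}$ (using $s\le t-3$), so $\widetilde w_2^b\widetilde w_3^c=0$ whenever $c>2^t-2^{s+1}$.

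Next I would run a backward induction on $k$, from $k=t$ down to $k=s+2$, proving that for $2b+3c=2^{t+2}-3\cdot2^{s+1}-5$ and $2^k-2^{s+1}-1\le c<2^{k+1}-2^{s+1}-1$ the monomial $\widetilde w_2^b\widetilde w_3^c$ is nonzero iff $c=2^k-2^{s+1}-1$, and in that case equals $\widetilde w_2^{2^{t-1}-1}\widetilde w_3^{2^t-2^{s+1}-1}$. A parity observation (from $2b+3c$ being odd modulo a suitable power of $2$) pins down $c$ modulo $2$, exactly as in the earlier lemmas. In the inductive step, given $c>2^k-2^{s+1}-1$, I would reduce $w_2^bw_3^c$ by $f_k$ (checking $b$ exceeds the $w_2$-exponent of $\mathrm{LM}(f_k)$ and $c$ exceeds its $w_3$-exponent, which follows from the bound on $c$ together with $2b+3c=2^{t+2}-3\cdot2^{s+1}-5$), apply Lemma~\ref{reduction - f_i} with the appropriate value of $l_k$ read off from (\ref{LMgi}) and Lemma~\ref{4/4 - Grebner - donje}, note that in each resulting summand the new $w_3$-exponent jumps by at least $2^{k}$ (since the relevant $e$ is forced to be even, hence $\ge2$), invoke the inductive hypothesis to force $c+2^k e=2^m-2^{s+1}-1$ for some $m>k$, and derive a contradiction with the range $2^k-2^{s+1}-1<c<2^{k+1}-2^{s+1}-1$ via a congruence mod $2^{k+1}$. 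When $c=2^k-2^{s+1}-1$ I would instead reduce by $f_{k-1}$ (or by $f_0$, whichever is set up by the analogue of the earlier proofs), apply Lemma~\ref{reduction - f_i} again, and use Lucas' theorem to show that the only surviving summand is the one that produces $\widetilde w_2^{2^{t+1}-3\cdot2^k-1}\widetilde w_3^{2^{k+1}-2^{s+1}-1}$, which by the inductive hypothesis equals $\widetilde w_2^{2^{t-1}-1}\widetilde w_3^{2^t-2^{s+1}-1}$; all other binomial coefficients vanish because of a mod-$4$ (or mod-$2^{k}$) obstruction on $\binom{d+e}{e}$.

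The main obstacle I anticipate is bookkeeping: getting the three exponent ranges ($c$ even or of fixed residue, $2^k-2^{s+1}-1\le c<2^{k+1}-2^{s+1}-1$, and the constraint $c\le 2^t-2^{s+1}$ from $\height(\widetilde w_3)$) to interlock cleanly, and correctly identifying which Gr\"obner basis element to reduce by at each stage, since for $n=2^{t+1}-2^{s+1}+1$ the basis has the two qualitatively different families $f_i$ ($i\le s$) and $f_i$ ($i\ge s+1$) from Lemma~\ref{4/4 - Grebner - donje}. The case $k=s+2$ (the bottom of the induction) will need separate care because the relevant reduction may land on the boundary $c=2^t-2^{s+1}$ where $\widetilde w_3$ hits its height, and one must check that the desired nonzero class does not get killed there. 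Once the lemma is proved, it feeds directly into the lower-bound proposition for this case: Lemma~\ref{z nonzero} applied with $\beta=2^{t+1}-1$, $\gamma=2^{t-1}-1$ (note $\beta+\gamma=3\cdot2^t-2^{s+1}-2$ only after also splitting off a $z(\widetilde w_3)^{2^{s+1}}$ factor, or with the precise $\beta,\gamma$ dictated by $r=2^{t+2}-3\cdot2^{s+1}-5$), together with Lucas' theorem and an identity of Corollary type (an analogue of Corollary~\ref{2/4 - posledica} / Corollary~\ref{treca cetvrtina - posledica}) to collapse the tensor sum to a single nonzero simple tensor.
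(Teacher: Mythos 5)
Your overall strategy is the paper's: record the Gr\"obner data from Lemma \ref{4/4 - Grebner - donje}, check that $w_2^{2^{t-1}-1}w_3^{2^t-2^{s+1}-1}$ avoids all leading monomials, use $\height(\widetilde w_3)=2^t-2^{s+1}$ to kill large $c$, and run a backward induction on $k$ with a two-case reduction step. However, there is a genuine gap: your induction stops at $k=s+2$, which only covers the monomials with $c\ge 2^{s+2}-2^{s+1}-1=2^{s+1}-1$. Since $c$ is odd and can be as small as $1$, the degree $2^{t+2}-3\cdot2^{s+1}-5$ contains monomials $\widetilde w_2^b\widetilde w_3^c$ with $1\le c<2^{s+1}-1$, and the lemma claims all of these vanish. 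You never address them. The paper handles this by \emph{continuing} the backward induction through $1\le k\le s$ (range $2^k-1\le c<2^{k+1}-1$), reducing by the elements $f_k$ of the \emph{other} family, with $\mathrm{LM}(f_k)=w_2^{2^t-2^s-2^k}w_3^{2^k-1}$, and deriving a contradiction $c\equiv-1\pmod{2^{k+1}}$ from the inductive hypothesis. Without this piece the ``only nonzero monomials'' claim, and hence the collapse of the tensor sum in the lower-bound proposition, is unproved. (Your worry about the ``boundary $c=2^t-2^{s+1}$'' at $k=s+2$ is not where the difficulty lies.)

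A secondary issue is the indexing of the reduction elements. For $c>2^k-2^{s+1}-1$ (so $c\ge2^k-2^{s+1}+1$ by parity) the correct reducer is $f_{k-1}$, whose leading monomial $w_2^{2^t-2^k}w_3^{2^k-2^{s+1}+1}$ exactly fits this range; your choice $f_k$ has $w_3$-exponent $2^{k+1}-2^{s+1}+1>c$ in its leading monomial, so the reduction you propose cannot be performed. For $c=2^k-2^{s+1}-1$ the paper reduces by $f_s$ (leading monomial $w_2^{2^t-2^{s+1}}w_3^{2^s-1}$), not by $f_{k-1}$ or $f_0$; the hedge you leave here is precisely the point where the two families of basis elements in Lemma \ref{4/4 - Grebner - donje} must be matched to the two cases, and it needs to be resolved, not deferred.
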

\begin{proof}
    It is obvious from Lemma \ref{4/4 - Grebner - donje} that $w_2^{2^{t-1}-1}w_3^{2^t-2^{s+1}-1}$ is not divisible by any of the leading monomials $\mathrm{LM}(f_i)$, $0\leq i\leq t-1$. So,
    $\widetilde w_2^{2^{t-1}-1}\widetilde w_3^{2^t-2^{s+1}-1}\in\mathcal B_n$, and particularly, $\widetilde w_2^{2^{t-1}-1}\widetilde w_3^{2^t-2^{s+1}-1}\neq0$.

    Note also that Theorem \ref{heights} gives us that $\height(\widetilde w_3)=2^t-2^{s+1}$, which means that $\widetilde w_2^b\widetilde w_3^c=0$ whenever $c>2^t-2^{s+1}$.

    \bigskip

    Now we use backward induction on $k$, where $s+2\le k\le t$, to prove that $\widetilde w_2^b\widetilde w_3^c$,
    with $2b+3c=2^{t+2}-3\cdot2^{s+1}-5$ and
    $2^k-2^{s+1}-1\leq c<2^{k+1}-2^{s+1}-1$, is nonzero if
    and only if $c=2^k-2^{s+1}-1$, and that $\widetilde w_2^{2^{t+1}-3\cdot2^{k-1}-1}\widetilde w_3^{2^k-2^{s+1}-1}=\widetilde w_2^{2^{t-1}-1}\widetilde w_3^{2^t-2^{s+1}-1}$. Throughout the proof we will use the fact that $c$ must be odd (this follows from
    $2b+3c=2^{t+2}-3\cdot2^{s+1}-5$).

    \medskip

    We have already discussed the cases $c=2^t-2^{s+1}-1$ and $c>2^t-2^{s+1}$. Since $c$ must be odd, the induction base ($k=t$) is verified.

    \medskip

    Proceeding to the induction step, let $s+2\le k\le t-1$ (and $2^k-2^{s+1}-1\le c<2^{k+1}-2^{s+1}-1$). We will distinguish two cases.
Suppose first that $c\ge2^k-2^{s+1}+1$. Note that $\mathrm{LM}(f_{k-1})\mid w_2^bw_3^c$ in this case. Namely, $\mathrm{LM}(f_{k-1})=w_2^{2^t-2^k}w_3^{2^k-2^{s+1}+1}$ (Lemma \ref{4/4 - Grebner - donje}), while
     \[c\ge2^k-2^{s+1}+1 \mbox{ and } b\ge\frac{1}{2}\left(2^{t+2}-3\cdot2^{s+1}-5-3\cdot(2^{k+1}-2^{s+1}-3)\right)>2^t-2^k.\]
So, we can reduce $w_2^bw_3^c$ by $f_{k-1}$, i.e., apply Lemma \ref{reduction - f_i} for $i=k-1$, and obtain
   \[\widetilde w_2^b\widetilde w_3^c=\sum_{\substack{2d+3e=2^{t+2-k}-4\\ e>0}}
              \binom{d+e}{e}\widetilde w_2^{b-2^{k-1}\left(2^{t+1-k}-2-d\right)}\widetilde w_3^{c+2^{k-1}e}.\]
    Obviously, in each summand of this sum $e$ must be even, and so $e>0$ implies $e\ge2$. Therefore, $c+2^{k-1}e\ge2^k-2^{s+1}+1+2^k>2^{k+1}-2^{s+1}-1$, which means that we can apply inductive hypothesis to conclude that if
    $\widetilde w_2^{b-2^t+2^k+2^{k-1}d}\widetilde w_3^{c+2^{k-1}e}\neq0$, then we must have
    $c+2^{k-1}e=2^m-2^{s+1}-1$ for some $m>k$. This implies $c\equiv-2^{s+1}-1\pmod{2^k}$,
    which is impossible since
    $2^k-2^{s+1}+1\le c\le2^{k+1}-2^{s+1}-3$ (and $s+1\le k-1$). So, each summand from the last sum is zero,
    and hence $\widetilde w_2^b\widetilde w_3^c=0$.

    Suppose now $c=2^k-2^{s+1}-1$. Then $b=2^{t+1}-3\cdot2^{k-1}-1>2^t-2^{s+1}$ (because $k\le t-1$) and $c>2^s-1$ (because $k\ge s+2$). Since $\mathrm{LM}(f_s)=w_2^{2^t-2^{s+1}}w_3^{2^s-1}$ (Lemma \ref{4/4 - Grebner - donje}) we can reduce $w_2^bw_3^c$ by $f_s$, and Lemma \ref{reduction - f_i} leads to
    \[\widetilde w_2^b\widetilde w_3^c=\sum_{\substack{2d+3e=2^{t+1-s}-4\\ e>0}}\binom{d+e}{e}\widetilde w_2^{b-2^s\left(2^{t-s}-2-d\right)}\widetilde w_3^{c+2^se}.\]
    Similarly as above, $e$ must be even, and so $e\ge2$. This means that $c+2^se\ge2^k-2^{s+1}-1+2^{s+1}=2^k-1>2^k-2^{s+1}+1$, and hence, by induction hypothesis and the first part of the induction step, if $\widetilde w_2^{b-2^t+2^{s+1}+2^sd}\widetilde w_3^{c+2^se}\neq0$, then $c+2^se=2^m-2^{s+1}-1$ for some $m\in\{k+1,k+2,\ldots,t\}$. This implies $e=2^{m-s}-2^{k-s}$ and $d=(2^{t+1-s}-4-3\cdot2^{m-s}+3\cdot2^{k-s})/2=2^{t-s}-2-3\cdot2^{m-s-1}+3\cdot2^{k-s-1}$. Therefore,
    \[\widetilde w_2^b\widetilde w_3^c=\sum_{m=k+1}^t\binom{2^{t-s}-2-2^{m-s-1}+2^{k-s-1}}{2^{m-s}-2^{k-s}}\widetilde w_2^{2^{t+1}-3\cdot2^{m-1}-1}\widetilde w_3^{2^m-2^{s+1}-1}.\]
    For $m=k+1$ the binomial coefficient is $\binom{2^{t-s}-2-2^{k-s-1}}{2^{k-s}}\equiv1\pmod2$ by Lucas' theorem, since $2^{t-s}-2-2^{k-s-1}\equiv2^{k-s}+2^{k-s-1}-2\pmod{2^{k-s+1}}$. On the other hand, for $k+2\le m\le t$ the binomial coefficient vanishes, since $2^{t-s}-2-2^{m-s-1}+2^{k-s-1}\equiv2^{k-s-1}-2\pmod{2^{k-s+1}}$, while $2^{m-s}-2^{k-s}\equiv2^{k-s}\pmod{2^{k-s+1}}$. So the only nonzero summand in the above sum is the one for $m=k+1$. Finally, we conclude that
    \[\widetilde w_2^{2^{t+1}-3\cdot2^{k-1}\!-1}\widetilde w_3^{2^k-2^{s+1}\!-1}\!\!=\widetilde w_2^b\widetilde w_3^c=\widetilde w_2^{2^{t+1}-3\cdot2^k\!-1}\widetilde w_3^{2^{k+1}-2^{s+1}\!-1}\!\!=\widetilde w_2^{2^{t-1}\!-1}\widetilde w_3^{2^t-2^{s+1}\!-1},\]
    by induction hypothesis.

\bigskip

We are left to prove that $\widetilde w_2^b\widetilde w_3^c=0$ if $c<2^{s+2}-2^{s+1}-1=2^{s+1}-1$ (and $2b+3c=2^{t+2}-3\cdot2^{s+1}-5$). Actually, we are going to continue the above backward induction on $k$, and prove that for $1\le k\le s$, $2^k-1\le c<2^{k+1}-1$ (and $2b+3c=2^{t+2}-3\cdot2^{s+1}-5$) one has $\widetilde w_2^b\widetilde w_3^c=0$.

So, let $2^k-1\le c<2^{k+1}-1$ for some $k\in\{1,2,\ldots,s\}$. Then
    $b>(2^{t+2}-3\cdot2^{s+1}-5-3\cdot2^{k+1}+3)/2=2^{t+1}-3\cdot2^s-3\cdot2^k-1>2^t-2^s-2^k$ (since $k\le s\le t-3$). By Lemma \ref{4/4 - Grebner - donje} we know that $\mathrm{LM}(f_k)=w_2^{2^t-2^s-2^k}w_3^{2^k-1}$, and therefore we can
    reduce $w_2^bw_3^c$ by $f_k$. Lemma \ref{reduction - f_i} then gives us
    \[\widetilde w_2^b\widetilde w_3^c=\sum_{\substack{2d+3e=2^{t+1-k}-2^{s+1-k}-2\\ e>0}}\binom{d+e}{e}\widetilde w_2^{b-2^k\left(2^{t-k}-2^{s-k}-1-d\right)}\widetilde w_3^{c+2^ke}.\]
Again, $e$ must be even, which leads to $e\ge2$, and consequently $c+2^ke>2^ke\ge2^{k+1}$. By inductive hypothesis, in order for $\widetilde w_2^{b-2^t+2^s+2^k+2^kd}\widetilde w_3^{c+2^ke}$ to be nonzero, we must have $c+2^ke=2^m-2^{s+1}-1$ for some $m\in\{s+2,s+3,\ldots,t\}$. However, this would imply $c\equiv-1\pmod{2^{k+1}}$, which contradicts the fact $2^k-1\le c<2^{k+1}-1$. Finally, we conclude $\widetilde w_2^b\widetilde w_3^c=0$, and the proof of the lemma is now complete.
\end{proof}

We are now ready to establish the lower bound.

\begin{proposition}
    Let $2^{t+1}-2^{s+1}+1\le n \le2^{t+1}-2^s$ (where $1\le s\le t-3$). Then:
    \[\zcl(W_{n})\ge3\cdot2^t-2^{s+1}-2.\]
\end{proposition}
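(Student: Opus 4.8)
The plan is to follow the template of the previous lower‑bound propositions. By Lemma \ref{zcl raste} it suffices to prove the inequality for the smallest value $n=2^{t+1}-2^{s+1}+1$. For this $n$ I would apply Lemma \ref{z nonzero} with $\beta=2^{t+1}-1$, $\gamma=2^t-2^{s+1}-1$ (so that $\beta+\gamma=3\cdot2^t-2^{s+1}-2$) and $r=2^{t+2}-3\cdot2^{s+1}-5$; one checks $2\beta+3\gamma-r=3\cdot2^t\ge0$. By (\ref{htz}) and Theorem \ref{heights} these are in fact the only admissible exponents, since $\height(\widetilde w_2)=2^{t+1}-3\cdot2^s-1\in[2^t,2^{t+1})$ and $\height(\widetilde w_3)=2^t-2^{s+1}\in[2^{t-1},2^t)$ give $\height(z(\widetilde w_2))=2^{t+1}-1$ and $\height(z(\widetilde w_3))=2^t-1$.

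Next I would invoke Lemma \ref{ono sa s}: in degree $r$ the only nonzero monomials $\widetilde w_2^b\widetilde w_3^c$ are those with $(b,c)=(2^{t+1}-3\cdot2^{k-1}-1,\,2^k-2^{s+1}-1)$ for $s+2\le k\le t$, and each of them equals $\widetilde w_2^{2^{t-1}-1}\widetilde w_3^{2^t-2^{s+1}-1}\in\mathcal B_n$. For such $(b,c)$ one has $\beta-b=3\cdot2^{k-1}$ and $\gamma-c=2^t-2^k$, and both binomial coefficients $\binom{2^{t+1}-1}{b}$ and $\binom{2^t-2^{s+1}-1}{2^k-2^{s+1}-1}$ are odd (the latter by Lucas' theorem, since the binary expansion of $2^k-2^{s+1}-1$ is contained in that of $2^t-2^{s+1}-1$). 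Hence the sum in Lemma \ref{z nonzero} collapses to
\[\widetilde w_2^{2^{t-1}-1}\widetilde w_3^{2^t-2^{s+1}-1}\ \otimes\ \sum_{k=s+2}^{t}\widetilde w_2^{3\cdot2^{k-1}}\widetilde w_3^{2^t-2^k},\]
whose first tensor factor lies in $\mathcal B_n$ and is therefore nonzero; it remains to show that $\Sigma:=\sum_{k=s+2}^{t}\widetilde w_2^{3\cdot2^{k-1}}\widetilde w_3^{2^t-2^k}$ is nonzero in $W_n$.

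To evaluate $\Sigma$ I would establish, by induction on $t$ (with $s$ fixed) in the same manner as Lemmas \ref{2/4 - lema} and \ref{treca cetvrtina - lema} — i.e.\ using Lemma \ref{kvadriranje2}, the recurrence (\ref{recgpolk3}) and the formula (\ref{g-exp}) — an identity of the form
\[g_{3\cdot2^{t}}+w_2^{3\cdot2^{t-1}}+\sum_{k=s+1}^{t-2}w_2^{3\cdot2^{k-1}}w_3^{2^t-2^k}\in w_3I_m,\]
with $m$ large enough that $w_3I_m\subseteq I_{m+1}\subseteq I_n$ (cf.\ (\ref{w_3I_n subset I_n+1})). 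Since $3\cdot2^t>n$ gives $g_{3\cdot2^t}\in I_n$, this yields $w_2^{3\cdot2^{t-1}}\equiv\sum_{k=s+1}^{t-2}w_2^{3\cdot2^{k-1}}w_3^{2^t-2^k}\pmod{I_n}$; substituting this into $\Sigma$ (after splitting off the terms $k=t$ and $k=t-1$) all intermediate summands cancel over $\mathbb Z_2$, leaving $\Sigma\equiv\widetilde w_2^{3\cdot2^{s}}\widetilde w_3^{2^t-2^{s+1}}+\widetilde w_2^{3\cdot2^{t-2}}\widetilde w_3^{2^{t-1}}$ in $W_n$. Finally, using Lemma \ref{4/4 - Grebner - donje} one sees that $w_2^{3\cdot2^{t-2}}w_3^{2^{t-1}}$ is divisible by the monomial $f_{t-2}=w_2^{2^{t-1}}w_3^{2^{t-1}-2^{s+1}+1}\in I_n$, so its class vanishes, while $w_2^{3\cdot2^{s}}w_3^{2^t-2^{s+1}}$ is not divisible by any $\mathrm{LM}(f_i)$, whence $\widetilde w_2^{3\cdot2^{s}}\widetilde w_3^{2^t-2^{s+1}}\in\mathcal B_n$ is nonzero (these divisibility checks go through uniformly for $1\le s\le t-3$). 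Thus $\Sigma\neq0$, and the proof is complete.

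The main obstacle is the inductive identity in the third paragraph: pinning down the correct modulus $m$ — which should be of the shape $2^{t+1}-2^{s+1}+2^{\text{small}}$, so that $I_m\subseteq I_n$ — and carrying out the induction, in particular its base case and the bookkeeping of the finite sum $\sum_{k=s+1}^{t-2}$ under squaring, is where the genuine work lies, exactly as Lemmas \ref{2/4 - lema} and \ref{treca cetvrtina - lema} were the technical cores of the earlier cases. Everything else reduces to the choice of $(\beta,\gamma,r)$ matching the hypotheses of Lemma \ref{ono sa s} and to routine divisibility checks against the explicit Gr\"obner basis of Lemma \ref{4/4 - Grebner - donje}.
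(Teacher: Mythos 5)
Your proposal is correct, and it coincides with the paper's proof in every step except the last: the reduction to $n=2^{t+1}-2^{s+1}+1$ via Lemma \ref{zcl raste}, the choice $(\beta,\gamma,r)=(2^{t+1}-1,\,2^t-2^{s+1}-1,\,2^{t+2}-3\cdot2^{s+1}-5)$ in Lemma \ref{z nonzero}, the appeal to Lemma \ref{ono sa s} and Lucas' theorem, and the resulting reduction to showing $\Sigma=\sum_{k=s+2}^{t}\widetilde w_2^{3\cdot2^{k-1}}\widetilde w_3^{2^t-2^k}\neq0$ are exactly as in the paper. Where you diverge is in proving $\Sigma\neq0$. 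The paper multiplies $\Sigma$ by $\widetilde w_2^{2^{t-1}-3\cdot2^s-1}$: the relation $f_{t-2}=w_2^{2^{t-1}}w_3^{2^{t-1}-2^{s+1}+1}$ of Lemma \ref{4/4 - Grebner - donje} annihilates every summand with $k<t$, and the surviving $k=t$ term is $\widetilde w_2^{2^{t+1}-3\cdot2^s-1}$, which is nonzero because $\height(\widetilde w_2)$ equals exactly $2^{t+1}-3\cdot2^s-1$ by Theorem \ref{heights}. You instead telescope $\Sigma$ against the congruence $w_2^{3\cdot2^{t-1}}\equiv\sum_k w_2^{3\cdot2^{k-1}}w_3^{2^t-2^k}$ and land on $\widetilde w_2^{3\cdot2^{t-2}}\widetilde w_3^{2^{t-1}}+\widetilde w_2^{3\cdot2^{s}}\widetilde w_3^{2^t-2^{s+1}}$, where the first monomial dies against $f_{t-2}$ and the second lies in $\mathcal B_n$; the divisibility checks you defer are indeed routine and go through for all $1\le s\le t-3$. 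The one thing worth pointing out is that the ``main obstacle'' you identify --- establishing that congruence by a fresh induction --- requires no new work: it is literally Corollary \ref{2/4 - posledica} applied with $t+1$ in place of $t$ (the sum there starts at $k=1$, but the terms with $k\le s$ vanish since $\widetilde w_3^{2^t-2^{s+1}+1}=0$ by $f_{t-1}$), and the modulus $I_{2^{t+1}+2^{t-1}+2}$ is contained in $I_n$; the paper invokes exactly this instance in the proof of Lemma \ref{4/4 - z}. A minor quibble: your parenthetical claim that the heights force this particular $(\beta,\gamma)$ is not accurate (any $\gamma$ with $2^t-2^{s+1}-1\le\gamma\le2^t-1$ is compatible with $\height(z(\widetilde w_3))=2^t-1$), but nothing in your argument depends on it.
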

\begin{proof}
    By Lemma \ref{zcl raste}, it is enough to prove the inequality for
    $n=2^{t+1}-2^{s+1}+1$. In order to do so, we apply Lemma \ref{z nonzero} for $\beta=2^{t+1}-1$, $\gamma=2^t-2^{s+1}-1$ and $r=2^{t+2}-3\cdot2^{s+1}-5$. By that lemma, it suffices to show that
    \[\sum_{2b+3c=2^{t+2}-3\cdot 2^{s+1}-5}\!
        \binom{2^{t+1}-1}{b}\binom{2^t-2^{s+1}-1}{c}
        \widetilde w_2^b\widetilde w_3^c\otimes
        \widetilde w_2^{2^{t+1}-1-b}\widetilde w_3^{2^t-2^{s+1}-1-c}\!\neq0\]
    in $W_n\otimes W_n$. By Lemma \ref{ono sa s}, we only need to consider the summands with
    $(b,c)=(2^{t+1}-3\cdot 2^{k-1}-1,2^k-2^{s+1}-1)$, for $s+2\le k\le t$, and for each
    of them we know that $\widetilde w_2^b\widetilde w_3^c=\widetilde w_2^{2^{t-1}-1}\widetilde w_3^{2^t-2^{s+1}-1}$. So, by Lucas' theorem,
    the last sum simplifies to
    \[\widetilde w_2^{2^{t-1}-1}\widetilde w_3^{2^t-2^{s+1}-1}\otimes\sum_{k=s+2}^t\widetilde w_2^{3\cdot 2^{k-1}}\widetilde w_3^{2^t-2^k}.\]
    We know that $\widetilde w_2^{2^{t-1}-1}\widetilde w_3^{2^t-2^{s+1}-1}\neq0$ (by Lemma \ref{ono sa s}), so it remains to establish that $\sum_{k=s+2}^t\widetilde w_2^{3\cdot 2^{k-1}}\widetilde w_3^{2^t-2^k}\neq0$ in $W_n$. However, we are going to prove that this sum multiplied by $\widetilde w_2^{2^{t-1}-3\cdot2^s-1}$ is nonzero, so the sum itself must be nonzero. Namely:
    \begin{align*}
    \widetilde w_2^{2^{t-1}-3\cdot2^s-1}\sum_{k=s+2}^t\widetilde w_2^{3\cdot 2^{k-1}}\widetilde w_3^{2^t-2^k}&=\sum_{k=s+2}^t\widetilde w_2^{2^{t-1}-3\cdot2^s-1+3\cdot2^{k-1}}\widetilde w_3^{2^t-2^k}\\
    &=\widetilde w_2^{2^{t+1}-3\cdot2^s-1}\neq0.
    \end{align*}
    The second equality holds because for $s+2\le k<t$ one has $2^{t-1}-3\cdot2^s-1+3\cdot2^{k-1}>2^{t-1}$ and $2^t-2^k>2^{t-1}-2^{s+1}+1$, and then $f_{t-2}=w_2^{2^{t-1}}w_3^{2^{t-1}-2^{s+1}+1}$ (Lemma \ref{4/4 - Grebner - donje}) implies that $\widetilde w_2^{2^{t-1}-3\cdot2^s-1+3\cdot2^{k-1}}\widetilde w_3^{2^t-2^k}=0$ in this case. So, only the summand for $k=t$ remains, and $\widetilde w_2^{2^{t+1}-3\cdot2^s-1}\neq0$ because $\height(\widetilde w_2)$ is exactly $2^{t+1}-3\cdot2^s-1$ (see Theorem \ref{heights}).
\end{proof}

In the final part of the section we obtain the upper bound for $\zcl(W_n)$ in the case
$2^{t+1}-2^{s+1}+1\leq n\leq 2^{t+1}-2^s$ (where $t\ge4$ and $1\le s\le t-3$).
For this proof we will need the following two lemmas.

\begin{lemma}\label{4/4 - Grebner}
In $W_{2^{t+1}-2^s}$ the following relations hold:
\begin{itemize}
    \item[(a)] $\widetilde w_2^{2^{t-1}+2^{t-2}}\widetilde w_3^{2^{t-2}-2^s}=\widetilde w_3^{2^{t-1}+2^{t-2}-2^s}$;
    \item[(b)] $\widetilde w_2^{2^{t-1}}\widetilde w_3^{2^{t-1}-2^s}=0$;
    \item[(c)] $\widetilde w_3^{2^t-2^s}=0$.
\end{itemize}
\end{lemma}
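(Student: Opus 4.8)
The plan is to obtain all three relations by reading off the last three elements of the Gröbner basis $F_n$ for $n=2^{t+1}-2^s$, exactly as the Gröbner bases are exploited in the earlier cases (cf.\ Lemmas \ref{pomocna za pola plus 2} and \ref{4/4 - Grebner - donje}). First I would set up the combinatorial data of Theorem \ref{Grebner}. Since $1\le s\le t-3$ and $t\ge4$, one has $2^t-1<n<2^{t+1}-1$, so Theorem \ref{Grebner} applies with this $t$, and $n-2^t+1=2^t-2^s+1$ has binary expansion $1+2^s+2^{s+1}+\cdots+2^{t-1}$. Hence $\alpha_0=1$, $\alpha_j=0$ for $1\le j\le s-1$, and $\alpha_j=1$ for $s\le j\le t-1$; consequently $s_j=1$ for $0\le j\le s-1$ and $s_j=2^{j+1}-2^s+1$ for $s\le j\le t-1$. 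In particular $\alpha_{t-1}=\alpha_{t-2}=\alpha_{t-3}=1$, and the indices $t-1,t-2,t-3$ are all legitimate and distinct (this is where $t\ge4$ is used).

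Next I would compute $f_{t-1},f_{t-2},f_{t-3}$ from $f_i=w_3^{\alpha_is_{i-1}}g_{n-2+2^i-s_i}$. The index arithmetic gives $n-2+2^{t-1}-s_{t-1}=2^t+2^{t-1}-3$, $n-2+2^{t-2}-s_{t-2}=2^t+2^{t-1}+2^{t-2}-3$, and $n-2+2^{t-3}-s_{t-3}=2^{t+1}-2^{t-3}-3=2^{t-3}\cdot 15-3$. Applying Lemma \ref{g-3}(b) and (d) to the first two, and Lemma \ref{kvadriranje} (with $i=t-3$ and $r=12$) together with $g_{12}=w_2^6+w_3^4$ from Table \ref{table:2} to the third, and then multiplying through by the appropriate power $w_3^{\alpha_is_{i-1}}$ in each case, I expect to arrive at
\[f_{t-1}=w_3^{2^t-2^s},\qquad f_{t-2}=w_2^{2^{t-1}}w_3^{2^{t-1}-2^s},\qquad f_{t-3}=w_2^{2^{t-1}+2^{t-2}}w_3^{2^{t-2}-2^s}+w_3^{2^{t-1}+2^{t-2}-2^s}.\]
Once these are established, all three parts follow immediately from the equivalence (\ref{ekv - s tildom - bez tilde}): $f_{t-1}\in I_n$ yields (c); $f_{t-2}\in I_n$ yields (b); and $f_{t-3}\in I_n$ yields $\widetilde w_2^{2^{t-1}+2^{t-2}}\widetilde w_3^{2^{t-2}-2^s}+\widetilde w_3^{2^{t-1}+2^{t-2}-2^s}=0$, which is (a) since we work over $\mathbb Z_2$.

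The only point requiring care is the prefactor $w_3^{\alpha_{t-3}s_{t-4}}$ in $f_{t-3}$: the value $s_{t-4}$ equals $2^{t-3}-2^s+1$ when $s\le t-4$, but equals $s_{s-1}=1$ when $s=t-3$. However, in both cases $\alpha_{t-3}s_{t-4}=2^{t-3}-2^s+1$, so this case distinction collapses and the stated formula for $f_{t-3}$ holds uniformly. Apart from this observation and the (entirely routine) bookkeeping with powers of two, I do not anticipate any genuine obstacle.
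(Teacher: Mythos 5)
Your proposal is correct and follows essentially the same route as the paper: extract the binary data $\alpha_j$, $s_j$ for $n=2^{t+1}-2^s$, compute $f_{t-1}$, $f_{t-2}$, $f_{t-3}$ from Theorem \ref{Grebner}, and read off the relations via (\ref{ekv - s tildom - bez tilde}); your careful note that $\alpha_{t-3}s_{t-4}=2^{t-3}-2^s+1$ in both the $s\le t-4$ and $s=t-3$ cases is exactly the point the paper's formula silently relies on. The only cosmetic difference is that you invoke Lemma \ref{g-3}(b),(d) for $f_{t-1}$ and $f_{t-2}$, while the paper uniformly applies Lemma \ref{kvadriranje} (reducing to $g_0$, $g_4$, $g_{12}$); the computations are equivalent.
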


\begin{proof}
We will use the members of the Gr\"obner basis $F_{2^{t+1}-2^s}$, defined in Theorem \ref{Grebner}.
If $n=2^{t+1}-2^s$, then $n-2^t+1=2^{t-1}+2^{t-2}+\dots+2^s+1$, and since $s\le t-3$, we have $\alpha_{t-3}=\alpha_{t-2}=\alpha_{t-1}=1$ and $s_i=2^{i+1}-2^s+1$ for $t-4\le i\le t-1$.
Therefore, for $t-3\le i\le t-1$, by Lemma \ref{kvadriranje} we have
\[f_i=w_3^{\alpha_is_{i-1}}g_{n-2+2^i-s_i}=w_3^{2^i-2^s+1}g_{2^{t+1}-2^i-3}=w_3^{2^{i+1}-2^s}(g_{2^{t-i+1}-4})^{2^i}.\]
Now Table \ref{table:2} leads to
\begin{align*}
f_{t-3}&=w_3^{2^{t-2}-2^s}g_{12}^{2^{t-3}}=w_3^{2^{t-2}-2^s}(w_2^6+w_3^4)^{2^{t-3}}\!=w_2^{3\cdot2^{t-2}}w_3^{2^{t-2}-2^s}+w_3^{2^{t-1}+2^{t-2}-2^s};\\
f_{t-2}&=w_3^{2^{t-1}-2^s}g_4^{2^{t-2}}=w_2^{2^{t-1}}w_3^{2^{t-1}-2^s};\\
f_{t-1}&=w_3^{2^t-2^s}g_0^{2^{t-1}}=w_3^{2^t-2^s}.
\end{align*}
Of course, all these polynomials belong to $I_{2^{t+1}-2^s}$, and so (\ref{ekv - s tildom - bez tilde}) concludes the proof of the lemma.
\end{proof}

\begin{lemma}\label{4/4 - z}
Let $n=2^{t+1}-2^s$ (where $t\ge4$ and $1\le s\le t-3$).
Then in $W_n\otimes W_n$ one has:
\[z(\widetilde w_2)^{2^{t+1}-2^{s+1}}z(\widetilde w_3)^{2^t-2^s}=0 \quad\mbox{ and }\quad z(\widetilde w_2)^{2^{t+1}-2^s}z(\widetilde w_3)^{2^t-2^{s+1}}=0.\]
\end{lemma}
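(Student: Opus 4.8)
The plan is to adapt the method of Lemma~\ref{2/4 - gornje}. Both claimed vanishings concern products $z(\widetilde w_2)^\beta z(\widetilde w_3)^\gamma$ whose exponents satisfy $\beta\ge2^{t-1}$ and $\gamma\ge2^{t-1}$, and the three relations of Lemma~\ref{4/4 - Grebner} will play the role that the identities coming from $f_{t-3},f_{t-2},f_{t-1}$ and Corollary~\ref{2/4 - posledica} played there, supplemented by a few of their consequences (and, when needed, by leading-monomial relations of $F_n$ as in Lemmas~\ref{4/4 - Grebner - donje} and \ref{4/4 - Grebner}). Specifically I would use $\widetilde w_2^{2^{t-1}}\widetilde w_3^{2^{t-1}}=0$ and $\widetilde w_2^{2^t}\widetilde w_3^{2^{t-1}-2^s}=0$ (both from part~(b) after multiplying by a suitable monomial), the identity $\widetilde w_2^{2^t}\widetilde w_3^{2^{t-2}-2^s}=\widetilde w_2^{2^{t-2}}\widetilde w_3^{2^{t-1}+2^{t-2}-2^s}$ (part~(a) times $\widetilde w_2^{2^{t-2}}$), part~(c), and the vanishing $\widetilde w_2^{2^{t+1}-2^s}=0$, valid since $2^{t+1}-2^s>\height(\widetilde w_2)=2^{t+1}-3\cdot2^s-1$ by Theorem~\ref{heights}.

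First I would peel off powers of two via \eqref{z(a^stepen dvojke)}: writing $z(\widetilde w_2)^\beta=z(\widetilde w_2)^{\beta-2^{t-1}}z(\widetilde w_2^{2^{t-1}})$ and $z(\widetilde w_3)^\gamma=z(\widetilde w_3)^{\gamma-2^{t-1}}z(\widetilde w_3^{2^{t-1}})$, and expanding $z(\widetilde w_2^{2^{t-1}})z(\widetilde w_3^{2^{t-1}})$ by \eqref{z(ab)}, the diagonal term disappears because $\widetilde w_2^{2^{t-1}}\widetilde w_3^{2^{t-1}}=0$, leaving the slot-symmetric pair $\widetilde w_2^{2^{t-1}}\otimes\widetilde w_3^{2^{t-1}}+\widetilde w_3^{2^{t-1}}\otimes\widetilde w_2^{2^{t-1}}$. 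I would then keep detaching further factors $z(\widetilde w_2^{2^j})$, $z(\widetilde w_3^{2^j})$ and applying them to the current symmetric tensor. At each step the new cross-terms are disposed of in one of two ways: either a monomial appearing in a cross-term is annihilated by one of the relations above — for the first product part~(c), and for the second product $\widetilde w_2^{2^{t+1}-2^s}=0$, get triggered this way (the former after part~(a) has been used to trade a block of $\widetilde w_2$'s for $\widetilde w_3$'s) — so the cross-term vanishes on the nose; or the cross-terms occur as a slot-symmetric pair that is killed, after multiplication by a further symmetric $z$-factor, by the change-of-variables argument used for the element $y$ in Lemma~\ref{2/4 - gornje}.

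This reduction brings each product, just as \eqref{prva} did there, to the form $z(u)\cdot A$ for an element $u\in W_n$ of small degree and an explicit doubly-indexed sum $A$ of simple tensors invariant under interchange of the two tensor factors. Expanding $z(u)$ writes the product as $x_1+x_2$, and the substitution that reverses the two slots — with $\binom{N}{i}=\binom{N}{N-i}$ and Lucas' theorem matching the remaining binomial coefficients — carries $x_1$ term by term onto $x_2$; hence $x_1=x_2$ and, over $\mathbb Z_2$, $x_1+x_2=0$.

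The hard part is the bookkeeping: tracking which monomials survive after each peeling step, choosing the correct relation to annihilate (or the correct partner to pair up) each cross-term, and checking at the end that the slot-reversing substitution really is a bijection of the relevant index range. As in the analogous places earlier in the paper, this forces a few boundary values — in particular $s=t-3$, and small $t$ — to be treated separately.
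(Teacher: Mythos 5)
Your outline correctly identifies the toolkit (the relations of Lemma \ref{4/4 - Grebner}, peeling off powers of two via (\ref{z(a^stepen dvojke)}), Lucas' theorem, and a slot-reversal cancellation over $\mathbb Z_2$), and it is in the same spirit as the paper's argument. But as written it has a genuine gap, in two places. First, the reduction cannot be carried out with Lemma \ref{4/4 - Grebner} and leading-monomial relations alone: after the peeling one is left with the symmetric tensor $\widetilde w_2^{2^t+2^{t-1}}\otimes\widetilde w_3^{2^{t-1}+2^{t-2}}+\widetilde w_3^{2^{t-1}+2^{t-2}}\otimes\widetilde w_2^{2^t+2^{t-1}}$, and the indispensable next step is the expansion $\widetilde w_2^{2^t+2^{t-1}}=\sum_{k=1}^{t-2}\widetilde w_2^{3\cdot2^{k-1}}\widetilde w_3^{2^t-2^k}$, which comes from Corollary \ref{2/4 - posledica} applied with $t+1$ in place of $t$ (ultimately from Lemma \ref{2/4 - lema} via Lemma \ref{kvadriranje2}). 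Without this input there is no doubly-indexed sum to which the change-of-variables argument can be applied, and your sketch does not identify it.

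Second, and more importantly, your endgame is wrong for the second product. You assert that each full product reduces to $x_1+x_2$ with $x_1=x_2$, hence vanishes by symmetry. The paper instead isolates the common factor $x=z(\widetilde w_2)^{2^{t+1}-2^{s+1}}z(\widetilde w_3)^{2^t-2^{s+1}}$ and proves the closed form $x=A^{2^s}\cdot(\widetilde w_3\otimes\widetilde w_3)^{2^t-2^{s+1}}$; the slot-reversal cancellation occurs only inside this computation and kills two of \emph{three} pieces, leaving a nonzero remainder (so the full products do not vanish by symmetry alone). The first product then dies because $(\widetilde w_3\otimes\widetilde w_3)^{2^t-2^{s+1}}z(\widetilde w_3^{2^s})=0$ by Lemma \ref{4/4 - Grebner}(c); but the second requires computing $z(\widetilde w_2)\cdot A$, which telescopes not to zero but to the surviving symmetric pair $\widetilde w_2^{2^{t-s-1}}\otimes\widetilde w_2^2+\widetilde w_2^2\otimes\widetilde w_2^{2^{t-s-1}}$ — two distinct simple tensors that do not cancel — and these are only annihilated after raising to the $2^s$-th power, multiplying by $(\widetilde w_3\otimes\widetilde w_3)^{2^t-2^{s+1}}$, and invoking $\widetilde w_2^{2^{t-1}}\widetilde w_3^{2^{t-1}-2^s}=0$ from part (b). This hybrid mechanism (partial telescoping plus a cohomological relation) is the essential content of the lemma and is absent from your plan. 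Finally, the paper's proof is uniform in $s$ and $t$; no boundary cases such as $s=t-3$ need separate treatment.
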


\begin{proof}
Let $x:=z(\widetilde w_2)^{2^{t+1}-2^{s+1}}z(\widetilde w_3)^{2^t-2^{s+1}}$. We need to prove that $x\cdot z(\widetilde w_3)^{2^s}=0$ and $x\cdot z(\widetilde w_2)^{2^s}=0$. Let us  first establish that
\begin{equation}\label{x=sum}
x=A^{2^s}\cdot(\widetilde w_3\otimes\widetilde w_3)^{2^{t}-2^{s+1}},
\end{equation}
where
\[A=\sum_{i=0}^{2^{t-s-2}-2}\bigg(\widetilde w_2^{3+2i}\otimes\widetilde w_2^{2^{t-s-1}-2-2i}+\widetilde w_2^{2^{t-s-1}-2-2i}\otimes\widetilde w_2^{3+2i}\bigg).\]

According to (\ref{z(a^stepen dvojke)}) and using the fact $s+1\le t-2$, we have
\[x=z(\widetilde w_2)^{2^{t-1}-2^{s+1}}z(\widetilde w_3)^{2^{t-2}-2^{s+1}}z\big(\widetilde w_2^{2^{t}}\big)z\big(\widetilde w_2^{2^{t-1}}\big)z\big(\widetilde w_3^{2^{t-1}}\big)z\big(\widetilde w_3^{2^{t-2}}\big).\]
By Lemma \ref{4/4 - Grebner}(b), $\widetilde w_2^{2^{t-1}}\widetilde w_3^{2^{t-1}}=0$, and hence
\[
x=z(\widetilde w_2)^{2^{t-1}-2^{s+1}}z(\widetilde w_3)^{2^{t-2}-2^{s+1}}z\big(\widetilde w_3^{2^{t-2}}\big)\big(\widetilde w_2^{2^{t}+2^{t-1}}\otimes\widetilde w_3^{2^{t-1}}+\widetilde w_3^{2^{t-1}}\otimes\widetilde w_2^{2^t+2^{t-1}}\big).
\]
Again by Lemma \ref{4/4 - Grebner} (part (a) multiplied by $\widetilde w_2^{2^{t-1}+2^{t-2}}\widetilde w_3^{2^s}$ and part (b) multiplied by $\widetilde w_2^{2^{t-2}}\widetilde w_3^{2^{t-2}+2^s}$), $\widetilde w_2^{2^t+2^{t-1}}\widetilde w_3^{2^{t-2}}=\widetilde w_2^{2^{t-1}+2^{t-2}}\widetilde w_3^{2^{t-1}+2^{t-2}}=0$, and so
\[
x=z(\widetilde w_2)^{2^{t-1}-2^{s+1}}z(\widetilde w_3)^{2^{t-2}-2^{s+1}}\big(\widetilde w_2^{2^t+2^{t-1}}\otimes\widetilde w_3^{2^{t-1}+2^{t-2}}+\widetilde w_3^{2^{t-1}+2^{t-2}}\otimes\widetilde w_2^{2^t+2^{t-1}}\big).
\]
As in some instances before, we will expand the first two factors by binomial formula, while the expression in the brackets will be expanded using Corollary \ref{2/4 - posledica}. By that corollary (applied for $t+1$ in place of $t$), $w_2^{3\cdot 2^{t-1}}\equiv\sum_{k=1}^{t-2}w_2^{3\cdot 2^{k-1}}w_3^{2^t-2^k}\pmod{I_{2^{t+1}+2^{t-1}+2}}$, and since $I_n=I_{2^{t+1}-2^s}\supseteq I_{2^{t+1}+2^{t-1}+2}$, using (\ref{ekv - s tildom - bez tilde}) we get
\[\widetilde w_2^{2^t+2^{t-1}}=\sum_{k=1}^{t-2}\widetilde w_2^{3\cdot 2^{k-1}}\widetilde w_3^{2^t-2^k} \quad\mbox{in }W_n.\]
Every summand from the expansion of $z(\widetilde w_2)^{2^{t-1}-2^{s+1}}z(\widetilde w_3)^{2^{t-2}-2^{s+1}}$ is of the
form
\[\binom{2^{t-1}-2^{s+1}}{l}\binom{2^{t-2}-2^{s+1}}{m}\widetilde w_2^{l}\widetilde w_3^{m}\otimes\widetilde w_2^{2^{t-1}-2^{s+1}-l}\widetilde w_3^{2^{t-2}-2^{s+1}-m},\]
where $l$ and $m$ are integers such that $0\le l\le2^{t-1}-2^{s+1}$ and $0\le m\le2^{t-2}-2^{s+1}$.
By Lucas' theorem, $\binom{2^{t-1}-2^{s+1}}{l}$ (resp.\
$\binom{2^{t-2}-2^{s+1}}{m}$) is odd
if and only if $l=2^{s+1}i$ for some $i\in\{0,1,\ldots,2^{t-s-2}-1\}$ (resp.\
$m=2^{s+1}j$ for some $j\in\{0,1,\ldots,2^{t-s-3}-1\}$). We conclude that
\begin{align*}
  x & =\sum_{i=0}^{2^{t-s-2}-1}\sum_{j=0}^{2^{t-s-3}-1}\sum_{k=1}^{t-2}\widetilde w_2^{3\cdot2^{k-1}+2^{s+1}i}\widetilde w_3^{2^t-2^k+2^{s+1}j}\otimes \widetilde w_2^{2^{t-1}-2^{s+1}(i+1)}\widetilde w_3^{2^t-2^{s+1}(j+1)}\\
   & +\sum_{i=0}^{2^{t-s-2}-1}\sum_{j=0}^{2^{t-s-3}-1}\sum_{k=1}^{t-2}\widetilde w_2^{2^{t-1}-2^{s+1}(i+1)}\widetilde w_3^{2^t-2^{s+1}(j+1)}\otimes \widetilde w_2^{3\cdot2^{k-1}+2^{s+1}i}\widetilde w_3^{2^t-2^k+2^{s+1}j}.
\end{align*}
Also, Lemma \ref{4/4 - Grebner}(c) implies $\widetilde w_3^{2^t-2^k+2^{s+1}j}=0$ for
$k\le s$ (and all $j\ge0$), while for $k=s+1$ the class $\widetilde w_3^{2^t-2^k+2^{s+1}j}$ is nonzero only for
$j=0$. Moreover, in this case,
i.e., $j=0$ and $k=s+1$, for
$i=2^{t-s-2}-1$ one has
$\widetilde w_2^{3\cdot2^{k-1}+2^{s+1}i}\widetilde w_3^{2^t-2^k+2^{s+1}j}=\widetilde w_2^{2^{t-1}+2^s}\widetilde w_3^{2^t-2^{s+1}}=0$
(by Lemma \ref{4/4 - Grebner}(b)). Hence, $x$ is equal to
\begin{align*}
 & \sum_{k=s+2}^{t-2}\sum_{i=0}^{2^{t-s-2}-1}\,\sum_{j=0}^{2^{t-s-3}-1}\widetilde w_2^{3\cdot2^{k-1}+2^{s+1}i}\widetilde w_3^{2^t-2^k+2^{s+1}j}\otimes\widetilde w_2^{2^{t-1}-2^{s+1}(i+1)}\widetilde w_3^{2^t-2^{s+1}(j+1)}\\
   & +\sum_{k=s+2}^{t-2}\sum_{i=0}^{2^{t-s-2}-1}\,\sum_{j=0}^{2^{t-s-3}-1}\!\widetilde w_2^{2^{t-1}-2^{s+1}(i+1)}\widetilde w_3^{2^t-2^{s+1}(j+1)}\!\otimes\widetilde w_2^{3\cdot2^{k-1}+2^{s+1}i}\widetilde w_3^{2^t-2^k+2^{s+1}j}\\
   &+y\cdot\sum_{i=0}^{2^{t-s-2}-2}\bigg(\widetilde w_2^{3\cdot2^s+2^{s+1}i}\otimes \widetilde w_2^{2^{t-1}-2^{s+1}(i+1)}+\widetilde w_2^{2^{t-1}-2^{s+1}(i+1)}\otimes \widetilde w_2^{3\cdot2^s+2^{s+1}i}\bigg),
\end{align*}
where $y=\widetilde w_3^{2^{t}-2^{s+1}}\otimes \widetilde w_3^{2^{t}-2^{s+1}}=(\widetilde w_3\otimes \widetilde w_3)^{2^{t}-2^{s+1}}$. It is obvious that the last sum is $A^{2^s}$ (see (\ref{x=sum})), and if we denote the two triple sums by $x_1$ and $x_2$ respectively, we will prove (\ref{x=sum}) as soon as we show that $x_1=x_2$ (i.e., that these two triple sums cancel out).

Let us introduce the following notation:
\[\sigma(k,i,j)\!:=\!\widetilde w_2^{3\cdot2^{k-1}+2^{s+1}i}\widetilde w_3^{2^t-2^k+2^{s+1}j} \mbox{ and } \tau(k,i,j)\!:=\!\widetilde w_2^{2^{t-1}-2^{s+1}(i+1)}\widetilde w_3^{2^t-2^{s+1}(j+1)}.\]
We know that $\widetilde w_2^{2^{t-1}}\widetilde w_3^{2^{t-1}-2^s}=0$ (Lemma \ref{4/4 - Grebner}(b)), and since $2^t-2^k+2^{s+1}j\ge2^t-2^{t-2}>2^{t-1}-2^s$, we conclude that we must have $3\cdot2^{k-1}+2^{s+1}i<2^{t-1}$, i.e., $i\le2^{t-s-2}-3\cdot2^{k-s-2}-1$, in order for $\sigma(k,i,j)$ to be nonzero. Similarly, since $\widetilde w_3^{2^t-2^s}=0$ (Lemma \ref{4/4 - Grebner}(c)), one more necessary condition for $\sigma(k,i,j)\neq0$ is $2^t-2^k+2^{s+1}j<2^t-2^s$, and this amounts to $2^t-2^k+2^{s+1}j\le2^t-2^{s+1}$ because $2^t-2^k+2^{s+1}j$ is divisible by $2^{s+1}$. So, $\sigma(k,i,j)\neq0$ implies $j\le2^{k-s-1}-1$. We conclude that
\begin{align*}
x_1&=\sum_{k=s+2}^{t-2}\,\sum_{i=0}^{2^{t-s-2}-3\cdot2^{k-s-2}-1}\,\,\sum_{j=0}^{2^{k-s-1}-1}\sigma(k,i,j)\otimes\tau(k,i,j)\quad \mbox{and}\\
x_2&=\sum_{k=s+2}^{t-2}\,\sum_{\overline i=0}^{2^{t-s-2}-3\cdot2^{k-s-2}-1}\,\,\sum_{\overline j=0}^{2^{k-s-1}-1}\tau(k,\overline i,\overline j)\otimes\sigma(k,\overline i,\overline j).
\end{align*}
However, if $i+\overline i=2^{t-s-2}-3\cdot2^{k-s-2}-1$ and $j+\overline j=2^{k-s-1}-1$, then it is routine to check that
\[\sigma(k,i,j)=\tau(k,\overline i,\overline j) \quad\mbox{and}\quad \tau(k,i,j)=\sigma(k,\overline i,\overline j),\]
so the change of variables $\overline i=2^{t-s-2}-3\cdot2^{k-s-2}-1-i$ and $\overline j=2^{k-s-1}-1-j$ transforms the sum $x_1$ to the sum $x_2$, leading to the conclusion $x_1=x_2$. This establishes (\ref{x=sum}).

\medskip

Now we prove $x\cdot z(\widetilde w_3)^{2^s}=0$. By (\ref{z(a^stepen dvojke)}) and Lemma \ref{4/4 - Grebner}(c),
\begin{align*}
(\widetilde w_3\otimes\widetilde w_3)^{2^{t}-2^{s+1}}z(\widetilde w_3)^{2^s}&=\big(\widetilde w_3^{2^{t}-2^{s+1}}\otimes \widetilde w_3^{2^{t}-2^{s+1}}\big)\big(\widetilde w_3^{2^s}\otimes1+1\otimes\widetilde w_3^{2^s}\big)\\
&=\widetilde w_3^{2^{t}-2^s}\otimes \widetilde w_3^{2^{t}-2^{s+1}}+\widetilde w_3^{2^{t}-2^{s+1}}\otimes \widetilde w_3^{2^{t}-2^s}=0,
\end{align*}
and then (\ref{x=sum}) implies $x\cdot z(\widetilde w_3)^{2^s}=0$.

\medskip

To prove $x\cdot z(\widetilde w_2)^{2^s}=0$ we also use (\ref{x=sum}). Let us first calculate $z(\widetilde w_2)\cdot A$. This element is equal to
\begin{align*}
&(\widetilde w_2\otimes1+1\otimes\widetilde w_2)\cdot\sum_{i=0}^{2^{t-s-2}-2}\bigg(\widetilde w_2^{3+2i}\otimes\widetilde w_2^{2^{t-s-1}-2-2i}+\widetilde w_2^{2^{t-s-1}-2-2i}\otimes\widetilde w_2^{3+2i}\bigg)\\
=&\sum_{i=0}^{2^{t-s-2}-2}\widetilde w_2^{4+2i}\otimes\widetilde w_2^{2^{t-s-1}-2-2i}+\sum_{i=0}^{2^{t-s-2}-2}\widetilde w_2^{2^{t-s-1}-1-2i}\otimes\widetilde w_2^{3+2i}\\
&+\sum_{i=0}^{2^{t-s-2}-2}\widetilde w_2^{3+2i}\otimes\widetilde w_2^{2^{t-s-1}-1-2i}+\sum_{i=0}^{2^{t-s-2}-2}\widetilde w_2^{2^{t-s-1}-2-2i}\otimes\widetilde w_2^{4+2i}.
\end{align*}
The change of variable $i\mapsto2^{t-s-2}-2-i$ transforms the second sum to the third, so these two cancel out. For the first and the fourth, if we apply the change of variable $i\mapsto2^{t-s-2}-3-i$ to the fourth sum, we get
\begin{align*}
z(\widetilde w_2)\cdot A&=\sum_{i=0}^{2^{t-s-2}-2}\widetilde w_2^{4+2i}\otimes\widetilde w_2^{2^{t-s-1}-2-2i}+\sum_{i=-1}^{2^{t-s-2}-3}\widetilde w_2^{4+2i}\otimes\widetilde w_2^{2^{t-s-1}-2-2i}\\
&=\widetilde w_2^{2^{t-s-1}}\otimes\widetilde w_2^2+\widetilde w_2^2\otimes\widetilde w_2^{2^{t-s-1}}.
\end{align*}
Now, by (\ref{x=sum}) we have
\begin{align*}
z(\widetilde w_2)^{2^s}\cdot x&=\big(z(\widetilde w_2)\cdot A\big)^{2^s}(\widetilde w_3\otimes\widetilde w_3)^{2^{t}-2^{s+1}}\\
&=\big(\widetilde w_2^{2^{t-1}}\otimes\widetilde w_2^{2^{s+1}}+\widetilde w_2^{2^{s+1}}\otimes\widetilde w_2^{2^{t-1}}\big)\big(\widetilde w_3^{2^{t}-2^{s+1}}\otimes\widetilde w_3^{2^{t}-2^{s+1}}\big)\\
&=\widetilde w_2^{2^{t-1}}\widetilde w_3^{2^{t}-2^{s+1}}\otimes\widetilde w_2^{2^{s+1}}\widetilde w_3^{2^{t}-2^{s+1}}+\widetilde w_2^{2^{s+1}}\widetilde w_3^{2^{t}-2^{s+1}}\otimes\widetilde w_2^{2^{t-1}}\widetilde w_3^{2^{t}-2^{s+1}}\\
&=0,
\end{align*}
by Lemma \ref{4/4 - Grebner}(b), and the proof is complete.
\end{proof}

We can now easily prove the upper bound in this case.

\begin{proposition}
Let $1\le s\le t-3$ and
$2^{t+1}-2^{s+1}+1\leq n\leq 2^{t+1}-2^s$. Then
\[\zcl(W_n)\le3\cdot2^t-2^{s+1}-2.\]
\end{proposition}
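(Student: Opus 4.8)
\end{proposition}

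\begin{proof}
The plan is to mimic the scheme used in the previous cases. First, by Lemma~\ref{zcl raste} it suffices to prove the inequality for the largest value in the range, $n=2^{t+1}-2^s$, for which Lemma~\ref{4/4 - z} is available. So I would assume, for contradiction, that there are integers $\beta,\gamma\ge0$ with
\[x:=z(\widetilde w_2)^\beta z(\widetilde w_3)^\gamma\neq0\quad\mbox{in }W_n\otimes W_n\quad\mbox{and}\quad\beta+\gamma=3\cdot2^t-2^{s+1}-1.\]

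Next, I would read off from Theorem~\ref{heights} (using $s\le t-3$) that $\height(\widetilde w_2)=2^{t+1}-3\cdot2^s-1$ and $\height(\widetilde w_3)=2^t-2^s-1$; the elementary inequalities $2^t\le2^{t+1}-3\cdot2^s-1<2^{t+1}$ and $2^{t-1}\le2^t-2^s-1<2^t$ (each needing only $s\le t-3$) together with (\ref{htz}) then give $\height(z(\widetilde w_2))=2^{t+1}-1$ and $\height(z(\widetilde w_3))=2^t-1$. Since $x\neq0$ forces $z(\widetilde w_2)^\beta\neq0$ and $z(\widetilde w_3)^\gamma\neq0$, we get $\beta\le2^{t+1}-1$ and $\gamma\le2^t-1$, and combining this with $\beta+\gamma=3\cdot2^t-2^{s+1}-1$ yields $\beta\ge2^{t+1}-2^{s+1}$ and $\gamma\ge2^t-2^{s+1}$. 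Writing $\beta=2^{t+1}-2^{s+1}+a$ and $\gamma=2^t-2^{s+1}+b$ with $a,b\ge0$, the condition on $\beta+\gamma$ becomes $a+b=2^{s+1}-1$.

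Finally, since $a+b=2^{s+1}-1>2(2^s-1)$, at least one of $a\ge2^s$ and $b\ge2^s$ must hold. If $a\ge2^s$, I would write
\[x=\big(z(\widetilde w_2)^{2^{t+1}-2^s}z(\widetilde w_3)^{2^t-2^{s+1}}\big)\cdot z(\widetilde w_2)^{a-2^s}z(\widetilde w_3)^{b}\]
and invoke the second identity of Lemma~\ref{4/4 - z} to conclude $x=0$; if $b\ge2^s$, I would instead write
\[x=\big(z(\widetilde w_2)^{2^{t+1}-2^{s+1}}z(\widetilde w_3)^{2^t-2^s}\big)\cdot z(\widetilde w_2)^{a}z(\widetilde w_3)^{b-2^s}\]
and invoke the first identity of Lemma~\ref{4/4 - z}. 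In either case $x=0$, contradicting the assumption, whence $\zcl(W_n)\le3\cdot2^t-2^{s+1}-2$. I expect no serious obstacle here: the genuine computation was carried out in Lemma~\ref{4/4 - z}, and the only point requiring a little care is the bookkeeping showing that the height constraints on $\beta,\gamma$ force $a+b=2^{s+1}-1$ and hence $\max\{a,b\}\ge2^s$.
\end{proof}
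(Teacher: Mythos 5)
Your proposal is correct and follows essentially the same argument as the paper: reduce to $n=2^{t+1}-2^s$ via Lemma \ref{zcl raste}, bound $\beta$ and $\gamma$ from both sides using Theorem \ref{heights} and (\ref{htz}), observe that the slack variables sum to $2^{s+1}-1$ so one of them is at least $2^s$, and conclude by Lemma \ref{4/4 - z}. The only difference is cosmetic: you spell out the factorization of $x$ explicitly, while the paper simply notes the contradiction with Lemma \ref{4/4 - z}.
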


\begin{proof}
By Lemma \ref{zcl raste}, $\zcl(W_n)\le\zcl(W_{2^{t+1}-2^s})$, so it is enough to prove that $\zcl(W_{2^{t+1}-2^s})\le3\cdot2^t-2^{s+1}-2$.
Assume to the contrary that there are integers $\beta,\gamma\ge0$ such that $\beta+\gamma=2^{t+1}+2^t-2^{s+1}-1$ and
\[z(\widetilde w_2)^\beta z(\widetilde w_3)^\gamma\neq 0\quad\mbox{in } W_{2^{t+1}-2^s}\otimes W_{2^{t+1}-2^s}.\]
We know that $\height(\widetilde w_2)=2^{t+1}-3\cdot2^s-1$ and
$\height(\widetilde w_3)=2^t-2^s-1$ (by Theorem \ref{heights}), and then (\ref{htz})
gives us $\height(z(\widetilde w_2))=2^{t+1}-1$
and $\height(z(\widetilde w_3))=2^t-1$. This means that we must have $\beta\le2^{t+1}-1$ and $\gamma\le2^t-1$, and consequently $\beta=2^{t+1}+2^t-2^{s+1}-1-\gamma\ge2^{t+1}-2^{s+1}$ and $\gamma\ge2^t-2^{s+1}$. Now for $\beta':=\beta-2^{t+1}+2^{s+1}$ and
$\gamma':=\gamma-2^t+2^{s+1}$ one has $\beta'+\gamma'=2^{s+1}-1$, and hence either $\beta'\ge2^s$ or $\gamma'\ge2^s$, i.e., either $\beta\ge2^{t+1}-2^s$ or $\gamma\ge2^t-2^s$. However, this contradicts Lemma \ref{4/4 - z}.
\end{proof}

The proof of Theorem \ref{zcl Wn} is now completed.

\section{Comparison between $\zcl(\widetilde G_{n,3})$ and $\zcl(W_n)$}
\label{comp}

In this section we compare $\zcl(W_n)$ with $\zcl(\widetilde G_{n,3})$ and thus establish some lower bounds for $\mathrm{TC}(\widetilde G_{n,3})$. Since we are working over a field, $W_n\subseteq H^*(\widetilde G_{n,3})$ implies $W_n\otimes W_n\subseteq H^*(\widetilde G_{n,3})\otimes H^*(\widetilde G_{n,3})$, and so we certainly have $\zcl(\widetilde G_{n,3})\ge\zcl(W_n)$. However, we can prove more.

\begin{proposition}
 For every integer $n\ge6$ one has
 \[\zcl(\widetilde G_{n,3})\ge1+\zcl(W_n).\]
\end{proposition}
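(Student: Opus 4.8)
The plan is to promote the optimal zero-divisor product in $W_n$ by one extra factor manufactured from $\mathbb Z_2$-Poincaré duality on the closed manifold $\widetilde G_{n,3}$. Since the only indecomposable elements of $W_n$ are $\widetilde w_2$ and $\widetilde w_3$, we may fix $\beta,\gamma\ge0$ with $\beta+\gamma=\zcl(W_n)$ and
\[Z:=z(\widetilde w_2)^\beta z(\widetilde w_3)^\gamma\neq0\quad\text{in }W_n\otimes W_n.\]
Because $\mathbb Z_2$ is a field and $W_n\subseteq H^*(\widetilde G_{n,3})$ is a graded subalgebra, $W_n\otimes W_n$ embeds in $H^*(\widetilde G_{n,3})\otimes H^*(\widetilde G_{n,3})$, so $Z\neq0$ there as well. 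Using the expansion computed in the proof of Lemma \ref{z nonzero}, $Z=\sum_{b,c}\binom{\beta}{b}\binom{\gamma}{c}\widetilde w_2^b\widetilde w_3^c\otimes\widetilde w_2^{\beta-b}\widetilde w_3^{\gamma-c}$; writing $N=2\beta+3\gamma$ and decomposing $Z=\sum_r Z_r$ with $Z_r\in H^r(\widetilde G_{n,3})\otimes H^{N-r}(\widetilde G_{n,3})$, each $Z_r$ is exactly the partial sum over pairs $(b,c)$ with $2b+3c=r$. Let $r_0$ be the largest index with $Z_{r_0}\neq0$.

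The key observation is that $r_0<3n-9$: indeed $Z_{r_0}\neq0$ forces some monomial $\widetilde w_2^b\widetilde w_3^c$ with $2b+3c=r_0$ to be nonzero in $H^*(\widetilde G_{n,3})$, and then (\ref{topdim}) gives $r_0=2b+3c<3n-9$. Now choose a basis $\sigma_1,\dots,\sigma_k$ of $H^{r_0}(\widetilde G_{n,3})$, write $Z_{r_0}=\sum_j\sigma_j\otimes\tau_j$ uniquely, and pick $j_0$ with $\tau_{j_0}\neq0$. Since $\widetilde G_{n,3}$ is a connected closed manifold of dimension $3n-9$, $\mathbb Z_2$-Poincaré duality makes the cup-product pairing $H^{r_0}(\widetilde G_{n,3})\times H^{3n-9-r_0}(\widetilde G_{n,3})\to H^{3n-9}(\widetilde G_{n,3})\cong\mathbb Z_2$ perfect, so there is $u\in H^{3n-9-r_0}(\widetilde G_{n,3})$ with $\sigma_j\cup u=\delta_{j,j_0}\mu$, where $\mu$ generates the top group. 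By the bound just established, $\deg u=3n-9-r_0\ge1$, so $z(u)$ is a zero-divisor of positive degree.

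Finally I would verify $z(u)\cdot Z\neq0$ by inspecting a single bidegree. We have $z(u)Z=(u\otimes1)Z+(1\otimes u)Z$, where $(u\otimes1)Z_r$ lies in bidegree $(r+\deg u,\,N-r)$ and $(1\otimes u)Z_r$ in $(r,\,N-r+\deg u)$. In the bidegree $(3n-9,\,N-r_0)$ the first family contributes only through $r=r_0$, giving $(u\otimes1)Z_{r_0}=\sum_j(\sigma_j\cup u)\otimes\tau_j=\mu\otimes\tau_{j_0}\neq0$ (field coefficients), while the second family would need $r=3n-9>r_0$, where $Z_r=0$. Hence the $(3n-9,\,N-r_0)$-component of $z(u)Z$ equals $\mu\otimes\tau_{j_0}\neq0$, so $z(u)Z\neq0$; this is a product of $1+\beta+\gamma=1+\zcl(W_n)$ zero-divisors of positive degree, yielding $\zcl(\widetilde G_{n,3})\ge1+\zcl(W_n)$. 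There is no deep obstacle here: the whole point is that the top-degree-in-the-first-factor slice $Z_{r_0}$ can be detected by a Poincaré-dual class without ever entering into a cancellation, because multiplying by $u\otimes1$ pushes it strictly above the support of $Z$ in the first tensor factor; the only points needing care are the strict inequality $r_0<3n-9$ (which is precisely where (\ref{topdim}) is used) and the bookkeeping of bidegrees.
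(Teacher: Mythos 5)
Your proof is correct and follows essentially the same route as the paper: both arguments take the optimal product $z(\widetilde w_2)^\beta z(\widetilde w_3)^\gamma$, use (\ref{topdim}) to see that its nonzero bidegree components have first-factor degree strictly below $3n-9$, and then append one factor $z(u)$ with $u$ a Poincar\'e dual class so that the $(3n-9,\,N-r_0)$-component survives because the $(1\otimes u)$-part cannot reach first-factor degree $3n-9$. Your choice of the \emph{largest} nonzero component and the explicit dual-basis element are minor (slightly more careful) variants of the paper's bookkeeping, not a different method.
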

\begin{proof}
	Let $z=z(\widetilde w_2)^\beta z(\widetilde w_3)^\gamma$ be a monomial which realizes $\zcl(W_n)$ (so $\zcl(W_n)=\beta+\gamma$), and let $m=2\beta+3\gamma$. Since
	\[z\neq0 \quad\mbox{ in }\big(H^*(\widetilde G_{n,3})\otimes H^*(\widetilde G_{n,3})\big)_m=\bigoplus_{i=0}^mH^i(\widetilde G_{n,3})\otimes H^{m-i}(\widetilde G_{n,3}),\]
	if we write $z$ in the form $\sum_{i=0}^mz_i$, where $z_i\in H^i(\widetilde G_{n,3})\otimes H^{m-i}(\widetilde G_{n,3})$, then there exists $k\in\{0,1,\ldots,m\}$ with the property $z_k\neq0$. 

Now, let $\{b_1,b_2,\ldots,b_r\}$ be a vector space basis for  $H^k(\widetilde G_{n,3})\cap W_n$, and complete it to a basis $\{b_1,\ldots,b_r,b_{r+1},\ldots,b_s\}$ for $H^k(\widetilde G_{n,3})$. Then $z_k$ can be written in the form $\sum_{j=1}^rb_j\otimes v_j$ for some $v_j\in H^{m-k}(\widetilde G_{n,3})\cap W_n$.
	Since $z_k\neq0$, there exists $j_0\in\{1,2,\ldots,r\}$ with the property $b_{j_0}\otimes v_{j_0}\neq0$. Note that (\ref{topdim}) implies $k<3n-9$, because $b_{j_0}\in H^k(\widetilde G_{n,3})$ is a nonzero polynomial in $\widetilde w_2$ and $\widetilde w_3$.
	
	Define now a map $\varphi:H^k(\widetilde G_{n,3})\rightarrow H^{3n-9}(\widetilde G_{n,3})$ on the basis elements by requiring $\varphi(b_{j_0})=c$, where $c\in H^{3n-9}(\widetilde G_{n,3})$ is the generator, and $\varphi(b_{j})=0$ for $j\neq j_0$. Poincar\'e duality applies to give us a class $a\in H^{3n-9-k}(\widetilde G_{n,3})$ such that $\varphi$ is multiplication with $a$. So for $1\le j\le s$, one has $ab_j\neq0$ if and only if $j=j_0$.
	
	Let us show that $z(a)z(\widetilde w_2)^\beta z(\widetilde w_3)^\gamma\neq0$ in $H^*(\widetilde G_{n,3})\otimes H^*(\widetilde G_{n,3})$, which will prove the proposition. The degree of this element is $3n-9-k+m$, and we have
	\[z(a)z(\widetilde w_2)^\beta z(\widetilde w_3)^\gamma=(a\otimes1+1\otimes a)\sum_{i=0}^mz_i=\sum_{i=0}^m(a\otimes1)z_i+\sum_{i=0}^m(1\otimes a)z_i.\]
	The summand in $H^{3n-9}(\widetilde G_{n,3})\otimes H^{m-k}(\widetilde G_{n,3})$ is either $(a\otimes1)z_k+(1\otimes a)z_{3n-9}$ (if $m\ge3n-9$) or $(a\otimes1)z_k$ (if $m<3n-9$). In the former case, we have $z_{3n-9}=0$, since the first coordinates of the simple tensors in $z_{3n-9}$ belong to $H^{3n-9}(\widetilde G_{n,3})\cap W_n=0$ (by (\ref{topdim})). So in any case, this summand is $(a\otimes1)z_k$ and it suffices to prove that it is nonzero. This is a consequence of the choice of the class $a$:
	\[(a\otimes1)z_k=(a\otimes1)\sum_{j=1}^rb_j\otimes v_j=\sum_{j=1}^rab_j\otimes v_j=ab_{j_0}\otimes v_{j_0}\neq0.\]
\end{proof}

So the difference between $\zcl(\widetilde G_{n,3})$ and $\zcl(W_n)$ is at least $1$, but, as we show in the next proposition, not more than $2$.

\begin{proposition}\label{c2}
 For every integer $n\ge15$ one has
 \[\zcl(\widetilde G_{n,3})\le2+\zcl(W_n).\]
\end{proposition}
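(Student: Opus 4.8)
The plan is to show that any zero-divisor product of length exceeding $2+\zcl(W_n)$ must vanish in $H^*(\widetilde G_{n,3})\otimes H^*(\widetilde G_{n,3})$. By the general remarks in the preliminaries, $\zcl(\widetilde G_{n,3})$ is realized by a product $z(a_1)\cdots z(a_m)$ where each $a_i$ is an indecomposable element of positive degree. The cohomology $H^*(\widetilde G_{n,3})$ is generated by $W_n$ together with finitely many extra indecomposable classes in specific degrees (coming from the anomalous part of $H^*(\widetilde G_{n,3})$ not hit by $p^*$). The first step is to recall (from \cite{Korbas:ChRank} or the structural results used implicitly above) exactly how many indecomposable generators lie outside $W_n$; for $\widetilde G_{n,3}$ one expects essentially one such indecomposable class (call it $a$) in each relevant range of $n$, so that every indecomposable is either $\widetilde w_2$, $\widetilde w_3$, or $a$.

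Granting that, the product realizing $\zcl(\widetilde G_{n,3})$ can be rearranged — using \eqref{z(a+b)} and \eqref{z(ab)} as in the discussion following \cite[Lemma 5.2]{CohenSuciu} — into the form $z(a)^{\delta}\,z(\widetilde w_2)^{\beta}z(\widetilde w_3)^{\gamma}$ with $\delta\in\{0,1,2,\dots\}$. The next step is to bound $\delta$. Since $a$ has some fixed positive degree and, more importantly, $a^2$ lies in $W_n$ (being a polynomial in $\widetilde w_2,\widetilde w_3$, because the anomalous part squares into $W_n$), formula \eqref{z(a^stepen dvojke)} gives $z(a)^2=z(a^2)$, and $a^2$ being decomposable (a polynomial in $\widetilde w_2,\widetilde w_3$) we can absorb $z(a^2)$ into a product of $z(\widetilde w_2)$'s and $z(\widetilde w_3)$'s via \eqref{z(ab)} without increasing the total count by more than the number of factors in $a^2$ — but crucially $z(a)^2$ contributes to the $W_n\otimes W_n$ part. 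The cleaner route: show $\height(z(a))\le 2$, or more precisely that $z(a)^2$ already lies in the subalgebra generated by $z(\widetilde w_2),z(\widetilde w_3)$, so that at most one factor $z(a)$ can genuinely appear "for free" beyond a $W_n$-product. Combining, a nonzero product has the shape $z(a)^{\delta}z(\widetilde w_2)^{\beta}z(\widetilde w_3)^{\gamma}$ with $\delta\le 2$ and $\beta+\gamma\le\zcl(W_n)$ once the $z(a)$ factors are accounted for; hence the length is at most $2+\zcl(W_n)$.

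To make the counting precise I would argue as follows. Take a nonzero product $P=z(a_1)\cdots z(a_m)$ of length $m=\zcl(\widetilde G_{n,3})$ with each $a_i$ indecomposable. Group the factors: those equal to $z(\widetilde w_2)$, those equal to $z(\widetilde w_3)$, and those of the form $z(a)$ (the anomalous generator). Write $P=z(a)^{\delta}z(\widetilde w_2)^{\beta}z(\widetilde w_3)^{\gamma}$. Expanding $P$ in $H^*\otimes H^*=\bigoplus_{i}H^i\otimes H^{m'-i}$ (where $m'$ is the total degree), nonvanishing forces the existence of a bidegree summand that is nonzero; projecting onto the component where the anomalous factors contribute in a controlled way, one reduces $\delta$ modulo $2$ using $z(a)^2=z(a^2)$ with $a^2\in W_n$ decomposable. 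A parity/degree argument — the anomalous class sits in a degree incompatible with appearing more than twice in a product that still survives against the top-class constraint \eqref{topdim} and Poincaré duality — then yields $\delta\le 2$. After removing those (at most two) factors, what remains is a nonzero zero-divisor product in the subalgebra generated by $z(\widetilde w_2),z(\widetilde w_3)$, whose components land in $W_n\otimes W_n$, and hence has length $\le\zcl(W_n)$. Therefore $m\le 2+\zcl(W_n)$.

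The main obstacle will be pinning down the structure of the indecomposable generators of $H^*(\widetilde G_{n,3})$ outside $W_n$ and verifying that there is effectively only one such class (up to the ranges of $n$ determined by Theorem \ref{heights} and Theorem \ref{zcl Wn}), together with the key fact that its square — and the square of the associated zero-divisor — reduces into the $W_n$-part. This is where one must invoke the known additive description of $H^*(\widetilde G_{n,3})$ (beyond the $\mathcal B_n$ basis of $W_n$) and carefully track degrees; everything else is bookkeeping with \eqref{z(a+b)}, \eqref{z(ab)}, \eqref{z(a^stepen dvojke)} and the height estimate \eqref{htz}. I would expect the bound $n\ge 15$ to enter precisely here, ensuring the anomalous generator has large enough degree that the parity argument forcing $\delta\le 2$ goes through.
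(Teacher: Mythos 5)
There is a genuine gap. Your proof stands or falls on bounding the number of ``anomalous'' zero-divisor factors by $2$, and none of the mechanisms you propose for this actually works. First, the structural premise is off: by \cite[Theorem A]{BasuChakraborty} there can be \emph{two} indecomposable classes $a,b$ outside $W_n$ (with $|a|+|b|=3n-5$), not essentially one, so a priori the product has the form $z(a)^pz(b)^qz(\widetilde w_2)^\beta z(\widetilde w_3)^\gamma$ and you must bound $p+q$. Second, the hoped-for bound $\height(z(a))\le2$ is false in general: the dimension constraint only gives $\height(a)\le3$, and by (\ref{htz}) this yields $\height(z(a))=3$ whenever $\height(a)\in\{2,3\}$; together with $\height(z(b))=1$ this allows $p+q\le4$, so height considerations alone do not suffice. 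Third, the claim that $a^2$ lies in $W_n$ (equivalently is a polynomial in $\widetilde w_2,\widetilde w_3$) is asserted but not justified, and the paper neither proves nor uses it; even granting it, ``absorbing'' $z(a)^2=z(a^2)$ into $z(\widetilde w_2)$'s and $z(\widetilde w_3)$'s does not by itself produce the inequality $m\le 2+\zcl(W_n)$ without a careful accounting that you do not carry out.

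What actually closes the argument in the paper is a quantitative degree count that your ``parity/degree argument'' only gestures at: assuming $p+q\ge3$, one combines (i) the bound $p|a|+q|b|+2\beta+3\gamma\le 6(n-3)-1$ for a nonzero product (the oriented analogue of \cite[Lemma 2.3]{Radovanovic}), (ii) the estimate $p|a|+q|b|\ge3|a|$ with the explicit value $|a|=\min\{3n-2^{t+1}-1,\,2^{t+1}-4\}$, (iii) the inequality $\gamma\ge\zcl(W_n)-\height(z(\widetilde w_2))-1$ coming from $\beta\le\height(z(\widetilde w_2))$, and (iv) the exact values of $\zcl(W_n)$ from Theorem \ref{zcl Wn} and of $\height(z(\widetilde w_2))$ from Theorem \ref{heights}, checked case by case over the ranges of $n$. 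This numerical verification is the substance of the proof, and it genuinely requires the computed values of $\zcl(W_n)$; without it, the reduction from $p+q\le4$ to $p+q\le2$ is unproven. (Also, the hypothesis $n\ge15$ enters through the availability of Theorem \ref{zcl Wn} for $t\ge4$, not through a parity property of the degree of $a$ as you suggest.)
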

\begin{proof}
Let $t\ge4$ be the integer with the property $2^t-1\le n\le2^{t+1}-2$.
According to \cite[Theorem A]{BasuChakraborty}, there are indecomposable classes in
$H^*(\widetilde G_{n,3})$ outside $W_n$, but at most two of them. Let us denote these classes by $a$ and $b$, where $|a|<|b|$ (in the cases in which there is only one indecomposable class, it is denoted by $a$). Then $|a|=\min\{3n-2^{t+1}-1,2^{t+1}-4\}$, and if $b$ exists,
then $|b|=\max\{3n-2^{t+1}-1,2^{t+1}-4\}$.  Since the dimension of the manifold is $3n-9$, it follows that $\height(a)\le3$ and $\height(b)=1$, and so $\height(z(a))\le3$ and $\height(z(b))=1$ (see (\ref{htz})).

Suppose to the contrary that $\zcl(\widetilde G_{n,3})\ge3+\zcl(W_n)$. Then $\zcl(\widetilde G_{n,3})$ is realized by a monomial of the form $z(a)^pz(b)^qz(\widetilde w_2)^\beta z(\widetilde w_3)^\gamma$, where $p+q\ge3$. By the previous discussion we also have $p+q\le\height(z(a))+\height(z(b))\le4$. Now, $p+q+\beta+\gamma=\zcl(\widetilde G_{n,3})\ge3+\zcl(W_n)$, and we conclude
\begin{equation}\label{ineq1}
\beta+\gamma\ge3+\zcl(W_n)-p-q\ge\zcl(W_n)-1,
\end{equation}
i.e.,
\begin{equation}\label{donjogr}
2(\beta+\gamma)\ge2\zcl(W_n)-2.
\end{equation}

The proof of Lemma 2.3 from \cite{Radovanovic} works equally well for the oriented Grassmannians, and so $p|a|+q|b|+2\beta+3\gamma=|z(a)^pz(b)^qz(\widetilde w_2)^\beta z(\widetilde w_3)^\gamma|\le6(n-3)-1$. On the other hand, since $\beta\le\height(z(\widetilde w_2))$, from (\ref{ineq1}) we get $\gamma\ge\zcl(W_n)-1-\beta\ge\zcl(W_n)-\height(z(\widetilde w_2))-1$. Therefore,
\begin{align*}
2(\beta+\gamma)&\le2(\beta+\gamma)+\gamma-\big(\zcl(W_n)-\height(z(\widetilde w_2))-1\big)\\
               &=2\beta+3\gamma-\zcl(W_n)+\height(z(\widetilde w_2))+1\\
               &\le6(n-3)-1-p|a|-q|b|-\zcl(W_n)+\height(z(\widetilde w_2))+1.
\end{align*}
Note also that $p|a|+q|b|\ge3|a|$ (since $p+q\ge3$ and $|a|<|b|$), and so we have
\begin{equation}\label{gornjogr}
2(\beta+\gamma)\le6n-18-3|a|-\zcl(W_n)+\height(z(\widetilde w_2)).
\end{equation}
According to (\ref{donjogr}) and (\ref{gornjogr}) we will reach a contradiction as soon as we prove
\[6n-18-3|a|-\zcl(W_n)+\height(z(\widetilde w_2))<2\zcl(W_n)-2,\]
i.e.,
\begin{equation}\label{ineq}
6n+\height(z(\widetilde w_2))<3\big(|a|+\zcl(W_n)\big)+16.
\end{equation}

In Table \ref{table:1}
we have listed the values of $|a|$, $\zcl(W_n)$ and $\height(z(\widetilde w_2))$ depending on
$n$. The second column of the table is due to \cite[Theorem A]{BasuChakraborty}, in the third
are our results from Theorem \ref{zcl Wn}, while the fourth follows from Theorem
\ref{heights} and (\ref{htz}).

\begin{table}[h!]
\footnotesize
\centering
\begin{tabular}{||m{2.5cm} m{2.1cm} m{3.5cm} m{1.5cm}||}
 \hline
 $n$ & $|a|$ & $\zcl(W_n)$ & $\height(z(\widetilde w_2))$ \\ [0.5ex]
 \hline\hline
 $2^t-1$ &  &   & \\
 $\cdot$ &  &   &\\
 $\cdot$ & $3n-2^{t+1}-1$ & $2^t+2^{t-1}-4$ &  $2^t-1$\\
 $\cdot$ &  &   & \\
 $2^t+2^{t-2}$ &  &   & \\
 \hline
  $2^t+2^{t-2}+1$ & $3n-2^{t+1}-1$ & $2^t+2^{t-1}-3$ &  $2^t-1$\\
\hline
 $2^t+2^{t-2}+2$ &  &   & \\
 $\cdot$ &  &  &\\
 $\cdot$ & $3n-2^{t+1}-1$ & $2^t+2^{t-1}-2$ &  $2^t-1$\\
 $\cdot$ &  &  & \\
 $2^t+\lfloor2^t/3\rfloor-1$ &  &  & \\
\hline
 $2^t+\lfloor2^t/3\rfloor$ &  &  & \\
 $\cdot$ &  &  &\\
 $\cdot$ & $2^{t+1}-4$ & $2^t+2^{t-1}-2$ & $2^t-1$\\
 $\cdot$ &  &  & \\
 $2^t+2^{t-1}$ &  &  & \\
\hline
  $2^t+2^{t-1}+1$ & $2^{t+1}-4$ & $2^{t+1}+2^{t-3}-3$ & $2^{t+1}-1$\\
\hline
 $2^t+2^{t-1}+2$ &  &  & \\
 $\cdot$ &  &  &\\
 $\cdot$ & $2^{t+1}-4$ & $2^{t+1}+2^{t-3}-2$ & $2^{t+1}-1$\\
 $\cdot$ &  &  & \\
 $2^t+2^{t-1}+2^{t-3}$ &  &  & \\
\hline
 $2^t+2^{t-1}+2^{t-3}+1$ &  &  & \\
 $\cdot$ &  &  &\\
 $\cdot$ & $2^{t+1}-4$ & $2^{t+1}+2^{t-2}-2$ & $2^{t+1}-1$\\
 $\cdot$ &  &  & \\
 $2^t+2^{t-1}+2^{t-2}$ &  &  & \\
\hline
 $2^t+2^{t-1}+2^{t-2}+1$ &  &  & \\
 $\cdot$ &  & $2^{t+1}+2^t-2^{s+1}-2,$  &\\
 $\cdot$ & $2^{t+1}-4$ & where $s\in\{1,\ldots,t-3\}$ & $2^{t+1}-1$\\
 $\cdot$ &  & is such that $2^{t+1}-2^{s+1}+$& \\
 $2^{t+1}-2$ &  & $1\le n\le2^{t+1}-2^s$  & \\
\hline
\end{tabular}
\caption{}
\label{table:1}
\end{table}

It is now routine to check that the inequality (\ref{ineq}) holds in all cases. For instance, if $2^t+2^{t-1}+1\le n\le2^t+2^{t-1}+2^{t-3}$, then
\begin{align*}
6n+\height(z(\widetilde w_2))&\le6(2^t+2^{t-1}+2^{t-3})+2^{t+1}-1=6\cdot13\cdot2^{t-3}+16\cdot2^{t-3}-1\\
                             &=94\cdot2^{t-3}-1,
\end{align*}
while
\begin{align*}
3\big(|a|+\zcl(W_n)\big)+16&\ge3(2^{t+1}-4+2^{t+1}+2^{t-3}-3)+16  \\
                           &=3(2^{t+2}+2^{t-3}-7)+16=99\cdot2^{t-3}-5,
\end{align*}
and $5\cdot2^{t-3}-4>0$ implies (\ref{ineq}).
\end{proof}

So for all integers $n\ge15$ we have
\[1+\zcl(W_n)\le\zcl(\widetilde G_{n,3})\le2+\zcl(W_n).\]
We are not aware of any integer $n$ for which $\zcl(\widetilde G_{n,3})=2+\zcl(W_n)$. Furthermore, we have used the computer software SAGE, and relations obtained in \cite{MatszangoszWendt}, to verify that $\zcl(\widetilde G_{n,3})=1+\zcl(W_n)$ for $6\le n\le100$. Therefore, the following conjecture seems reasonable.

\begin{conjecture}
For all $n\ge6$, $\zcl(\widetilde G_{n,3})=1+\zcl(W_n)$.
\end{conjecture}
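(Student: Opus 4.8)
The lower bound $\zcl(\widetilde G_{n,3})\ge 1+\zcl(W_n)$ holds unconditionally, so the plan is to improve the upper bound of Proposition \ref{c2} from $2+\zcl(W_n)$ to $1+\zcl(W_n)$; for $6\le n\le14$ the equality is already confirmed by the SAGE computation, so fix $n\ge15$ and assume for contradiction that $\zcl(\widetilde G_{n,3})\ge 2+\zcl(W_n)$. As in Proposition \ref{c2}, this supremum is realised by a monomial $z(a)^pz(b)^qz(\widetilde w_2)^\beta z(\widetilde w_3)^\gamma\ne0$, where $a$ and (when present) $b$ are the at most two indecomposables of $H^*(\widetilde G_{n,3})$ outside $W_n$, with $|a|=\min\{3n-2^{t+1}-1,\,2^{t+1}-4\}$ and $|b|=\max\{3n-2^{t+1}-1,\,2^{t+1}-4\}$. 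Since $\widetilde w_2,\widetilde w_3\in W_n$, the factor $z(\widetilde w_2)^\beta z(\widetilde w_3)^\gamma$ lies in $W_n\otimes W_n$, a subalgebra of $H^*(\widetilde G_{n,3})\otimes H^*(\widetilde G_{n,3})$ (we work over a field), so it vanishes as soon as $\beta+\gamma>\zcl(W_n)$. Hence $\beta+\gamma\le\zcl(W_n)$, $p+q\ge2$, and everything comes down to excluding these ``doubly exotic'' monomials.

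Two structural facts reduce the list of surviving cases. First, $|a|+|b|=(3n-2^{t+1}-1)+(2^{t+1}-4)=3n-5$ strictly exceeds $\dim\widetilde G_{n,3}=3n-9$, so $ab=0$, and $z(a)z(b)=a\otimes b+b\otimes a$ by (\ref{z(ab)}). Second, comparing $2|a|$ and $3|a|$ with $3n-9$ shows that $\height(a)=3$ only for $n=2^t-1$ (there $3|a|=3n-9$), $\height(a)\le2$ otherwise, while $\height(b)=1$ always; by (\ref{htz}) this forces $p\le3$, $q\le1$. The crude degree bound $p|a|+q|b|+2\beta+3\gamma\le2(3n-9)-1$ (the oriented analogue of \cite[Lemma 2.3]{Radovanovic}) together with $\beta+\gamma\ge\zcl(W_n)-(p+q)+2$ then rules out every case with $p+q\ge3$ — a short case check using Theorems \ref{zcl Wn} and \ref{heights} and Table \ref{table:1}, parallel to the one in Proposition \ref{c2}. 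One is thus left with $(p,q)=(1,1)$, and — only in case $\height(a)\ge2$, which the dimension constraint confines to $2^t-1\le n\le(2^{t+2}-7)/3$ — also with $(p,q)=(2,0)$; in both cases $\beta+\gamma=\zcl(W_n)$ is forced.

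For $(p,q)=(1,1)$, using $z(a)z(b)=a\otimes b+b\otimes a$ and expanding $z(\widetilde w_2)^\beta z(\widetilde w_3)^\gamma$ by the binomial theorem presents $z(a)z(b)z(\widetilde w_2)^\beta z(\widetilde w_3)^\gamma$ as a sum of terms $\binom{\beta}{i}\binom{\gamma}{j}\,a\widetilde w_2^i\widetilde w_3^j\otimes b\widetilde w_2^{\beta-i}\widetilde w_3^{\gamma-j}$ and their $(a\leftrightarrow b)$-transposes; if the product is nonzero, one of these terms is nonzero, hence both its tensor factors are nonzero classes and so of degree at most $3n-9$, which upon adding forces $2\beta+3\gamma\le3n-13$. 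One checks that $2\beta+3\gamma>3n-13$ for all admissible $(\beta,\gamma)$ when $2^t+2^{t-1}+2^{t-2}+1\le n\le 2^{t+1}-2$, settling those $n$; for all smaller $n$ (and, via $z(a)^2=z(a^2)$ from (\ref{z(a^stepen dvojke)}) and a decomposition $a^2=u_0+au_1+bu_2$ with $u_k\in W_n$, expanded through (\ref{z(a+b)}) and (\ref{z(ab)}), also for $(p,q)=(2,0)$) one is reduced to showing that an explicit sum of terms of the shape $a\widetilde w_2^i\widetilde w_3^j\otimes(\cdots)$ and $b\widetilde w_2^i\widetilde w_3^j\otimes(\cdots)$, with $(i,j)$ running over a narrow window, cancels, using the multiplicative relations among $a$, $b$, $\widetilde w_2$, $\widetilde w_3$. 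In such a reduction any pure-$W_n$ summand that appears is a product of $1+\zcl(W_n)$ positive-degree zero-divisors inside $W_n\otimes W_n$, hence automatically zero.

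The step I expect to be the main obstacle is precisely this last cancellation in the lower ranges of $n$. The numerical estimates that carried Proposition \ref{c2} no longer close the gap: for instance when $2^t+2^{t-1}+1\le n\le 2^t+2^{t-1}+2^{t-3}$ the inequality $2\beta+3\gamma>3n-13$ fails for some admissible $(\beta,\gamma)$, and the analogue of (\ref{ineq}) with an extra exotic factor present does not hold. What is genuinely required is the fine multiplicative structure of the exotic part of $H^*(\widetilde G_{n,3})$ — explicit, $n$-uniform formulas for $a^2$, $ab$, $b^2$ as elements of $W_n\oplus aW_n\oplus bW_n$, and a catalogue of which classes $a\cdot\widetilde w_2^i\widetilde w_3^j$ and $b\cdot\widetilde w_2^i\widetilde w_3^j$ are nonzero (the analogue of the additive bases $\mathcal B_n$). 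Extracting all of this from \cite{BasuChakraborty,MatszangoszWendt} across every range of $n$ in Theorem \ref{zcl Wn}, and feeding it systematically into the $z$-calculus identities (\ref{z(a+b)}), (\ref{z(ab)}), (\ref{z(a^stepen dvojke)}), is the real work — presumably the reason the equality is recorded here only as a conjecture.
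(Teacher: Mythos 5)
The statement you are asked to prove is stated in the paper only as a \emph{conjecture}: the authors do not prove it, they verify it by computer for $6\le n\le200$ and establish it rigorously only for $2^t-1\le n<2^t+2^{t-1}/3+1$ and $2^t+2^{t-1}+2^{t-2}+1\le n\le2^{t+1}-2$. Your proposal likewise does not prove it, and to your credit you say so explicitly in your last paragraph. The parts you do carry out essentially reproduce the paper's partial result: the reduction to a realizing monomial $z(a)^pz(b)^qz(\widetilde w_2)^\beta z(\widetilde w_3)^\gamma$ with $p+q=2$ and $\beta+\gamma=\zcl(W_n)$, the elimination of $p+q\ge3$ via degree and height estimates, and the elimination of the $(p,q)=(1,1)$ case in the top range $2^t+2^{t-1}+2^{t-2}+1\le n\le2^{t+1}-2$ via the bound $|a|+|b|+2\beta+3\gamma\le 6n-19$ all match the paper's final proposition. (One place where the paper is stronger than your sketch: the $z(a)^2$ case is disposed of for \emph{all} $n$ by a Poincar\'e-duality argument — since $a$ is the indecomposable of smallest degree outside $W_n$, a nonzero $v\in W_n$ must satisfy $|v|\le 3n-9-|a|$, which feeds back into the inequality (\ref{ineq}) already verified in Proposition \ref{c2}; you do not need your proposed decomposition $a^2=u_0+au_1+bu_2$.)

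The genuine gap is exactly where you locate it: the case $(p,q)=(1,1)$ for $n$ roughly in the range $2^t+\lfloor2^{t-1}/3\rfloor+2\le n\le 2^t+2^{t-1}+2^{t-2}$. There the purely numerical constraints ($2\beta+3\gamma\le3n-13$ or $3n+\height(z(\widetilde w_2))\ge3\zcl(W_n)+14$) are consistent with admissible $(\beta,\gamma)$, so one cannot conclude without knowing which classes $a\widetilde w_2^i\widetilde w_3^j$ and $b\widetilde w_2^i\widetilde w_3^j$ vanish and how the anomalous classes multiply — information not contained in the Gr\"obner-basis description of $W_n$ and not supplied by your argument. Since no cancellation is exhibited in that range, the proposal does not establish the conjecture; it only recovers (with a slightly weaker treatment of the $z(a)^2$ case) the ranges in which the paper itself proves the equality.
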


In the following proposition we prove the conjecture for approximately $5/12$ of the integers in the range $[2^t-1,2^{t+1}-2]$ (more precisely, in the first sixth and the last quarter of this range).

\begin{proposition}
Let $t\ge4$. If either $2^t-1\le n<2^t+2^{t-1}/3+1$ or $2^t+2^{t-1}+2^{t-2}+1\le n\le2^{t+1}-2$, then
\[\zcl(\widetilde G_{n,3})=1+\zcl(W_n).\]
\end{proposition}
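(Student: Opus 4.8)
The plan is to leverage the two preceding propositions, which already give $1+\zcl(W_n)\le\zcl(\widetilde G_{n,3})\le2+\zcl(W_n)$ for $n\ge15$; since here $t\ge4$ forces $n\ge15$, it suffices to rule out the equality $\zcl(\widetilde G_{n,3})=2+\zcl(W_n)$. Assume this equality. Exactly as in the proof of Proposition \ref{c2}, the value is realised by a monomial $M=z(a)^pz(b)^qz(\widetilde w_2)^\beta z(\widetilde w_3)^\gamma$, where $a$ (and possibly $b$) are the indecomposable classes outside $W_n$ supplied by \cite[Theorem A]{BasuChakraborty}; since every nonzero product $z(\widetilde w_2)^{\beta'}z(\widetilde w_3)^{\gamma'}$ is already a zero-divisor in $W_n\otimes W_n$, a realising monomial with $p+q\le1$ would yield $\zcl(\widetilde G_{n,3})\le1+\zcl(W_n)$. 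Hence $p+q\ge2$, while $q\le\height(z(b))=1$ and $p+q\le\height(z(a))+\height(z(b))\le4$. The first step is to substitute the exact numerical data valid on the two ranges in question: the degrees $|a|$, $|b|$ from \cite[Theorem A]{BasuChakraborty} (cf.\ the second column of Table \ref{table:1}), the heights $\height(\widetilde w_2),\height(\widetilde w_3)$ from Theorem \ref{heights}, hence $\height(z(\widetilde w_2)),\height(z(\widetilde w_3))$ via (\ref{htz}), and the value $\zcl(W_n)$ from Theorem \ref{zcl Wn}. From these one reads off that $|a|+|b|>3n-9$ and $2|b|>3n-9=\dim\widetilde G_{n,3}$, so that $ab=0$ and $b^2=0$; thus the only square of an anomalous generator that can survive in $M$ is $a^2$.

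The core of the argument then treats the anomalous factors one at a time. Whenever $q=1$, one uses $z(a)z(b)=z(ab)+a\otimes b+b\otimes a=a\otimes b+b\otimes a$ (by (\ref{z(ab)}) and $ab=0$) to rewrite $M$, and a term-by-term bound of degrees against the fundamental class — the analogue for $\widetilde G_{n,3}$ of \cite[Lemma 2.3]{Radovanovic}, already used in Proposition \ref{c2} — combined with the now-explicit values of $|a|$, $|b|$, $\zcl(W_n)$ and $\height(z(\widetilde w_2))$ forces $M=0$; this is precisely where the numerics of the first sixth and the last quarter of $[2^t-1,2^{t+1}-2]$ are invoked. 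When $p\ge2$, one uses the description of the multiplicative structure of $H^*(\widetilde G_{n,3})$ near top dimension from \cite[Theorem A]{BasuChakraborty}: on these two ranges the degree $2|a|$ is such that $a^2$ is decomposable over $W_n$ together with $b$, so by (\ref{z(a+b)}) and (\ref{z(ab)}) the zero-divisor $z(a)^2=z(a^2)$ can be re-expressed in terms of the zero-divisors $z(\widetilde w_2)$, $z(\widetilde w_3)$, the factor $z(b)$ (whose square vanishes), and operators $u\otimes1$, $1\otimes u$ with $u\in W_n$. Substituting this into $M$ and collecting terms, $M$ becomes a sum of products of exactly $1+\zcl(W_n)$ zero-divisors of positive degree, all lying in $W_n\otimes W_n$ once the $u\otimes1$, $1\otimes u$ factors have been absorbed. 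By the very definition of $\zcl(W_n)$ every such product is zero, so $M=0$ — a contradiction. (The mixed case $p\ge1$, $q=1$ reduces to the two treated cases via $ab=0$.)

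The main obstacle is precisely this last step handling $a^2$: it relies on knowing that in the first sixth and the last quarter of $[2^t-1,2^{t+1}-2]$ the class $a^2$ is not itself anomalous but lies in the subalgebra generated by $W_n$ and $b$, which is exactly the portion of \cite[Theorem A]{BasuChakraborty} one can use, and where, simultaneously, the degree count in the $q=1$ case works out. In the intermediate range $2^t+2^{t-1}/3+1<n\le2^t+2^{t-1}+2^{t-2}$ neither of these is favourable, which is why there the gap cannot be closed and only $\zcl(\widetilde G_{n,3})\in\{1+\zcl(W_n),\,2+\zcl(W_n)\}$ is obtained. Everything else is routine bookkeeping with the tables of Section \ref{comp}, Theorem \ref{heights}, and the identities (\ref{z(a+b)})–(\ref{z(a^stepen dvojke)}).
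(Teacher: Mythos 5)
Your treatment of the $z(a)z(b)$ case follows the paper: rewrite using $ab=0$, bound total degree against $6n-19$ via the oriented analogue of \cite[Lemma 2.3]{Radovanovic}, insert $|a|+|b|=3n-5$ and $\gamma\ge\zcl(W_n)-\height(z(\widetilde w_2))$, and check numerically that the resulting inequality fails exactly on the first sixth and last quarter of $[2^t-1,2^{t+1}-2]$. That part is sound.

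The $z(a)^2$ case, however, has a genuine gap. You rule it out by asserting that on these ranges $a^2$ lies in the subalgebra generated by $W_n$ and $b$, citing \cite[Theorem A]{BasuChakraborty}. That theorem (as used in this paper) supplies only the number and degrees of the indecomposables outside $W_n$ and the relation $|a|+|b|=3n-5$; it does not give the multiplication table, and nothing rules out a priori that the expansion of $a^2$ in the generators contains terms of the form $\widetilde w_2^i\widetilde w_3^j a$, which would make your rewriting of $z(a^2)$ circular. Moreover, even granting $a^2=P+Qb$ with $P,Q\in W_n$, the expansion $z(Qb)=z(Q)z(b)+(1\otimes b)z(Q)+(1\otimes Q)z(b)$ produces the term $(1\otimes Q)z(b)z(\widetilde w_2)^\beta z(\widetilde w_3)^\gamma$, which contains only $\zcl(W_n)$ zero-divisors from $W_n\otimes W_n$ (the factor $z(b)$ is not one of them), so it does not vanish "by the very definition of $\zcl(W_n)$" as you claim. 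The paper avoids all ring-structure input here and instead argues via Poincar\'e duality: writing $z(a^2)z(\widetilde w_2)^\beta z(\widetilde w_3)^\gamma$ as a sum of simple tensors $a^2u\otimes v$ and $u\otimes a^2v$ with $u,v\in W_n$, a nonzero term forces $2|a|+|u|\le3n-9$ and also $|v|\le3n-9-|a|$ — the latter because the Poincar\'e dual of $v$ would otherwise have degree less than $|a|$, hence lie in $W_n$, and then $v$ times that dual would be a nonzero top-degree class in $W_n$, contradicting (\ref{topdim}). Summing these and using $\gamma\ge\zcl(W_n)-\height(z(\widetilde w_2))$ yields $6n+\height(z(\widetilde w_2))\ge3\big(|a|+\zcl(W_n)\big)+18$, contradicting the inequality (\ref{ineq}) already proved in Proposition \ref{c2}; notably this disposes of the $z(a)^2$ case for \emph{every} $n$ in $[2^t-1,2^{t+1}-2]$, not just on the two special ranges (and it is what settles $n\in\{2^t-1,2^t,2^{t+1}-3,2^{t+1}-2\}$, where $b$ does not exist). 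You need this duality argument, or a genuine substitute for the unproved decomposability of $a^2$, to close the proof.
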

\begin{proof}
Assume to the contrary that $\zcl(\widetilde G_{n,3})=2+\zcl(W_n)$. This means that $\zcl(\widetilde G_{n,3})$ is reached by a monomial of the form $z(a)^pz(b)^qz(\widetilde w_2)^\beta z(\widetilde w_3)^\gamma$ with $p+q\ge2$, where $a$ and $b$ (if $b$ exists) are the two indecomposable classes outside $W_n$, as in the proof of Proposition \ref{c2}. In that proof we showed that $p+q$ cannot exceed $2$, so $p+q=2$. Since $\height(z(b))=1$ (if $b$ exists), $\zcl(\widetilde G_{n,3})$ is reached by either $z(a)^2z(\widetilde w_2)^\beta z(\widetilde w_3)^\gamma$ or $z(a)z(b)z(\widetilde w_2)^\beta z(\widetilde w_3)^\gamma$, where $\beta+\gamma=\zcl(W_n)$.

\medskip

Suppose that $\zcl(\widetilde G_{n,3})$ is reached by $z(a)^2z(\widetilde w_2)^\beta z(\widetilde w_3)^\gamma$. Note that
\[z(a)^2z(\widetilde w_2)^\beta z(\widetilde w_3)^\gamma=z(a^2)z(\widetilde w_2)^\beta z(\widetilde w_3)^\gamma=(a^2\otimes1+1\otimes a^2)z(\widetilde w_2)^\beta z(\widetilde w_3)^\gamma\]
is a sum of simple tensors of the form $a^2u\otimes v$ and $u\otimes a^2v$, where $u,v\in W_n$ are such that $|u|+|v|=2\beta+3\gamma$. Since $z(a)^2z(\widetilde w_2)^\beta z(\widetilde w_3)^\gamma\neq0$, at least one of these simple tensors is nonzero. If $a^2u\otimes v$ is a nonzero simple tensor, then $a^2u\neq0$ in $H^*(\widetilde G_{n,3})$ and $v\neq0$ in $W_n$ (and similarly if $u\otimes a^2v\neq0$). This means that
\begin{equation}\label{a^2}
2|a|+|u|\le3n-9 \quad \mbox{ and } \quad |v|\le3n-9-|a|.
 \end{equation}
Namely, the dimension of the manifold $\widetilde G_{n,3}$ is $3n-9$, and since $v\in W_n$ is nonzero, Poincar\'e duality gives us a class $x\in H^{3n-9-|v|}(\widetilde G_{n,3})$ with the property $vx\neq0$. But if $|v|$ were greater than $3n-9-|a|$, then we would have $|x|<|a|$, which would mean that $x\in W_n$ (since $a$ is the class of smallest degree in $H^*(\widetilde G_{n,3})$ which is not in $W_n$), and so $vx\in W_n$. This would contradict (\ref{topdim}).

Similarly as in the proof of Proposition \ref{c2}, we also have
\[\gamma=\beta+\gamma-\beta=\zcl(W_n)-\beta\ge\zcl(W_n)-\height(z(\widetilde w_2)).\]
Using this and summing up the inequalities form (\ref{a^2}) we get
\begin{align*}
2\zcl(W_n)&\le2(\beta+\gamma)+\gamma-\big(\zcl(W_n)-\height(z(\widetilde w_2))\big)\\
          &=2\beta+3\gamma-\zcl(W_n)+\height(z(\widetilde w_2))=|u|+|v|-\zcl(W_n)+\height(z(\widetilde w_2))\\
          &\le6n-18-3|a|-\zcl(W_n)+\height(z(\widetilde w_2)),
\end{align*}
that is
\[6n+\height(z(\widetilde w_2))\ge3\big(|a|+\zcl(W_n)\big)+18.\]
However, this contradicts (\ref{ineq}). So $\zcl(\widetilde G_{n,3})$ is not realized by $z(a)^2z(\widetilde w_2)^\beta z(\widetilde w_3)^\gamma$.

\medskip

This proves the proposition for $n\in\{2^t-1,2^t,2^{t+1}-3,2^{t+1}-2\}$, since in these cases $a$ is the only (up to addition of an element from $W_n$) indecomposable class outside $W_n$ (i.e., $b$ does not exist in these cases).

\medskip

Now suppose that $n\notin\{2^t-1,2^t,2^{t+1}-3,2^{t+1}-2\}$ and that $\zcl(\widetilde G_{n,3})$ is reached by $z(a)z(b)z(\widetilde w_2)^\beta z(\widetilde w_3)^\gamma$, $\beta+\gamma=\zcl(W_n)$. Let us note that, as in the proof of Proposition \ref{c2}, $|a|+|b|+2\beta+3\gamma=|z(a)z(b)z(\widetilde w_2)^\beta z(\widetilde w_3)^\gamma|\le6n-19$. On the other hand, according to \cite[Theorem A]{BasuChakraborty}, $|a|+|b|=3n-5$, and similarly as before we have $\gamma\ge\zcl(W_n)-\height(z(\widetilde w_2))$, so
\begin{align*}
3n-5+2\zcl(W_n)&\le|a|+|b|+2(\beta+\gamma)+\gamma-\big(\zcl(W_n)-\height(z(\widetilde w_2))\big)\\
          &=|a|+|b|+2\beta+3\gamma-\zcl(W_n)+\height(z(\widetilde w_2))\\
          &\le6n-19-\zcl(W_n)+\height(z(\widetilde w_2)).
\end{align*}
We conclude that $3n+\height(z(\widetilde w_2))\ge3\zcl(W_n)+14$.

However, if $2^t+1\le n<2^t+2^{t-1}/3+1$, then by looking at Table \ref{table:1} we see that
\[3n+\height(z(\widetilde w_2))<3(2^t+2^{t-1}/3+1)+2^t-1=2^{t+2}+2^{t-1}+2,\]
while
\[3\zcl(W_n)+14\ge3(2^t+2^{t-1}-4)+14=2^{t+2}+2^{t-1}+2.\]
Similarly, if $2^t+2^{t-1}+2^{t-2}+1\le n\le2^{t+1}-4$, then
\[3n+\height(z(\widetilde w_2))\le3(2^{t+1}-4)+2^{t+1}-1=2^{t+3}-13,\]
and
\[3\zcl(W_n)+14\ge3(2^{t+1}+2^t-2^{t-2}-2)+14=2^{t+3}+2^{t-2}+8.\]
This contradiction concludes the proof.
\end{proof}

We have seen that there exist numbers $n$ with $\zcl(\widetilde G_{n,3})=1+\zcl(W_n)$, so in terms of $\zcl(W_n)$, $1+\zcl(W_n)$ is the best general lower bound for $\zcl(\widetilde G_{n,3})$ that one could get. Now, since $1+\zcl(\widetilde G_{n,3})$ is a lower bound for $\mathrm{TC}(\widetilde G_{n,3})$, Theorem \ref{zcl Wn} provides lower bounds for $\mathrm{TC}(\widetilde G_{n,3})$ as given in Table \ref{table:3}.

\begin{table}[h!]
\footnotesize
\centering
\begin{tabular}{||m{2.4cm} m{3.3cm} m{2.9cm} m{2.3cm}||}
 \hline
 $n$ & $\zcl(W_n)$ & $\zcl(\widetilde G_{n,3})$ & $\mathrm{TC}(\widetilde G_{n,3})$ \\ [0.5ex]
 \hline\hline
 $2^t-1$ &  &   & \\
 $\cdot$ &  &   &\\
 $\cdot$ & $2^t+2^{t-1}-4$ &  $2^t+2^{t-1}-3$ &  $\ge2^t+2^{t-1}-2$\\
 $\cdot$ &  & & \\
 $2^t+\lfloor2^{t-1}/3\rfloor+1$ &  & & \\
 \hline
 $2^t+\lfloor2^{t-1}/3\rfloor+2$ &  &   & \\
 $\cdot$ &  &   &\\
 $\cdot$ & $2^t+2^{t-1}-4$ &  $\ge2^t+2^{t-1}-3$ &  $\ge2^t+2^{t-1}-2$\\
 $\cdot$ &  & & \\
 $2^t+2^{t-2}$ &  & & \\
 \hline
  $2^t+2^{t-2}+1$ & $2^t+2^{t-1}-3$ &  $\ge2^t+2^{t-1}-2$ &  $\ge2^t+2^{t-1}-1$\\
\hline
 $2^t+2^{t-2}+2$  & &  & \\
 $\cdot$ & & &\\
 $\cdot$ &  $2^t+2^{t-1}-2$ &  $\ge2^t+2^{t-1}-1$ &  $\ge2^t+2^{t-1}$\\
 $\cdot$ & &  & \\
 $2^t+2^{t-1}$ & & & \\
\hline
  $2^t+2^{t-1}+1$ & $2^{t+1}+2^{t-3}-3$ & $\ge2^{t+1}+2^{t-3}-2$ & $\ge2^{t+1}+2^{t-3}-1$\\
\hline
 $2^t+2^{t-1}+2$ & & & \\
 $\cdot$ & &  &\\
 $\cdot$ & $2^{t+1}+2^{t-3}-2$ & $\ge2^{t+1}+2^{t-3}-1$ & $\ge2^{t+1}+2^{t-3}$\\
 $\cdot$ & & &\\
 $2^t+2^{t-1}+2^{t-3}$ & & & \\
\hline
 $2^t+2^{t-1}+2^{t-3}+1$ & & & \\
 $\cdot$ & & &\\
 $\cdot$ & $2^{t+1}+2^{t-2}-2$ & $\ge2^{t+1}+2^{t-2}-1$ & $\ge2^{t+1}+2^{t-2}$\\
 $\cdot$ &  & & \\
 $2^t+2^{t-1}+2^{t-2}$ & &  & \\
\hline
 $2^t+2^{t-1}+2^{t-2}+1$ & & & \\
 $\cdot$  & $2^{t+1}+2^t-2^{s+1}-2,$ &  &\\
 $\cdot$  & where $s\in\{1,\ldots,t-3\}$ & $2^{t+1}+2^t-2^{s+1}-1$ & $\ge2^{t+1}+2^t-2^{s+1}$\\
 $\cdot$ & is such that $2^{t+1}-2^{s+1}+$ & & \\
 $2^{t+1}-2$ & $1\le n\le2^{t+1}-2^s$ & & \\
\hline
\end{tabular}
\caption{}
\label{table:3}
\end{table}

\bibliographystyle{amsplain}

\end{document}